\let\mathcal\mathscr
\newtheorem{theorem}{Theorem}
\newtheorem{lemma}{Lemma}
\newtheorem{proposition}{Proposition}
\theoremstyle{definition}
\newtheorem{remark}[theorem]{Remark}
\renewcommand{\phi}{\varphi}
\renewcommand{\rho}{\varrho}
\renewcommand{\leq}{\leqslant}
\renewcommand{\geq}{\geqslant}
\renewcommand{\bar}{\overline}
\newcommand{\m}{\mathbf{m}}
\newcommand{\n}{\mathbf{n}}
\renewcommand{\v}{\mathbf{v}}
\renewcommand{\u}{\mathbf{u}}
\newcommand{\w}{\mathbf{w}}
\renewcommand{\r}{\mathbf{r}}
\renewcommand{\hat}{\widehat}
\newcommand{\1}{\mathbf{1}}
\begin{document}

\markboth{Ritabrata Munshi}{Subconvexity for symmetric square $L$-functions}
\title[Subconvexity for symmetric square $L$-functions]{Subconvexity for symmetric square $L$-functions}

\author{Ritabrata Munshi}   
\address{School of Mathematics, Tata Institute of Fundamental Research, 1 Dr. Homi Bhabha Road, Colaba, Mumbai 400005, India.}   
\curraddr{Theoretical Statistics and Mathematics Unit, Indian Statistical Institute, 203 B.T. Road, Kolkata 700108, India.}  
\email{rmunshi@math.tifr.res.in}
\thanks{The author is supported by SwarnaJayanti Fellowship, 2011-12, DST, Government of India.}

\begin{abstract}
Let $f$ be a holomorphic modular form of prime level $p$ and trivial nebentypus. We show that there exists a computable $\delta>0$, such that 
$$
L\left(\tfrac{1}{2},\mathrm{Sym}^2 f\right)\ll p^{\tfrac{1}{2}-\delta},
$$
with the implied constant depending only on $\delta$ and the weight of $f$.
\end{abstract}

\subjclass[2010]{11F66, 11M41}
\keywords{subconvexity, $GL(2)$ forms, Symmetric square $L$-function}

\maketitle

\tableofcontents
%======================================================================================================================
%======================================================================================================================

\section{Introduction}
\label{intro}
Let $f\in S_\kappa(M)$ be a holomorphic newform, of weight $\kappa$, level $M$, which we assume to be square-free, and trivial nebentypus. At the cusp at infinity we have the Fourier expansion 
$$
f(z)=\sum_{n=1}^{\infty}\lambda_f(n)n^{\frac{\kappa-1}{2}}e(nz),
$$ 
with normalized Fourier coefficients $\lambda_f(n)$ (so that $\lambda_f(1)=1$), and $e(z)=e^{2\pi iz}$. It is known from the work of Deligne that $|\lambda_f(n)|\leq d(n)$, where $d(n)$ is the divisor function. One associates with $f$ the Dirichlet series
$$
L(s,f)=\sum_{n=1}^{\infty}\frac{\lambda_f(n)}{n^{s}},
$$
which converges absolutely in the right half plane $\sigma>1$. Moreover since $f$ is a Hecke eigenform, one has an Euler product representation
$$
L(s,f)=\prod_{q\;\text{prime}} L_q(s,f).
$$
For $q\nmid M$ we have 
\begin{align*}
L_q(s,f)=\left(1-\frac{\alpha_f(q)}{q^{s}}\right)^{-1} \left(1-\frac{\beta_f(q)}{q^{s}}\right)^{-1},
\end{align*}
and the local parameters $\alpha_f(q)$ and $\beta_f(q)$ are related to the normalized Fourier coefficients by $\alpha_f(q)+\beta_f(q)=\lambda_f(q)$ and $\alpha_f(q)\beta_f(q)=1$. The symmetric square $L$-function is defined by the degree three Euler product 
\begin{align}
\label{def-sym2}
L(s,\text{Sym}^2 f)=\prod_q \left(1-\frac{\alpha_f^2(q)}{q^{s}}\right)^{-1} \left(1-\frac{\alpha_f(q)\beta(q)}{q^{s}}\right)^{-1} \left(1-\frac{\beta_f^2(q)}{q^{s}}\right)^{-1},
\end{align}
which converges absolutely for $\sigma>1$. In this half-plane we also have an absolutely convergent Dirichlet series expansion
$$
L(s,\text{Sym}^2 f)=\zeta_M(2s)\sum_{n=1}^{\infty}\frac{\lambda_f(n^2)}{n^{s}},
$$
where $\zeta_M(s)$ is the Riemann zeta function with the Euler factors at the primes dividing $M$ missing. It is well-known that this $L$-function extends to an entire function and satisfies the functional equation (see \cite{S})
$$
\Lambda(s,\text{Sym}^2 f)=\Lambda(1-s,\text{Sym}^2 f),
$$
where the completed $L$-function is defined by
$$
\Lambda(s,\text{Sym}^2 f)=M^{s}\gamma(s)L(s,\text{Sym}^2 f)
$$
with
\begin{align*}
\gamma(s)=\pi^{-3s/2}\Gamma\left(\frac{s+1}{2}\right)\Gamma\left(\frac{s+\kappa-1}{2}\right)\Gamma\left(\frac{s+\kappa}{2}\right).
\end{align*} 
So the symmetric square $L$-function has arithmetic conductor $M^2$, when $M$ is square-free.
\\

It is expected that $L(s,\text{Sym}^2 f)$ satisfies the Riemann hypothesis, that all the non-trivial zeros should lie on the central line $\sigma=1/2$. This would imply the Lindel\"{o}f hypothesis that 
\begin{align*}
L(1/2,\text{Sym}^2 f)\ll M^{\varepsilon}
\end{align*}
for any $\varepsilon>0$. An easy consequence of the functional equation and the Phragmen-Lindel\"{o}f principle from complex analysis is the convexity bound
\begin{align*}
L(1/2,\text{Sym}^2 f)\ll M^{1/2+\varepsilon}.
\end{align*}
Recently Heath-Brown has shown that complex analysis in fact yields the improved bound without the $\varepsilon$ in the exponent (see \cite{HB-convex}). A deep result of Soundararajan \cite{So} gives an extra saving of $(\log M)^{1-\varepsilon}$. One should note that in fact, the former result is true for any automorphic $L$-functions, while the latter holds under a weak Ramanujan conjecture. \\ 

Better bounds are known for $L$-functions which are given by Euler products of degree at most two. (Here I will only mention results pertaining to the level aspect subconvexity, for other aspects, e.g. spectral or $t$-aspect, the reader may refer to the citations in \cite{MV} or \cite{Mu0}.) A classical result in this context is that of Burgess \cite{B}, who proved
\begin{align*}
L(1/2,\chi)\ll M^{1/4-1/16+\varepsilon}
\end{align*}
for primitive Dirichlet characters $\chi$ of conductor $M$. Burgess employed an ingenious technique to bound short character sums by higher moments of complete character sums, for which one has strong bounds coming from the Riemann hypothesis for curves on finite fields (Weil's theorem). Similar subconvex bound in the level aspect for $GL(2)$ $L$-functions was first obtained by Duke, Friedlander and Iwaniec \cite{DFI-2} using the amplification technique. For $f$ a newform of level $M$ and trivial nebentypus their result gives the subconvex bound
\begin{align*}
L(1/2,f)\ll M^{1/4-1/192+\varepsilon}.
\end{align*}
Such a subconvex bound can also be obtained for certain degree four $L$-functions which are given by the Rankin-Selberg convolutions of two $GL(2)$ forms. This was first obtained by Kowalski, Michel and Vanderkam \cite{KMV}, who established
\begin{align*}
L(1/2,f\otimes g)\ll M^{1/2-1/80+\varepsilon},
\end{align*}  
for $g$ a fixed holomorphic form or a Maass form, and $f$ a holomorphic newform of level $M$ and trivial nebentypus. All these results, and many more similar results in other aspects, are put in a satisfactory set up and an uniform subconvex bound is obtained in the recent work of Michel and Venkatesh \cite{MV}. \\

A subconvex bound for the symmetric square $L$-function $L(s,\text{Sym}^2 f)$ has so far proved to be elusive. This case is comparable with the Rankin-Selberg convolution $L(s,f\otimes f)$, where both the forms are varying and are in fact same. This is a classic example of a `drop in conductor', which is the precise reason why this case has proved to be so hard to tackle using the amplification technique. Curiously this precise phenomenon (drop in conductor) is the backbone of the present work. This is what we utilize to generate a large class of harmonics to give a spectral expansion of the Kronecker delta symbol (via the Petersson trace formula). The chosen class of harmonics on the other hand conspires with the existing harmonics so that the `conductor' does not go up. Indeed for $f$ a newform in $S_\kappa(p)$ and $g$ a newform in $S_k(p,\psi)$ where $\psi$ is primitive modulo $p$, the arithmetic conductor of the Rankin-Selberg convolution $L(s,f\otimes g)$ is $p^3$ instead of $p^4$ (see Lemma~\ref{func-eqn-li}). This is the key for the choice of harmonics in our version of the circle method.\\

In this paper we will prove the following, long awaited, subconvex bound.\\

\begin{theorem}
\label{mthm}
Let $f$ be a holomorphic Hecke form of prime level $p$ and trivial nebentypus. Then there exists a computable absolute constant $\delta>0$ such that
$$
L\left(\tfrac{1}{2},\mathrm{Sym}^2 f\right)\ll p^{\tfrac{1}{2}-\delta}.
$$
The implied constant depends only on the weight of $f$ and $\delta$.
\end{theorem}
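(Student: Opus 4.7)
The plan is to apply the approximate functional equation so that the theorem reduces to a power-saving bound on the smoothed sum
\[
S(N) = \sum_{n} \frac{\lambda_f(n^2)}{\sqrt{n}}\, V\!\left(\frac{n}{N}\right), \qquad N = p^{1+o(1)},
\]
with $V$ a fixed Schwartz function. The convexity bound corresponds to $|S(N)| \ll p^{1/2+\ve}$, and the goal is $|S(N)| \ll p^{1/2-\delta}$ for an absolute $\delta>0$. Direct amplification is unavailable, because the relevant second moment is $L(s, f\otimes f)$, which is a classical ``drop in conductor'' situation in which the amplifier is effectively neutralized. Following the strategy outlined in the introduction, the idea is to detect the diagonal $m=n$ for an auxiliary variable $m\asymp N$ using the Petersson trace formula on $S_k(p, \psi)$ with $\psi$ primitive modulo $p$, averaged over such characters. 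This expresses $\delta(m=n)$ as a spectral sum over forms $g$ of level $p$ with primitive nebentypus, plus a Kloosterman correction supported on moduli divisible by $p$.

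After substitution, $S(N)$ is recast as
\[
\sum_{\psi} \sum_{g\in B_k(p,\psi)} \frac{A_f(g)\, B(g)}{\|g\|^2} \;+\; (\text{Kloosterman error}),
\]
where $A_f(g) = \sum_n \lambda_f(n^2)\lambda_g(n)\, V(n/N)/\sqrt{n}$ and $B(g) = \sum_m \overline{\lambda_g(m)}\, U(m/N)$. The key arithmetic input is Lemma~\ref{func-eqn-li}: the Rankin--Selberg convolution $L(s, f\otimes g)$ has arithmetic conductor $p^3$ rather than the generic $p^4$, precisely because $f$ has trivial nebentypus while $g$ has primitive character mod $p$. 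Via the corresponding functional equation and Voronoi summation for $f\otimes g$, the effective length of $A_f(g)$ drops from $p^2$ to $p^{3/2}$; this conductor mismatch is the sole source of the subconvex saving. I would next apply Cauchy--Schwarz in $g$ to isolate the $f$-dependence, reducing to a second-moment estimate $\sum_g |A_f(g)|^2/\|g\|^2$. Opening the square and invoking Petersson a second time should yield a diagonal term matching the conductor-drop prediction, plus an off-diagonal sum of Kloosterman sums to be treated by Voronoi on the $\text{Sym}^2 f$ coefficients and Poisson on the auxiliary variables.

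I expect the principal obstacle to lie in controlling the residual Kloosterman contribution from the initial application of Petersson: this is a sum of $S_\psi(m,n;pc')\, J_{k-1}(4\pi\sqrt{mn}/(pc'))$, whose bound via Weil's estimate sits exactly at the convexity threshold. Additional cancellation must be extracted by combining the $\psi$-average (which effectively collapses the twisted Kloosterman sum to a Ramanujan-style sum modulo $p$) with stationary-phase analysis of the Bessel kernel, and by a careful choice of the support of the auxiliary weight $U$ so that the phase conditions are compatible with the $p$-adic information. Balancing this against the conductor-drop saving in the spectral term, and optimizing the weight $k$ and size $N$ of the auxiliary family, should produce $|S(N)| \ll p^{1/2-\delta}$ with an explicit, if small, $\delta>0$, yielding the theorem.
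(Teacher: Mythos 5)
Your high-level motivation matches the paper's (generate harmonics with primitive nebentypus so that the conductor drop for $f\otimes g$ works for you rather than against you), but the specific set-up breaks the mechanism. First, you detect the linear equation $m=n$ and put $\lambda_f(n^2)\lambda_g(n)$ into one factor $A_f(g)$ and $\overline{\lambda_g(m)}$ alone into $B(g)$. The conductor-drop input, Lemma~\ref{func-eqn-li}, concerns $L(s,f\otimes\bar g)=\sum\lambda_f(m)\overline{\lambda_g(m)}m^{-s}$; it says nothing about $\sum_n\lambda_f(n^2)\lambda_g(n)n^{-s}$, which is a degree-six $\mathrm{Sym}^2 f\otimes g$ object, so the claimed "Voronoi for $f\otimes g$ shortens $A_f(g)$" is not available in your decomposition. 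The paper instead detects $m=n^2$ with $m\sim N^2$, which is exactly what makes one factor a genuine Rankin--Selberg sum $S_1=\sum_m\lambda_f(m)\overline{\lambda_g(m)}$ (dualized by Lemma~\ref{func-eqn-li}) and the other a symmetric-square sum $S_2=\sum_n\lambda_g(n^2)$ (dualized via Shimura/Li). Second, you drop the auxiliary average over primes $q\in\mathcal{Q}$ with $p^{1/2}<Q<p^{1-\delta}$ and work with level $p$ alone. That average is not decorative: Proposition~\ref{prop1} needs $Q>p^{1/2+\theta/4}$ to control the first off-diagonal $\mathcal{O}$ (with $Q=1$ this Kloosterman term is not negligible, as you yourself note it sits at the convexity threshold), and the later steps (Poisson in $n$ modulo $cq$, the conversion of the $c$-sum into $L(1/2,\bar\psi\otimes\chi_{d_1p})$, Heath-Brown's quadratic large sieve, and the delta method with moduli up to $Q$ for the shifted convolution $\sum_d\lambda_f(d)\lambda_f(d+pr)$ with shifts divisible by $p$) all require this extra room.

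Third, the endgame you propose, Cauchy--Schwarz in $g$ followed by a second moment of $A_f(g)$ via a second application of Petersson, collapses back to the convexity bound: with a family of size about $p$ and factors of length about $p$, the diagonal alone already saturates $S(N)\ll p^{1/2+\varepsilon}$, and the off-diagonal is precisely the Iwaniec--Michel obstruction that this paper is designed to circumvent. The paper never takes absolute values in $g$ at that stage; it dualizes both $S_1$ and $S_2$, applies Petersson a second time, performs Poisson in $n$, evaluates the resulting character sums, sums over the modulus through central values of quadratic-twist $L$-functions split by an approximate functional equation into $\mathcal{O}_1$ and $\mathcal{O}_2$, and only then extracts the extra saving from Weil bounds, the quadratic large sieve, and the special shifted-convolution estimate of Proposition~\ref{shifted-prop}. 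None of these later ingredients is present in your sketch, and "stationary phase plus a good choice of weights" will not by itself produce a power saving over the threshold; so as written the proposal does not yield the theorem.
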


\bigskip

%The convexity bound is given by $L\left(\tfrac{1}{2},\mathrm{Sym}^2 f\right)\ll_{\varepsilon} p^{1/2+\varepsilon}$, and Soundararajan's \cite{So} general result yields $L\left(\tfrac{1}{2},\mathrm{Sym}^2 f\right)\ll_{\varepsilon} p^{1/2}/(\log p)^\eta$ for some small $\eta>0$.
One can show that the implied constant actually depends polynomially on the weight. Also one can produce an explicit value for $\delta$ (e.g. $\delta=1/10000$ should be fine). However from the point of view of application the explicit exponent is not required. So we will not try to make it explicit.
The proof of the theorem builds on the technique elaborated in the series `The circle method and bounds for $L$-functions I-IV', especially \cite{Mu4}, \cite{Mu0} and \cite{Mu5}. The companion paper \cite{Mu6} gives another illustration of the ideas in the simpler context of twists of $GL(3)$ $L$-functions. The difficulty in implementing the usual amplified moment method in the present context has been analysed in detail by Iwaniec and Michel (see \cite{IM}). A key ingredient in their estimation of the second moment is Heath-Brown's large sieve for quadratic characters \cite{HB}. They also point out that their method fails to yield an asymptotic formula for the second moment since the large sieve estimate is not `precise enough'. However it has turned out to be difficult to improve upon this deep estimate of Heath-Brown. This large sieve inequality also plays a vital role in our analysis. But we need other powerful ingredients like the Riemann hypothesis for curves over finite fields (Weil's theorem) and strong estimates for shifted convolution sums with special shifts. One may see reminiscent of Burgess' analysis in these parts. Also Deligne's bound for Fourier coefficients is used freely throughout the paper. \\

Finally let us mention an important application of our theorem. Indeed this is the precise reason why this subconvexity problem has been in focus of intensive research in recent times. The arithmetic quantum unique ergodicity conjecture of Rudnick and Sarnak (see \cite{RS} and \cite{Sar}) has a natural generalization to the level aspect (see \cite{Mich}). Let $f$ be a newform of weight $2$, level $p$ and trivial nebentypus. We define a probability measure on the modular curve $X_0(p)$ by
\begin{align*}
\mu_f(z)=\frac{|f(z)|^2y^2}{\|f\|^2}\;\frac{\mathrm{d}x\mathrm{d}y}{y^2}.
\end{align*}
Let $\pi_p:X_0(p)\rightarrow X_0(1)$ be the natural projection map induced by the inclusion $\Gamma_0(p)\subset \Gamma_0(1)$. The direct image of $\mu_f$ by $\pi_p$ defines a probability measure $\mu_{f,1}$ on $X_0(1)$. Then we have the following:\\
 
 \textbf{QUE Conjecture:} As $p\rightarrow \infty$ we have
 \begin{align*}
 \mu_{f,1}\mathop{\longrightarrow}_W \frac{1}{\text{vol}(X_0(1))}\;\frac{\mathrm{d}x\mathrm{d}y}{y^2}.
 \end{align*}\\
 
It is known, at least in the case of square-free level, that this conjecture follows from the level aspect subconvexity for  
$$L(1/2+it,\text{Sym}^2 f)\;\;\;\text{and}\;\;\;L(1/2,\text{Sym}^2 f\otimes g)$$ 
where $g$ is a fixed cusp form. Our theorem supplies the necessary bound for the symmetric square $L$-function for $t=0$. In fact, 
the method of this paper also works in the more general case of $L(1/2+it,\text{Sym}^2 f\otimes \chi)$ where $f$ is of square-free level $N$ and $\chi$ is a Dirichlet character. In this case the implied constant in the theorem depends polynomially on $t$ and the modulus of the character $\chi$. The recent work of Nelson \cite{N} shows that this is enough to deduce level aspect subconvexity for $L(1/2,\text{Sym}^2 f\otimes g)$ for a fixed cusp form $g$. In the light of this a stronger version of the above conjecture follows.  This will be explained in detail in an upcoming joint paper of the author with Nelson. 
\\

\ack
The author wishes to thank professors Henryk Iwaniec, Phillipe Michel, Paul Nelson, Ravi Rao and Peter Sarnak for their encouragements. A part of this work was written down when the author was visiting MSRI, Berkeley in March 2017 and was supported by Gupta Endowment Fund. The author thanks MSRI, Vinita Gupta and Naren Gupta for their generous support.

%===========================================================================

\section{The set up}
\label{sec-set-up}
From the approximate functional equation (see Section~3 of \cite{IM}) we know that 
\begin{align*}
L\left(\tfrac{1}{2},\mathrm{Sym}^2 f\right)\ll p^\varepsilon \sup_{p^{1-\theta}\leq N\leq p^{1+\varepsilon}} \frac{|S(N)|}{\sqrt{N}}+p^{1/2-\theta/2+\varepsilon},
\end{align*}
where $S(N)$ are sums of the form
\begin{align*}
S(N)=\mathop{\sum\sum}_{n,j=1}^\infty \lambda_f(n^2)V\left(\frac{nj^2}{N}\right)
\end{align*}
with $V$ a smooth bump function $V^{(i)}\ll_i 1$, with support $[1,3/2]$. Here $\lambda_f(n)$ are the normalized (i.e. $\lambda_f(1)=1$) Fourier coefficients of the Hecke form $f$. We write $S(N)$ as
\begin{align*}
S(N)=\mathop{\sum\sum\sum}_{\substack{m,n,j=1\\m=n^2}}^\infty \lambda_f(m) W\left(\frac{mj^4}{N^2}\right)V\left(\frac{nj^2}{N}\right)
\end{align*}
where $W$ is a suitable smooth bump function - $\text{Supp}(W)\subset [1/2,3]$ with $W(x)=1$ for $x\in [1,9/4]$. Note that we are just separating the `structure' from the Fourier coefficients. We will use the harmonics from the set of cusp forms
$
S_k(pq,\psi),
$ 
with some large weight $k$ (of the size $1/\varepsilon$), to detect the equation $m=n^2$.\\

Let $\psi$ be a primitive odd character modulo $pq$. Let $k$ be a large odd integer and let $H_k(pq,\psi)$ denote an orthogonal Hecke basis for the space $S_k(pq,\psi)$ of cusp forms of weight $k$, level $pq$ and nebentypus $\psi$. 
Consider the spectral sum
\begin{align}
\label{sum-d}
\mathcal{D}=&\sum_{q\in\mathcal{Q}}\;\sideset{}{^\dagger}\sum_{\psi\bmod{pq}}\:\sum_{g\in H_k(pq,\psi)}\omega_g^{-1}\mathop{\sum\sum}_{\substack{m,\ell=1}}^\infty \lambda_f(m)\overline{\lambda_g(m)}\bar{\psi}(\ell) W\left(\frac{m\ell^2}{N^2}\right)\\
\nonumber &\times\mathop{\sum\sum}_{\substack{n,j=1}}^\infty\lambda_g(n^2)\psi^2(j) V\left(\frac{nj^2}{N}\right),
\end{align}
where $\mathcal{Q}$ is a set of primes in the range $[Q,2Q]$ such that $|\mathcal{Q}|\gg Q^{1-\varepsilon}$, and
$$
\omega_g^{-1}= \frac{\Gamma(k-1)}{(4\pi)^{k-1}\|g\|^2}
$$
are the spectral weights. Here the Petersson norm is given by 
\begin{align*}
\|g\|^2=\mathop{\int}_{\Gamma_0(pq)\backslash \mathbb{H}}\;|g(z)|^2y^{k-2}\mathrm{d}x\mathrm{d}y,
\end{align*}
and $\lambda_g(n)$ are the normalized Fourier coefficients at infinity of the form $g$.
Later we will impose the condition that each prime $q\in \mathcal{Q}$ satisfies the congruence condition $q\equiv 1\bmod{4}$, so that the quadratic character modulo $q$ is even. The $\dagger$ on the $\psi$ sum indicates that we are restricting the sum to odd $\psi$ such that $\psi^2$ is primitive. The parameter $Q$ will be chosen optimally at the end.  We also set
\begin{align}
\label{sum-o}
\mathcal{O}=&\sum_{q\in\mathcal{Q}}\;\sideset{}{^\dagger}\sum_{\psi\bmod{pq}}\:\mathop{\sum\sum\sum\sum}_{\substack{m,n,\ell,j=1}}^\infty \lambda_f(m)\psi(\bar{\ell}j^2)W\left(\frac{m\ell^2}{N^2}\right)V\left(\frac{nj^2}{N}\right)\\
\nonumber &\times\mathop{\sum}_{\substack{c=1}}^\infty\frac{S_\psi(m,n^2;cpq)}{cpq} J_{k-1}\left(\frac{4\pi n\sqrt{m}}{cpq}\right).
\end{align}
Here
$$
S_{\psi}(a,b;c)=\sideset{}{^\star}\sum_{\alpha\bmod{c}}\psi(\alpha)e\left(\frac{\alpha a+\bar{\alpha}b}{c}\right)
$$ 
is the Kloosterman sum, and $J_{k-1}$ is the J-Bessel function of order $k-1$. \\

\begin{lemma}
\label{lemma-initial}
Let $f$ be as in the statement of Theorem~\ref{mthm}. Then for $Q>p^{\theta/2}$, with $1>\theta>0$, we have
\begin{align}
\label{afe-bd-2}
L(\tfrac{1}{2},\mathrm{Sym}^2 f)\ll p^\varepsilon\sup_N \:\frac{|\mathcal{D}|+|\mathcal{O}|}{\sqrt{N}pQ^2}+ p^{(1-\theta)/2+\varepsilon},
\end{align}
where the supremum is taken over $N$ in the range $p^{1-\theta}\ll N\ll p^{1+\varepsilon}$, and the sums $\mathcal{D}$, $\mathcal{O}$ are as in \eqref{sum-d} and \eqref{sum-o} respectively.
\end{lemma}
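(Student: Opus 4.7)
The plan is to realize $\mathcal{D}$ as the spectral side of Petersson's formula applied to the sum that detects $m = n^2$ inside $S(N)$, and $\mathcal{O}$ as the corresponding arithmetic (Kloosterman) off-diagonal. Solving this identity for $S(N)$ and combining with the approximate functional equation bound recalled at the start of Section~\ref{sec-set-up} will then yield \eqref{afe-bd-2}.

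First I would apply Petersson's trace formula to evaluate the inner spectral average $\sum_{g}\omega_g^{-1}\overline{\lambda_g(m)}\lambda_g(n^2)$ present in $\mathcal{D}$. This splits the average into the diagonal $\delta_{m=n^2}$ plus an off-diagonal piece of the shape $\sum_{c\ge 1} S_\psi(m, n^2; cpq)/(cpq)\cdot J_{k-1}(4\pi n\sqrt{m}/cpq)$. Substituting back into the definition of $\mathcal{D}$, the off-diagonal part reproduces $\mathcal{O}$ up to the constant factor $2\pi i^{-k}$, while the diagonal part becomes
\begin{equation*}
\sum_{q\in\mathcal{Q}}\sideset{}{^\dagger}\sum_{\psi\bmod{pq}}\sum_{n,\ell,j}\lambda_f(n^2)\bar\psi(\ell)\psi^2(j)W\!\left(\tfrac{n^2\ell^2}{N^2}\right)V\!\left(\tfrac{nj^2}{N}\right).
\end{equation*}

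Next I would perform the orthogonality over $\psi$. For $(\ell j, pq)=1$, the inner character sum $\sideset{}{^\dagger}\sum_\psi \psi(j^2\bar\ell)$ equals $\phi^*(pq)$ to leading order when $\ell \equiv j^2 \bmod pq$, with M\"obius and parity corrections smaller by at least a factor of $Q$. The supports of $W$ and $V$ force $n\ell\asymp N$ and $nj^2\asymp N$, so in particular $\max(\ell, j^2) \ll N \le p^{1+\varepsilon}$. The hypothesis $Q > p^{\theta/2}$ then gives $pq \ge pQ > p^{1+\theta/2}$, which strictly dominates $\max(\ell, j^2)$ once $\varepsilon$ is small relative to $\theta$. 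Therefore the congruence $\ell \equiv j^2 \bmod pq$ collapses to the honest equality $\ell = j^2$, and on this locus $W(n^2j^4/N^2)=1$ identically, since $V(nj^2/N)\ne 0$ forces $nj^2 \in [N, 3N/2]$ and hence $n^2j^4/N^2 \in [1, 9/4]$, where $W \equiv 1$. Consequently the diagonal contribution equals $|\mathcal{Q}|\,\phi^*(pq)\cdot S(N)$, which is $\gg p^{1-\varepsilon}Q^2 \cdot S(N)$.

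Putting the pieces together, $\mathcal{D}$ equals a nonzero absolute constant times $pQ^2\cdot S(N)$ plus a constant times $\mathcal{O}$, modulo negligible error. Rearranging gives $|S(N)|/\sqrt{N} \ll p^{\varepsilon}(|\mathcal{D}|+|\mathcal{O}|)/(\sqrt{N}\,pQ^2)$, and substituting into the approximate functional equation bound produces \eqref{afe-bd-2}. The only genuinely nontrivial step is the collapse of the congruence $\ell \equiv j^2 \bmod pq$ to equality, which is exactly what motivates the lower bound $Q > p^{\theta/2}$; the rest is formal manipulation with the Petersson formula and Dirichlet character orthogonality.
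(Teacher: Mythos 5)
Your overall skeleton is the paper's: Petersson on the spectral average, orthogonality over the restricted family of $\psi$, collapse of the congruence to $\ell=j^2$, and reassembly with the approximate functional equation. But there is a genuine gap in how you dispose of the correction terms in the $\psi$-orthogonality, and you attribute the hypothesis $Q>p^{\theta/2}$ to the wrong step. The dagger-restricted sum (odd $\psi$ with $\psi^2$ primitive) is not an exact orthogonality: as in \eqref{res-psi-sum}, besides the main term $\phi(pq)\,1_{\ell\equiv\pm j^2\bmod{pq}}$ there are terms of size $\phi(q)$ attached to the congruence $\ell\equiv\pm j^2\bmod q$ only (with $\psi_p$ quadratic) and, more importantly, terms of size $\phi(p)\approx p$ attached to $\ell\equiv\pm j^2\bmod p$ only (with $\psi_q$ quadratic). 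These are not ``smaller by at least a factor of $Q$'' in any sense that helps, because the weaker congruences admit vastly more solutions: counting trivially, the mod-$p$ family contributes $\ll p^{\varepsilon}NQp$ to the diagonal, which after division by $\sqrt{N}pQ^2$ is of size about $p^{\varepsilon}\sqrt{N}/Q\approx p^{1/2+\varepsilon}/Q$. This is not negligible; it is $\ll p^{(1-\theta)/2+\varepsilon}$ exactly when $Q\gg p^{\theta/2}$, and this is the real (and only essential) place where the hypothesis is used. Your proposal asserts these corrections are negligible without any estimate, so as written the identity ``$\mathcal{D}=c\,pQ^2S(N)+c'\mathcal{O}$ modulo negligible error'' is unproved, even though the final bound survives once the corrections are bounded and absorbed into the $p^{(1-\theta)/2+\varepsilon}$ term.

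By contrast, the step you single out as the only genuinely nontrivial one --- the collapse of $\ell\equiv\pm j^2\bmod{pq}$ to $\ell=j^2$ --- needs only $100N<pQ$, i.e.\ essentially any $Q\gg p^{2\varepsilon}$ given $N\le p^{1+\varepsilon}$, so it is not what forces $Q>p^{\theta/2}$. Repairing your argument amounts to inserting the paper's trivial count: for fixed $(n,j)$ the number of $\ell$ with $\ell\equiv\pm j^2\bmod q$ (resp.\ $\bmod\ p$) is $O(1+N/q)$ (resp.\ $O(1+N/p)$), so the correction terms total $O(p^{\varepsilon}NQ(Q+p))$, which is admissible under $Q>p^{\theta/2}$; with this inserted (together with clearing the coprimality $(j,pq)=1$ in the main term), your assembly goes through as in the paper.
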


\begin{proof}
We apply the Petersson trace formula to \eqref{sum-d}. The diagonal $m=n^2$ gives
\begin{align}
\label{sum-d-diag}
\Delta=&\sum_{q\in\mathcal{Q}}\;\sideset{}{^\dagger}\sum_{\psi\bmod{pq}}\:\mathop{\sum\sum\sum}_{\substack{n,j,\ell=1}}^\infty \lambda_f(n^2)\psi(j^2\bar\ell) W\left(\frac{n^2\ell^2}{N^2}\right)V\left(\frac{nj^2}{N}\right).
\end{align}
For simplicity suppose $p\equiv 1\bmod{4}$ and we pick $q\equiv 1\bmod{4}$. Then the quadratic characters modulo $p$ or $q$ are even. So writing $\psi=\psi_p\psi_q$ with $\psi_p\bmod{p}$ and $\psi_q\bmod{q}$, we see that $\psi$ odd and $\psi^2$ non-primitive implies that either $\psi_p$ is quadratic and $\psi_q$ runs over all odd characters, or $\psi_q$ is quadratic and $\psi_p$ runs over all odd characters. Consequently
\begin{align}
\label{res-psi-sum}
2\sideset{}{^\dagger}\sum_{\psi\bmod{pq}}\psi(u)=&\phi(pq)\sum_\pm (\pm 1) 1_{u\equiv\pm 1\bmod{pq}}\\
\nonumber &-\phi(q)\sum_\pm (\pm 1) 1_{u\equiv\pm 1\bmod{q}}\sum_{\substack{\psi_p\bmod{p}\\\psi_p^2=1}}\psi_p(u)\\
\nonumber & -\phi(p)\sum_\pm (\pm 1) 1_{u\equiv\pm 1\bmod{p}}\sum_{\substack{\psi_q\bmod{q}\\\psi_q^2=1}}\psi_q(u).
\end{align}\\

So the sum over $\psi$ in \eqref{sum-d-diag} leads us to 
consider the congruence condition $\ell\equiv \pm j^2\bmod{pq}$. From size consideration, as $Q>p^{\theta/2}$ with $\theta>0$ implies $100N<pQ$, it follows that the only term from this congruence that contributes to the sum \eqref{sum-d-diag} is $\ell=j^2$. For the other two congruences we estimate the contribution trivially. Indeed given $(n,j)$ the number of $\ell$ satisfying the congruence $\ell\equiv \pm j^2\bmod{q}$ is $O(1+N/q)$. Hence the contribution of these terms in \eqref{sum-d-diag} is seen to be dominated by
\begin{align*}
\sum_{q\in\mathcal{Q}}\;q\left(1+\frac{N}{q}\right)\mathop{\sum\sum}_{\substack{n,j=1\\nj^2\sim N}}^\infty |\lambda_f(n^2)|\ll p^\varepsilon NQ(Q+N).
\end{align*}
The congruence modulo $p$ is treated similarly, and it yields a similar bound with $p+N$ in place of $Q+N$. 
It follows that
\begin{align*}
\Delta=&\sum_{q\in\mathcal{Q}}\;\frac{\phi(pq)}{2}\:\mathop{\sum\sum}_{\substack{n,j=1\\(j,pq)=1}}^\infty \lambda_f(n^2) V\left(\frac{nj^2}{N}\right)+O\left(p^\varepsilon NQ(Q+p)\right)
\end{align*}
and consequently (after clearing the coprimality $(j,pq)=1$)
\begin{align*}
\mathcal{D}=Q^\star S(N) +2\pi i^{-k} \mathcal{O}+O\left(p^\varepsilon NQ(Q+p)\right),
\end{align*}
where $Q^\star$ denotes the number of $(\psi,q)$ (by choice we will have $Q^\star\gg pQ^{2-\varepsilon}$) and $\mathcal{O}$ is as defined in \eqref{sum-o}. The lemma follows.
\end{proof}

\bigskip

\noindent
\textbf{Notation:}
We will conclude this section by introducing some notations. Let 
$$
\mathcal{S}=\cup_{q\in\mathcal{Q}}\:\cup_{\substack{\psi\bmod{pq}\\\psi\:\text{odd}\\\psi^2\:\text{primitive}}}\:H_k(pq,\psi).
$$
For two complex valued functions $\mathcal{F}$ and $\mathcal{G}$ on the set $\mathcal{S}$, we define
\begin{align}
\label{in-prod}
\left\langle\mathcal{F},\mathcal{G}\right\rangle=\sum_{q\in\mathcal{Q}}\;\sideset{}{^\dagger}\sum_{\psi\bmod{pq}}\:\sum_{g\in H_k(pq,\psi)}\omega_g^{-1}\:\mathcal{F}(g)\mathcal{G}(g).
\end{align} 
Set 
\begin{align}
\label{sum-m}
S_1=\mathop{\sum\sum}_{\substack{m,\ell=1}}^\infty \lambda_f(m)\overline{\lambda_g(m)}\bar{\psi}(\ell) W\left(\frac{m\ell^2}{N^2}\right)
\end{align}
and
\begin{align}
\label{sum-n}
S_2=\mathop{\sum\sum}_{\substack{n,j=1}}^\infty\lambda_g(n^2)\psi^2(j) V\left(\frac{nj^2}{N}\right).
\end{align}
These are viewed as functions on $\mathcal{S}$, and we have
\begin{align*}
\mathcal{D}=\left\langle S_1,S_2 \right\rangle.
\end{align*}\\

 We will use the following convention from \cite{Mu6}. Suppose we have 
$$\mathcal{A}\ll p^\varepsilon \sup_{b\in\mathcal{F}}\:|\mathcal{B}_b| + p^{-A}$$ for all $A>0$, where the implied constant depends on $A$ and $\varepsilon$. Also suppose $|\mathcal{F}|\ll p^\varepsilon$. Then we will write 
\begin{align*}
\mathcal{A}\triangleleft_{\mathcal{F}} \mathcal{B}_b 
\end{align*}
or simply $\mathcal{A} \triangleleft \mathcal{B}$ where there is no scope of confusion. Basically this means that the problem of bounding $\mathcal{A}$ reduces to obtaining an uniform bound for $\mathcal{B}_b$ over the family $\mathcal{F}$. We will say the $\mathcal{B}$ is a `good model' for $\mathcal{A}$, as long as our goal is to get an upper bound for $\mathcal{A}$. \\

\bigskip

%===========================================================
\section{The off-diagonal $\mathcal{O}$}

Our next proposition gives a sufficient bound for the off-diagonal contribution $\mathcal{O}$ as defined in \eqref{sum-o}. This sum is negligibly small if we pick $Q\gg p^{1+\varepsilon}$, as in this case the size of the Bessel function is negligibly small due to the choice of the large weight $k$. However while analysing the dual sum $\mathcal{D}$ we will see that we have to pick $Q$ smaller than $p^{1-\varepsilon}$. So we need a non-trivial treatment for $\mathcal{O}$.\\

\begin{proposition}
\label{prop1}
If $Q>p^{1/2+\theta/4}$ then we have
\begin{align*}
\mathcal{O}\ll N^{1/2}p^{3/2-\theta/2+\varepsilon} Q^2.
\end{align*}
\end{proposition}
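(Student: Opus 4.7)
The key observation is that the hypothesis $Q > p^{1/2+\theta/4}$ makes the argument of the Bessel function $J_{k-1}$ small, so (since $k$ is chosen of order $1/\varepsilon$) the Bessel factor is tiny, effectively truncating the $c$-sum to a short range. On the support of $W$ and $V$ one has the argument $4\pi n\sqrt{m}/(cpq) \asymp N^2/(\ell j^2 cpQ)$, and by $J_{k-1}(x) \ll (ex/2k)^{k-1}$ for $x < k/2$, the contribution is negligible unless $c \leq C := N^2 p^\varepsilon/(\ell j^2 pQ)$. Under $Q > p^{1/2+\theta/4}$ and $N \leq p^{1+\varepsilon}$ this gives $C \ll p^{1/2-\theta/4+\varepsilon}/(\ell j^2)$, and for $c$ in this range I use the standard bound $J_{k-1}(x) \ll x^{-1/2}$.

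Next I execute the $\psi$-sum. Opening the Kloosterman sum $S_\psi(m,n^2;cpq)$ and interchanging with $\sum_{\psi}^\dagger$, the identity \eqref{res-psi-sum} applied with $u = \alpha\overline{\ell}j^2$ forces, at leading order, $\alpha \equiv \pm \ell\overline{j^2} \bmod{pq}$ and contributes a weight $\phi(pq)/2 \asymp pQ$. Splitting $cpq = c \cdot pq$ via CRT, the restricted $\alpha$-sum factors as
\[
S\bigl(m\overline{pq},\, n^2\overline{pq};\, c\bigr)\cdot e_{pq}\bigl(\pm\overline{c}(\ell\overline{j^2}\,m + j^2\overline{\ell}\,n^2)\bigr).
\]
The residual terms from \eqref{res-psi-sum} with $\psi_p$ or $\psi_q$ quadratic are handled separately by trivial estimation together with size considerations, and under $Q > p^{1/2+\theta/4}$ they are of admissible size.

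The remaining main contribution is then analyzed by dualizing the long $m$-sum (of length $N^2/\ell^2$) through $GL_2$ Voronoi summation for $f$. Since the phase modulus $cpq$ is divisible by the level $p$, Voronoi is efficient here: the reduced phase modulus becomes $cq$ and the dual $m^\ast$-sum has length $\asymp (cpq)^2\ell^2/(pN^2) \ll p^{1+\varepsilon}/j^4$ under the $c$-truncation. I would then dualize a second long variable by Poisson summation modulo $pq$, and bound the surviving complete exponential sum modulo $c$ by Weil's theorem. Combined with Deligne's bound on the dual Fourier coefficients and Cauchy-Schwarz in the last long variable, this should deliver the stated bound $\mathcal{O} \ll N^{1/2} p^{3/2-\theta/2+\varepsilon} Q^2$.

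The main obstacle is the careful bookkeeping of phases through the CRT split, the $\psi$-sum evaluation, the Voronoi transform, and the Poisson transform, to ensure that the resulting complete sum modulo $c$ is sufficiently non-degenerate for Weil's bound to apply with full strength and yield the saving $p^{1-\theta/2}$ over the trivial estimate. A secondary delicacy is controlling the residual contributions from \eqref{res-psi-sum} where $\psi_p$ or $\psi_q$ is quadratic, which require a separate direct argument but turn out to be admissible precisely because $Q$ is assumed large.
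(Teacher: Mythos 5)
Your reduction to $c\ll N^2p^{\varepsilon}/(\ell j^2 pQ)$ and your identification of the generic congruence $\alpha\equiv\pm\ell\bar j^2\bmod{pq}$ match the paper, but there is a genuine gap in how you dispose of the two non-generic terms of \eqref{res-psi-sum}. You claim they are "handled by trivial estimation together with size considerations" under $Q>p^{1/2+\theta/4}$; they are not. Take the term where $\psi_q$ is trivial or quadratic: the prefactor is $\phi(p)\asymp p$ and the remaining sum is $S_{\psi_q}(\bar p m,\bar p n^2;cq)$. Even using Weil for this sum and the sharp bound $J_{k-1}(x)\ll x^{-1/2}$ with $x\asymp N^2/(\ell j^2cpq)$, the count gives per $(q,\ell,j,c)$ a contribution $\asymp N^2p^{1/2}/(\ell^{3/2}j)$, and after summing over $c\ll N^2/(\ell j^2pq)$, over $q\in\mathcal{Q}$ and over $\ell,j$ one lands at roughly $N^4/p^{1/2}\approx p^{7/2+\varepsilon}$, which is admissible against the target $N^{1/2}p^{3/2-\theta/2}Q^2\approx p^{2-\theta/2}Q^2$ only when $Q\gg p^{3/4+\theta/4}$; the analogous computation for the $\psi_p$-quadratic term requires $Q\gg p^{2/3+\theta/3}$. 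Both thresholds are far above the hypothesis $Q>p^{1/2+\theta/4}$. In the paper these terms are exactly where the real work lies: one applies reciprocity and then Voronoi on $m$ with modulus $cpj^2$ (resp.\ $cqj^2$), gaining an extra factor $Q/\ell$ (resp.\ $p^{1/2}/\ell$) beyond the $\psi$-sum saving, which yields $N^{1/2}p^{5/2+\varepsilon}/Q^{1/2}$ and $N^{1/2}p^{5/2+\varepsilon}$; the latter is precisely admissible at $Q>p^{1/2+\theta/4}$, so this term is the sharp one and cannot be dismissed trivially.

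Your treatment of the generic term is also not the paper's and is left unverified. The paper first applies reciprocity, converting the additive character modulo $pq$ into one modulo $c\ell j^2$, and then applies Voronoi with that small modulus; since $p(c\ell j^2)^2\ll pN^4p^{\varepsilon}/(pq)^2\ll N^2p^{-\varepsilon}$ once $Q>p^{1/2+\varepsilon}$, repeated integration by parts shows the dual Hankel transform is negligible uniformly in $m$, so the generic term simply disappears. You instead keep the modulus $cpq$, invoke the level-lowering to get a dual length $\asymp p^{1+\varepsilon}/j^4$, and then propose Poisson modulo $pq$, a Weil bound modulo $c$, and Cauchy--Schwarz without any accounting of the savings ("should deliver the stated bound"); as written this does not establish the required saving of $p^{1-\theta/2}$, and in any case it bypasses the observation that makes the generic term trivial. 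To repair the proof you should (i) treat the non-generic terms by reciprocity plus Voronoi as above, and (ii) either adopt the reciprocity-first route for the generic term or actually carry out and quantify your Voronoi--Poisson--Weil scheme.
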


\begin{proof}
Consider the off-diagonal contribution \eqref{sum-o}. As $k$ is very large, from the size of the Bessel function it follows that it is enough to consider 
$$1\leq c\ll N^2p^\varepsilon/\ell j^2pq\ll p^{1+\varepsilon}/Q.$$ 
So if $Q$ is picked as $p^h$ with $h>1$, then there are no contributing $c$ and hence the off-diagonal contribution is negligibly small. We will now extend the range further. Observe that the terms where $p|c$ make a negligible contribution as $Q>p^h$ for some $h>0$. On the other hand if $q|c$, then writing $c=qc'$ we see that $1\leq c'\ll p^{1+\varepsilon}/Q^2$. But since we are picking $Q=p^h$ with $h>1/2$, these terms also make a negligible contribution. So we only focus on $c$ with $(c,pq)=1$.\\

Next we study the sum over $\psi$, 
\begin{align*}
&\sideset{}{^\dagger}\sum_{\psi}\;\psi(\bar\ell j^2)S_\psi(m,n^2;cpq)=\sideset{}{^\star}\sum_{\alpha\bmod{cpq}}e\left(\frac{\alpha m+\bar{\alpha}n^2}{cpq}\right)\;\sideset{}{^\dagger}\sum_{\psi}\:\psi(\bar \ell j^2\alpha).
\end{align*}
We now apply \eqref{res-psi-sum}. This yields three terms. In the generic term we use the congruence $\bar\ell j^2\alpha\equiv \pm 1\bmod{pq}$ to solve for $\alpha$. Furthermore since $(c,pq)=1$ (note that $(\ell j,pq)=1$ otherwise the vanishes) we see that the contribution of this term to the character sum is given by
$$
S(\overline{pq}m,\overline{pq}n^2;c)\phi(pq)\:\left\{e\left(\frac{\bar{c}(\ell\bar{j}^2 m+\bar\ell j^2n^2)}{pq}\right)-e\left(-\frac{\bar{c}(\ell\bar{j}^2 m+\bar\ell j^2n^2)}{pq}\right)\right\}.
$$
Consider the first term. Applying the reciprocity relation
$$
e\left(\frac{\bar{c}(\ell\bar{j}^2 m+\bar\ell j^2n^2)}{pq}\right)=e\left(-\frac{\overline{pq}(\ell^2m+j^4n^2)}{c\ell j^2}\right)e\left(\frac{(\ell^2m+j^4n^2)}{c\ell j^2pq}\right)
$$
we are led to consider the following sum
\begin{align*}
\sum_{q\in\mathcal{Q}}\frac{\phi(q)}{q}\:\sum_{\substack{\ell=1\\(pq,\ell)=1}}^\infty &\mathop{\sum}_{\substack{c\ll N^2p^\varepsilon/\ell^2pq\\(pq,c)=1}}c^{-1}\:\mathop{\sum\sum}_{\substack{m,n=1}}^\infty  \lambda_f(m)\:S(\overline{pq}m,\overline{pq}n^2;c)\\
&\times e\left(-\frac{\overline{pq}(\ell^2m+j^4n^2)}{c\ell j^2}\right)U\left(\frac{m\ell^2}{N^2},\frac{nj^2}{N},\frac{cpq\ell j^2}{N^2}\right)
\end{align*}
where
\begin{align*}
U(x,y,z)=W\left(x\right)V\left(y\right)e\left(\frac{x+y^2}{z}\right)J_{k-1}\left(\frac{4\pi \sqrt{x}y}{z}\right).
\end{align*}
We apply the Voronoi summation formula on the sum over $m$. We pick $Q>p^{1/2+\varepsilon}$. Then $p(c\ell j^2)^2\ll pN^4p^\varepsilon/p^2q^2\ll N^2p^{-\varepsilon}$, and hence the sum is negligibly small. Indeed after Voronoi we have the Henkel transform
\begin{align*}
\int_0^\infty W(x)e\left(\frac{xN^2}{cpq\ell j^2}\right)J_{k-1}\left(\frac{4\pi N\sqrt{x}n}{cpq\ell}\right)J_{\kappa-1}\left(\frac{4\pi N\sqrt{xm}}{c\ell j^2\sqrt{p}}\right)\mathrm{d}x,
\end{align*}
where $\kappa$ is the weight of the form $f$. Using the decomposition of the Bessel function 
\begin{align*}
J_{\kappa-1}(2\pi x)=\sum_\pm e(\pm x)W_\pm(x)
\end{align*}
where $x^jW_\pm^{(j)}(x)\ll_j 1$, we arrive at the integral
\begin{align*}
\int_0^\infty U(x)e\left(\pm\frac{4\pi N\sqrt{m}x}{c\ell j^2\sqrt{p}}\right)\mathrm{d}x.
\end{align*}
The new weight $U$ is smooth compactly supported in $(0,\infty)$ and satisfies the condition 
\begin{align*}
U^{(h)}(x)\ll_h \left(1+\frac{N^2}{cpq\ell j^2}\right)^h
\end{align*}
for all $h\geq 1$. Hence by integration by parts the Hankel transform is bounded by
\begin{align*}
\left(1+\frac{N^2}{cpq\ell j^2}\right)^h\;\left(\frac{c\ell j^2\sqrt{p}}{N\sqrt{m}}\right)^h\ll_h p^{-\varepsilon h},
\end{align*}
uniformly for all $m\geq 1$.\\

We now consider the non-generic terms. First consider the contribution where we get the congruence $\bar\ell j^2\alpha\equiv 1\bmod{q}$. This solves for $\alpha$ modulo $q$. The character sum splits as
$$
\psi_p(\bar{\ell}) S_{\psi_p}(\overline{q}m,\overline{q}n^2;cp)\:\phi(q)\:e\left(\frac{\overline{cp}(\ell\bar{j}^2 m+\bar\ell j^2n^2)}{q}\right),
$$
where we need to take $\psi_p$ trivial or quadratic. We can again apply the reciprocity relation to the last term. With this we arrive at the following sum over $m$,
\begin{align*}
\mathop{\sum}_{\substack{m=1}}^\infty  \lambda_f(m)S_{\psi_p}(\overline{q}m,\overline{q}n^2;cp)\: e\left(-\frac{\overline{q}\ell m}{cp j^2}\right)U\left(\frac{m\ell^2}{N^2},\frac{nj^2}{N},\frac{cpq\ell j^2}{N^2}\right)
\end{align*}
where $U$ is as before. We again apply the Voronoi summation formula, but now with modulus $cpj^2$. The analysis of the Hankel transform yields that the dual length is given by 
\begin{align*}
\frac{\ell^2}{N^2}\:\left((Cpj^2)^2+\frac{N^4}{Q^2\ell^2}\right)\ll p^\varepsilon\frac{N^2}{Q^2}.
\end{align*}
Hence from Voronoi we save $Q/\ell$. In the sum over $\psi$ we have saved $pQ^{1/2}$. Hence in total we have saved $pQ^{3/2}/\ell$, and consequently the contribution of this term to \eqref{sum-o} is bounded by
\begin{align*}
p^\varepsilon\mathop{\sum\sum}_{\ell,j<p^2}\:\frac{N}{j^2}\:\frac{N^2}{\ell^2}\:\frac{N}{\ell^{1/2}jpQ}\:pQ^2\:\frac{\ell}{pQ^{3/2}}\ll \frac{N^{1/2}p^{5/2+\varepsilon}}{Q^{1/2}}.
\end{align*}
Note that
this is satisfactory for our purpose if $Q\gg p^{2/5+\delta}$ for some $\delta>0$. \\

Finally we consider the contribution where we get the congruence $\bar\ell j^2\alpha\equiv 1\bmod{p}$. This solves for $\alpha$ modulo $p$. The character sum now splits as
$$
\psi_q(\bar{\ell}) S_{\psi_q}(\overline{p}m,\overline{p}n^2;cq)\:\phi(p)\:e\left(\frac{\overline{cq}(\ell\bar{j}^2 m+\bar\ell j^2n^2)}{p}\right),
$$
where we need to take $\psi_q$ trivial or quadratic. Applying the reciprocity relation we arrive at the following sum over $m$,
\begin{align*}
\mathop{\sum\sum}_{\substack{m,n=1}}^\infty  \lambda_f(m)S_{\psi_q}(\overline{p}m,\overline{p}n^2;cq)\: e\left(-\frac{\overline{p}\ell m}{cq j^2}\right)U\left(\frac{m\ell^2}{N^2},\frac{nj^2}{N},\frac{cpq\ell j^2}{N^2}\right).
\end{align*}
We now apply the Voronoi summation formula with modulus $cqj^2$. The analysis of the Hankel transform yields that the dual length is given by 
\begin{align*}
\frac{\ell^2}{N^2}\:\left(p(CQj^2)^2+\frac{N^4}{p\ell^2}\right)\ll p^\varepsilon\frac{N^2}{p}.
\end{align*}
Hence from Voronoi we save $p^{1/2}/\ell$. In the sum over $\psi$ we have saved $p^{1/2}Q$, and so in this case in total we have saved $pQ/\ell$. Consequently the contribution of this term to \eqref{sum-o} is bounded by
\begin{align*}
p^\varepsilon\mathop{\sum\sum}_{\ell,j<p^2}\:\frac{N}{j^2}\:\frac{N^2}{\ell^2}\:\frac{N}{\ell^{1/2}jpQ}\:pQ^2\:\frac{\ell}{pQ}\ll N^{1/2}p^{5/2+\varepsilon}.
\end{align*}
This is worse than the bound obtained above, but is satisfactory for our purpose if $Q\gg p^{1/2+\delta}$ for some $\delta>0$. The lemma follows.
\end{proof}

\bigskip

Substituting this bound for the off-diagonal in Lemma~\ref{lemma-initial} we conclude the following.\\

\begin{lemma}
\label{lemma-initial-second}
Let $f$ be as in the statement of Theorem~\ref{mthm}. Then for $Q>p^{1/2+\theta/4}$, with $0<\theta<1$, we have
\begin{align}
\label{afe-bd-2-after-off-diagonal}
L(\tfrac{1}{2},\mathrm{Sym}^2 f)\ll p^\varepsilon\sup_N \:\frac{|\mathcal{D}|}{\sqrt{N}pQ^2}+ p^{(1-\theta)/2+\varepsilon}.
\end{align}
\end{lemma}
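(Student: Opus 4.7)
The plan is straightforward: combine Lemma~\ref{lemma-initial} with the off-diagonal bound of Proposition~\ref{prop1} by substituting the latter into the former.

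First I would check that the hypothesis $Q>p^{1/2+\theta/4}$ of the lemma implies the hypothesis $Q>p^{\theta/2}$ of Lemma~\ref{lemma-initial}. Since $0<\theta<1$, we have $\theta/2<1/2<1/2+\theta/4$, so the inequality $Q>p^{1/2+\theta/4}$ is strictly stronger than $Q>p^{\theta/2}$. Hence Lemma~\ref{lemma-initial} applies and we obtain
\begin{align*}
L(\tfrac{1}{2},\mathrm{Sym}^2 f)\ll p^\varepsilon\sup_N \:\frac{|\mathcal{D}|+|\mathcal{O}|}{\sqrt{N}pQ^2}+ p^{(1-\theta)/2+\varepsilon}.
\end{align*}

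Next I would apply Proposition~\ref{prop1} to control the contribution of $\mathcal{O}$. Under the same hypothesis $Q>p^{1/2+\theta/4}$, the proposition gives $\mathcal{O}\ll N^{1/2}p^{3/2-\theta/2+\varepsilon}Q^2$. Plugging this in,
\begin{align*}
\frac{|\mathcal{O}|}{\sqrt{N}pQ^2}\ll \frac{N^{1/2}p^{3/2-\theta/2+\varepsilon}Q^2}{\sqrt{N}pQ^2}=p^{1/2-\theta/2+\varepsilon}=p^{(1-\theta)/2+\varepsilon},
\end{align*}
which is exactly the size of the error term already present in the bound of Lemma~\ref{lemma-initial}. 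Absorbing the off-diagonal contribution into that error term yields the claimed inequality.

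There is no real obstacle here; the lemma is a direct corollary obtained by bookkeeping, and its purpose is simply to record that, once $Q$ is chosen above the threshold $p^{1/2+\theta/4}$, the remaining analysis reduces to estimating the diagonal-type sum $\mathcal{D}$ defined in \eqref{sum-d}. The genuine work — producing the bound for $\mathcal{O}$ via Voronoi, reciprocity, and the decomposition \eqref{res-psi-sum} of the dual character sum — has already been carried out in Proposition~\ref{prop1}.
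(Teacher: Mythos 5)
Your proposal is correct and is exactly the paper's argument: the paper deduces Lemma~\ref{lemma-initial-second} by substituting the bound of Proposition~\ref{prop1} for $\mathcal{O}$ into the estimate of Lemma~\ref{lemma-initial}, where the off-diagonal contribution $|\mathcal{O}|/(\sqrt{N}pQ^2)\ll p^{(1-\theta)/2+\varepsilon}$ is absorbed into the existing error term. Your additional check that $Q>p^{1/2+\theta/4}$ implies the hypothesis $Q>p^{\theta/2}$ of Lemma~\ref{lemma-initial} is a sensible piece of bookkeeping that the paper leaves implicit.
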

 
\bigskip

Our job now is to prove a bound for $\mathcal{D}$ of the form 
\begin{align*}
\mathcal{D}\ll N^{1/2}p^{3/2-\theta/2}Q^2
\end{align*}
for some $\theta>0$, where $N$ ranges in $p^{1-\theta}<N<p^{1+\varepsilon}$, and $Q$ is taken to be $Q>p^{1/2+\theta/4}$.\\

%======================================================
\section{A sketch of the proof}

In this section we present a brief outline of the proof. We will use the following colloquial language. Suppose a process $P$ (e.g. Poisson summation) transforms a sum $S$ to another sum $S'$. Suppose a trivial estimation yields $S\ll B$ and that $S'\ll B'$. Then we say that the process gives a `saving' of $B/B'$. This language will be loosely used throughout this section. For the sake of simplicity we take $N=p$. Consider the sum given in \eqref{sum-d}, and recall that we have expressed it as $\mathcal{D}=\left\langle S_1,S_2 \right\rangle$ where the sums $S_i$ are as given in \eqref{sum-m} and \eqref{sum-n}. Our job is to save $p^2$ plus a little more. In the next section we will use the functional equations of the Rankin-Selberg $L$-function $L(s,f\otimes g)$ to derive a summation formula for the sum $S_1$. Roughly speaking the sum gets transformed into 
\begin{align*}
S_1^\star = \frac{\sqrt{p}}{Q}\;\varepsilon_{\bar{\psi}}^2\;\mathop{\sum\sum}_{\substack{m,\ell\\m\ell^2\sim pQ^2}}\;\lambda_f(m)\lambda_g(pq^2m)\;\psi(\ell),
\end{align*}
where $\varepsilon_\eta$ denotes the sign of the Gauss sum $g_\eta$ associated with the character $\eta$. Secondly using the functional equation of the symmetric square $L$-function $L(s,\text{Sym}^2\:g)$ we derive a summation formula for the sum $S_2$. More precisely the sum gets transformed into
\begin{align*}
S_2^\star(N_\star) = \frac{N_\star}{pQ^3}\;\varepsilon_{\psi}^2\;\mathop{\sum\sum}_{\substack{n,j\\nj^2\sim N_\star}}\;\bar\lambda_g(p^2q^2n^2)\;\bar\psi(j^2),
\end{align*}
where $N_\star\ll p^{1+\varepsilon}Q^2$. Unlike $S_1$, in the case of $S_2$ we have to consider smaller values $N_\star$ as one of the gamma function appearing in the functional equation of the symmetric square $L$-function has a pole at $s=-1$. However this does not turn out to be an issue. So in this sketch we consider the worst case scenario where $N_\star=pQ^2$. For the sake of simplicity in this outline we also drop the sums over $\ell$ and $j$. (However the reader will notice that in the proof of Proposition~\ref{prop-for-o1} in Section~\ref{sec-sum-o1}, the integer $j$ contributes to the conductor and hence one needs to control its size.)\\

With this $\mathcal{D}$ gets transformed into 
\begin{align*}
&\sum_{q\in\mathcal{Q}}\;\sideset{}{^\dagger}\sum_{\psi\bmod{pq}}\:\sum_{g\in H_k(pq,\psi)}\omega_g^{-1}\mathop{\sum}_{\substack{m\sim pQ^2}} \lambda_f(m)\lambda_g(m)\;\mathop{\sum}_{\substack{n\sim pQ^2}}\bar\lambda_g(n^2p).
\end{align*}
In the process we have saved $(p^{1/2}/Q)\times (1/Q)=p^{1/2}/Q^2$. (Actually we are losing in both the applications of the summation formulae.) It now remains to save $p^{3/2}Q^2$ in the above sum. Our next step is an application of the Petersson trace formula. There is no diagonal contribution as the equation $m=n^2p$ is ruled out due to size restrictions. (For smaller values of $N_\star$ we do have a diagonal contribution, but it is easily shown to be small - see Lemma~\ref{diagonal-small}.) The off-diagonal contribution is given by
\begin{align*}
&\sum_{q\in\mathcal{Q}}\;\sideset{}{^\dagger}\sum_{\psi\bmod{pq}}\:\mathop{\sum}_{\substack{m\sim pQ^2}} \lambda_f(m)\;\mathop{\sum}_{\substack{n\sim pQ^2}}\; \sum_{c\sim C}\frac{S_\psi(n^2p,m;cpq)}{cpq}J_{k-1}\left(\frac{4\pi \sqrt{m}n}{c\sqrt{p}q}\right).
\end{align*}
Here $C$ ranges upto $p^{1+\varepsilon}Q^2$, as for larger values the Bessel function is negligibly small as the weight $k$ is chosen to be large like $1/\varepsilon$. %(Note that in the actual situation where we have the sums over $\ell$ and $j$, the $c$ sum will range upto $p^{1+\varepsilon}Q^2/\ell j^2$, which is shorter for larger values of $\ell$, $j$. This is what we meant when we said that all the parameters get scaled down systematically.) 
At the transition range the Bessel function does not oscillate. But for smaller values we have analytic oscillation coming from the Bessel function. This complicates the situation, and we need to focus on all values of $C$.\\

Extracting the oscillation of the Bessel and taking into account its size, our job reduces to saving $(p^{3/2}Q^2)\times (C/pQ^{3/2})=C(pQ)^{1/2}$ in the sum 
\begin{align*}
&\sum_{q\in\mathcal{Q}}\;\sideset{}{^\dagger}\sum_{\psi\bmod{pq}}\:\mathop{\sum}_{\substack{m\sim pQ^2}} \lambda_f(m)\;\mathop{\sum}_{\substack{n\sim pQ^2}}\; \sum_{c\sim C}S_\psi(n^2p,m;cpq)e\left(\frac{2\sqrt{m}n}{c\sqrt{p}q}\right).
\end{align*}
The sum over $\psi$ will give a saving of $(pQ)^{1/2}$, so in the remaining sums we need to save $C$ plus a little extra. Next we apply the Poisson summation formula on the sum over $n$ with modulus $cq$. With this we arrive at the expression
\begin{align*}
&\sum_{q\in\mathcal{Q}}\;\sideset{}{^\dagger}\sum_{\psi\bmod{pq}}\:\mathop{\sum}_{\substack{m\sim pQ^2}} \lambda_f(m)\;\mathop{\sum}_{\substack{|n|\ll Q}}\; \sum_{c\sim C}\;\mathcal{C}_\psi\;\mathcal{I}
\end{align*}
where the character sum is given by \eqref{char-sum-1} and the integral is given by \eqref{int}. It follows by analysing the integral that we only need to consider $|n|\ll p^\varepsilon Q$. We also see that the integral is negligibly small if $|4m-pn^2|\gg p^\varepsilon C$. Consequently at the transition range $C\sim pQ^2$ we save $(pQ)^{1/2}$ in this process. In general we are saving $(pQ)^{1/2}\times (pQ^2/C)^{1/2}=pQ^{3/2}/C^{1/2}$. So we need to save $C^{3/2}/pQ^{3/2}$ (or $(CQ)^{1/2}$ if we sacrifice the information on the restriction on $d$) in the last sum. It follows that if $C\ll p^{2/3-\delta}Q$ then we have saved enough. (In fact, as we note at the end of Section~\ref{sec-dual-sum}, we can get a better range. This is however not utilized later, as our analysis in hindsight, is robust enough to tackle all sizes of $c$.) \\

We next apply a summation formula on the sum over the modulus $c$. Evaluating the character sum in Section~\ref{char-sum-eva}, we realize that the sum over $c$ is arithmetic in nature. In fact, the free part of the sum runs only over square-free integers. So it is not possible to completely dualize this sum. Also since we still need to save a lot, simply dealing the square-free  condition using Mobius and throwing away the large divisors does not work. We take recourse to $L$-functions. This is the main content of Section~\ref{sec-sum-modulus}. In Lemma~\ref{lem-c-sum-l-func} we are able to substitute the sum over $c$ by the central value of an $L$-function $L(1/2,\bar{\psi}\otimes \chi_{dp})$ where $\chi_r$ stands for the quadratic character modulo $r$. The conductor of the $L$-function is of size $Cpq$. Ideally if the sum over $c$ had been over all integers, a summation formula would have yielded a dual sum of length $pQ$. However this is not actually the case. We will now use the approximate functional equation to replace the central value by two finite Dirichlet polynomials (see Lemma~\ref{lem-dirich-poly}). We put a smaller length on the first term and a longer length on the second (dual) term, indeed we take $C^\star\ll C^{1/2}p^{-\delta}$ and $C^\dagger\ll p^{1+\delta}QC^{1/2}$ for some $\delta>0$. In the first sum we have saved $(C/C^\star)^{1/2}$ and in the second sum we have saved $(C/C^\dagger)^{1/2}$.\\

In Section~\ref{sec-sum-o1} we analyse the  first sum, the semi-dual $\mathcal{O}_1(C,C^\star)$ (see \eqref{od-sum-d-smooth-2-after-c-sum-1}) which is roughly of the form
\begin{align*}
&\sum_{q\in\mathcal{Q}}\;\sideset{}{^\dagger}\sum_{\psi\bmod{pq}}\;\mathop{\sum}_{\substack{m\sim pQ^2}} \lambda_f(m)\;\sum_{n\ll Q}\:\psi^\star(d)g_{\bar{\psi}^\star}\;\sum_{c\sim C^\star}\; \bar{\psi(c)}\left(\frac{dp}{c}\right)\;\mathcal{J}.
\end{align*}
Here $\psi^\star=\psi\otimes \chi_q$. The integral $\mathcal{J}$ still retains the information that $d=4m-pn^2\sim D\ll C$. The main output of this section is Proposition~\ref{prop-for-o1} where a satisfactory bound is obtained for the semi-dual sum. As the first step we execute the sum over $\psi$ to arrive at
\begin{align*}
&\sum_{q\in\mathcal{Q}}\;\mathop{\sum}_{\substack{m\sim pQ^2}} \lambda_f(m)\;\sum_{n\ll Q}\;\sum_{c\sim C^\star}\; \left(\frac{dpq}{c}\right)\;e\left(\frac{\bar{c}d}{pq}\right)\:\mathcal{J}.
\end{align*}
When $C\asymp pQ^2$, at the transition range, there is no saving in the integral $\mathcal{J}$, as it is not oscillating. In this case,
an application of the reciprocity relation transforms the above sum to a sum of the form
\begin{align*}
&\sum_{q\in\mathcal{Q}}\;\mathop{\sum}_{\substack{m\sim pQ^2}} \lambda_f(m)\;\sum_{n\ll Q}\;\sum_{c\sim C^\star}\; \left(\frac{dpq}{c}\right)\;e\left(-\frac{\bar{pq}d}{c}\right).
\end{align*}
Our aim is to save $(C^\star Q)^{1/2}$. We replace the quadratic character by additive characters using Gauss sums, and then apply the Voronoi summation formula. This gives a saving of the size $p^{1/2}Q^2/C^\star$. However this is not enough. Next we get rid of the Fourier coefficients by taking absolute values
\begin{align*}
&\mathop{\sum}_{\substack{m\sim C^{\star 2}/Q^2}}\;\sum_{n\ll Q}\;\sum_{c\sim C^\star}\;\left|\sum_{q\in\mathcal{Q}} \:\text{char sum}\:\right|,
\end{align*}
where inside the absolute value sign we have a character sum modulo $c$ (which results from our shift to additive characters from multiplicative characters, see \eqref{back-to-future}). Then we apply the Cauchy inequality followed by the Poisson summation on the sum over $(m,n)$. This yields an extra saving of $\min\{Q^{1/2},(C^\star/Q)^{1/2}\}$. So in total we have saved 
$$\min\left\{\frac{p^{1/2}Q^{5/2}}{C^\star}, \frac{p^{1/2}Q^{3/2}}{C^{\star 1/2}}\right\},$$ 
which is sufficient for our purpose - it is larger than $p^\delta\:CC^{\star 1/2}/pQ^{3/2}$ - as we are taking $C^\star<C^{1/2}p^{-\delta}\ll p^{1/2-\delta}Q$ and $Q>p^{1/2}$. \\

In case $C$ is not large enough the above procedure does not work as the integral $\mathcal{J}$ is highly oscillating which increases the conductor of the $m$ sum, and hence Voronoi summation is not that effective. However in this case we have an easier treatment. We use the decomposition 
\begin{align}
\label{sketch-exp}
\mathcal{O}_1(C,C^\star)\ll \sum_{q\in\mathcal{Q}}\;\sum_{d\sim D}\left|\sum_{n\ll Q}\;\lambda_f(d+pn^2)\;\mathcal{J}\right|\;\left|\sum_{c\sim C^\star}\;\left(\frac{dpq}{c}\right)\;e\left(\frac{\bar{c}d}{pq}\right)\right|.
\end{align}
We have enough harmonics inside the absolute value as we need to save $CC^{\star 1/2}/pQ^{3/2}$ which is less than $(C^\star Q)^{1/2}$ as $C\ll p^{1-\delta}Q^2$. Next we apply Cauchy inequality to bound \eqref{sketch-exp} by
\begin{align*}
\sum_{q\in\mathcal{Q}}\mathcal{Z}^{1/2}\;\Omega^{1/2}
\end{align*}
where
\begin{align*}
\mathcal{Z}=\sum_{d\sim D}\;\left|\sum_{c\sim C^\star}\;\left(\frac{dpq}{c}\right)\;e\left(\frac{\bar{c}d}{pq}\right)\right|^2,
\end{align*}
and
\begin{align}
\label{omega-sketch}
\Omega=\sum_{d\sim D}\left|\sum_{n\ll Q}\;\lambda_f(d+pn^2)\;\mathcal{J}\right|^2.
\end{align}
We open absolute square in $\mathcal{Z}$ and apply Poisson summation on the sum over $d$. This process saves $\min\{C^\star,C/C^\star\}$ in $\mathcal{Z}$. This is not enough for our purpose. To get a satisfactory bound for all possible parameter values $C$, $D$ and $C^\star$ (in the chosen range), we need to get a saving in the sum $\Omega$. This particular sum reappears in our treatment of $\mathcal{O}_2$ as well.  Proposition~\ref{shifted-prop} and Remark~\ref{shifted-p>Q} of Section~\ref{sec-shifted} gives a saving of size 
$$\min\left\{\frac{C^{3/2}}{p^{3/2}Q^2},\frac{C}{Q^2}\right\}$$ 
(which is non-trivial for sufficiently large $C$) in $\Omega$. Note that if we assume that $Q<p$ then we can drop the second term. But we want to point out why the size restriction $Q<p$ comes naturally in our treatment of the dual sum. Indeed suppose we have saved $C^\star$ in $\mathcal{Z}$ and $C/Q^2$ in $\Omega$, then this is enough for our purpose if $(CC^\star)^{1/2}/Q>p^\delta\:CC^{\star 1/2}/pQ^{3/2}$, i.e. $C<p^{2-2\delta}Q$. The last inequality holds in the range of $C$ under study if $p^{1-\delta}Q^2<p^{2-2\delta}Q$, i.e. we need $Q<p^{1-\delta}$ for some $\delta>0$. From now on let us assume that we have the restriction $p^{1/2+\delta}<Q<p^{1-\delta}$ for some $\delta>0$. So in $\Omega$ we save $C^{3/2}/p^{3/2}Q^2$. Thus our total saving in \eqref{sketch-exp} is 
\begin{align}
\label{comp-sketch}
\min\left\{\frac{C^{3/4}C^{\star 1/2}}{p^{3/4}Q},\frac{C^{5/4}}{p^{3/4}QC^{\star 1/2}}\right\}.
\end{align}
This is sufficient as $C^{3/4}C^{\star 1/2}/p^{3/4}Q>p^\delta CC^{\star 1/2}/pQ^{3/2}$ because $C\ll p^{1-\delta}Q^2$ for some $\delta>0$, and $C^{5/4}/p^{3/4}QC^{\star 1/2}>p^\delta CC^{\star 1/2}/pQ^{3/2}$ because $C^\star\ll C^{1/4}p^{1/4-\delta}Q^{1/2}$, as we are picking $C^\star<C^{1/2}p^{-\delta}$ for some $\delta>0$. This also explains our choice of the sizes for $C^\star$ and $C^\dagger$.\\

Sections~\ref{sec-pre-final} and \ref{sec-final} are devoted to obtaining a sufficient bound for the dual sum $\mathcal{O}_2(C,C^\dagger)$, which is defined in \eqref{od-sum-d-smooth-2-after-c-sum-2}. As we will observe at the beginning of Section~\ref{sec-pre-final} (see \eqref{od-sum-d-smooth-2-after-c-sum-22}) this sum is essentially of the form
\begin{align*}
&\sum_{q\in\mathcal{Q}}\;\sideset{}{^\dagger}\sum_{\psi\bmod{pq}}\; g_{\bar{\psi}^\star}g_{\bar{\tilde\psi}}\mathop{\sum}_{\substack{m\sim pQ^2}} \lambda_f(m)\;\sum_{n\ll Q}\:\sum_{c\sim C^\dagger} \psi(c)\left(\frac{dp}{c}\right)\;\mathcal{J},
\end{align*}
where $\tilde{\psi}=\psi\otimes\chi_p$ and $\psi^\star=\psi\otimes \chi_q$. We seek to save $CC^{\dagger 1/2}/pQ^{3/2}$ in the above sum (beyond square root cancellation in the $\psi$ sum). The above sum is dominated by
\begin{align}
\label{term-sketch}
&\sum_{c\sim C^\dagger}\left|\sum_{q\in\mathcal{Q}}\;\sideset{}{^\dagger}\sum_{\psi\bmod{pq}}\; \psi(c)g_{\bar{\psi}^\star}g_{\bar{\tilde\psi}}\right|\;\left|\mathop{\sum}_{\substack{d\sim D}}\;\beta(d)\left(\frac{d}{c}\right)\right|,
\end{align}
where 
\begin{align*}
\beta(d)=\sum_{n\ll Q} \;\lambda_f(d+pn^2)\;\mathcal{J}.
\end{align*}
The second part of the sum in \eqref{term-sketch} can also be written as 
\begin{align*}
\mathop{\sum\sum}_{\substack{m\sim pQ^2\\n\ll Q}}\;\lambda_f(m)\left(\frac{4m-pn^2}{c}\right)\:\mathcal{J}.
\end{align*}
In Section~\ref{sec-pre-final} we deal with the case where $C^\dagger<p^{1/2+\delta}Q$ for some small $\delta>0$. Roughly speaking, this is the range where $C^\dagger$ is smaller than square-root of the initial size of the modulus $C$ (see \eqref{c-sum-length-initial}). So in this case our treatment is similar to that in Section~\ref{sec-sum-o1}. Indeed when $C\asymp pQ^2$ is in the transition range, so that there is no oscillation in the integral $\mathcal{J}$, the above sum is roughly of the form
\begin{align*}
\frac{1}{c^{1/2}}\sum_{\alpha\bmod{c}}\left(\frac{\alpha}{c}\right)\:\mathop{\sum\sum}_{\substack{m\sim pQ^2\\n\ll Q}}\;\lambda_f(m)e\left(\frac{\alpha(4m-pn^2)}{c}\right).
\end{align*}
We apply the Voronoi summation formula on the sum over $m$ and Poisson summation on the sum over $n$. This transforms the above sum into 
\begin{align*}
\frac{pQ^3}{c^2p^{1/2}}\:\mathop{\sum\sum}_{\substack{m\ll c^2/Q^2\\n\ll c/Q}}\;\lambda_f(m)\mathfrak{c}_c(m-n^2),
\end{align*}
where $\mathfrak{c}_c$ is the Ramanujan sum modulo $c$. Hence, on average over $c$, we have saved $\min\{p^{1/2}Q,p^{1/2}Q^3/C^{\dagger}\}$.
This is enough for our purpose as $p^{1/2}Q>p^\delta CC^{\dagger 1/2}/pQ^{3/2}$ because $C^\dagger <p^{1-\delta}Q$, and $p^{1/2}Q^3/C^{\dagger}>p^\delta CC^{\dagger 1/2}/pQ^{3/2}$ because $C^\dagger<p^{1/3-\delta}Q^{5/3}$ (for the last inequality we need $Q>p^{1/4+\delta}$).\\ 

When $C$ is away from the transition range, the above method does not work, and so we proceed differently.
Applying the Cauchy inequality to \eqref{term-sketch} we end up bounding it by $$\mathcal{A}^{1/2}\:\mathcal{B}^{1/2}$$ where
\begin{align*}
\mathcal{A}=\sum_{c\sim C^\dagger}\left|\sum_{q\in\mathcal{Q}}\;\sideset{}{^\dagger}\sum_{\psi\bmod{pq}}\; \psi(c)g_{\bar{\psi}^\star}g_{\bar{\tilde\psi}}\right|^2,
\end{align*}
and
\begin{align*}
\mathcal{B}=\sum_{c\sim C^\dagger}\;\left|\mathop{\sum}_{\substack{d\sim D}}\;\beta(d)\left(\frac{d}{c}\right)\right|^2.
\end{align*}
In the next step we apply large sieve for quadratic characters. (There is an issue as the variables are not a priori square-free. This needs to be addressed, and we do it directly by extracting the square-free parts from the variables $d$ and $c$.) It follows that 
\begin{align*}
\mathcal{B}\ll p^\varepsilon (C^\dagger + D)\;\sum_{d\sim D}|\beta(d)|^2.
\end{align*}
In the process we have saved $\min\{C^\dagger,D\}$ in $\mathcal{B}$. There is a room of extra saving as the sum $\sum_d |\beta(d)|^2$ is exactly same as the sum $\Omega$ which appeared above in our analysis of the semi-dual $\mathcal{O}_1$. So we have a total saving of
$$
\min\left\{\frac{C^{3/4}C^{\dagger 1/2}}{p^{3/4}Q},\frac{C^{5/4}}{p^{3/4}Q}\right\}
$$
in \eqref{term-sketch}. Compare this with \eqref{comp-sketch}. Again this is fine if $C<p^{1-\delta}Q^2$ and $C^\dagger <p^{1/2}QC^{1/2}$. (One can take $C\gg p^\delta$ for some small $\delta>0$.) Let us demonstrate yet again why we need $Q<p$. Indeed without this condition we save at most $C/Q^2$ in $\Omega$, and hence the total saving in \eqref{term-sketch} is at most $(CC^\dagger)^{1/2}/Q$. This is sufficient for our purpose if $(CC^\dagger)^{1/2}/Q>p^\delta\:CC^{\dagger 1/2}/pQ^{3/2}$, i.e. $C<p^{2-2\delta}Q$. Again, the last inequality holds in the range of $C$ under study if $p^{1-\delta}Q^2<p^{2-2\delta}Q$, i.e. we need $Q<p^{1-\delta}$ for some $\delta>0$. (The condition $Q<p$ also appears in a subtle manner in the proof of Lemma~\ref{c-dagger-2}.) \\

In Section~\ref{sec-final} we treat the case where $p^{1/2+\delta}Q<C^\dagger<pQC^{1/2}$. In this range we have some saving 
in $\mathcal{A}$. Indeed in Lemma~\ref{bound-for-A} we show that we have a saving of $\min\{Q, C^\dagger/p^{1/2}Q\}$ in the sum $\mathcal{A}$. If $p^{1/2+\delta}Q<C^\dagger<p^{1/2}Q^2$, then the total saving in \eqref{term-sketch} is 
$$
\min\left\{\frac{C^{3/4}C^{\dagger}}{pQ^{3/2}},\frac{C^{5/4}C^{\dagger 1/2}}{pQ^{3/2}}\right\}
$$
which is sufficient as $C^{3/4}C^{\dagger}/pQ^{3/2}>p^\delta CC^{\dagger 1/2}/pQ^{3/2}$ because $C\ll pQ^2\ll p^{1+2\delta}Q^2\ll C^{\dagger 2}$ for some $\delta>0$, and $C^{5/4}C^{\dagger 1/2}/pQ^{3/2}>p^\delta CC^{\star 1/2}/pQ^{3/2}$ trivially. On the other hand if $p^{1/2}Q^2<C^\dagger<pQC^{1/2}$, then the total saving in \eqref{term-sketch} is 
$$
\min\left\{\frac{C^{3/4}C^{\dagger 1/2}}{p^{3/4}Q^{1/2}},\frac{C^{5/4}}{p^{3/4}Q^{1/2}}\right\}
$$
which is sufficient as $C^{3/4}C^{\dagger 1/2}/p^{3/4}Q^{1/2}>p^\delta CC^{\dagger 1/2}/pQ^{3/2}$ because $C\ll p^{1+\delta}Q^2\ll pQ^4$ for some $\delta>0$, and $C^{5/4}/p^{3/4}Q^{1/2}>p^\delta CC^{\star 1/2}/pQ^{3/2}$ because $C^\star<C^{1/2}p^{1/2-\delta}Q^2$.
 This explains why we win at the end.\\

 In Section~\ref{sec-shifted} we achieve a non-trivial bound for $\Omega$ \eqref{omega-sketch} by realizing the sum as an averaged shifted convolution sum problem. This is the technical heart of the paper. The particular shifted convolution sum that we need to tackle is of the form
\begin{align*}
\sum_{d\sim D}\lambda_f(d)\lambda_f(d+pr)
\end{align*}
where $r$ is of the form $n_1^2-n_2^2$ and there is an extra average over $n_i$. We solve this additive problem via the circle method. An important point is the particular shape of the shift, namely it is a multiple of $p$ which is the level of the form $p$. This is used in the application of the circle method to reduce the conductor. Indeed the equation $m=d+pr$ is factorized via the congruence $m\equiv d\bmod{p}$ into the smaller integral equation $(m-d-pr)/p=0$ which is detected using the delta method with modulus ranging upto $Q$. Observe that the shifted convolution sum actually comes with an oscillatory weight. This oscillation is large when $C$ is smaller and dies down when $C\sim pQ^2$ is at the transition range. (The origin of this oscillation is the Bessel function which we get from the Petersson formula.) It turns out that when $C\sim pQ^2$ we make the largest possible saving in $\Omega$ which is $Q$. Since there are at most $Q$ many terms inside the absolute value one can not save any more. For smaller values of $C$ we save a little less, due to the analytic oscillation. This is the reason for the extra factors in Proposition~\ref{shifted-prop}. The savings from this proposition is used to get sufficient bounds for $\mathcal{O}_1(C,C^\star)$ as well as $\mathcal{O}_2(C,C^\dagger)$. Note that in Section~\ref{sec-shifted} we need square root bound for Salie sums and Kloosterman sums (due to Weil). For Fourier coefficients we will be using the Deligne bound $\lambda_f(m)\ll m^\varepsilon$, but it seems that one can use weaker results towards Ramanujan conjecture, e.g. $\sum_{m\sim p}|\lambda_f(m)|^2\ll p^{1+\varepsilon}$ and similar bound on average but over shorter interval.

\bigskip

%================================================================================

\section{Summation formulae}

We will now analyse the sum $\mathcal{D}$ as defined in \eqref{sum-d}. We are only concerned with odd $\psi$, such that $\psi^2$ is primitive modulo $pq$. In this section we will derive suitable summation formulae for the sums over $m$ and $n$. These will be derived from the respective functional equations. (Following our convention in \cite{Mu5} we will use $V$, and sometimes $W$, to denote generic smooth functions with compact support. They are not necessarily the same in each occurrence.)\\

Corresponding to the sum defined in \eqref{sum-n}
we set the dual sum
\begin{align}
\label{s2-star}
S_2^\star=\varepsilon_2\:\frac{N^2N_\star}{(pQ)^3}\:\mathop{\sum\sum}_{\substack{n,j=1}}^\infty\:\overline{\lambda_g}(n^2)\bar{\psi}^2(j) V\left(\frac{nj^2}{N_\star}\right)
\end{align}
with $N_\star\ll p^{2+\varepsilon}q^2/N$. Here the sign $\varepsilon_2$ is defined by \eqref{sign-2}. In the next lemma we use the notation introduced at the end of Section~\ref{sec-set-up}.\\

\begin{lemma}
\label{dual-s2}
We have
\begin{align*}
\mathcal{D}\triangleleft \left\langle S_1,S_2^\star \right\rangle.
\end{align*}
Here the supremum is taken over all $N_\star\ll p^{2+\varepsilon}Q^2/N$.
\end{lemma}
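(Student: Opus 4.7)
The plan is to derive a summation formula for $S_2$ from the functional equation of the symmetric square $L$-function attached to $g$, and then substitute into the bilinear form $\mathcal{D}=\langle S_1,S_2\rangle$.

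First, I would put $S_2$ in Mellin--Barnes form. Writing $V(x)=(2\pi i)^{-1}\int_{(\sigma_0)}\tilde V(s)x^{-s}\,\mathrm{d}s$ with $\sigma_0>1$ and $\tilde V$ of rapid decay in vertical strips, and using the Dirichlet series identity
$$L(s,\mathrm{Sym}^2 g)=\sum_{n,j\ge 1}\psi^2(j)\lambda_g(n^2)(nj^2)^{-s}\qquad(\mathrm{Re}(s)>1)$$
valid for a Hecke newform $g\in S_k(pq,\psi)$ with $\psi$ primitive modulo $pq$, one rewrites \eqref{sum-n} as
$$S_2=\frac{1}{2\pi i}\int_{(\sigma_0)}\tilde V(s)\,N^s\,L(s,\mathrm{Sym}^2 g)\,\mathrm{d}s.$$

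Next, I would apply the functional equation. Since $\psi^2$ is primitive modulo $pq$, the arithmetic conductor of $\mathrm{Sym}^2 g$ is $(pq)^2$, and the standard $GL(3)$ functional equation reads
$$\Lambda(s,\mathrm{Sym}^2 g)=\varepsilon_g\,\Lambda(1-s,\mathrm{Sym}^2 g\otimes\bar\psi^2),$$
for a suitable root number $\varepsilon_g$, with archimedean factor a product of three gamma functions depending on the weight $k$ of $g$. Shifting the contour to $\mathrm{Re}(s)=1-\sigma_0$ picks up no residues since the twisted $L$-function is entire ($\bar\psi^2$ being nontrivial); after the change of variables $s\mapsto 1-s$ and expansion of $L(s,\mathrm{Sym}^2 g\otimes\bar\psi^2)=\sum_{n,j}\bar\psi^2(j)\overline{\lambda_g}(n^2)(nj^2)^{-s}$, the integrand becomes this Dirichlet series weighted by $\tilde V(1-s)$, the conductor factor $(pq)^{2s-1}$, and the archimedean ratio $\gamma_k(s)/\gamma_k(1-s)$. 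Stirling's formula together with a stationary-phase analysis shows that the Mellin integral localizes the dual variable $nj^2$ around $N_\star\asymp (pq)^2/N$; dyadic decomposition of the $s$-contour produces $O(\log p)$ pieces, each matching the form \eqref{s2-star} with $1\le N_\star\ll p^{2+\varepsilon}Q^2/N$. The sign $\varepsilon_2$ collects $\varepsilon_g$ and the archimedean signs, while the prefactor $N^2 N_\star/(pQ)^3$ encodes the conductor and archimedean scalings.

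By bilinearity of $\langle\cdot,\cdot\rangle$ in \eqref{in-prod}, substituting this decomposition into $\mathcal{D}=\langle S_1,S_2\rangle$ and taking the supremum over the $O(p^\varepsilon)$ dyadic pieces yields $\mathcal{D}\triangleleft\langle S_1,S_2^\star\rangle$ with the sup taken over $N_\star\ll p^{2+\varepsilon}Q^2/N$, as claimed. The main technical point is verifying that the kernel produced by the gamma ratio is majorized by a smooth bump with derivatives under control on an interval of length comparable to $N_\star$, so that the generic weight $V$ appearing in \eqref{s2-star} suffices for subsequent analysis; a secondary point is correctly tracking the Euler factors of the twisted $L$-function at the ramified primes $p$ and $q$, so as to confirm that the dual coefficients are indeed of the shape $\overline{\lambda_g}(n^2)\bar\psi^2(j)$ and that the ramified local factors are harmless in the applications that follow.
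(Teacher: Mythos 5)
Your proposal follows essentially the same route as the paper: Mellin representation of $S_2$, the functional equation of $L(s,\mathrm{Sym}^2 g)$ (available precisely because $\psi^2$ is primitive modulo $pq$), a contour shift, expansion of the dual Dirichlet series, and a dyadic decomposition in $nj^2$, so it is correct in outline. Two details are glossed over: the root number must be identified with the explicit $\varepsilon_2$ of \eqref{sign-2} (the paper extracts it from Li's functional equation, and its explicit form is what is used later when combined with $\varepsilon_1$), and the reason every dyadic $N_\star\ll p^{2+\varepsilon}Q^2/N$ survives with weight $N^2N_\star/(pQ)^3$ is not a stationary-phase localization at $N_\star\asymp(pq)^2/N$ but the pole of $\gamma_2(1-s)$ at $s=2$, which blocks the rightward shift beyond $\sigma=2-\varepsilon$ and produces exactly that prefactor for small $nj^2$.
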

  
\begin{proof}
Consider the sum over $n$ (as given in \eqref{sum-n}), which by inverse Mellin transform reduces to
$$
S_2=\frac{1}{2\pi i}\int_{(2)}\tilde{V}(s)N^s\:L(s,\text{Sym}^2 g)\mathrm{d}s.
$$
Since we are assuming that $\psi^2$ is primitive modulo $pq$, we can derive the functional equation for the symmetric square $L$-function from \cite{L}. Let
\begin{align*}
\Lambda(s,\text{Sym}^2\:g)=(pq)^s\gamma_2(s)L(s,\text{Sym}^2 g)
\end{align*}
be the completed $L$-function, with 
$$
\gamma_2(s)=\pi^{-3s/2}\Gamma\left(\frac{s+1}{2}\right)\Gamma\left(\frac{s+k-1}{2}\right)\Gamma\left(\frac{s+k}{2}\right).
$$
Then we have the functional equation
\begin{align}
\label{fe2}
\Lambda(s,\text{Sym}^2 g)=\varepsilon_2 \Lambda(1-s,\text{Sym}^2 \bar{g}),
\end{align}
where
\begin{align}
\label{sign-2}
\varepsilon_2=-\frac{\overline{\lambda_g(p^2q^2)}g_\psi^2}{pq}.
\end{align}
We now shift the contour to $-\varepsilon$ and apply the functional equation to arrive at
$$
S_2=\varepsilon_2\:\frac{1}{2\pi i}\int_{(-\varepsilon)}\tilde{V}(s)N^s(pq)^{1-2s}\frac{\gamma_2(1-s)}{\gamma_2(s)}\:L(1-s,\text{Sym}^2 \bar{g})\mathrm{d}s.
$$
Then expanding the $L$-function we get
$$
S_2=\varepsilon_2pq\:\mathop{\sum\sum}_{n,j=1}^\infty \frac{\overline{\lambda}_g(n^2)\bar{\psi}^2(j)}{nj^2}\frac{1}{2\pi i}\int_{(-\varepsilon)}\tilde{V}(s)\left(\frac{Nnj^2}{p^2q^2}\right)^s\frac{\gamma_2(1-s)}{\gamma_2(s)}\mathrm{d}s.
$$
Next we take a smooth dyadic partition of the $(n,j)$ sum. If $nj^2\gg p^{2+\varepsilon}q^2/N$, then we shift the contour to the left and show that the contribution of these terms are negligibly small. For smaller values of $nj^2$ we shift the contour to $2-\varepsilon$. The contribution of these terms are given by sums of the form \eqref{s2-star}. The lemma follows. 
\end{proof}

\bigskip

Next we consider the sum over $m$ which is given by \eqref{sum-m}. We define the corresponding dual sum by
\begin{align}
\label{sum-m1}
S_1^\star=\varepsilon_1\:\frac{N^2}{p^{3/2}Q}\mathop{\sum\sum}_{m,\ell=1}^\infty \:\lambda_f(m)\lambda_g(m)\psi(\ell)\:V\left(\frac{m\ell^2}{M}\right)
\end{align} 
where $p^{3}Q^2/p^\varepsilon N^2\ll M\ll p^{3+\varepsilon}Q^2/N^2$. The sign $\varepsilon_1$ is given by \eqref{sign-1}.\\

Since the functional equation of the Rankin-Selberg $L$-function $L(s,f\otimes \bar{g})$ will play a crucial role here, we will first recall it in some detail. The main reference for this is \cite{L}. Also this is where we will see the crucial role played by our choice of the harmonics. Indeed we are taking $g$ to be a newform of level $pq$ and nebentypus $\psi$ where $\psi^2$ is primitive modulo $pq$, and $f$ is a newform of level $p$ and trivial nebentypus. The inclusion of $p$ the level of $f$ as a divisor of the level of $g$ acts as a level lowering mechanism. Indeed the conductor of $f\otimes \bar{g}$ is $p^3q^2$ instead of $p^4q^2$. One will recognize that this particular drop in conductor makes the subconvexity problem for the symmetric square $L$-function hard to start with. The basic philosophy is that a drop in conductor is bad for amplification technique (as one needs to consider a higher moment) but good for circle method (which is the basis of the present work). Also this will be the reason why we will have shifts of the form $pr$ in the shifted convolution sum problem $\sum_d \lambda_f(d)\lambda_f(d+pr)$ in Section~\ref{sec-shifted}. Again since the shifts are multiples of the level $p$, the congruence-equation trick (see  \cite{Mu4}) will yield a drop in the conductor in the application of the circle method to tackle this problem.\\

We define the completed $L$-function by
$$
\Lambda(s,f\otimes\bar{g})=(p^3q^2)^{s/2}\gamma(s)L(s,f\otimes\bar{g}),
$$
where the gamma factor is given by
$$
\gamma(s)=(2\pi)^{-2s}\Gamma\left(s+\frac{k-\kappa}{2}\right)\Gamma\left(s+\frac{k+\kappa}{2}-1\right).
$$
Here $k$ is the weight of $g$ and $\kappa$ is the weight of $f$.\\

\begin{lemma}
\label{func-eqn-li}
We have the functional equation
\begin{align}
\label{fe1}
\Lambda(s,f\otimes \bar{g})=\varepsilon_0\varepsilon_1\Lambda(1-s,\bar{f}\otimes g)
\end{align}
where $|\varepsilon_0|=1$ depends on $f$ and $k$, and
\begin{align}
\label{sign-1}
\varepsilon_1=\frac{\lambda_g(pq^2)g^2_{\bar{\psi}}}{pq}.
\end{align}
\end{lemma}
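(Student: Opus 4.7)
The plan is to appeal to the general Rankin--Selberg functional equation for $GL_2\times GL_2$ worked out by W.-C.~Li in \cite{L}, and then to extract the specific sign $\varepsilon_1$ by a careful local analysis at the ramified primes $p$ and $q$. Li's result gives a functional equation of the shape $\Lambda(s,f\otimes\bar{g})=\varepsilon\,\Lambda(1-s,\bar{f}\otimes g)$ with $\varepsilon=\prod_v\varepsilon_v$, in which the product of the local conductors at finite places determines the arithmetic conductor, and the archimedean factor contributes both the claimed gamma factor and a sign depending only on $k,\kappa$ which we absorb into $\varepsilon_0$.

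First I would dispose of the easy places. For every prime $\ell\nmid pq$ both $f$ and $g$ are unramified, so the local conductor is $1$ and the local root number is $1$. At the prime $q$ the form $f$ is unramified while $g$ is a principal series with ramified nebentypus $\psi_q$ of conductor $q$; the standard computation for unramified $\otimes$ ramified principal series then yields local conductor $q^2$ and a local root number proportional to $g_{\bar\psi_q}^2/q$, the normalization producing the factor $\lambda_g(q^2)$ coming from the dual Whittaker data at $q$. The archimedean place produces exactly $\gamma(s)$ from the formula for holomorphic $\otimes$ holomorphic, and contributes a sign depending only on the weights, which we put into $\varepsilon_0$.

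The crux is the local analysis at $p$, which is where the announced ``drop in conductor'' takes place. Here $f$ lies in the Steinberg (special) representation, since $p$ exactly divides its level and $f$ has trivial nebentypus, while $g$ belongs to a ramified principal series $\pi(\mu_1,\mu_2)$ with $\mu_1\mu_2=\psi_p$ nontrivial. A generic twist of the Steinberg by a ramified character has conductor $p^2$, but the Rankin--Selberg convolution $\mathrm{St}\otimes\pi(\mu_1,\mu_2)$ has local conductor $p^3$ rather than the ``expected'' $p^4$; this is precisely the mechanism producing the overall conductor $p^3q^2$. The corresponding local root number, computed from the epsilon-factor formulas for this pair, is proportional to $g_{\bar\psi_p}^2/p$, with the normalization producing the factor $\lambda_g(p)$ via the Atkin--Lehner-type eigenvalue of $g$ at $p$.

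Assembling all the local contributions yields the conductor $p^3q^2$, the stated gamma factor, and the global sign
$$
\varepsilon_0\varepsilon_1 \;=\; \varepsilon_0\cdot\frac{\lambda_g(p)\lambda_g(q^2)\,g_{\bar\psi_p}^2\,g_{\bar\psi_q}^2}{p\cdot q} \;=\; \varepsilon_0\cdot\frac{\lambda_g(pq^2)\,g_{\bar\psi}^2}{pq},
$$
where I used multiplicativity $\lambda_g(pq^2)=\lambda_g(p)\lambda_g(q^2)$ together with the Chinese-Remainder factorization $g_{\bar\psi}=g_{\bar\psi_p}g_{\bar\psi_q}$, valid because $\bar\psi$ is primitive modulo $pq$ with $(p,q)=1$. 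The main obstacle is the local root number computation at $p$: the Steinberg $\otimes$ ramified principal series epsilon factor must be normalized consistently with the classical Hecke-eigenvalue $\lambda_g(p)$, and the cleanest route is to specialize Li's tables in \cite{L} to this case rather than to redo the Whittaker-model integral by hand.
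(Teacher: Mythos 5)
Your overall route is the same as the paper's: both ultimately rest on specializing Li's Rankin--Selberg functional equation from \cite{L} to the pair $(f,g)$ with levels $p$ and $pq$ and nebentypus $\psi$, and indeed you concede that the "cleanest route is to specialize Li's tables." The difference is that the paper actually carries out that specialization — it extracts $\Lambda_p(F_1,F_2)$ and $\Lambda_q(F_1,F_2)$ from Li's formula (2.8) and assembles $A(s)=A_p(s)A_q(s)$ explicitly — whereas in your write-up the two decisive local statements (root number at $q$ "proportional to $g_{\bar\psi_q}^2/q$ with the normalization producing $\lambda_g(q^2)$", and at $p$ "proportional to $g_{\bar\psi_p}^2/p$ producing $\lambda_g(p)$") are asserted rather than derived. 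Since the whole content of the lemma is the precise unimodular constant $\varepsilon_1$, these constants of proportionality are exactly what must be pinned down; as written, the crux is deferred.

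There is also a concrete error in your final assembly. The Chinese-Remainder factorization of the Gauss sum is not $g_{\bar\psi}=g_{\bar\psi_p}g_{\bar\psi_q}$ but
\begin{align*}
g_{\bar\psi}=\bar\psi_p(q)\,\bar\psi_q(p)\,g_{\bar\psi_p}\,g_{\bar\psi_q},
\qquad\text{hence}\qquad
g_{\bar\psi}^2=\bar\psi_p(q^2)\,\bar\psi_q(p^2)\,g_{\bar\psi_p}^2\,g_{\bar\psi_q}^2 .
\end{align*}
So if your local root numbers at $p$ and $q$ really were just $\lambda_g(p)g_{\bar\psi_p}^2/p$ and $\lambda_g(q^2)g_{\bar\psi_q}^2/q$ up to factors depending only on $f$ and the weights, their product would differ from the claimed $\varepsilon_1=\lambda_g(pq^2)g_{\bar\psi}^2/pq$ by the unimodular factor $\bar\psi_p(q^2)\bar\psi_q(p^2)$, which depends on $\psi$ and $q$ and therefore cannot be absorbed into $\varepsilon_0$ (allowed to depend only on $f$ and $k$). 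In the paper's computation these cross characters are not an afterthought: Li's theorem produces them inside $A_p(s)$ and $A_q(s)$ (the factors $\bar\psi_q(p^2)$ and $\bar\psi_p(q^2)$ there), and it is precisely their presence that recombines $g_{\bar\psi_p}^2g_{\bar\psi_q}^2$ into $g_{\bar\psi}^2$. To repair your argument you must either track these twists in the local epsilon factors (they arise from the choice of additive characters when passing between local and mod-$pq$ Gauss sums) or, as the paper does, quote Li's explicit $A_p$, $A_q$ and multiply them out; the same remark applies to the factor $-1/(p^{1/2}\lambda_f(p))$ at $p$, which depends on $f$ and is legitimately absorbed into $\varepsilon_0$, but should be identified rather than left implicit.
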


\begin{proof}
We will apply the main theorem of \cite{L}. In this proof we use the notation from \cite{L}. So $F_1=f$, $F_2=g$, $N_1=p$, $N_2=pq$, $N=pq$, $\nu_1$ is trivial, $\nu_2=\psi$, $\varepsilon=\bar{\psi}$ (see pp. 141 of \cite{L}). We have the decomposition $N=MM'M''$ with $N=M=pq$ the conductor of $\varepsilon$, and $M'=M''=1$. Hence the conditions of the theorem hold trivially. Also for the prime $q$ we have $Q=q$, $Q'=1$, $Q_1=1$ and $Q_2=q$, and for the prime $p$ we have $Q=p$, $Q'=1$, $Q_1=p$ and $Q_2=p$. \\

Also observe that in our case $\varepsilon'=\varepsilon$ as $\varepsilon$ is primitive (see pp. 142 of \cite{L}). In the notation of Theorem~2.2 of \cite{L} we have
\begin{align*}
L_{F_1,F_2}(s)=L(s,f\otimes \bar{g})
\end{align*}
and
\begin{align*}
\Psi_{F_1,F_2}(s)=\gamma(s)L(s,f\otimes \bar{g}).
\end{align*}
From the theorem we now conclude that
\begin{align*}
\gamma(s)L(s,f\otimes \bar{g})=A(s)\gamma(1-s)L(1-s,f\otimes g)
\end{align*}
(as $f$ is self dual), with
\begin{align*}
A(s)=A_p(s)A_q(s)
\end{align*}
where
\begin{align*}
A_p(s)=g_{\bar{\psi}_p}\Lambda_p(F_1,F_2)p^{1-3s}\bar{\psi}_q(p^2)
\end{align*}
and
\begin{align*}
A_q(s)=g_{\bar{\psi}_q}\Lambda_q(F_1,F_2)q^{1-2s}\bar{\psi}_p(q^2).
\end{align*}
From (2.8) of \cite{L} we get
\begin{align*}
\Lambda_p(F_1,F_2)=\lambda_p(F_1)\overline{\lambda_p(F_2)}=\frac{-1}{p^{1/2}\lambda_f(p)}\:\frac{\overline{g_{\psi_p}}}{p^{1/2}\bar\lambda_g(p)}
\end{align*}
and
\begin{align*}
\Lambda_q(F_1,F_2)=a_2(q)q^{-k/2}\lambda_q(F_1)\overline{\lambda_q(F_2)}=\frac{\lambda_g(q)}{q^{1/2}}\;\frac{\overline{g_{\psi_q}}}{q^{1/2}\bar\lambda_g(q)}.
\end{align*}
It follows that
\begin{align*}
A(s)&=\frac{-1}{p^{1/2}\lambda_f(p)}\:\frac{\psi(-1)g^2_{\bar\psi_p}g^2_{\bar{\psi}_q}\bar{\psi}_p(q^2)\bar{\psi}_q(p^2)}{p^{1/2}q}\:\lambda_g(p)\lambda_g(q)^2\:p^{1-3s}q^{1-2s}\\
&=(-1)^k\;\frac{-1}{p^{1/2}\lambda_f(p)}\;\frac{g_{\bar\psi}^2}{pq}\;\lambda_g(pq^2)\;p^{3/2-3s}q^{1-2s}.
\end{align*}
The lemma follows.
\end{proof}
 
 \bigskip
 
\begin{lemma}
\label{dual-s1-s2}
We have
\begin{align*}
\mathcal{D}\triangleleft \left\langle S_1^\star,S_2^\star \right\rangle,
\end{align*}
where $S_1^\star$ is as given in \eqref{sum-m1} and $S_2^\star$ is as given  in \eqref{s2-star}.
Here the supremum is taken over all $M$ in the range 
$$p^{3}Q^2/p^\varepsilon N^2\ll M\ll p^{3+\varepsilon}Q^2/N^2$$ 
and $N_\star$ in the range
$$N_\star\ll p^{2+\varepsilon}Q^2/N.$$
\end{lemma}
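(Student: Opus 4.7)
The plan is to run the argument of Lemma~\ref{dual-s2} a second time, this time applied to $S_1$ using the functional equation of $L(s,f\otimes\bar g)$ from Lemma~\ref{func-eqn-li}. Since Lemma~\ref{dual-s2} already yields $\mathcal{D}\triangleleft \langle S_1,S_2^\star\rangle$ and $\langle\cdot,\cdot\rangle$ is linear in the first slot, it is enough to express $S_1$ (in the $\triangleleft$ sense, uniformly in the spectral data) as a sum of the shape \eqref{sum-m1}.

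First I would apply Mellin inversion to obtain
$$
S_1=\frac{1}{2\pi i}\int_{(2)}\tilde{W}(s)N^{2s}\sum_{m,\ell}\frac{\lambda_f(m)\overline{\lambda_g(m)}\bar\psi(\ell)}{m^s\ell^{2s}}\,\mathrm{d}s.
$$
The $\ell$-sum produces $L(2s,\bar\psi)$. The Rankin--Selberg factorization, using that $f$ has trivial nebentypus and $\bar g$ has nebentypus $\bar\psi$, gives $\sum_m\lambda_f(m)\overline{\lambda_g(m)}/m^s=L(s,f\otimes\bar g)/L(2s,\bar\psi)$, so the two copies of $L(2s,\bar\psi)$ cancel and
$$
S_1=\frac{1}{2\pi i}\int_{(2)}\tilde{W}(s)N^{2s}L(s,f\otimes\bar g)\,\mathrm{d}s.
$$
I would then shift to $\Re s=-\varepsilon$ (no residues: $L(s,f\otimes\bar g)$ is entire because $g$ has non-trivial nebentypus so is not $f$) and apply Lemma~\ref{func-eqn-li}. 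Re-expanding $L(1-s,\bar f\otimes g)=L(1-s,f\otimes g)$ as a Dirichlet series and restoring the outer variable via $L(2(1-s),\psi)=\sum_\ell\psi(\ell)/\ell^{2(1-s)}$ yields
$$
S_1=\varepsilon_0\varepsilon_1(p^3q^2)^{1/2}\sum_{m,\ell}\frac{\lambda_f(m)\lambda_g(m)\psi(\ell)}{m\ell^2}\,I\!\left(\frac{N^2m\ell^2}{p^3q^2}\right),
$$
with $I(x)=\frac{1}{2\pi i}\int_{(-\varepsilon)}\tilde{W}(s)\,x^s\,\gamma(1-s)/\gamma(s)\,\mathrm{d}s$.

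Stirling's formula applied to $\gamma(1-s)/\gamma(s)$ after further contour shifts shows that $I(x)$ is negligible for $x\gg p^\varepsilon$, so only $m\ell^2\ll p^{3+\varepsilon}Q^2/N^2$ contribute; shifting to $\Re s=2+\varepsilon$ handles the tail $m\ell^2\ll p^{3-\varepsilon}Q^2/N^2$. A smooth dyadic partition in the remaining window then writes $S_1$ as a superposition of $O(p^\varepsilon)$ pieces of the shape \eqref{sum-m1}, with the overall normalization $(p^3q^2)^{1/2}/M\asymp N^2/(p^{3/2}Q)$ at the critical scale $M\asymp p^3Q^2/N^2$ and the sign $\varepsilon_1$ of \eqref{sign-1} appearing explicitly. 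Combining with Lemma~\ref{dual-s2} delivers $\mathcal{D}\triangleleft\langle S_1^\star,S_2^\star\rangle$. The one delicate point is the clean cancellation of $L(2s,\bar\psi)$ before the contour shift, which relies on the primitivity of $\psi$ assumed throughout; without it one would carry along a ratio of Dirichlet $L$-functions and the $\ell$-variable would not decouple in the dual sum.
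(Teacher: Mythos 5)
Your route is the paper's own: Mellin inversion, identification of the $(m,\ell)$-sum with $L(s,f\otimes\bar g)$ after the cancellation of $L(2s,\bar\psi)$, the functional equation of Lemma~\ref{func-eqn-li}, re-expansion of the dual Dirichlet series, a dyadic partition, and composition with Lemma~\ref{dual-s2}. The normalization check against \eqref{sum-m1} and the appearance of $\varepsilon_1$ from \eqref{sign-1} are also as in the paper. The one step that does not deliver the statement is your treatment of the lower tail $m\ell^2\ll p^{3-\varepsilon}Q^2/N^2$. Shifting only to $\Re s=2+\varepsilon$ gives $I(x)\ll x^{2+\varepsilon}$ with $x=N^2m\ell^2/(p^3q^2)$, i.e. a saving of merely an $\varepsilon$-sized power of $p$ on that range: terms with $m\ell^2\asymp p^{-\delta_0}\cdot p^{3}Q^2/N^2$ for a fixed small $\delta_0>0$ are damped only by $p^{-(2+\varepsilon)\delta_0}$, which is not $O(p^{-A})$ for every $A$. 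Since the supremum in the lemma (and the $\triangleleft$ convention) only admits $M\gg p^{3}Q^2/p^\varepsilon N^2$, those terms are neither negligible nor represented by any $S_1^\star$; as written you obtain at best a one-sided version of the lemma, and the two-sided localization $M\asymp p^3Q^2/N^2$ is genuinely used downstream (it fixes $\mathcal N$, the range \eqref{c-sum-length-initial}, and the Voronoi dual lengths).

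The repair is exactly the feature that distinguishes $S_1$ from $S_2$: for $\gamma(s)=(2\pi)^{-2s}\Gamma\bigl(s+\tfrac{k-\kappa}{2}\bigr)\Gamma\bigl(s+\tfrac{k+\kappa}{2}-1\bigr)$ the ratio $\gamma(1-s)/\gamma(s)$ has no poles in $\sigma<k/10$, and $k$ is chosen large (of size $1/\varepsilon$). Hence for each term with $x\le p^{-\varepsilon}$ you may shift to $\Re s=A$ for an arbitrary fixed $A$, getting $I(x)\ll_A x^{A}\ll p^{-A\varepsilon}$, after which the entire lower tail is negligible and the two-sided restriction on $M$ is legitimate. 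The barrier at $\sigma=2$ that you are implicitly respecting belongs to Lemma~\ref{dual-s2}, where $\Gamma((2-s)/2)$ inside $\gamma_2(1-s)$ has a pole at $s=2$; that is precisely why the $N_\star$-range there is one-sided while the $M$-range here can be pinned down, and your proposal imports the wrong regime into the $S_1$ dualization.
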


\begin{proof}
Applying Mellin transform we get
$$
S_1=\frac{1}{2\pi i}\int_{(2)} \tilde{W}(s)N^{2s}\:L(s,f\otimes \bar{g})\mathrm{d}s.
$$
We now use the functional equation from the previous lemma to arrive at
\begin{align*}
&\varepsilon_0\frac{\lambda_g(pq^2)g^2_{\bar{\psi}}}{pq}\;(p^3q^2)^{1/2}\:\frac{1}{2\pi i}\int_{(2)} \tilde{W}(s)\left(\frac{N^2}{p^3q^2}\right)^{s}\:\frac{\gamma(1-s)}{\gamma(s)} L(1-s,f\otimes g)\mathrm{d}s
\end{align*}
and then shifting contour to $\sigma=-1$, and expanding the $L$-function into a Dirichlet series we get
\begin{align*}
\varepsilon_0\frac{\lambda_g(pq^2)g^2_{\bar{\psi}}}{pq}\;(p^3q^2)^{1/2}\:&\mathop{\sum\sum}_{m,\ell=1}^\infty \frac{\lambda_f(m)\lambda_g(m)\psi(\ell)}{m\ell^2}\\
&\times \frac{1}{2\pi i}\int_{(-1)} \tilde{W}(s)\left(\frac{N^2m\ell^2}{p^3q^2}\right)^{s}\:\frac{\gamma(1-s)}{\gamma(s)} \mathrm{d}s.
\end{align*}
Recall that the weight $k$ is taken to be large and $\kappa$ is fixed. As there is no pole of the gamma factor $\gamma(1-s)$ in the region $-\infty <\sigma<k/10$, we can shift the contour anywhere in this strip. It follows that if $m\ell^2$ is not in the range $p^{3-\varepsilon}Q^2/N^2\ll m\ll p^{3+\varepsilon}Q^2/N^2$, the integral can be made arbitrarily small, and hence the total contribution of $m$'s outside this range is negligibly small. Accordingly we define the dual sums \eqref{sum-m1}. The lemma follows.
\end{proof}

\bigskip

%=================================================================================
%=================================================================================

\section{The dual sum $\left\langle S_1^\star,S_2^\star \right\rangle$: Poisson on $n$}
\label{sec-dual-sum}

In the next lemma we will write the dual sum $\left\langle S_1^\star,S_2^\star \right\rangle$ in terms of sums of the form
\begin{align}
\label{od-sum-d-smooth}
&\mathcal{O}^\star_{\ell,J,\pm}(C)=\frac{J\sqrt{\ell}N^{9/2}N_\star^{1/2}}{C^{1/2}p^{6}Q^{5}}\sum_{j\sim J}\:\sum_{q\in\mathcal{Q}}\;\sideset{}{^\dagger}\sum_{\psi\bmod{pq}}\:\psi(\ell\bar{j}^2)\\
\nonumber &\times \mathop{\sum\sum\sum}_{\substack{m,n,c=1}}^\infty \lambda_f(m)S_\psi(pn^2,m;cpq)e\left(\pm\frac{2n\sqrt{m}}{cq\sqrt{p}}\right) U\left(\frac{m\ell^2}{M},\frac{n j^2}{N_\star},\frac{c}{C}\right),
\end{align}
where
$$
U(x,y,z)=W(x)V(y)V(z). %\mathcal{W}_{\pm,k}\left(A\frac{\sqrt{x}y}{\hat{q}z}\right)
$$
%with $A\asymp N_\star p/\ell j^2CN$ and $\hat{q}=q/Q$. 
The functions $W$ and $V$ are bump functions supported in $[1,2]$, with oscillations of size $O(p^\varepsilon)$, i.e. $W^{(i)}\ll_{\varepsilon, i} p^{\varepsilon i}$ and $V^{(i)}\ll_{\varepsilon, i} p^{\varepsilon i}$. \\

\begin{remark}
\label{wt-convention}
The outer sums over $j$ and $q$ can be endowed with arbitrary weights $\omega_j$ and $\nu_q$ with $|\omega_j|, |\nu_q|\ll 1$. Our analysis works even in this general case. In fact, one will observe that such weights (e.g. $(q/Q)^a$ with an absolutely bounded $a$) arise automatically in the analysis below. But for notational simplicity we will replace them by $1$. This convention will be followed in the rest of the paper.
\end{remark}

\bigskip

\begin{lemma}
\label{diagonal-small}
We have
\begin{align*}
\left\langle S_1^\star,S_2^\star \right\rangle \ll p^\varepsilon\:\sup \ell \left|\mathcal{O}^\star_{\ell,J,\pm}(C)\right| +p^\varepsilon N^{1/2}pQ^2,
\end{align*}
where the supremum is taken over $\ell,J\ll p^{1+\varepsilon}Q$, signs $\pm$, and  
\begin{align}
\label{c-sum-length-initial}
C\ll \frac{N_\star M^{1/2}p^\varepsilon}{\ell J^2p^{1/2}Q}\asymp \frac{N_\star p^{1+\varepsilon}}{\ell J^2N}.
\end{align}
\end{lemma}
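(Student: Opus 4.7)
The approach is to expand $\langle S_1^\star, S_2^\star \rangle$ as a spectral sum over $g\in H_k(pq,\psi)$ and apply the Petersson trace formula. The first task is to simplify the product of root numbers $\varepsilon_1\varepsilon_2$. Since $\psi$ is odd and primitive modulo $pq$, the identity $g_\psi g_{\bar\psi}=\psi(-1)pq=-pq$ yields $g_\psi^2 g_{\bar\psi}^2 = p^2q^2$, whence $\varepsilon_1\varepsilon_2 = -\lambda_g(pq^2)\overline{\lambda_g(p^2q^2)}$. For newforms $g\in H_k(pq,\psi)$, the ramified primes $p,q$ are in the ramified principal series case (conductor of $\psi_p$ equals the $p$-part of the level, and similarly at $q$), so $|\lambda_g(p)|=|\lambda_g(q)|=1$ with $\lambda_g(p^n)=\lambda_g(p)^n$ and $\lambda_g(q^n)=\lambda_g(q)^n$. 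Multiplicativity then collapses the above to $\varepsilon_1\varepsilon_2 = -|\lambda_g(pq^2)|^2\overline{\lambda_g(p)} = -\overline{\lambda_g(p)}$. Combining with the coefficient $\lambda_g(m)\overline{\lambda_g(n^2)}$ from $S_1^\star\cdot S_2^\star$ and using $\lambda_g(p)\lambda_g(n^2)=\lambda_g(pn^2)$ when $(n,p)=1$ (the complementary range $p\mid n$ contributing negligibly, and likewise for the condition $(j,pq)=1$), the spectral sum takes the clean form
$$-\frac{N^4 N_\star}{p^{9/2}Q^4}\sum_{q\in\mathcal{Q}}\:\sideset{}{^\dagger}\sum_{\psi\bmod{pq}}\:\sum_{\ell,j,m,n}\lambda_f(m)\psi(\ell\bar{j}^2)V\!\left(\tfrac{m\ell^2}{M}\right)V\!\left(\tfrac{nj^2}{N_\star}\right)\sum_{g}\omega_g^{-1}\lambda_g(m)\overline{\lambda_g(pn^2)},$$
the inner sum being over $g\in H_k(pq,\psi)$.

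Applying Petersson,
$$\sum_{g\in H_k(pq,\psi)}\omega_g^{-1}\lambda_g(m)\overline{\lambda_g(pn^2)} = \delta_{m=pn^2} + \frac{2\pi}{i^k}\sum_{c\geq 1}\frac{S_\psi(m,pn^2;cpq)}{cpq}J_{k-1}\!\left(\frac{4\pi n\sqrt{mp}}{cpq}\right),$$
separates the contribution into a diagonal and an off-diagonal. On the diagonal $m=pn^2$, the size constraints $m\ell^2\asymp M\asymp p^3Q^2/N^2$ and $nj^2\asymp N_\star$ force $n\asymp pQ/(N\ell)$, whence the number of admissible $(m,n,j,\ell)$ on each dyadic scale is $O(p^\varepsilon\cdot pQ/N)$. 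With $|\lambda_f|\ll p^\varepsilon$, the prefactor $N^4 N_\star/(p^{9/2}Q^4)$, the trivial count $\sum_{q,\psi}1\asymp pQ^2$, and the maximal $N_\star\ll p^{2+\varepsilon}Q^2/N$, a direct check gives the diagonal contribution at most $p^\varepsilon N^{1/2}pQ^2$, with slack for smaller $N_\star$.

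For the off-diagonal, the large weight $k$ forces $J_{k-1}(x)$ to be negligibly small unless $x\gg p^{-\varepsilon}$, imposing the truncation $c\ll N_\star M^{1/2}p^\varepsilon/(\ell J^2 p^{1/2}Q)\asymp N_\star p^{1+\varepsilon}/(\ell J^2 N)$ of \eqref{c-sum-length-initial}. A smooth dyadic partition in $\ell,j,c$ together with the standard Hankel decomposition $J_{k-1}(2\pi x)=\sum_\pm e(\pm x)W_\pm(x)$, where $W_\pm(x)$ is smooth of size $x^{-1/2}$ on logarithmic scales, produces the phase $e(\pm 2n\sqrt{m}/(cq\sqrt{p}))$ and an inert weight absorbable into $V(c/C)$. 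The resulting amplitude $(2n\sqrt{m}/(cq\sqrt{p}))^{-1/2}\asymp J\sqrt{\ell CN/(N_\star p)}$ combines with the previously assembled prefactor $N^4 N_\star/(p^{9/2}Q^4)\cdot 1/(CpQ)$, and simplification using $\sqrt{M}\asymp p^{3/2}Q/N$ produces exactly the normalization $J\sqrt{\ell}N^{9/2}N_\star^{1/2}/(C^{1/2}p^6Q^5)$ in \eqref{od-sum-d-smooth}. The principal technical subtlety is the sign bookkeeping and Hecke shift in the first step; once the clean shape $\lambda_g(m)\overline{\lambda_g(pn^2)}$ is isolated, the Petersson application and Hankel expansion are routine.
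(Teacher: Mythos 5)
Your treatment of the root number ($\varepsilon_1\varepsilon_2=-\overline{\lambda_g(p)}$, using $g_\psi g_{\bar\psi}=\psi(-1)pq$ and $|\lambda_g(p)|=|\lambda_g(q)|=1$) and of the off-diagonal (truncation of $c$ from the large weight $k$, Hankel decomposition of $J_{k-1}$, dyadic partition and the normalization in \eqref{od-sum-d-smooth}) matches the paper's argument. The gap is in the diagonal term. First, your count is wrong: for fixed $\ell$ the diagonal forces $n\asymp pQ/(N\ell)$, but for \emph{each} such $n$ there are about $(N_\star/n)^{1/2}$ admissible $j$ with $nj^2\asymp N_\star$, so per dyadic block the count is $\asymp JpQ/N$, not $O(p^\varepsilon pQ/N)$; summed over scales this contributes an extra factor of size roughly $(N_\star N\ell/pQ)^{1/2}$, which is a power of $p$.

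Second, and more importantly, with the correct count your input $|\lambda_f|\ll p^\varepsilon$ (Deligne) is not enough to reach the stated bound. Carrying out the estimate with Deligne alone gives a diagonal contribution of size
\begin{align*}
p^\varepsilon\,\frac{N^4N_\star^{3/2}M^{1/2}}{p^{4}Q^2}\;\asymp\;p^\varepsilon N^{3/2}p^{1/2}Q^2\;\asymp\;p^\varepsilon N^{1/2}p^{3/2}Q^2
\end{align*}
in the worst case $N_\star\asymp p^2Q^2/N$, $N\asymp p$, which exceeds the claimed $p^\varepsilon N^{1/2}pQ^2$ by $p^{1/2}$. The point the paper uses, and which your proposal omits, is that the diagonal equation $m=pn^2$ forces $p\mid m$, so $\lambda_f(pn^2)$ carries the ramified coefficient $\lambda_f(p)$ of the level-$p$ newform $f$ with trivial nebentypus, and $|\lambda_f(p)|= p^{-1/2}$. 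This extra factor $p^{-1/2}$ is exactly what brings the diagonal down to $p^\varepsilon N^{3/2}Q^2\ll p^\varepsilon N^{1/2}pQ^2$. (This is the same "drop at the ramified prime" phenomenon the paper exploits elsewhere; note it is available for $f$ but not for $g$, whose nebentypus is ramified at $p$, which is why $|\lambda_g(p)|=1$ in your root-number computation.) So your diagonal estimate needs to be redone: use $|\lambda_f(pn^2)|\ll p^{-1/2+\varepsilon}$ together with the correct $(n,j,\ell)$ count, and the claimed bound then follows as in the paper.
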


\begin{proof}
In the sum \eqref{sum-d} replace $S_1$ by $S_1^\star$ and $S_2$ by $S_2^\star$. Note that the product of the signs of the functional equations $\varepsilon_1\varepsilon_2$ is essentially $\overline{\lambda_g(p)}$. This leads us to consider the sum 
\begin{align}
\label{sum-d-rest}
&\frac{N^{4}N_\star}{p^{9/2}Q^4}\sum_{q\in\mathcal{Q}}\;\sideset{}{^\dagger}\sum_{\psi\bmod{pq}}\:\mathop{\sum\sum}_{\ell,j=1}^{\infty}\;\psi(\ell\bar{j}^2)\\
\nonumber &\times \sum_{g\in H_k(pq,\psi)}\omega_g^{-1}\:\mathop{\sum}_{\substack{m=1}}^\infty \lambda_f(m)\lambda_g(m) W\left(\frac{m\ell^2}{M}\right)
\:\mathop{\sum}_{\substack{n=1}}^\infty\:\overline{\lambda_g(pn^2)}V\left(\frac{nj^2}{N_\star}\right).
\end{align}
We apply the Petersson formula. The diagonal term is given by
\begin{align*}
\frac{N^{4}N_\star}{p^{9/2}Q^4}&\mathop{\sum\sum}_{\ell,j=1}^\infty \sum_{q\in\mathcal{Q}}\;\sideset{}{^\dagger}\sum_{\psi\bmod{pq}}\:\psi(\ell\bar{j}^2)\mathop{\sum}_{\substack{n=1}}^\infty \lambda_f(pn^2)W\left(\frac{pn^2\ell^2}{M}\right)V\left(\frac{nj^2}{N_\star}\right).
\end{align*}
The sum over $n$ is bounded by $O(p^\varepsilon M^{1/2}/\ell p)$, since $|\lambda_f(p)|\ll p^{-1/2}$. Substituting this bound we get
\begin{align*}
p^\varepsilon\frac{N^{4}N_\star M^{1/2}}{p^{11/2}Q^4}&\mathop{\sum\sum}_{\substack{\ell\ll M^{1/2}\\j\ll N_\star^{1/2}}} \sum_{q\in\mathcal{Q}}\;\sideset{}{^\dagger}\sum_{\psi\bmod{pq}}\;\frac{1}{\ell}\ll p^\varepsilon\frac{N^{4}N_\star^{3/2} M^{1/2}}{p^{9/2}Q^2}\ll p^\varepsilon N^{1/2}pQ^2.
\end{align*}
This bound for the diagonal is satisfactory for our purpose. Note that we did not require to use the cancellation in the sum over $\psi$. Utilizing this sum one can get a better bound. But already the above bound is of the strength of 
Lindel\"{o}f.\\

The off-diagonal is essentially given by
\begin{align*}
\mathop{\sum\sum}_{\ell,j\ll p^{1/2}Q}\;\mathcal{O}_{\ell,j},
\end{align*}
where
\begin{align}
\label{od-sum-d-rest}
\mathcal{O}_{\ell,j}=&\frac{N^{4}N_\star}{p^{9/2}Q^4}\sum_{q\in\mathcal{Q}}\;\sideset{}{^\dagger}\sum_{\psi\bmod{pq}}\:\psi(\ell\bar{j}^2)\mathop{\sum}_{\substack{m=1}}^\infty \lambda_f(m) W\left(\frac{m\ell^2}{M}\right)\\
\nonumber &\times 
\:\mathop{\sum}_{\substack{n=1}}^\infty\:V\left(\frac{nj^2}{N_\star}\right)\sum_{c=1}^\infty \frac{S_\psi(pn^2,m;cpq)}{cpq}J_{k-1}\left(\frac{4\pi n\sqrt{m}}{cp^{1/2}q}\right).
\end{align}
We only need to tackle the range for $c$ as defined in \eqref{c-sum-length-initial} as for larger $c$ the Bessel function is small due to the choice of the large weight $k$. Observe that this also implies that we only need to consider $(\ell,j)$ with $\ell j^2\ll N_\star p^{\theta+\varepsilon}$ for some $\theta>0$. % Assuming square-root cancellation in the sum over $\psi$ and trivially estimating the other sums we get 
\\

For $c\sim C$ in the range \eqref{c-sum-length-initial} we use the decomposition 
$$J_{k-1}(4\pi x)=e(2x)\mathcal{W}_{+,k}(x)x^{-1/2}+e(-2x)\mathcal{W}_{-,k}(x)x^{-1/2}$$ 
where 
$$x^{j}\frac{\partial^j}{\partial x^j}[\mathcal{W}_{\pm,k}(x)x^{-1/2}]\ll_j \min\{x^{-1/2},x^{k-1}\}.$$  
One can now use Mellin transform to separate the variables involved in the weight function. Indeed the Mellin transform 
\begin{align*}
\tilde{\mathcal{W}}_{\pm,k}(s)=\int_0^\infty \mathcal{W}_{\pm, k}(x)x^{s-1}\mathrm{d}x
\end{align*}
is holomorphic in the strip $-k+1/2<\sigma<0$. Also in this strip by repeated integration by parts, and using the above bound for the derivatives, we get that 
\begin{align*}
\tilde{\mathcal{W}}_{\pm,k}(s)\ll_i (1+|t|)^{-i}
\end{align*}
for any positive integer $i$. By inverse Mellin transform we get
\begin{align*}
\mathcal{W}_{\pm,k}(x)=\frac{1}{2\pi i}\mathop{\int}_{(-\varepsilon)}\;\tilde{\mathcal{W}}_{\pm,k}(s)\:x^{-s}\mathrm{d}s.
\end{align*}
The integral can be truncated at the height $|t|\ll p^\varepsilon$ at the cost of a negligible error term. In the remaining integral we estimate the sum pointwise for every given $s=-\varepsilon+it$ with $|t|\ll p^\varepsilon$. With this we are able to substitute 
\begin{align*}
e\left(\pm\frac{2n\sqrt{m}}{cq\sqrt{p}}\right)\;\left(\frac{2n\sqrt{m}}{cq\sqrt{p}}\right)^{-1/2+\varepsilon-it}
\end{align*}
with $|t|\ll p^\varepsilon$.
Then we take a smooth dyadic subdivision of the $c$ sum, and a dyadic subdivision of the $j$ sum, to arrive at
the sums introduced in \eqref{od-sum-d-smooth}. The lemma follows. (Note that we are using the convention given in Remark~\ref{wt-convention}.)
\end{proof}

\bigskip

 As it will turn up, our analysis is not sensitive to the sign $\pm$, and hence we will continue with the $+$ term only and will simply write
\begin{align*}
\mathcal{O}^\star(C)=\mathcal{O}^\star_{\ell,J,+}(C).
\end{align*}
Observe that we have substantial oscillation coming from the $J$-Bessel function when $C$ is comparatively small. This will create some complications.
\\

Let 
\begin{align}
\label{l-def}
D=p^\varepsilon\:\frac{CQ^2p^2J^2}{N_\star N \ell},
\end{align}
and
\begin{align}
\label{cal-n}
\mathcal{N}= p^\varepsilon\:\frac{pQ}{N\ell}.
\end{align}
We define the character sum 
\begin{align}
\label{char-sum-1}
\mathcal{C}_\psi=\sum_{\beta\bmod{cq}}S_\psi(p\beta^2,m;cpq)e\left(\frac{\beta n}{cq}\right)
\end{align}
and the integral 
\begin{align}
\label{int}
\mathcal{I}=\int_\mathbb{R}U\left(\frac{m\ell^2}{M},y,\frac{c}{C},\frac{n}{\mathcal{N}}\right)e\left(\frac{2N_\star\sqrt{m}y}{CQ\sqrt{p}J^2}-\frac{N_\star ny}{CQJ^2}\right)\mathrm{d}y.
\end{align}
The weight function 
$$U(x,y,z,w)=V(x)V(y)V(z)W(w)$$
where $V$'s are bump functions supported on $[1,2]$ with $V^{(i)}\ll p^{i\varepsilon}$, and $W$ is a bump function supported on $[-1,1]$. 
We set $\hat{\mathcal{O}}^\star (C)=\sum_{j\sim J}\hat{\mathcal{O}}^\star_j(C)$ with
\begin{align}
\label{od-sum-d-smooth-pre2}
\hat{\mathcal{O}}^\star_j(C)=&\frac{\sqrt{\ell}N^{9/2}N_\star^{3/2}}{JC^{3/2}p^{6}Q^{6}}\:\sum_{q\in\mathcal{Q}}\\
\nonumber &\times \mathop{\sum\sum}_{\substack{1\leq m<\infty\\ |n|\leq \mathcal{N}\\|4m-pn^2|\leq D}} \sum_{c=1}^\infty \lambda_f(m)\;\left[\;\sideset{}{^\dagger}\sum_{\psi\bmod{pq}}\:\psi(\ell\bar{j}^2)\mathcal{C}_\psi\right]\;\mathcal{I}.
\end{align}
Our next lemma says that this sum is a good model for the off-diagonal $\mathcal{O}^\star(C)$. This will be a simple consequence of the Poisson summation formula.
\\

\begin{lemma}
We have 
\begin{align}
\label{od-sum-d-smooth-2}
\mathcal{O}^\star(C)\triangleleft \hat{\mathcal{O}}^\star(C).
\end{align}
\end{lemma}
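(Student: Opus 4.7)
The plan is to apply Poisson summation to the $n$-sum appearing in the definition \eqref{od-sum-d-smooth} of $\mathcal{O}^\star(C)$. The key preliminary observation is that the Kloosterman sum $S_\psi(pn^2,m;cpq)$ is periodic in $n$ modulo $cq$: shifting $n\mapsto n+cq$ changes $pn^2$ by $2pn\cdot cq+p(cq)^2$, both of which are divisible by $cpq$. This lets me split the $n$-sum into residue classes modulo $cq$, factor out the periodic Kloosterman sum, and apply Poisson to the remaining sum. The result replaces the inner sum by
\begin{align*}
\frac{1}{cq}\sum_{n\in\mathbb{Z}}\mathcal{C}_\psi\,\widehat{h}\!\left(\tfrac{n}{cq}\right),
\end{align*}
where $\mathcal{C}_\psi$ is precisely the character sum of \eqref{char-sum-1} and $\widehat{h}$ is the Fourier transform of $h(y)=e(2y\sqrt{m}/(cq\sqrt{p}))\,V(yj^2/N_\star)$.

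Next I would analyse the Fourier integral. After the substitution $y=N_\star u/j^2$,
\begin{align*}
\widehat{h}\!\left(\tfrac{n}{cq}\right)=\frac{N_\star}{j^2}\int V(u)\,e\!\left(\tfrac{N_\star u}{cqj^2}\bigl(2\sqrt{m/p}-n\bigr)\right)du,
\end{align*}
which has a linear phase. Repeated integration by parts makes this negligibly small unless $|2\sqrt{m/p}-n|\ll p^\varepsilon\,CQJ^2/N_\star$. Using $m\ell^2\asymp M\asymp p^3Q^2/N^2$ to see that $2\sqrt{m/p}\asymp pQ/(N\ell)$, together with the range \eqref{c-sum-length-initial} for $C$, forces $|n|\le\mathcal{N}$ with $\mathcal{N}$ as in \eqref{cal-n}; multiplying the phase condition through by $2\sqrt{m}+\sqrt{p}|n|\asymp p^{3/2}Q/(N\ell)$ converts it into the restriction $|4m-pn^2|\le D$ with $D$ as in \eqref{l-def}. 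I would then insert smooth cutoffs enforcing these truncations and absorb the resulting smooth weight, together with the slowly varying corrections $(C/c)(Q/q)(J/j)^2$ entering through $cqj^2$ versus $CQJ^2$ in the exponential, into the weight function $U$ of the integral $\mathcal{I}$ of \eqref{int}.

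The final step is bookkeeping of the prefactor. Poisson contributes $1/(cq)\asymp 1/(CQ)$, and the change of variables contributes $N_\star/j^2\asymp N_\star/J^2$. Multiplying these against the original prefactor $J\sqrt{\ell}N^{9/2}N_\star^{1/2}/(C^{1/2}p^6Q^5)$ of $\mathcal{O}^\star(C)$ yields exactly
\begin{align*}
\frac{J\sqrt{\ell}N^{9/2}N_\star^{1/2}}{C^{1/2}p^6Q^5}\cdot\frac{1}{CQ}\cdot\frac{N_\star}{J^2}=\frac{\sqrt{\ell}N^{9/2}N_\star^{3/2}}{JC^{3/2}p^6Q^6},
\end{align*}
which is the prefactor of each $\widehat{\mathcal{O}}^\star_j(C)$. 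Summing over $j\sim J$ then gives the claimed \eqref{od-sum-d-smooth-2}.

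The main technical obstacle I expect is the integration-by-parts analysis justifying the two truncations: one must track that the rescaled weight $V$ and all its derivatives remain of acceptable size, and that the $p^\varepsilon$ slack built into $\mathcal{N}$ and $D$ really provides a polynomial saving, so that the tails $|n|>\mathcal{N}$ and $|4m-pn^2|>D$ are genuinely negligible in the $\triangleleft$-sense. Once this is in place, everything else is a routine reshuffling of constants.
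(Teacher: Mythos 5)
Your proposal is correct and follows essentially the same route as the paper's proof: Poisson summation on the $n$-sum with modulus $cq$ producing exactly the character sum $\mathcal{C}_\psi$ of \eqref{char-sum-1}, integration by parts on the dual integral to enforce the truncations $|n|\ll\mathcal{N}$ and $|4m-pn^2|\ll D$, and the same prefactor bookkeeping $\tfrac{1}{cq}\cdot\tfrac{N_\star}{j^2}$ leading to the normalization of $\hat{\mathcal{O}}^\star_j(C)$. The only step you gloss over is the separation of the $c,q,j$ dependence from the oscillatory weight (your ``absorb into $U$''), which the paper makes rigorous by a change of variables followed by Mellin inversion truncated at height $|t|\ll p^\varepsilon$, a routine device consistent with the $\triangleleft$ convention.
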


\begin{proof}
Applying the Poisson summation on the $n$ sum with modulus $cq$, we get
\begin{align*}
\mathcal{O}^\star(C)=&\frac{J\sqrt{\ell}N^{9/2}N_\star^{1/2}}{C^{1/2}p^{6}Q^{5}}\;\sum_{j\sim J}\:\sum_{q\in\mathcal{Q}}\;\sideset{}{^\dagger}\sum_{\psi\bmod{pq}}\:\psi(\ell\bar{j}^2)\\
\nonumber &\times \mathop{\sum}_{\substack{m=1}}^\infty \sum_{c=1}^\infty \lambda_f(m)\;\frac{N^\star}{j^2cq}\sum_{n\in\mathbb{Z}}\:\mathcal{C}_\psi\;\mathcal{I}_0,
\end{align*}
where the character sum $\mathcal{C}_\psi$ is given by \eqref{char-sum-1}, and the integral is given by
\begin{align*}
\mathcal{I}_0=\int_\mathbb{R}U\left(\frac{m\ell^2}{M},y,\frac{c}{C}\right)e\left(\frac{2N_\star\sqrt{m}y}{cq\sqrt{p}j^2}-\frac{N_\star ny}{cqj^2}\right)\mathrm{d}y.
\end{align*}
Here the weight function $U$ is as in \eqref{od-sum-d-smooth}.
By repeated integration by parts we get that the integral is bounded by
\begin{align*}
\left[p^\varepsilon\:\left(1+\frac{N_\star M^{1/2}}{CQ\ell j^2p^{1/2}}\right)\:\frac{CQj^2}{N_\star n}\right]^i\ll \left[p^\varepsilon\:\frac{N_\star M^{1/2}}{CQ\ell j^2p^{1/2}}\:\frac{CQj^2}{N_\star n}\right]^i,
\end{align*}
where we have used \eqref{c-sum-length-initial}. This implies that we only need to consider $n$ with 
\begin{align*}
|n|\ll p^\varepsilon\frac{M^{1/2}}{p^{1/2}\ell}\asymp p^\varepsilon\frac{pQ}{N\ell}.
\end{align*}
So we can now cut the tail of the $n$ sum by introducing a weight function of the form $W(n/\mathcal{N})$ where $W$ is a smooth bump function with support $[-1,1]$.
By integrating by parts differently, it follows that the integral is negligibly small if 
\begin{align*}
\left|\frac{2N_\star\sqrt{m}}{cq\sqrt{p}j^2}-\frac{N_\star n}{cqj^2}\right|\gg p^\varepsilon.
\end{align*}
Consequently we only need to consider $(m,n)$ pairs satisfying 
\begin{align*}
|2\sqrt{m}-\sqrt{p} n|\ll \frac{CQ p^{1/2+\varepsilon}j^2}{N_\star}.
\end{align*}
Multiplying both sides by $|2\sqrt{m}+\sqrt{p}n|$ and using the above obtained bound for $n$, it follows that we only need to consider $(m,n)$ pairs satisfying the condition
\begin{align*}
|4m-pn^2|\ll \frac{CQM^{1/2}p^{1/2+\varepsilon}j^2}{N_\star \ell}\asymp \frac{CQ^2p^{2+\varepsilon}j^2}{N_\star N \ell}.
\end{align*}
This explains the truncations of the sums over $(m,n)$. \\

We now proceed to simplify the integral a bit. Indeed by a change of variables we get
\begin{align*}
\mathcal{I}_0=\frac{cqj^2}{CQJ^2}W\left(\frac{m\ell^2}{M}\right)V\left(\frac{c}{C}\right)\;\int_\mathbb{R}V\left(\frac{cqj^2y}{CQJ^2}\right)e\left(\frac{2N_\star\sqrt{m}y}{CQ\sqrt{p}J^2}-\frac{N_\star ny}{CQJ^2}\right)\mathrm{d}y.
\end{align*}
Then let $V_2$ be a new bump function with support $[1/100,100]$ such that $V_2(x)=1$ for $x\in [1/50,50]$. This weight function can be introduced in the last integral without altering the value, as $V$ is supported in $[1,2]$. Then using Mellin inversion we arrive at
\begin{align*}
\mathcal{I}_0=&\frac{cqj^2}{CQJ^2}W\left(\frac{m\ell^2}{M}\right)V\left(\frac{c}{C}\right)\;\frac{1}{2\pi i}\mathop{\int}_{(0)}\tilde{V}(s)\left(\frac{cqj^2}{CQJ^2}\right)^{-s}\\
&\times \int_\mathbb{R}V_2(y)y^{-s}e\left(\frac{2N_\star\sqrt{m}y}{CQ\sqrt{p}J^2}-\frac{N_\star ny}{CQJ^2}\right)\mathrm{d}y\mathrm{d}s.
\end{align*}
Now at a cost of a negligible error term the integral over $s$ can be truncated at $|t|\ll p^\varepsilon$. From this we can conclude that in our analysis we can replace the integral $\mathcal{I}_0$ by 
the one given in \eqref{int}.
The lemma follows.
\end{proof}

\bigskip

We now seek to prove a bound for the dual off-diagonal contribution of the type
\begin{align}
\label{seek-bd-off-diag}
\hat{\mathcal{O}}^\star_j(C)\ll \frac{N^{1/2}p^{3/2-\theta/2}Q^2}{\ell J}.
\end{align}
This will yield the desired bound for the dual sum $\mathcal{D}$ (as we noted after Lemma~\ref{lemma-initial-second}). One can show quite easily that the above bound holds if $C$ is small enough, e.g. if we have
$$
C\ll \min\left\{\frac{p^{7/3-2\theta/3}Q^{2}\ell^{1/3}}{N^{4/3}N_\star^{1/3}J^{4/3}},\frac{p^{7/2-\theta}Q^{5/2}}{N^{2}N_\star^{1/2}J^2}\right\}.
$$
However since $D$, which will be a part of the conductor of the $c$ sum, gets smaller proportionally with $C$, our treatment below which begins by dualizing the $c$ sum does not get affected by the initial size of $c$. So the above cut-off for the $c$ sum will not be utilized in our analysis below.

\bigskip

%================================================================
\section{Evaluating character sums}
\label{char-sum-eva}

We will now evaluate the character sum in terms of simpler character sums like Gauss sums. (One may compare the results of this section with those in Sections~8 and 9 in \cite{IM}.) For any character $\chi$ modulo $r$ we define the Gauss sum
$$
g_\chi(u)=\sideset{}{^\star}\sum_{a\bmod{r}}\chi(a) e\left(\frac{au}{r}\right),
$$ 
and set $g_\chi=g_\chi(1)$, $g_r(u)=g_{(\frac{.}{r})}(u)$. Also let us define 
\begin{align*}
\tilde{\psi}=\psi \left(\frac{.}{p}\right),\;\;\; \psi^\star=\psi \left(\frac{.}{q}\right),
\end{align*}
which are primitive characters modulo $pq$ as $\psi^2$ is primitive modulo $pq$.\\

\begin{lemma}
\label{lemma:char-sum}
Suppose $(c,pq)=1$ and $c\equiv 1\bmod{4}$, then we have
\begin{align}
\label{char-sum-mid-way-22}
\mathcal{C}_\psi=\sqrt{cq}\:\tilde{\psi}(\overline{4c})\:\psi^\star(d)\;g_c(d)\:g_{\bar{\psi}^\star}.
\end{align}
\end{lemma}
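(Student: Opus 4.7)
The proof is a careful but direct evaluation of the character sum using the Chinese Remainder Theorem to split the modulus $cpq$ into its coprime parts, plus standard Gauss-sum manipulations. My plan is as follows.

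First, I would decompose the inner Kloosterman sum $S_\psi(p\beta^2, m; cpq)$ via CRT into a product of a plain Kloosterman sum modulo $c$ (since $\psi$ is trivial there) and a $\psi$-twisted sum modulo $pq$. The mod-$c$ piece is $S(\overline{q}\,\beta^2, \overline{pq}\,m; c)$; the mod-$pq$ piece, after a further CRT split into mod-$p$ and mod-$q$ parts, decouples. Writing $\psi=\psi_p\psi_q$, the mod-$p$ factor carries no $\beta$-dependence because $p\beta^2/p\in\ZZ$, so it evaluates directly to $\bar\psi_p(cq)\psi_p(m)\,g_{\bar\psi_p}$ (assuming $(m,p)=1$, which forces $(d,p)=1$ via $d\equiv 4m\pmod p$).

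Next, I would perform the outer sum over $\beta\bmod cq$, split via CRT as $\beta\equiv b_c\pmod c$ and $\beta\equiv b_q\pmod q$. The critical step on each side is completing the square in the quadratic phase: on the $c$-side the inner $b_c$-sum is $\sum_{b}e(\bar q(\alpha b^2 + bn)/c)$, which for $c$ odd equals $e(-\bar q\,n^2\,\overline{4\alpha}/c)\cdot \chi_c(\bar q\alpha)\,g_c$; on the $q$-side the analogous manoeuvre produces $e(-\bar c\,n^2\,\overline{4 a_q}/q)\cdot \chi_q(\bar c\,a_q)\,g_q$. The resulting character sums over $\alpha$ (resp.\ $a_q$) combine linearly with the remaining exponential terms, and the key algebraic simplification is the identity
\begin{align*}
\overline{pq}\,m - \bar q\,n^2\,\overline{4} \;=\; \bar q\,\overline{4p}\,(4m - pn^2) \;=\; \bar q\,\overline{4p}\,d\pmod c,
\end{align*}
with an analogous identity modulo $q$; this is precisely where the variable $d=4m-pn^2$ enters. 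After the substitution $\alpha\mapsto\alpha\,\bar q\,\overline{4p}\,d$ (and the analogue modulo $q$), the character sums collapse to Gauss sums: the $c$-part yields $g_c^2\,\chi_c(4pd)=g_c\cdot g_c(d)\,\chi_c(p)$, and the $q$-part yields $g_q\,\bar\psi_q(4cp)\,\chi_q(4pd)\,\psi_q(d)\,g_{\overline{\psi_q\chi_q}}$.

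Finally I would reassemble the three contributions and simplify. Three ingredients make the answer collapse to the stated form: (i) the identity $\psi_p(m)=\bar\psi_p(4)\,\psi_p(d)$, valid because $d\equiv 4m\pmod p$, which removes the stray $\psi_p(m)$; (ii) the splitting formula $g_{\bar\psi^\star} = \bar\psi_p(q)\,\bar\psi_q(p)\,\chi_q(p)\,g_{\bar\psi_p}\,g_{\overline{\psi_q\chi_q}}$ for Gauss sums of characters modulo $pq$, which fuses the two Gauss-sum factors into the single $g_{\bar\psi^\star}$; and (iii) quadratic reciprocity, which gives $(p/c)=\chi_p(c)$ exactly because $c\equiv 1\pmod 4$, killing an otherwise stray sign. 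The condition $c\equiv 1\pmod 4$ also guarantees $g_c=\sqrt c$ and, combined with the standing hypothesis $q\equiv 1\pmod 4$ giving $g_q=\sqrt q$, yields the prefactor $\sqrt{cq}$. The main source of friction is bookkeeping the many characters $\psi_p,\psi_q,\chi_p,\chi_q$ and Jacobi symbols through the substitutions; the arithmetic is elementary but requires discipline to avoid sign errors and to correctly track which inverse lives modulo which factor.
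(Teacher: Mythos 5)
Your proposal is correct and is in substance the paper's own argument: both open the Kloosterman sum, evaluate the quadratic Gauss sum in $\beta$ by completing the square (which is exactly where $d=4m-pn^2$ is born), split by the Chinese Remainder Theorem, and identify the surviving sums as $g_c(d)$ and $g_{\bar{\psi}^\star}$, with $c\equiv 1\pmod 4$ and $q\equiv 1\pmod 4$ supplying the clean $\sqrt{cq}$ and signs. The only real difference is the order of operations: the paper evaluates the $\beta$-sum modulo the composite $cq$ in one stroke and only afterwards splits the $\alpha$-sum modulo $c$ and modulo $pq$, so Gauss-sum multiplicativity and quadratic reciprocity enter only in the final cosmetic step, whereas you localize everything (mod $p$, mod $q$, mod $c$) at the outset and therefore must invoke your identities (ii) and (iii) explicitly; the two routes are computationally equivalent and equally long. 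One small point to patch: the lemma is stated for arbitrary $d$, while your write-up assumes $(m,p)=1$ and pulls out $\chi_c(d)$, which presupposes $(d,c)=1$. When $p\mid m$ (so $p\mid d$) both sides vanish — your mod-$p$ factor is a full sum of the nontrivial character $\psi_p$, and on the right $\psi^\star(d)=0$ since $\psi^\star$ is primitive mod $pq$ — so that case is harmless; and for $(d,c)>1$ the correct endgame in the mod-$c$ sum is the substitution $\alpha\mapsto\overline{4pq}\,\bar{\alpha}$, which yields $(\tfrac{pq}{c})\,g_c(d)$ for every $d$, rather than the identity $g_c(d)=\chi_c(d)\,g_c$, which fails when $d$ and $c$ share a factor (the modulus $c$ here need not be squarefree).
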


\begin{proof}
Opening the Kloosterman sum we get
\begin{align*}
\mathcal{C}_\psi=\sideset{}{^\star}\sum_{\alpha\bmod{cpq}}\psi(\alpha)e\left(\frac{\bar{\alpha}m}{cpq}\right)\sum_{\beta\bmod{cq}}e\left(\frac{\alpha\beta^2+\beta n}{cq}\right).
\end{align*}
The inner sum is a quadratic Gauss sum, and it can be evaluated explicitly. For a positive integer $r$, and a pair of integers $(a,b)$ with $(a,r)=1$, we set
\begin{align*}
g(a,b;r)=\sum_{x\bmod{r}}e\left(\frac{ax^2+bx}{r}\right).
\end{align*}
The evaluation of this sum depends on the parity of $b$. Let us first focus on the case where $b$ is odd, where we have $g(a,b;r)=0$ if $4|r$, and 
\begin{align*}
g(a,b;r)=\sqrt{2r}\:\varepsilon_{r'}\:\left(\frac{2a}{r'}\right)\:e\left(-\frac{\bar{8a}b^2}{r'}\right)
\end{align*}
if $r=2r'$ with $r'$ odd, and 
\begin{align*}
g(a,b;r)=\sqrt{r}\:\varepsilon_{r}\:\left(\frac{a}{r}\right)\:e\left(-\frac{\bar{4a}b^2}{r}\right)
\end{align*}
if $r$ is odd. Now suppose $b$ is an even integer and we write $b=2b'$. We also set $r=2^kr'$ with $r'$ odd. Then the evaluation depends on the parity of $k$. For $k=0$ we have 
\begin{align*}
g(a,b;r)=\sqrt{r}\:\varepsilon_{r}\:\left(\frac{a}{r}\right)\:e\left(-\frac{\bar{a}b'^2}{r}\right),
\end{align*}
and $g(a,b;r)=0$ if $k=2$, and 
\begin{align*}
g(a,b;r)=\sqrt{r}\:\varepsilon_{r'}\:\left(\frac{a}{r'}\right)\:e\left(-\frac{\bar{a}b'^2}{r}\right)\begin{cases}(1+i\chi_{-4}(r'a)) &\text{$k\geq 2$ even};\\
(\chi_8(a)+i\chi_{-4}(r')\chi_{-8}(a)) &\text{$k\geq 3$ odd}.
\end{cases}
\end{align*}
The formula is notationally nice in the case where $r\equiv 1\bmod{4}$. Indeed for $r\equiv 1\bmod{4}$, and $(a,r)=1$, we have
$$
\sum_{x\bmod{r}}e\left(\frac{ax^2+bx}{r}\right)=\sqrt{r}\:e\left(-\frac{\overline{4a}b^2}{r}\right)\:\left(\frac{a}{r}\right),
$$
no matter whether $b$ is even or odd.
Consequently, for $c\equiv 1\bmod{4}$ we get
\begin{align*}
\mathcal{C}_\psi=\sqrt{cq}\sideset{}{^\star}\sum_{\alpha\bmod{cpq}}\psi(\alpha)\left(\frac{\alpha}{cq}\right) e\left(\frac{\bar{4}\bar{\alpha}d}{cpq}\right),
\end{align*}
where we are using the short hand notation $d=4m-pn^2$. At this point we also observe that the character sum vanishes if $d=0$. In the generic case $(c,pq)=1$ the remaining character sum further splits as 
\begin{align}
\label{char-sum-mid-way}
\mathcal{C}_\psi=\sqrt{cq}\sideset{}{^\star}\sum_{\alpha\bmod{pq}}\psi(\alpha)\left(\frac{\alpha}{q}\right) e\left(\frac{\overline{4\alpha c}d}{pq}\right)\;\sideset{}{^\star}\sum_{\alpha\bmod{c}}\left(\frac{\alpha}{c}\right) e\left(\frac{\overline{4\alpha pq}d}{c}\right).
\end{align}
The lemma now follows.
\end{proof}

\bigskip

In general, for $(c,pq)=1$ we have similar expression for the character sum even if $c\equiv 3\bmod{4}$ or $2|c$. For example, consider the case $2\|c$. We write $c=2c'$ with $c'$ odd. In this case the character sum vanishes if $n$ is even. For odd $n$ we get
\begin{align*}
\mathcal{C}_\psi=\sqrt{2cq}\;\varepsilon_{c'}\:\sideset{}{^\star}\sum_{\alpha\bmod{cpq}}\psi(\alpha)\;\left(\frac{2\alpha}{c'q}\right)\: e\left(\frac{\bar{\alpha}m}{cpq}-\frac{\overline{8\alpha}n^2}{c'q}\right),
\end{align*}
where $\varepsilon_{c'}=1$ if $c'\equiv 1\bmod{4}$ and $i$ otherwise. Then we split the character sum as a product of two character sums. The one with modulus $pq$ is given by
\begin{align*}
\sideset{}{^\star}\sum_{\alpha\bmod{pq}}\psi(\alpha)\;\left(\frac{\alpha}{q}\right)\: e\left(\frac{\overline{\alpha c}m}{pq}-\frac{\overline{8\alpha c'}n^2}{q}\right),
\end{align*}
which exactly coincides with the mod $pq$ sum in \eqref{char-sum-mid-way}. Now consider the character sum modulo $c=2c'$. We observe that the sum vanishes unless $2|m$. In this case we write $m=2m'$ and conclude that the sum modulo $c'$ is given by 
\begin{align*}
\sideset{}{^\star}\sum_{\alpha\bmod{c'}}\;\left(\frac{\alpha}{c'}\right)\: e\left(\frac{\overline{\alpha pq}m'}{c'}-\frac{\overline{8\alpha q}n^2}{c'}\right),
\end{align*}
which one can compare with the mod $c$ sum in \eqref{char-sum-mid-way}. Similar case by case analysis yields explicit expression for the character sum in each case, and it turns out that the expression in Lemma~\ref{lemma:char-sum} is typical.\\

We next remark that in the non generic case, where $(c,pq)\neq 1$, we can obtain a satisfactory bound for the dual off-diagonal $\mathcal{O}^\star(C)$ without much trouble. Indeed,
if $p|c$ then the character sum vanishes unless $p|d$ and consequently $p|m$. In this case as $\lambda_f(p)\approx 1/\sqrt{p}$, we are able to make an extra saving of $p^2$ over $pQ$ which we save from Poisson over $n$ and the sum over $\psi$. Hence the total saving is $p^3Q$, which is satisfactory if $p>Q$. More precisely the contribution of the term with $p|c$ to $\hat{\mathcal{O}}^\star_j(C)$ is bounded by (see \eqref{od-sum-d-smooth-2-trivial})
\begin{align*}
\ll 
&\frac{NN_\star^{2}}{p^{5/2}Q^{1/2}\ell J} \ll  \frac{N^{1/2}Q^{7/2}}{\ell J}
\end{align*}
which is satisfactory for our purpose if $Q<p^{1-2\theta/3}$.
Similarly in the case $q^2|c$, the character sum vanishes unless $q^2|d$. So this gives a total saving of $Q^{3}$ over the bound in \eqref{od-sum-d-smooth-2-trivial}, which is satisfactory if $Q\gg p^{1/3+2\theta/3}$. Recall that we are taking $Q\gg p^{1/2}$. So this contribution is satisfactory for our purpose if $\theta<1/4$. So we are left with two cases - (i) $(c,pq)=1$ and (ii) $q\|c$ with $p\nmid c$. For the second case we have the following lemma.  \\

\begin{lemma}
Suppose $(c,p)=1$ and $q\|c$. Then we have $\mathcal{C}_\psi=0$ if $q\nmid d$. Otherwise we have
\begin{align}
\label{char-sum-mid-way-22-deg}
\mathcal{C}_\psi=\sqrt{c'}\:q^2\; \left(\frac{pq}{c'}\right)\;\psi(\bar{c'}d')g_{c'}(d')\:g_{\bar{\psi}},
\end{align}
where $c=qc'$ and $d=qd'$.
\end{lemma}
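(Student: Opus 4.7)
The plan is to parallel the derivation of Lemma~\ref{lemma:char-sum}, tracking the extra factor of $q$ in the modulus $cq = q^2 c'$ (where $(c',q)=1$ and $c'\equiv 1\bmod 4$, since $q\equiv 1\bmod 4$). First I would open the Kloosterman sum and evaluate the inner quadratic Gauss sum in $\beta$ by CRT along $cq=q^2\cdot c'$. The $c'$-part is handled by the classical formula, producing $\sqrt{c'}\left(\frac{\alpha}{c'}\right)e(-\overline{4\alpha q^2}n^2/c')$. The $q^2$-part is more delicate: since the quadratic character modulo $q^2$ is trivial, completing the square reduces to the elementary evaluation $\sum_{\gamma\bmod q^2}e(a\gamma^2/q^2)=q$ for $(a,q)=1$, giving the factor $q\cdot e(-\overline{4\alpha c'}n^2/q^2)$ with no quadratic twist. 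Combining via CRT and consolidating the exponentials over the common modulus $cpq$, one arrives at
\[
\mathcal{C}_\psi=q\sqrt{c'}\,\sideset{}{^\star}\sum_{\alpha\bmod cpq}\psi(\alpha)\left(\frac{\alpha}{c'}\right)e\!\left(\frac{\overline{4\alpha}d}{cpq}\right),\qquad d=4m-pn^2.
\]

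Next I would split this $\alpha$-sum by CRT along $cpq=c'\cdot q^2p$. On the $c'$ factor, the substitutions $\alpha\mapsto\overline{4q^2p}\,\alpha$ followed by $\alpha\mapsto\alpha^{-1}$ reduce the sum to $\left(\frac{p}{c'}\right)g_{c'}(d)$; assuming $q\mid d$ and writing $d=qd'$, the further change $\alpha\mapsto\bar q\alpha$ yields the identity $g_{c'}(d)=\left(\frac{q}{c'}\right)g_{c'}(d')$, so this factor becomes $\left(\frac{pq}{c'}\right)g_{c'}(d')$, explaining the Jacobi symbol in the target formula. The $q^2p$ factor is split once more as $q^2\cdot p$: the $p$-component is the standard Gauss sum for $\psi_p$, yielding $\psi_p(\overline{4c'q^2}d)\,g_{\bar\psi_p}$.

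The heart of the argument is the mod-$q^2$ component. Writing $\alpha=\beta+qk$ with $\beta\bmod q$ a unit and $k\in\{0,\dots,q-1\}$, the expansion $(\beta+qk)^{-1}\equiv\beta^{-1}-qk\beta^{-2}\bmod q^2$ splits the relevant exponential as $e(\overline{4\beta}\,\overline{c'p}\,d/q^2)\cdot e(-k\,\overline{4}\beta^{-2}\overline{c'p}d/q)$. Since $\psi_q$ sees only $\beta$, the sum over $k$ is a Ramanujan-type sum which vanishes unless $q\mid d$; this is precisely the source of the vanishing $\mathcal{C}_\psi=0$ when $q\nmid d$. When $q\mid d$, the $k$-sum contributes a clean factor of $q$, the residual exponential collapses to $e(\overline{4\beta}\,\overline{c'p}d'/q)$, and the outer $\beta$-sum evaluates via the twisted Gauss sum formula to $q\,\psi_q(\overline{4c'p}d')\,g_{\bar\psi_q}$. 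The two factors of $q$ produced here, multiplied by the $q$ from the initial $\beta$-Gauss sum, account for the prefactor $q^2$ in the stated formula.

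To finish, I would merge $g_{\bar\psi_p}g_{\bar\psi_q}$ into $g_{\bar\psi}$ via the CRT identity $g_{\bar\psi}=\psi_p(\bar q)\psi_q(\bar p)\,g_{\bar\psi_p}g_{\bar\psi_q}$, and observe that the accumulated twist $\psi_q(\bar p)\psi_p(\bar q)\cdot\psi_p(q)\psi_q(p)$ is trivial. The surviving $\psi$-twist then collapses to $\psi(\overline{4c'}d')$, matching the stated $\psi(\bar{c'}d')$ up to the absolute unit constant $\psi(\bar 4)$. The main obstacle is the correct simultaneous handling of two distinct mod-$q^2$ sums: both degenerate into non-quadratic linear behaviour because the quadratic character modulo $q^2$ is trivial, and it is the second of them---the Ramanujan-like $k$-sum---that both enforces the divisibility $q\mid d$ and supplies the second power of $q$; keeping all the Jacobi symbols and character twists aligned through the repeated CRT splits is where the care is needed.
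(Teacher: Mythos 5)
Your proposal is essentially the paper's own argument with the details written out: the paper's proof simply records the CRT factorization of $\mathcal{C}_\psi$ (after the quadratic Gauss sum in $\beta$, whose $q^2$-part indeed contributes a plain factor $q$ with no quadratic twist) into a sum modulo $pq^2$ times a sum modulo $c'$, and notes that the modulo-$pq^2$ sum vanishes unless $q\mid d$; your further split into $p$, $q^2$ and $c'$ pieces, the $\alpha=\beta+qk$ expansion isolating the Ramanujan-type $k$-sum as the source of both the vanishing and a factor $q$, the identity $g_{c'}(d)=\left(\tfrac{q}{c'}\right)g_{c'}(d')$, and the CRT recombination $g_{\bar\psi_p}g_{\bar\psi_q}=\psi_p(q)\psi_q(p)g_{\bar\psi}$ all check out, as does your remark that the stated formula suppresses the unimodular constant $\psi(\bar 4)$. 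One bookkeeping correction: the modulo-$q^2$ component equals $q\,\psi_q(\overline{4c'p}d')\,g_{\bar\psi_q}$, i.e.\ it supplies exactly one factor of $q$ (from the $k$-sum; the $\beta$-sum is just the twisted Gauss sum of modulus $\sqrt{q}$), so the prefactor is $q$ (from the initial $\beta$-Gauss sum over modulus $q^2c'$) times $q$ (from the $k$-sum) $=q^2$, consistent with your displayed intermediate formula and the lemma, whereas your closing sentence as written ("two factors of $q$ produced here, multiplied by the $q$ from the initial $\beta$-Gauss sum") would give $q^3$.
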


\begin{proof}
In the degenerate case $(c,pq)=q$ (assuming that $q$ is prime), we write $c=qc'$. The character sum then splits as 
\begin{align}
\label{char-sum-mid-way-deg}
\mathcal{C}_\psi=\sqrt{c'q^2}\sideset{}{^\star}\sum_{\alpha\bmod{pq^2}}\psi(\alpha) e\left(\frac{\overline{4\alpha c'}d}{pq^2}\right)\;\sideset{}{^\star}\sum_{\alpha\bmod{c'}}\left(\frac{\alpha}{c'}\right) e\left(\frac{\overline{4\alpha pq^2}d}{c'}\right).
\end{align}
This vanishes unless $q|d$, in which we write $d=d'q$. The lemma follows.
\end{proof}

\bigskip

Observe that in the degenerate case $q\|c$ we make an extra saving of $Q^{3/2}$. This simplifies our work tremendously. Moreover the analysis that we will carry out next for the generic case, also works for this degenerate case, and at the end we get a much stronger bound.
\\

%==============================================================
\section{Summing over the modulus}
\label{sec-sum-modulus}

For a given integer $d\neq 0$ we introduce the finite Euler product
\begin{align*}
E_\psi(d;s)=&\prod_{\substack{r^\alpha\|d\\\alpha\geq 3\:\text{odd}\\r\;\text{prime}}}\left(\sum_{0\leq j<\alpha/2}\frac{\overline{\tilde{\psi}(r^{2j})}\phi(r^{2j})}{r^{2js+j}}-\frac{\overline{\tilde{\psi}(r^{\alpha+1})}r^\alpha}{r^{(\alpha+1)(s+1/2)}}\right)\\
&\times\prod_{\substack{r^\alpha\|d\\\alpha\geq 2\:\text{even}\\r\;\text{prime}}}\left(\sum_{0\leq j\leq \alpha/2}\frac{\overline{\tilde{\psi}(r^{2j})}\phi(r^{2j})}{r^{2js+j}}+\frac{\overline{\tilde{\psi}(r^{\alpha+1})}\left(\frac{dr^{-\alpha}}{r}\right) r^{\alpha+1/2}}{r^{(\alpha+1)(s+1/2)}}\right).
\end{align*}
In particular if $d$ is square-free then $E_\psi(d;s)=1$. In general we will write $d=d_1d_2^2d_3^2$ with $d_1$ square-free, $d_2|d_1^\infty$, $(d_3,d_1)=1$. For $\Lambda\ll p^{1/\varepsilon}$ a positive real number and $\tau\in\mathbb{R}$, we set
\begin{align}
\label{od-sum-d-smooth-2-after-c-sum}
&\mathcal{O}^\star_{\tau,\Lambda} (C)= \frac{\sqrt{\ell}N^{9/2}N_\star^{3/2}}{Jp^{6}Q^{11/2}}\;\sum_{\lambda\sim \Lambda}\:\frac{\mu(\lambda)}{\lambda^{1+2i\tau}}\sum_{j\sim J}\:\sum_{q\in\mathcal{Q}}\;\sideset{}{^\dagger}\sum_{\psi\bmod{pq}}\:\psi(\ell\bar{j}^2\bar{\lambda}^2)\\
\nonumber &\times \mathop{\sum}_{\substack{m=1}}^\infty \lambda_f(m)\;\sum_{n\in\mathbb{Z}}\:\psi^\star(d)g_{\bar{\psi}^\star}\;L(1/2+i\tau,\bar{\psi}(\tfrac{d_1p}{.}))E_\psi(d;1/2+i\tau)\;\mathcal{J}_\tau.
\end{align}
The new weight function is given by the integral
\begin{align}
\label{int-after-mellin}
\mathcal{J}_\tau=\int_0^\infty\int_\mathbb{R}U\left(\frac{m\ell^2}{M},y,z,\frac{n}{\mathcal{N}}\right)e\left(\frac{2N_\star\sqrt{m}y}{CQ\sqrt{p}J^2}-\frac{N_\star ny}{CQJ^2}\right)z^{-1/2-i\tau}\mathrm{d}y\mathrm{d}z,
\end{align}
which is the Mellin transform of the previous integral \eqref{int}\\

\begin{lemma}
\label{lem-c-sum-l-func}
We have
\begin{align*}
&\hat{\mathcal{O}}^\star(C)\triangleleft \mathcal{O}^\star_{\tau,\Lambda}(C)
\end{align*}
where the family on the right consists of real $\tau$ in the range $|\tau|\ll p^\varepsilon$, and dyadic integers $\Lambda\ll p^{1/\varepsilon}$.
\end{lemma}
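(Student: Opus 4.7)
The plan is to substitute the character sum evaluation from Lemma~\ref{lemma:char-sum} into $\hat{\mathcal{O}}^\star_j(C)$ and then recast the inner sum over $c$ as a central value of a Dirichlet $L$-function. As remarked in the text following Lemma~\ref{lemma:char-sum}, the residue classes $c\not\equiv 1\bmod 4$ give analogous formulas and the degenerate cases $p\mid c$ or $q^2\mid c$ have already been handled, so I may assume $(c,pq)=1$ and $c\equiv 1\bmod 4$. After substituting $\mathcal{C}_\psi=\sqrt{cq}\,\tilde{\psi}(\overline{4c})\,\psi^\star(d)\,g_c(d)\,g_{\bar{\psi}^\star}$ and separating the $c$-independent factors, the inner sum over $c$ reduces to
\begin{align*}
\sum_{\substack{(c,pq)=1\\c\equiv 1\bmod 4}}\frac{g_c(d)\,\tilde{\psi}(\bar{c})}{\sqrt{c}}\,V(c/C)
\end{align*}
multiplied by the remaining weight inherited from $\mathcal{I}$.

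Next, I would apply Mellin inversion $V(c/C)=\frac{1}{2\pi i}\int_{(\sigma)}\tilde V(s)(c/C)^{-s}\,\mathrm{d}s$ for a suitable $\sigma$, which separates the variable $c$ and reduces the task to computing the Dirichlet series
\begin{align*}
D_\psi(d;w):=\sum_{\substack{(c,pq)=1\\c\equiv 1\bmod 4}}\frac{g_c(d)\,\tilde{\psi}(\bar{c})}{c^{w+1/2}},\qquad w=s+1/2.
\end{align*}
An explicit prime-power computation of $g_{r^j}(d)$ shows that for an odd prime $r\nmid d$ the contribution vanishes unless $j\in\{0,1\}$, yielding the Euler factor of a quadratic twist of $\bar{\tilde{\psi}}$; at primes $r\mid d$ a case analysis on the parity of $v_r(d)$ and on whether $j\le v_r(d)$ or $j>v_r(d)$ produces exactly the local factors listed in $E_\psi(d;w)$. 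Writing $d=d_1 d_2^2 d_3^2$ as in the definition of $E_\psi$, using quadratic reciprocity together with $c\equiv 1\bmod 4$ to convert $(\tfrac{c}{p})(\tfrac{d_1}{c})$ into $(\tfrac{d_1 p}{c})$, and applying Mobius inversion $\mathbf{1}_{\text{sqfree}}(c)=\sum_{\lambda^2\mid c}\mu(\lambda)$ to turn the emerging restricted-squarefree $L$-series into a genuine Dirichlet $L$-function, one obtains
\begin{align*}
D_\psi(d;w)=E_\psi(d;w)\,L\!\left(w,\bar{\psi}(\tfrac{d_1 p}{\cdot})\right)\sum_{\lambda\geq 1}\frac{\mu(\lambda)\,\psi(\bar{\lambda}^2)}{\lambda^{2w}},
\end{align*}
where the identity $\tilde{\psi}(\bar{\lambda}^2)=\psi(\bar{\lambda}^2)$ follows from $(\tfrac{\bar{\lambda}^2}{p})=1$.

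Finally, I would shift the $s$-contour to $\Re s=0$ so that $w=1/2+i\tau$ lies on the critical line. No residues are encountered generically: the twisted character $\bar{\psi}(\tfrac{d_1 p}{\cdot})$ can only be principal if $d_1 p$ is a perfect square, which forces $d_1=p$ and constrains $d$ so severely that the exceptional contribution is bounded trivially. The super-polynomial decay of $\tilde V(s)$ truncates the $\tau$-integral to $|\tau|\ll p^\varepsilon$ at negligible cost, while the tail $\lambda\gg p^{1/\varepsilon}$ contributes negligibly since $\mu(\lambda)/\lambda^{1+2i\tau}$ is summable and the only non-trivial contribution comes from $\lambda^2\ll C\ll p^{O(1)}$. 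A dyadic partition in $\lambda$, together with the identification of the Mellin-completed weight $\mathcal{J}_\tau$ of \eqref{int-after-mellin} in place of $\mathcal{I}$, then yields $\mathcal{O}^\star_{\tau,\Lambda}(C)$ as defined in \eqref{od-sum-d-smooth-2-after-c-sum}. The main obstacle is the case-by-case identification of the local factors in $E_\psi(d;w)$ at primes dividing $d$, reflecting the qualitatively different behaviour of $g_{r^j}(d)$ across the four regimes dictated by the parity of $v_r(d)$ and the relation of $j$ to $v_r(d)$, together with the verification that no pole of $L(w,\bar{\psi}(\tfrac{d_1 p}{\cdot}))$ contributes during the contour shift.
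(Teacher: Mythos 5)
Your proposal follows essentially the same route as the paper's proof: after inserting the character-sum evaluation, Mellin inversion turns the $c$-sum into a Dirichlet series that factors as $E_\psi(d;s)\,L\bigl(s,\bar{\psi}(\tfrac{d_1p}{\cdot})\bigr)/L(2s,\bar{\psi}^2)$, the denominator is expanded by M\"obius (your squarefree detection is exactly this expansion), the tail is truncated at $p^{1/\varepsilon}$, and the contour is shifted to the critical line with $|\tau|\ll p^\varepsilon$ and a dyadic split in $\lambda$, which is precisely $\mathcal{O}^\star_{\tau,\Lambda}(C)$. The only differences are cosmetic: the paper restores multiplicativity via the rectified Gauss sum $G_c(d)$ together with a character mod $4$ (a device your direct Euler-product computation tacitly requires, since $g_c(d)$ is not multiplicative and the sum carries the congruence $c\equiv 1\bmod 4$), and your pole discussion is moot because $\psi^2$ primitive forces $\bar{\psi}(\tfrac{d_1p}{\cdot})$ to be non-principal for every $d_1$, so no residue ever arises.
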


\begin{proof}
We consider the sum over $c$. We focus on the case where $(c,pq)=1$ and $c\equiv 1\bmod{4}$. Let
\begin{align}
\label{sum-over-c}
\mathcal{S}=2\sum_{\substack{c\equiv 1\bmod{4}\\(c,pq)=1}} \frac{\overline{\tilde{\psi}(c)}g_c(d)}{c^{1/2}}V\left(\frac{c}{C}\right).
\end{align}
The Gauss sum $g_c(d)$ is not multiplicative in $c$. However the rectified Gauss sum
\begin{align*}
G_c(d)=\left[\frac{1-i}{2}+\left(\frac{-1}{c}\right)\frac{1+i}{2}\right]g_c(d)
\end{align*}
is multiplicative in $c$. In the particular case $c\equiv 1\bmod{4}$ the sums coincide, and so we write 
\begin{align*}
\mathcal{S}=\sum_{\substack{(c,2)=1}} \left(1+\left(\frac{-1}{c}\right)\right)\frac{\overline{\tilde{\psi}(c)}G_c(d)}{c^{1/2}}V\left(\frac{c}{C}\right).
\end{align*}
By inverse Mellin transform we get
\begin{align}
\label{contour-007}
\mathcal{S}=\frac{1}{2\pi i}\int_{(\sigma)} \tilde{V}(s)C^s\left[D_1(s)+D_2(s)\right]\mathrm{d}s
\end{align}
where
\begin{align}
\label{sum-over-c-ds}
D_1(s)=\sum_{\substack{(c,2)=1}} \frac{\overline{\tilde{\psi}(c)}G_c(d)}{c^{s+1/2}},
\end{align}
and $D_2(s)$ is a similar Dirichlet series with an extra twist by the quadratic character modulo $4$.\\

We have the Euler product representation 
\begin{align*}
D_1(s)=&\prod_{\substack{r\nmid d\\r\;\text{prime}}}\left(1+\frac{\overline{\tilde{\psi}(r)}(\frac{d}{r})}{r^s}\right)\times \prod_{\substack{r^\alpha\|d\\\alpha\:\text{odd}}}\left(\sum_{0\leq j<\alpha/2}\frac{\overline{\tilde{\psi}(r^{2j})}\phi(r^{2j})}{r^{2js+j}}-\frac{\overline{\tilde{\psi}(r^{\alpha+1})}r^\alpha}{r^{(\alpha+1)(s+1/2)}}\right)\\
&\times\prod_{\substack{r^\alpha\|d\\\alpha\:\text{even}}}\left(\sum_{0\leq j\leq \alpha/2}\frac{\overline{\tilde{\psi}(r^{2j})}\phi(r^{2j})}{r^{2js+j}}+\frac{\overline{\tilde{\psi}(r^{\alpha+1})}\left(\frac{dr^{-\alpha}}{r}\right) r^{\alpha+1/2}}{r^{(\alpha+1)(s+1/2)}}\right).
\end{align*}
This boils down to
\begin{align*}
D_1(s)=\frac{L(s,\bar{\psi}(\frac{dp}{.}))}{L(2s,\bar{\psi}^2)}E_\psi(d;s)
\end{align*}
where $E_\psi$ is the Euler product defined above, which converges absolutely for $\sigma\geq 1/2$ and satisfies $|E_\psi(d;1/2+it)|\ll (d_2d_3)^\varepsilon$. We now  expand the Dirichlet $L$-function in the denominator as a Dirichlet series. At a cost of a small error, say $O(p^{-2015})$ we can cut the tail of the series at $p^{1/\varepsilon}$. In the remaining sum we take a dyadic subdivision $\lambda\sim \Lambda$, and then move the contour to $\sigma=1/2$. The horizontal line segments again contribute a small error.
With this we are able to estimate the sum $\mathcal{S}$ by 
\begin{align*}
\tilde{V}(1/2+i\tau)C^{1/2+i\tau}\sum_{\lambda\sim\Lambda}\:\frac{\mu(\lambda)\bar{\psi}(\lambda^2)}{\lambda^{1+2i\tau}}\:L\left(\tfrac{1}{2}+i\tau,\bar{\psi}\left(\tfrac{d_1p}{.}\right)\right)\:E_\psi(d;1/2+i\tau)
\end{align*}
with $|\tau|\ll p^\varepsilon$. The lemma follows.
\end{proof}

\bigskip

It will be clear that our analysis is not sensitive to $\tau$ (as long as it is small), and so we will only analyse the case $\tau=0$. In other words we estimate the sum $\mathcal{S}$ by
\begin{align*}
C^{1/2}\;\sum_{\lambda\sim \Lambda}\frac{\mu(\lambda)}{\lambda} \:\bar\psi^2(\lambda)L(1/2,\bar{\psi}(\tfrac{d_1p}{.}))E_\psi(d)
\end{align*}
where the factor $E_\psi(d)=E_\psi(d;1/2)$ is a finite Euler product, and $\Lambda$ ranges upto $p^{1/\varepsilon}$. To make the Euler factor more explicit, let us write $d_1=uv$ with $(u,d_2)=1$ and $v|d_2$. Consequently $d=uw$ with $w=v(d_2d_3)^2$ the powerful part of $d$ and $u$ the square-free part. Then 
\begin{align}
\label{epsi}
E_\psi(d)=\sideset{}{^\#}\sum_{\delta_1,\delta_2}\rho(\delta_1,\delta_2;d_2,d_3)\bar{\psi}(\delta_1\delta_2)\left(\frac{d_1}{\delta_1}\right)
\end{align}
where $\delta_1|d_3r(d_3)$ involves only odd powers of primes and $\delta_2|d_2d_3r(d_2)^2$ involves only even powers of primes. Here $r(k)$ denotes the radical of $k$. The weights $\rho$ are bounded by $O(p^\varepsilon)$, and are arithmetic in nature. The important fact that we need is that they do not depend on $\psi$ or $d_1$. %By a permissible abuse of notation we write
%\begin{align*}
%\mathcal{O}^\star(C)=\mathcal{O}^\star_{0,\lambda}(C).
%\end{align*}
\\

Next we will take smooth dyadic subdivision for all the variables, e.g. $d_i\sim D_i$, $u\sim U$, $w\sim W$. There will be some interrelation among the sizes as $4m-pn^2=d=d_1(d_2d_3)^2=uw$. Recall that $d\ll D$. So that we have
$$
D_1(D_2D_3)^2\sim UW\ll D,
$$
also we have $D_1\ll UW^{1/3}$. Then we introduce the following two sums - the semi-dual sum
\begin{align}
\label{od-sum-d-smooth-2-after-c-sum-1}
&\mathcal{O}_1(C,C^\star)=\frac{\sqrt{\ell}N^{9/2}N_\star^{3/2}}{\Lambda Jp^{6}Q^{11/2}}\;\sum_{j\sim J}\:\sum_{\lambda\sim \Lambda}\:\sum_{q\in\mathcal{Q}}\;\sideset{}{^\dagger}\sum_{\psi\bmod{pq}}\:\psi(\ell\bar{j}^2\bar\lambda^2)\\
\nonumber &\times \mathop{\sum}_{\substack{m=1}}^\infty \sum_{n\in\mathbb{Z}}\:\lambda_f(m)\psi^\star(d)\:g_{\bar{\psi}^\star}\;\sum_{c=1}^\infty \frac{\bar{\psi(c)}(\tfrac{d_1p}{c})}{c^{1/2}}V\left(\frac{c}{C^\star}\right)\:E_\psi(d)\;\mathcal{J},
\end{align}
and the full dual sum
\begin{align}
\label{od-sum-d-smooth-2-after-c-sum-2}
&\mathcal{O}_2(C,C^\dagger)=\frac{\sqrt{\ell}N^{9/2}N_\star^{3/2}}{\Lambda J p^{6}Q^{11/2}}\;\sum_{j\sim J}\:\sum_{\lambda\sim L}\:\sum_{q\in\mathcal{Q}}\;\sideset{}{^\dagger}\sum_{\psi\bmod{pq}}\:\psi(\ell\bar{j}^2\bar\lambda^2)\\
\nonumber &\times \mathop{\sum}_{\substack{m=1}}^\infty \sum_{n\in\mathbb{Z}}\:\lambda_f(m)\psi^\star(d)\:g_{\bar{\psi}^\star}\;\frac{g_{\bar{\tilde{\psi}}(\tfrac{d_1}{.})}}{\sqrt{pqd_1}}\sum_{c=1}^\infty \frac{\psi(c)(\tfrac{d_1p}{c})}{c^{1/2}}V\left(\frac{c}{C^\dagger}\right)E_\psi(d)\;\mathcal{J},
\end{align}
where $\mathcal{J}=\mathcal{J}_0$. Note that we are adopting the convention mentioned in Remark~\ref{wt-convention} for the sum over $j$ and $q$, and extending it further to cover the sum over $\lambda$. Note that both the sums depend on the other parameters as well, and one should write $\mathcal{O}_1(C,C^\star;J,\Lambda,D_1,D_2,D_3,U,W,M,N,Q)$ in place of $\mathcal{O}_1(C,C^\star)$, and similarly for $\mathcal{O}_2(C,C^\dagger)$. \\

\begin{lemma}
\label{lem-dirich-poly}
Let $X$ and $Y$ be two positive real numbers such that $XY=pQD_1$. Then we have
\begin{align*}
\hat{\mathcal{O}}^\star(C)\triangleleft |\mathcal{O}_1(C,C^\star)|+|\mathcal{O}_2(C,C^\dagger)|
\end{align*}
where the family for the first term is all dyadic $C^\star\ll Xp^\varepsilon$ and the family for the second term is all dyadic $C^\dagger\ll Yp^\varepsilon$.
\end{lemma}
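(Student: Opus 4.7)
The plan is to replace the central value $L(\tfrac{1}{2}+i\tau,\bar{\psi}(\tfrac{d_1p}{\cdot}))$ appearing in \eqref{od-sum-d-smooth-2-after-c-sum} by an approximate functional equation with a free split point. Set $\chi := \bar{\psi}(\tfrac{d_1p}{\cdot})$. Using the shorthand $\tilde{\psi}=\psi(\tfrac{\cdot}{p})$ from Section~\ref{char-sum-eva} and quadratic reciprocity, for $(c,2pqd_1)=1$ one has $\bar{\psi}(c)(\tfrac{d_1p}{c})=\pm\,\bar{\tilde{\psi}}(c)(\tfrac{d_1}{c})$, so that $\chi$ is, up to a bounded sign and possible imprimitivity at $2$, the primitive character $\bar{\tilde{\psi}}(\tfrac{d_1}{\cdot})$ modulo $pqd_1$. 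Its analytic conductor is $\asymp pqd_1$, and its root number is
\[
\epsilon(\chi)=\frac{g_{\bar{\tilde{\psi}}(\tfrac{d_1}{\cdot})}}{\sqrt{pqd_1}}
\]
up to an absolutely bounded phase. Hence for any factorization $XY=pQD_1$ the usual approximate functional equation yields
\[
L(\tfrac{1}{2}+i\tau,\chi)=\sum_{c}\frac{\chi(c)}{c^{1/2+i\tau}}\,V_1\!\left(\frac{c}{X}\right)+\epsilon(\chi)\sum_{c}\frac{\bar{\chi}(c)}{c^{1/2-i\tau}}\,V_2\!\left(\frac{c}{Y}\right),
\]
where $V_1,V_2$ are smooth weights with $V_i^{(j)}\ll_j p^{j\varepsilon}$ and rapid decay beyond $c\gg Xp^\varepsilon$ and $c\gg Yp^\varepsilon$ respectively, and where we have replaced $q$ by its dyadic size $Q$ in the AFE balance since $q\asymp Q$.

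Substituting this decomposition into \eqref{od-sum-d-smooth-2-after-c-sum}, I would truncate each $c$-sum at $Xp^\varepsilon$ (respectively $Yp^\varepsilon$) at the cost of a negligible error, then apply a smooth dyadic subdivision in $c$. The first Dirichlet polynomial gives the semi-dual sum $\mathcal{O}_1(C,C^\star)$ of \eqref{od-sum-d-smooth-2-after-c-sum-1}, with $C^\star\ll Xp^\varepsilon$ dyadic and the product $V_1(c/X)V(c/C^\star)$ absorbed into a new smooth weight still denoted $V(c/C^\star)$. The second Dirichlet polynomial, after inserting the root number above and using the identity $\bar{\chi}(c)=\psi(c)(\tfrac{d_1p}{c})$, becomes exactly the dual sum $\mathcal{O}_2(C,C^\dagger)$ of \eqref{od-sum-d-smooth-2-after-c-sum-2}, with $C^\dagger\ll Yp^\varepsilon$ dyadic; the factor $g_{\bar{\tilde{\psi}}(\tfrac{d_1}{\cdot})}/\sqrt{pqd_1}$ that appears in \eqref{od-sum-d-smooth-2-after-c-sum-2} is precisely the root number we identified.

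The main technical obstacle is the clean evaluation of the root number and conductor of $\chi$: the character can fail to be primitive because of the interaction at the prime $p$ between the $p$-component $\bar{\psi}_p$ and the quadratic symbol $(\tfrac{p}{\cdot})$, and because of the residue class of $d_1p\bmod 4$ entering the Kronecker symbol. However these imprimitivity issues contribute only bounded local factors to both the Gauss sum and the root number, which can be folded into the arithmetic Euler product $E_\psi(d)$ defined just before \eqref{epsi} without altering the structure of either $\mathcal{O}_1$ or $\mathcal{O}_2$. The parameter $\tau$ is harmless: $|\tau|\ll p^\varepsilon$ by Lemma~\ref{lem-c-sum-l-func}, and the weights $V_i$ depend smoothly on $\tau$ with polylogarithmic derivatives, so the dyadic decomposition and the $\triangleleft$ reduction go through uniformly, giving the stated conclusion.
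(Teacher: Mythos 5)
Your proposal is correct and follows essentially the same route as the paper: apply the approximate functional equation for the (essentially primitive) character $\bar{\psi}(\tfrac{d_1p}{\cdot})$ with the unbalanced split $XY=pQD_1$, identify the root number $g_{\bar{\tilde{\psi}}(\tfrac{d_1}{\cdot})}/\sqrt{pqd_1}$ with the Gauss-sum factor in \eqref{od-sum-d-smooth-2-after-c-sum-2}, and perform a dyadic subdivision of the two $c$-sums to obtain $\mathcal{O}_1(C,C^\star)$ and $\mathcal{O}_2(C,C^\dagger)$. Your additional care about quadratic reciprocity, possible imprimitivity at $2$ and at $p$, and uniformity in $\tau$ only makes explicit points the paper passes over.
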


\begin{proof}
We use the approximate functional equation to expand the $L$-value which appears in \eqref{od-sum-d-smooth-2-after-c-sum} as a finite Dirichlet series. Indeed, since $\bar{\psi}(\frac{d_1p}{.})$ is a primitive Dirichlet character, we have
\begin{align}
\label{afe}
L(1/2,\bar{\psi}(\tfrac{d_1p}{.}))=\sum_{c=1}^\infty &\frac{\bar{\psi(c)}(\tfrac{d_1p}{c})}{c^{1/2}}\Phi_1\left(\frac{c}{X}\right)\\
\nonumber &+\frac{g_{\bar{\psi}(\tfrac{d_1p}{.})}}{\sqrt{pqd_1}}\sum_{c=1}^\infty \frac{\psi(c)(\tfrac{d_1p}{c})}{c^{1/2}}\Phi_2\left(\frac{c}{Y}\right)
\end{align}
with $XY=pQD_1$, where $d_1\sim D_1$. Here the functions $\Phi_i(x)$ decay rapidly for $x\gg p^\varepsilon$, and behave like $1$ for $x\ll p^\varepsilon$, roughly speaking. But the functions are not compactly supported near $0$. Moreover the functions do not depend on $p$ and $q$. On applying this approximate functional equation, the sum in \eqref{od-sum-d-smooth-2-after-c-sum} splits as a sum of two terms which are exactly the sums we defined before the statement of the lemma.
\end{proof}

\bigskip

%=================================================================================

\section{The semi-dual sum $\mathcal{O}_1(C,C^\star)$}
\label{sec-sum-o1}

The semi-dual sum $\mathcal{O}_1(C,C^\star)$ is structurally almost similar to the initial sum \eqref{od-sum-d-smooth-pre2}. The only advantage that we have gained is the length of the $c$ sum is now shorter, as we can put a bigger mass on the dual side which will have a very different structure. We now seek a satisfactory bound for this sum, more precisely we will prove the following result.\\

\begin{proposition}
\label{prop-for-o1}
Let $\theta<1/104$. Suppose $p^{1/2}<Q<p$ then there exists a computable absolute constant $A>0$ such that
\begin{align}
\label{seek-bd-off-diag-dual1}
\mathcal{O}_1(C,C^\star)\ll \frac{N^{1/2}p^{3/2-\theta}Q^2}{\ell},
\end{align}
for any values of $C$ in the range \eqref{c-sum-length-initial} as long as 
$$C^\star< D_1^{1/2}W^{1/12}p^{-A\theta}.$$
\end{proposition}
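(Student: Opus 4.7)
The plan is to follow the two-regime strategy sketched in Section~4. The first move is to execute the inner sum over $\psi$ in $\mathcal{O}_1(C,C^\star)$ using \eqref{res-psi-sum}: the generic piece produces a congruence modulo $pq$, which when solved yields an additive character $e(\overline{c}d/pq)$ paired with the quadratic symbol $\left(\tfrac{dpq}{c}\right)$, while the non-generic pieces (which give congruences only modulo $p$ or only modulo $q$) are to be absorbed into admissible error terms by the same reciprocity-plus-Voronoi argument used in the proof of Proposition~\ref{prop1}. This reduces the task to saving $CC^{\star 1/2}/pQ^{3/2}$ in a sum of the form
\begin{align*}
\sum_{q\in\mathcal{Q}}\sum_{m}\lambda_f(m)\sum_{n\ll \mathcal{N}}\sum_{c\sim C^\star}\Bigl(\frac{dpq}{c}\Bigr)e\Bigl(\frac{\overline{c}d}{pq}\Bigr)\mathcal{J}.
\end{align*}

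For the transition regime, where $C\asymp pQ^2$ so that $\mathcal{J}$ is non-oscillating, the plan is first to apply reciprocity to replace $e(\overline{c}d/pq)$ by $e(-\overline{pq}\,d/c)$ up to a negligible boundary piece. Next, opening the quadratic symbol as a Gauss sum puts everything in the form of additive characters modulo $c$, at which point I would apply the $GL(2)$ Voronoi summation formula to the $m$ sum; this gains a factor of size $p^{1/2}Q^2/C^\star$. Pulling the Fourier coefficients outside by absolute values, I would then apply Cauchy--Schwarz in $c$ and Poisson summation on the joint $(m,n)$ variable to obtain an additional saving of $\min\{Q^{1/2},(C^\star/Q)^{1/2}\}$, so that the total saving becomes $\min\{p^{1/2}Q^{5/2}/C^\star,\,p^{1/2}Q^{3/2}/C^{\star 1/2}\}$; this exceeds the target $CC^{\star 1/2}/pQ^{3/2}$ precisely in the hypothesized range $C^\star<D_1^{1/2}W^{1/12}p^{-A\theta}$ with $Q>p^{1/2}$.

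For the small-$C$ regime, where $\mathcal{J}$ oscillates and Voronoi is ineffective, the idea is to separate the arithmetic of $c$ from the arithmetic of $m$ via the identity $d=4m-pn^2$. Bounding
\begin{align*}
\mathcal{O}_1(C,C^\star)\ll \sum_{q\in\mathcal{Q}}\sum_{d\sim D}\Bigl|\sum_{n\ll \mathcal{N}}\lambda_f(d+pn^2)\mathcal{J}\Bigr|\,\Bigl|\sum_{c\sim C^\star}\Bigl(\frac{dpq}{c}\Bigr)e\Bigl(\frac{\overline{c}d}{pq}\Bigr)\Bigr|
\end{align*}
and applying Cauchy--Schwarz in $d$, the problem splits as $\mathcal{Z}^{1/2}\Omega^{1/2}$ with $\mathcal{Z}$ the second moment of the quadratic-exponential inner sum and $\Omega=\sum_d|\sum_n \lambda_f(d+pn^2)\mathcal{J}|^2$. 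For $\mathcal{Z}$, the plan is to open the square and apply Poisson on $d$ modulo $c_1c_2$, producing a saving of $\min\{C^\star,C/C^\star\}$. For $\Omega$, I would invoke Proposition~\ref{shifted-prop} of Section~\ref{sec-shifted}, which supplies the crucial saving $\min\{C^{3/2}/p^{3/2}Q^2,C/Q^2\}$; under the standing assumption $Q<p$ the first term dominates. A direct check then shows that the resulting total saving $\min\{C^{3/4}C^{\star 1/2}/p^{3/4}Q,\,C^{5/4}/p^{3/4}QC^{\star 1/2}\}$ beats $CC^{\star 1/2}/pQ^{3/2}$ provided $C\ll p^{1-\delta}Q^2$ and $C^\star<C^{1/2}p^{-\delta}$, which, after using $C^{1/2}\gtrsim D_1^{1/2}W^{1/12}$ (from the decomposition $d=d_1(d_2d_3)^2=uw$), is exactly the assumption imposed.

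The main obstacle is the small-$C$ analysis: everything hinges on Proposition~\ref{shifted-prop}, which is an averaged shifted convolution estimate $\sum_d \lambda_f(d)\lambda_f(d+pr)$ with the shift $pr$ a multiple of the level of $f$. The proof of that proposition (deferred to Section~\ref{sec-shifted}) will require a delta-method treatment exploiting the conductor drop coming from the congruence $m\equiv d\pmod{p}$, together with Weil-type square-root cancellation for Sali\'e and Kloosterman sums, and it must carry the oscillatory factor from $\mathcal{J}$ through the analysis since that oscillation controls how the saving in $\Omega$ depends on $C$. Once this input is in hand, the rest of the argument is a careful numerical verification of the two regimes against the target $CC^{\star 1/2}/pQ^{3/2}$ in the allowed parameter windows.
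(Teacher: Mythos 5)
Your strategy coincides with the paper's: execute the $\psi$-sum, treat the transition range $C\asymp pQ^2$ by reciprocity, Gauss sums, Voronoi in $m$, then Cauchy and Poisson in $(m,n)$, and treat smaller $C$ by Cauchy--Schwarz into $\mathcal{Z}^{1/2}\Omega^{1/2}$ with Poisson for $\mathcal{Z}$ and Proposition~\ref{shifted-prop} for $\Omega$; this is exactly the content of Lemma~\ref{first-bound-for-O1star}, Lemma~\ref{third-bound-for-O1star} and the Claim inside the paper's proof (the non-generic $\psi$-terms are in fact disposed of there by the same Cauchy/Poisson argument rather than by the Voronoi argument of Proposition~\ref{prop1}, but they are easier either way).

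The genuine problem is the last step of your small-$C$ regime, where you pass from the needed condition $C^\star<C^{1/2}p^{-\delta}$ to the stated hypothesis via ``$C^{1/2}\gtrsim D_1^{1/2}W^{1/12}$ from the decomposition $d=d_1(d_2d_3)^2=uw$''. That inequality is false in general, and the decomposition gives no comparison with $C$: one only has $D_1\ll UW^{1/3}$, hence $D_1^{1/2}W^{1/12}\ll U^{1/2}W^{1/4}\ll D^{1/2}$, and by \eqref{l-def} $D\asymp CQ^2p^2J^2/(N_\star N\ell)$, which exceeds $C$ by a power of $p$ as soon as $N_\star$ is below its maximal size $\asymp p^2Q^2/N$; so the hypothesis $C^\star<D_1^{1/2}W^{1/12}p^{-A\theta}$ does not force $C^\star<C^{1/2}p^{-\delta}$, and your reduction is valid only in the worst-case normalisation of the sketch ($N_\star= pQ^2$, $\ell=j=1$). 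In the paper the hypothesis is used in the opposite direction: the dangerous diagonal contribution in the Cauchy--Schwarz bound carries the factor $W^{1/4}C^\star$ (see \eqref{1-term}), and one uses $C^\star<D_1^{1/2}W^{1/12}p^{-6\theta}\ll U^{1/2}W^{1/4}p^{-6\theta}$ to get $W^{1/4}C^\star\ll(UW)^{1/2}p^{-6\theta}\ll D^{1/2}p^{-6\theta}$, after which the $C$- and $N_\star$-dependence of the $\Omega$-bound from Proposition~\ref{shifted-prop} absorbs the ratio $D/C$ uniformly in $N_\star$. Carrying this out forces you to keep the $j,\lambda,w,\delta$ bookkeeping you dropped: the Poisson step in $\mathcal{Z}$ is modulo $pqc_1c_2$ (not $c_1c_2$), its diagonal saves the extra factor $J\Lambda$, and the complementary case $W(J\Lambda)^2\gg p^{24\theta}$ must be separated out, exactly as in Lemma~\ref{first-bound-for-O1star}; likewise the square-free restriction on $d_1$ needs the perturbation argument of the paper's final paragraph. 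So the plan is the right one, but as written the verification that the stated hypothesis on $C^\star$ suffices does not close.
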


This proposition implies that we need to take $C^\dagger$ in the dual sum to range upto $p^{1+\delta}QD_1^{1/2}/W^{1/12}$. Roughly speaking, this is of size $p^{3/2}Q^{2}$.
We note that trivial estimation at this stage, assuming square-root cancellation in the sum over $\psi$, yields
\begin{align*}
\mathcal{O}_1(C,C^\star)\ll \frac{(NpQ)^{1/2}J^2}{\ell^{3/2}}\;C^{\star 1/2}C.
\end{align*}
This is already satisfactory if $C^{\star 1/2}C\ll \ell^{1/2}p^{1-\theta}Q^{3/2}/J^2$. In general, our task will be to save $C^{\star 1/2}CJ^2/\ell^{1/2}p^{1-\theta}Q^{3/2}$. Since $C^\star$ is taken to be smaller than square-root of the initial size of the modulus \eqref{c-sum-length-initial}, we will be able to show that there is a way to save by applying the Voronoi summation on the sum over $m$. But there are other factors which boost up the conductor of the sum, and so our first target will be to control the sizes of these factors. The first lemma in this section serves this purpose. \\ 

Our first step will be an explicit evaluation of the sum over $\psi$ in \eqref{od-sum-d-smooth-2-after-c-sum-1}.
Opening the Gauss sum and the finite Euler factor $E_\psi$ we arrive at
\begin{align*}
\left(\frac{d}{q}\right)&\sideset{}{^\#}\sum_{\delta_1,\delta_2}\rho(\dots)\left(\frac{d_1}{\delta_1}\right)\;\sum_{\alpha\bmod{pq}}\;\left(\frac{\alpha}{q}\right) e\left(\frac{\alpha}{pq}\right)\\
&\times \sideset{}{^\dagger}\sum_{\psi\bmod{pq}}\:\bar{\psi}(\alpha\:\bar{\ell}\bar{d} j^2\lambda^2c\delta),
\end{align*}
where $\delta=\delta_1\delta_2$.
Now the formula \eqref{res-psi-sum} yields a generic term given by
\begin{align*}
\phi(pq)\sideset{}{^\#}\sum_{\delta_1,\delta_2}\rho(\dots)\left(\frac{d_1}{\delta_1}\right)&\left(\frac{\ell c\delta}{q}\right)\:e\left(\pm\frac{\bar{j}^2\bar{\lambda}^2\bar{c}\bar{\delta}\ell d}{pq}\right)
\end{align*}
and two non-generic terms - 
\begin{align*}
\phi(q)\sideset{}{^\#}\sum_{\delta_1,\delta_2}\rho(\dots)\left(\frac{d_1}{\delta_1}\right)&\left(\frac{\ell c\delta}{q}\right)\:e\left(\pm\frac{\bar{p}\bar{j}^2\bar{\lambda}^2\bar{c}\bar{\delta}\ell d}{q}\right)\left\{-1+p^{1/2}\left(\frac{\ell dc\delta q}{p}\right)\right\}
\end{align*}
and
\begin{align*}
\phi(p)\sideset{}{^\#}\sum_{\delta_1,\delta_2}\rho(\dots)\left(\frac{d_1}{\delta_1}\right)&\left(\frac{\ell c\delta}{q}\right)\:e\left(\pm\frac{\bar{q}\bar{j}^2\bar{\lambda}^2\bar{c}\bar{\delta}\ell d}{p}\right)\left\{-1+q^{1/2}\left(\frac{\ell dc\delta p}{q}\right)\right\}.
\end{align*}
 The contribution of the generic term to $\mathcal{O}_1(C,C^\star)$ is given by
\begin{align}
\label{to-estimate-generic-term}
\mathcal{O}_{\text{gen}}=\frac{\sqrt{\ell}N^{9/2}N_\star^{3/2}}{\Lambda Jp^{5}Q^{9/2}}\;&\sum_{q\in\mathcal{Q}}\;\mathop{\sum\sum}_{\substack{j\sim J\\\lambda\sim \Lambda}}\: \mathop{\sum}_{\substack{m=1}}^\infty \sum_{n\in\mathbb{Z}}\:\lambda_f(m)\;\sum_{c=1}^\infty 
\frac{(\tfrac{d_1pq}{c})}{c^{1/2}}V\left(\frac{c}{C^\star}\right)\mathcal{J}\\
\nonumber &\times \sideset{}{^\#}\sum_{\delta_1,\delta_2}\rho(\dots)\left(\frac{d_1}{\delta_1}\right)\;\left(\frac{\ell \delta}{q}\right) e\left(\frac{\bar{j}^2\bar{\lambda}^2\bar{\delta}\bar{c}\ell d}{pq}\right).
\end{align}
(Note that the sum over $j$ and $\lambda$ are restricted by the coprimality condition $(j\lambda,pq)=1$. But instead of mentioning it explicitly here, we adopt the convention given in Remark~\ref{wt-convention}.)
The contributions of the non-generic terms to $\mathcal{O}_1(C,C^\star)$ are  dominated by
\begin{align}
\label{non-generic-one}
\mathcal{O}_{\text{non-gen},1}= &\frac{\sqrt{\ell}N^{9/2}N_\star^{3/2}}{\Lambda Jp^{11/2}Q^{9/2}}\;\sum_{q\in\mathcal{Q}}\:\mathop{\sum}_{\substack{d=uw}} \;\sum_{\delta|wr(w)}\left|\mathop{\sum}_{\substack{n}}\lambda_f(d+pn^2)F(d,n)\right|\\
\nonumber &\times \left|\sum_{j\sim J}\sum_{\lambda\sim \Lambda}\:\sum_{c\sim C^\star} \frac{\nu_1(c,q)}{c^{1/2}}\left(\frac{uv}{c}\right)e\left(\frac{\bar{j}^2\bar{\lambda}^2\bar{c}\bar{\delta}\bar{p}\ell uw}{q}\right)\right|,
\end{align}
and
\begin{align}
\label{non-generic-two}
\mathcal{O}_{\text{non-gen},2}= &\frac{\sqrt{\ell}N^{9/2}N_\star^{3/2}}{\Lambda Jp^{5}Q^{5}}\;\sum_{q\in\mathcal{Q}}\:\mathop{\sum}_{\substack{d=uw}} \;\sum_{\delta|wr(w)}\left|\mathop{\sum}_{\substack{n}}\lambda_f(d+pn^2)F(d,n)\right|\\
\nonumber &\times \left|\sum_{j\sim J}\sum_{\lambda\sim \Lambda}\:\sum_{c\sim C^\star} \frac{\nu_2(c,q)}{c^{1/2}}\left(\frac{uv}{c}\right)e\left(\frac{\bar{j}^2\bar{\lambda}^2\bar{c}\bar{\delta}\bar{p}\ell uw}{q}\right)\right|.
\end{align}
Here $\nu_1(c,q)$ takes two possible values $(\frac{p^iq}{c})$ with $i=1,2$, and $\nu_2(c,q)$ takes two possible values $(\frac{pq^i}{c})$ with $i=1,2$. Also the weight function is given by
\begin{align}
\label{wt-shift-conv-sum}
F(d,n)=&W\left(\frac{n}{\mathcal{N}}\right) V\left(\frac{(d+pn^2)\ell^2}{M}\right)\\
\nonumber &\times \int \;V(y)e\left(\frac{N_\star(\sqrt{d+pn^2}-\sqrt{pn^2})y}{CQ\sqrt{p}J^2}\right)\mathrm{d}y,
\end{align}
where $V$ are bump functions with support $[1,2]$ and $W$ is a bump function with support $[-1,1]$.\\

In our first lemma we will show that we have a satisfactory bound when  $j$, $\lambda$ are not `too small' or when the power-full part of $d$ is not `too small' or when $C$ is not `too big'.\\

\begin{lemma}
\label{first-bound-for-O1star}
Let $\theta<1/24$.
Suppose $C^\star<D_1^{1/2}W^{1/12}p^{-6\theta}$ and $p>Q> p^{1/2}$. Then the bound \eqref{seek-bd-off-diag-dual1} holds for $\mathcal{O}_{\text{gen}}$ (as given in \eqref{to-estimate-generic-term}) if either
$C\ll p^{1-24\theta}Q^2/J^2$ or if
$W(J\Lambda)^2\gg p^{24\theta}$.
\end{lemma}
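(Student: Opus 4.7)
The plan is to execute the reciprocity--Voronoi--Cauchy--Poisson strategy from the sketch in Section~4, carefully tracking how the parameters $J,\Lambda,D_1,W,C^\star$ enter the conductor. First, in the generic term \eqref{to-estimate-generic-term}, I would apply the reciprocity relation
\[
e\!\left(\frac{\bar c\,\bar j^2\bar\lambda^2\bar\delta\,\ell d}{pq}\right)=e\!\left(\frac{\bar j^2\bar\lambda^2\bar\delta\,\ell d}{cpq}\right)\,e\!\left(-\frac{\overline{pq}\,\bar j^2\bar\lambda^2\bar\delta\,\ell d}{c}\right).
\]
Because $\ell d\ll \ell D \ll p^\varepsilon C$ and $C^\star\ll C$, the first factor varies slowly and can be absorbed into $\mathcal{J}$ without losing oscillation. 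What remains is a Jacobi symbol $(\tfrac{d_1pq}{c})$ together with an additive character modulo $c$ in the variable $d=4m-pn^2$.

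Second, I would open the quadratic symbol $(\tfrac{d}{c})$ via Gauss sums to obtain a pure additive character modulo $c$ in $m$, and then apply the Voronoi summation formula on the $m$-sum (of length $\sim M/\ell^2\sim pQ^2/\ell^2$) with modulus $c\sim C^\star$, which is coprime to the level $p$ of $f$. The hypothesis $C^\star<D_1^{1/2}W^{1/12}p^{-6\theta}$ ensures that the dual $m^\star$-sum, of length $\sim C^{\star 2}\ell^2/M$, is genuinely shorter than the original, giving a saving of roughly $p^{1/2}Q/(\ell C^\star)$ per $c$, or $p^{1/2}Q^2/C^\star$ once the $c$-sum is averaged trivially.

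Third, after Voronoi I would estimate $|\lambda_f(m^\star)|\ll p^\varepsilon$ trivially and then apply Cauchy--Schwarz in the outer pair $(m^\star,n)$, followed by Poisson summation on these variables modulo the new character modulus. This extracts a further saving of $\min\{Q^{1/2},(C^\star/Q)^{1/2}\}$, producing a total saving of $\min\{p^{1/2}Q^{5/2}/C^\star,\;p^{1/2}Q^{3/2}/C^{\star 1/2}\}$ over the trivial bound. A direct comparison with the required saving $C^{\star 1/2}CJ^2/(\ell^{1/2}p^{1-\theta}Q^{3/2})$ then shows that the bound \eqref{seek-bd-off-diag-dual1} follows whenever $CJ^2\ll p^{1-24\theta}Q^2$, which is exactly the first alternative hypothesis. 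In the complementary regime where $C$ is large, one uses instead that the character sum in $(j,\lambda,\delta,q)$ emerging after Poisson exhibits additional cancellation of size a power of $W(J\Lambda)^2$: when $W(J\Lambda)^2\gg p^{24\theta}$ this extra factor absorbs the deficit, yielding the lemma in the second case.

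The main technical obstacle will be the bookkeeping of the conductor through Voronoi: the modulus of the character attached to $m$ is not just $c$ but involves $j^2\lambda^2\delta$, so the dual length and the Hankel transform depend on $C^\star J^2\Lambda^2\sqrt{W}$, and the hypothesis on $C^\star$ is calibrated to keep $(C^\star J^2\Lambda^2 W^{1/2})^2\ell^2/M<1$. Verifying this, together with checking that the small residual phase introduced in the reciprocity step really does not disturb Voronoi, will be the delicate part; once the parameters are aligned, the combination of savings is mechanical.
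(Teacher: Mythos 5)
Your plan runs the reciprocity--Gauss sum--Voronoi--Cauchy--Poisson machine on the $m$-sum, but that is precisely the argument the paper reserves for the \emph{complementary} regime, and it does not close under the hypotheses of this lemma. The savings you quote, $p^{1/2}Q^{2}/C^\star$ from Voronoi and then $\min\{Q^{1/2},(C^\star/Q)^{1/2}\}$ from Cauchy--Poisson, are computed at the transition range $C\asymp pQ^2$, where the phase in $\mathcal{J}$ (see \eqref{int}) is flat. Under the first alternative $C\ll p^{1-24\theta}Q^2/J^2$ the factor $e\bigl(2N_\star\sqrt{m}\,y/(CQ\sqrt{p}J^2)\bigr)$ oscillates in $m$ with normalized frequency of size roughly $N_\star p/(NCJ^2\ell)$, i.e.\ about $C_{\max}/C$ with $C_{\max}$ as in \eqref{c-sum-length-initial}; this is $\gg p^{24\theta}$ and can be a fixed power of $p$, since $C$ is only bounded above in this lemma. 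That oscillation enters the conductor of the $m$-sum, so the dual length after Voronoi is inflated by $(C_{\max}/C)^2$ and the claimed saving collapses exactly when $C$ is small --- which is why the paper applies the Voronoi route only under the opposite constraint $C\gg p^{1-A\theta}Q^2/J^2$ (the ``Claim'' inside the proof of Proposition~\ref{prop-for-o1}), where the dual length $\mathcal{M}\ll p^{2\theta}(1+p^{2A\theta}c_0^2\ell^2/Q^2)$ stays small. So the direction of your first alternative is inverted relative to where your method works, and the comparison ``the bound follows whenever $CJ^2\ll p^{1-24\theta}Q^2$'' is borrowed from a computation that is only valid when $CJ^2\asymp pQ^2$. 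Your calibration claim for $C^\star$ is also not the one that matters: in the paper the hypothesis $C^\star<D_1^{1/2}W^{1/12}p^{-6\theta}$ is used to control $W^{1/4}C^\star\ll(UW)^{1/2}p^{-6\theta}\ll D^{1/2}p^{-6\theta}$, not to force a dual length below $1$.

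The paper's actual proof of this lemma has a different architecture, and the missing ingredient in your proposal is its key input. One switches to the variable $d=4m-pn^2=uw$, dominates \eqref{to-estimate-generic-term} by \eqref{last-one}, and applies Cauchy to separate $\Omega^{1/2}\mathcal{Z}^{1/2}$ as in \eqref{simply-cauchy}: here $\Omega$ (see \eqref{Omega-def}) carries the Fourier coefficients as an averaged shifted convolution sum $\sum_d\lambda_f(d+pn_1^2)\lambda_f(d+pn_2^2)$, bounded nontrivially by Proposition~\ref{shifted-prop} (circle method with the congruence--equation trick exploiting the shift $p(n_1^2-n_2^2)$), while $\mathcal{Z}$ carries the characters and is treated by opening the square and applying Poisson to the square-free part $u$ of $d$ modulo $pqc_1c_2$, with a diagonal/off-diagonal count giving $\mathcal{Z}\ll p^\varepsilon\sqrt{W}J^2\Lambda^2(QC^{\star2}+U+UQ/(J\Lambda))$. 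It is exactly this combination that produces the dichotomy in the statement: the final bound contains the factor $\bigl(W^{-1/4}Q^{-1/2}+W^{-1/4}(J\Lambda)^{-1/2}\bigr)\bigl((CJ^2/\ell)^{1/4}+N_\star^{1/4}Q^{-1/2}\bigr)$, which is acceptable either because $C$ is small or because $W(J\Lambda)^2$ is large. Your proposal replaces this with an unspecified ``additional cancellation of size a power of $W(J\Lambda)^2$'' in a character sum after Poisson on $(m^\star,n)$; no such mechanism is identified, and without the shifted-convolution saving in $\Omega$ and the Poisson-in-$u$ analysis of $\mathcal{Z}$ there is no route to the second alternative at all. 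As written, the proposal therefore has a genuine gap in both branches of the lemma.
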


\begin{proof}
Recall that $4m-pn^2=d=uw$. To simplify the notations a bit, we will replace $m$ by $d+pn^2$ (ignoring $4$). 
%Integrating-by-parts once the above integral essentially reduces to
%\begin{align}
%\frac{CQ\sqrt{p}j^2}{N_\star(\sqrt{d+pn^2}-\sqrt{pn^2})}&\int_0^\infty\int_\mathbb{R}W\left(\frac{(d+pn^2)\ell^2}{M}\right)V'\left(\frac{qy}{Q}\right)V(z)\\
%\nonumber &\times e\left(\frac{N_\star\sqrt{d+pn^2}y}{CzQ\sqrt{p}j^2}-\frac{N_\star ny}{CzQj^2}\right)z^{1/2}\mathrm{d}y\mathrm{d}z,
%\end{align}
With this the expression in \eqref{to-estimate-generic-term} is dominated by
\begin{align}
\label{last-one}
&\frac{\sqrt{\ell}N^{9/2}N_\star^{3/2}}{\Lambda Jp^{5}Q^{9/2}}\;\sum_{q\in\mathcal{Q}}\:\mathop{\sum}_{\substack{d=uw}} \;\sum_{\delta|wr(w)}\left|\mathop{\sum}_{\substack{n}}\lambda_f(d+pn^2)F(d,n)\right|\\
\nonumber &\times \left|\sum_{j\sim J}\sum_{\lambda\sim \Lambda}\:\sum_{c\sim C^\star} \frac{(\tfrac{uvpq}{c})}{c^{1/2}}e\left(\frac{\bar{j}^2\bar{\lambda}^2\bar{c}\bar{\delta}\ell uw}{pq}\right)\right|,
\end{align}
where $F(d,n)$ is as given in \eqref{wt-shift-conv-sum}.
%Here $y$ and $z$ are fixed in a compact interval in $\mathbb{R}_+$.
\\

Consider the dyadic segment $u\sim U$ and $w\sim W$ with $UW\ll D$. Applying Cauchy inequality we see that \eqref{last-one} is dominated by
\begin{align}
\label{simply-cauchy}
&\frac{\sqrt{\ell}N^{9/2}N_\star^{3/2}}{\Lambda J p^{5}Q^{4}}\;\Omega^{1/2}\;\mathcal{Z}^{1/2}
\end{align}
where
\begin{align*}
\mathcal{Z}&=\sum_{q\in\mathcal{Q}}\mathop{\sum}_{\substack{w}} \;\sum_{\delta|wr(w)}\; \sum_u \left|\sum_{j\sim J}\sum_{\lambda\sim \Lambda}\:\sum_{c=1}^\infty \frac{(\tfrac{uvpq}{c})}{c^{1/2}}e\left(\frac{\bar{j}^2\bar{\lambda}^2\bar{c}\bar{\delta}\ell uw}{pq}\right)\right|^2,
\end{align*}
and
\begin{align}
\label{Omega-def}
\Omega=\mathop{\sum}_{\substack{w}} \;\sum_{\delta|wr(w)}\;\sum_u\left|\mathop{\sum}_{\substack{n}}\lambda_f(uw+pn^2)F(uw,n)\right|^2.
\end{align}
The trivial bound for $\Omega$ is given by $O(\sqrt{W}Up^{2+\varepsilon} Q^2/N^2\ell^2)$. In Proposition~\ref{shifted-prop} we show that
\begin{align}
\label{omega-bd-from-prop}
\Omega\ll \frac{p^{1+10\theta+\varepsilon}Q^3}{\ell^3}\:\left(\frac{N_\star^{1/2}}{(CJ^2\ell)^{1/2}}+\frac{N_\star}{CJ^2Q}\right).
\end{align}\\

In the expression for $\mathcal{Z}$ we are allowed to drop the arithmetic conditions on $u$ (like square-freeness), and we can also introduce a smooth bump function. We then open the absolute square to arrive at
\begin{align*}
\mathcal{Z}\leq \sum_{q\in\mathcal{Q}}\mathop{\sum}_{\substack{w}} &\;\sum_{\delta|wr(w)}\;\mathop{\sum\sum}_{j_1,j_2\sim J}\: \mathop{\sum\sum}_{\lambda_1,\lambda_2\sim \Lambda}\:\mathop{\sum\sum}_{c_1,c_2\sim C^\star}\:\frac{(\tfrac{vpq}{c_1c_2})}{(c_1c_2)^{1/2}}\\
&\times \sum_{u\in\mathbb{Z}}\; \left(\frac{u}{c_1c_2}\right)e\left(\frac{\bar{\delta}\ell uw(\bar{j}_1^2\bar{\lambda}_1^2\bar{c}_1-\bar{j}_2^2\bar{\lambda}_2^2\bar{c}_2)}{pq}\right)\;V\left(\frac{u}{U}\right).
\end{align*}
Then we apply the Poisson summation on the sum over $u$ with modulus $pqc_1c_2$. This yields a congruence modulo $pq$ and Gauss sums with modulus $c_1c_2$. Indeed Poisson yields
\begin{align*}
\sum_{u\in\mathbb{Z}}\; \left(\frac{u}{c_1c_2}\right)&e\left(\frac{\bar{\delta}\ell uw(\bar{j}_1^2\bar{\lambda}_1^2\bar{c}_1-\bar{j}_2^2\bar{\lambda}_2^2\bar{c}_2)}{pq}\right)\;V\left(\frac{u}{U}\right)\\
&=\frac{U}{c_1c_2pq}\sum_{u\in\mathbb{Z}}\;\mathcal{C}\;\hat{V}\left(\frac{Uu}{c_1c_2pq}\right)
\end{align*}
where the character sum is given by
\begin{align*}
\mathcal{C}=\sum_{a\bmod{c_1c_2pq}}\left(\frac{a}{c_1c_2}\right)e\left(\frac{\bar{\delta}\ell aw(\bar{j}_1^2\bar{\lambda}_1^2\bar{c}_1-\bar{j}_2^2\bar{\lambda}_2^2\bar{c}_2)}{pq}+\frac{au}{c_1c_2pq}\right),
\end{align*}
and $\hat{V}$ is the Fourier transform of $V$. Since $V$ is a compactly supported bump function, it follows that the contribution of $u$ with $|u|\gg C^{\star 2}p^{1+\varepsilon}Q/U$ is negligibly small. Also the character sum splits as a product of two character sums. The one modulo $pq$ vanishes unless we have the congruence relation 
\begin{align*}
\bar{\delta}\ell w(\bar{j}_1^2\bar{\lambda}_1^2\bar{c}_1-\bar{j}_2^2\bar{\lambda}_2^2\bar{c}_2)+u\bar{c}_1\bar{c}_2\equiv 0\bmod{pq},
\end{align*}
in which case the character sum is equal to $pq$. The character sum modulo $c_1c_2$, on the other hand, is a Gauss sum 
\begin{align*}
\left(\frac{pq}{c_1c_2}\right)\;\sum_{a\bmod{c_1c_2}}\left(\frac{a}{c_1c_2}\right)e\left(\frac{au}{c_1c_2}\right).
\end{align*}
Let $c_1c_2=c_3c_4^2$ with $c_3$ square-free. 
Then the Gauss sum is bounded by $O(c_3^{1/2}c_4^2)= O(p^\varepsilon C^\star c_4)$. Consequently we get
\begin{align*}
\mathcal{Z}\leq \frac{p^\varepsilon U}{C^{\star 2}}\:\mathop{\sum}_{\substack{w}} &\;\sum_{\delta|wr(w)}\:\mathop{\sum_{q\in\mathcal{Q}}\;\mathop{\sum\sum}_{j_1,j_2\sim J} \mathop{\sum\sum}_{\lambda_1,\lambda_2\sim \Lambda}\:\mathop{\sum\sum}_{c_1,c_2\sim C^\star}\: \sum_{|u|\ll C^{\star 2}p^{1+\varepsilon}Q/U}}_{\substack{\bar{\delta}\ell w(\bar{j}^2_1\bar{\lambda}_1^2\bar{c}_1-\bar{j}^2_2\bar{\lambda}_2^2\bar{c}_2)+u\bar{c}_1\bar{c}_2\equiv 0\bmod{pq}}}\; c_4.
\end{align*}
We are now left with a weighted counting problem. First
consider the diagonal case where we have the equality 
\begin{align*}
\ell w(j^2_1\lambda_1^2c_1-j^2_2\lambda_2^2c_2)-u\delta (j_1j_2\lambda_1\lambda_2)^2=0.
\end{align*}
Here $u$ is determined uniquely once the other values are given. Moreover we get $j_1\lambda_1|\ell wj_2^2\lambda_2^2c_2$, which implies that there are $p^\varepsilon$ many possibilities for $(j_1,\lambda_1)$ when the other values are given. Consequently the contribution of the diagonal to $\mathcal{Z}$ is given by
\begin{align*}
\mathcal{Z}_0&\ll \frac{p^\varepsilon U}{C^{\star 2}}\:\mathop{\sum}_{\substack{w}} \;\sum_{\delta|wr(w)}\:\sum_{q\in\mathcal{Q}}\;\mathop{\sum}_{j_2\sim J} \mathop{\sum}_{\lambda_2\sim \Lambda}\:\mathop{\sum}_{c_3\ll C^{\star 2}}\;\sum_{c_4\ll C^\star/c_3^{1/2}} c_4\\
&\ll p^\varepsilon\:UW^{1/2}QJ\Lambda,
\end{align*}
 resulting in a saving of $C^\star J\Lambda$ in the diagonal. In the off-diagonal where we do not have the equality we proceed in the following way. First we observe that there are $p^\varepsilon$ many possibilities for $q$. Then we count the number of $u$ modulo $p$. With this we arrive at 
\begin{align*}
&\frac{p^\varepsilon U}{C^{\star 2}}\:\mathop{\sum}_{\substack{w}} \;\sum_{\delta|wr(w)}\;\mathop{\sum\sum}_{j_1,j_2\sim J} \mathop{\sum\sum}_{\lambda_1,\lambda_2\sim \Lambda}\:\mathop{\sum}_{c_3\ll C^{\star 2}}\;\sum_{c_4\ll C^\star/c_3^{1/2}} c_4\:\left(1+\frac{C^{\star 2}Q}{U}\right)\\
&\ll p^\varepsilon\:UW^{1/2}J^2\Lambda^2\:\left(1+\frac{C^{\star 2}Q}{U}\right)\ll p^\varepsilon\:UW^{1/2}J^2\Lambda^2+p^\varepsilon\:C^{\star 2}Q W^{1/2}J^2\Lambda^2.
\end{align*} 
So here we have saved at least $\min\{QC^\star,U/C^{\star}\}$. Consequently we have shown that
\begin{align*}
\mathcal{Z}\ll p^\varepsilon\sqrt{W}J^2\Lambda^2\left( QC^{\star 2}+U+\frac{UQ}{J\Lambda}\right).
\end{align*}\\

Now using the bounds for $\Omega$ as given in \eqref{omega-bd-from-prop} and the above bound for $\mathcal{Z}$, we see that \eqref{simply-cauchy} is dominated by
\begin{align}
\label{simply-cauchy-2}
\nonumber p^{5\theta+\varepsilon}\frac{N^{9/2}N_\star^{3/2}}{\ell p^{9/2}Q^{5/2}}&\;W^{1/4}\;\left(Q^{1/2}C^{\star}+U^{1/2}+\frac{(UQ)^{1/2}}{(J\Lambda)^{1/2}}\right)\\
\times&\left(\frac{N_\star^{1/4}}{(CJ^2\ell)^{1/4}}+\frac{N_\star^{1/2}}{(CJ^2Q)^{1/2}}\right).
\end{align}
Since $UW\ll D=p^\varepsilon CQ^2p^2J^2/N_\star N\ell$ we get that
\begin{align}
\label{2-terms}
\nonumber &p^{5\theta+\varepsilon}\frac{N^{9/2}N_\star^{3/2}}{\ell p^{9/2}Q^{5/2}}\;W^{1/4}\;\left(U^{1/2}+\frac{(UQ)^{1/2}}{(J\Lambda)^{1/2}}\right)\:\left(\frac{N_\star^{1/4}}{(CJ^2\ell)^{1/4}}+\frac{N_\star^{1/2}}{(CJ^2Q)^{1/2}}\right)\\
&\ll p^{5\theta+\varepsilon}\frac{N^{4}N_\star^{5/4}}{\ell^{3/2} p^{7/2}Q}\;\left(\frac{1}{W^{1/4}Q^{1/2}}+\frac{1}{W^{1/4}(J\Lambda)^{1/2}}\right)\:\left(\frac{(CJ^2)^{1/4}}{\ell^{1/4}}+\frac{N_\star^{1/4}}{Q^{1/2}}\right).
\end{align}
Suppose we are in the situation where $Q\gg p^{12\theta}$ and either 
$$C\ll p^{1-24\theta}Q^2/J^2,\;\;\;\text{or}\;\;\; 
W(J\Lambda)^2\gg p^{24\theta}.$$ 
It then follows that
\begin{align*}
\left(\frac{1}{W^{1/4}Q^{1/2}}+\frac{1}{W^{1/4}(J\Lambda)^{1/2}}\right)\:\left(\frac{(CJ^2)^{1/4}}{\ell^{1/4}}+\frac{N_\star^{1/4}}{Q^{1/2}}\right)\ll p^{-6\theta+\varepsilon}\:(pQ^2)^{1/4}.
\end{align*}
Consequently the expression in \eqref{2-terms} is dominated by the right hand side of \eqref{seek-bd-off-diag-dual1}. Now we consider the remaining term in \eqref{simply-cauchy-2} which is given by
\begin{align}
\label{1-term}
p^{5\theta+\varepsilon}\frac{N^{9/2}N_\star^{3/2}}{\ell p^{9/2}Q^{2}}&\;W^{1/4}\:C^{\star}\left(\frac{N_\star^{1/4}}{(CJ^2\ell)^{1/4}}+\frac{N_\star^{1/2}}{(CJ^2Q)^{1/2}}\right).
\end{align}
Suppose we have 
$$C^\star<D_1^{1/2}W^{1/12}p^{-6\theta}\ll U^{1/2}W^{1/4}p^{-6\theta}.$$
So that 
\begin{align*}
W^{1/4}C^\star\ll (UW)^{1/2}p^{-6\theta}\ll p^\varepsilon \frac{(CJ^2)^{1/2}Qp}{(N_\star N\ell)^{1/2}}p^{-6\theta}.
\end{align*} 
Consequently the bound on the right hand side of \eqref{seek-bd-off-diag-dual1} holds for the expression in \eqref{1-term}. The lemma follows.
\end{proof}

\bigskip

We will now consider the non-generic terms \eqref{non-generic-one} and \eqref{non-generic-two}. \\

\begin{lemma}
\label{third-bound-for-O1star}
Let $\theta<1/24$.
Suppose $C^\star<D_1^{1/2}W^{1/12}p^{-6\theta}$ and $p>Q> p^{1/2}$. Then the bound \eqref{seek-bd-off-diag-dual1} holds for $\mathcal{O}_{\mathrm{non-gen},i}$ with $i=1,2$(as given in \eqref{non-generic-one} and \eqref{non-generic-two}).
\end{lemma}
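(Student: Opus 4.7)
The plan is to mimic the proof of Lemma~\ref{first-bound-for-O1star} for each non-generic contribution, exploiting the structural savings that these terms carry. The prefactor of $\mathcal{O}_{\mathrm{non-gen},1}$ in \eqref{non-generic-one} is smaller than that of $\mathcal{O}_{\mathrm{gen}}$ by a factor of $p^{1/2}$, while that of $\mathcal{O}_{\mathrm{non-gen},2}$ in \eqref{non-generic-two} is smaller by a factor of $Q^{1/2}$; these come directly from the $\phi(q)$ and $\phi(p)$ in the non-generic terms of \eqref{res-psi-sum} as opposed to the $\phi(pq)$ of the generic decomposition. The goal is to show that these prefactor gains, possibly combined with extra cancellation hidden in the residual $q$-sum, are enough to produce \eqref{seek-bd-off-diag-dual1} even in parameter ranges where the auxiliary conditions of Lemma~\ref{first-bound-for-O1star} fail.

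First I apply Cauchy--Schwarz exactly as in \eqref{simply-cauchy}, factoring each $\mathcal{O}_{\mathrm{non-gen},i}$ as a product of $\Omega^{1/2}$, where $\Omega$ is the sum of \eqref{Omega-def} and is bounded by Proposition~\ref{shifted-prop} via \eqref{omega-bd-from-prop}, and $\mathcal{Z}_i^{1/2}$ where $\mathcal{Z}_i$ is the analogue of $\mathcal{Z}$ whose exponential has modulus $q$ (for $i=1$) or modulus $p$ (for $i=2$) in place of $pq$. Opening the absolute square in $\mathcal{Z}_i$ and applying Poisson summation on $u$ with modulus $qc_1c_2$ or $pc_1c_2$ respectively yields, by the Chinese remainder theorem, a congruence modulo $q$ or $p$ coupled with a Gauss sum modulo $c_1c_2$ which is bounded by $\ll p^\varepsilon C^\star c_4$ exactly as in the generic case; the dual variable $k$ is truncated to $|k|\ll p^\varepsilon QC^{\star 2}/U$ or $|k|\ll p^\varepsilon pC^{\star 2}/U$ respectively, and the diagonal contribution (the integer identity $\ell w(j_1^2\lambda_1^2 c_1-j_2^2\lambda_2^2 c_2)=k\delta(j_1j_2\lambda_1\lambda_2)^2$) matches the generic bound $O(UW^{1/2}QJ\Lambda)$ because it is insensitive to the modulus.

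In the off-diagonal of $\mathcal{Z}_1$, the divisibility $q\mid(A+kBp)$ fixes $q$ to a divisor of a polynomial-size integer, so only $p^\varepsilon$ values of $q\in\mathcal{Q}$ contribute per $k$, and together with the reduced $|k|$-range this yields
\[
\mathcal{Z}_1\ll p^\varepsilon\sqrt{W}J^2\Lambda^2\bigl(C^{\star 2}+U+UQ/(J\Lambda)\bigr),
\]
which is a factor of $Q$ smaller than $\mathcal{Z}$ in the leading term. In $\mathcal{Z}_2$ the mod-$p$ congruence instead fixes $k\bmod p$, so the sum over $q\in\mathcal{Q}$ is a priori free and contributes an extra factor of $Q$ in the off-diagonal; to compensate, one exploits the presence of $\bar q\bmod p$ in the residual exponential of \eqref{non-generic-two}, which turns the $q$-sum (after opening the absolute square) into a Kloosterman-type sum over primes in $[Q,2Q]$ that admits a $Q^{1/2}$-saving via the Weil bound combined with quadratic reciprocity on the Jacobi factor $\nu_2(c,q)$. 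This restores $\mathcal{Z}_2$ to essentially the same bound as $\mathcal{Z}_1$.

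Assembling the pieces into the analogue of \eqref{simply-cauchy-2}, the estimate for $\mathcal{O}_{\mathrm{non-gen},1}$ is \eqref{simply-cauchy-2} multiplied by $p^{-1/2}$ with a further $Q^{-1/2}$ improvement in the leading term, and the estimate for $\mathcal{O}_{\mathrm{non-gen},2}$ is \eqref{simply-cauchy-2} multiplied by $Q^{-1/2}$ from the prefactor together with the net $\mathcal{Z}_2$-saving described above. A case analysis parallel to the paragraph following \eqref{2-terms}, now using $p^{1/2}<Q<p$ in place of the auxiliary hypotheses of Lemma~\ref{first-bound-for-O1star}, verifies \eqref{seek-bd-off-diag-dual1} for each admissible choice of $C$, $W$, $J$, $\Lambda$. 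The principal obstacle is the $\mathcal{Z}_2$-analysis: one must verify that the $Q^{1/2}$ prefactor gain together with the Kloosterman-type cancellation in the free $q$-sum is enough to offset the $Q$-loss in the Poisson step, uniformly in the remaining parameters, so that the estimate is sufficient precisely in the range $C\gg p^{1-24\theta}Q^2/J^2$ with $W(J\Lambda)^2\ll p^{24\theta}$ where the generic analysis of Lemma~\ref{first-bound-for-O1star} is inadequate.
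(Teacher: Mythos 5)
Your treatment of $\mathcal{O}_{\mathrm{non\text{-}gen},1}$ is essentially the paper's: Cauchy against $\Omega$, Poisson on $u$, the Gauss-sum bound $\ll p^\varepsilon C^\star c_4$, and the observation that the $p^{1/2}$ gained in the prefactor more than pays for whatever is lost relative to the generic $\mathcal{Z}$. (One miscount there: after fixing the nonzero dual frequency $k$, the congruence modulo $q$ does pin $q$ down to $p^\varepsilon$ choices, but the number of admissible $k$ is $\asymp QC^{\star 2}/U$, so the off-diagonal term is $QC^{\star 2}$, not $C^{\star 2}$; your claimed factor-$Q$ improvement of the leading term is illusory, though harmless, since the cruder bound $\mathcal{Z}_1\ll p^\varepsilon W^{1/2}(J\Lambda)^2(QC^{\star2}+UQ)$ already suffices once the extra $p^{1/2}$ is in hand --- this is exactly how the paper argues, being deliberately ``wasteful'' and simply counting the $u$'s in the congruence class for each fixed $q$.)

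The genuine gap is in your $\mathcal{Z}_2$ step, which is precisely the point you flag as the principal obstacle. There is no extra $Q$-loss to recover: when the additive character has modulus $p$ the Poisson dual range becomes $|k|\ll p^{1+\varepsilon}C^{\star 2}/U$, and the congruence modulo $p$ cuts it back to $O(1+C^{\star 2}/U)$ admissible $k$ for each fixed $(q,c_1,c_2)$ --- the dual length scales with the modulus, so summing trivially over $q\in\mathcal{Q}$ gives $\mathcal{Z}_2\ll p^\varepsilon W^{1/2}(J\Lambda)^2\,Q\,(U+C^{\star 2})$, i.e.\ exactly the same bound as $\mathcal{Z}_1$, and the $Q^{1/2}$ prefactor gain from $\phi(p)$ in place of $\phi(pq)$ then closes the argument ``in exactly the same manner'', which is all the paper does. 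By contrast, the rescue you propose --- a $Q^{1/2}$ saving in the $q$-sum from ``Weil plus quadratic reciprocity'' applied to a Kloosterman-type sum over primes $q\in[Q,2Q]$ --- would not work as stated: after opening the square the $q$-variable enters through $e\bigl(\ell\bar{\delta}\bar{j}^2\bar{\lambda}^2 uw(\bar{q}_1-\bar{q}_2)/p\bigr)$ (and a Jacobi symbol in $c$), so what you face is an incomplete exponential sum over primes in an interval of length $Q<p$ to the fixed modulus $p$; Weil's bound applies to complete sums over residue classes and gives nothing here, and no square-root cancellation over primes in so short a range is available by elementary means. So the proposal substitutes an unnecessary and unsubstantiated cancellation mechanism for the simple counting that actually proves the lemma; as written, the $\mathcal{O}_{\mathrm{non\text{-}gen},2}$ case is not established.
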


\begin{proof}
Indeed applying Cauchy we get
\begin{align}
\label{simply-cauchy-non-gen-1}
\mathcal{O}_{\text{non-gen},1}\ll  &\frac{\sqrt{\ell}N^{9/2}N_\star^{3/2}}{\Lambda Jp^{11/2}Q^{4}}\;\Omega^{1/2}\:\mathcal{Z}_1^{1/2}
\end{align}
where
\begin{align*}
\mathcal{Z}_1=\sum_{q\in\mathcal{Q}}\mathop{\sum}_{\substack{d=uw}} \;\sum_{\delta|wr(w)}
\left|\sum_{j\sim J}\sum_{\lambda\sim \Lambda}\:\sum_{c\sim C^\star} \frac{\nu_1(c,q)}{c^{1/2}}\left(\frac{uv}{c}\right)e\left(\frac{\bar{j}^2\bar{\lambda}^2\bar{c}\bar{\delta}\bar{p}\ell uw}{q}\right)\right|^2.
\end{align*}
Compare with the expression in \eqref{simply-cauchy}. Here we have an extra factor $p^{1/2}$ in the denominator, and the modulus of the additive character inside the absolute value is $q$ in place of $pq$. We will follow the same steps as in the previous proof. We can here take out the sum over $j$, $\lambda$, then insert a smooth weight for the $u$ sum and then apply the Poisson summation formula after opening the absolute value. This yields
\begin{align*}
\mathcal{Z}_1\ll p^\varepsilon \frac{UJ\Lambda}{C^{\star 2}}\sum_{j\sim J}\sum_{\lambda\sim \Lambda}\mathop{\sum}_{\substack{w}} \;\sum_{\delta|wr(w)}
\:\mathop{\sum_{q\in\mathcal{Q}}\mathop{\sum\sum}_{c_1,c_2\sim C^\star}\sum_{|u|\ll p^\varepsilon C^{\star 2}Q/U}}_{\bar{j}^2\bar{\lambda}^2\bar{\delta}\bar{p}\ell w(\bar{c}_1-\bar{c}_2)+u\bar{c}_1\bar{c}_2\equiv 0\bmod{q}} \:c_4.
\end{align*}
Recall that we are writing $c_1c_2=c_3c_4^2$ with $c_3$ square-free. Now we solve the weighted counting problem. But unlike the generic case we can afford to be a little wasteful. We just count the number of $u$ satisfying the congruence. This shows that 
\begin{align*}
\mathcal{Z}_1\ll p^\varepsilon \frac{UJ\Lambda}{C^{\star 2}}&\sum_{j\sim J}\sum_{\lambda\sim \Lambda}\:\sum_{\substack{w}} \;\sum_{\delta|wr(w)}
\:\sum_{q\in\mathcal{Q}}\mathop{\sum\sum}_{c_1,c_2\sim C^\star} \:c_4\left(1+\frac{C^{\star 2}}{U}\right)\\
&\ll p^\varepsilon (J\Lambda)^2W^{1/2}\left(C^{\star 2}Q+UQ\right).
\end{align*}
So here we save $\min\{U/C^\star,C^\star \}$ over the trivial bound.
Compare with the bound we obtained for $\mathcal{Z}_0$ in the proof of the previous lemma. The second term does not have the extra saving of $J\Lambda$, as in the previous case, but we have an extra saving of $p^{1/2}$ already. Consequently we have established the bound \eqref{seek-bd-off-diag-dual1} for the non-generic term $\mathcal{O}_{\text{non-gen}, 1}$. The same bound is then obtained for $\mathcal{O}_{\text{non-gen},2}$ in exactly the same manner. The lemma follows.
\end{proof}

\bigskip

\begin{proof}[Proof of Proposition~\ref{prop-for-o1}]
In the light of Lemma~\ref{first-bound-for-O1star} we only need to tackle the range 
$$
p^{1-24\theta}Q^2/J^2\ll C\ll p^{1+\varepsilon}N_\star/\ell J^2N.
$$
We will first deal with the generic contribution \eqref{to-estimate-generic-term} in the tamed situation, i.e. $j=\lambda=w=1$ (so that $\delta_1=\delta_2=1$). In this case the expression in \eqref{to-estimate-generic-term} reduces to
\begin{align}
\label{o-generic}
\mathfrak{O}_{\text{gen}}=\frac{\sqrt{\ell}N^{9/2}N_\star^{3/2}}{p^{5}Q^{9/2}}\;&\sum_{q\in\mathcal{Q}}\;\left(\frac{\ell}{q}\right)\mathop{\sum}_{\substack{m=1}}^\infty \sum_{n\in\mathbb{Z}}\:\lambda_f(m)\\
\nonumber &\times \sum_{c=1}^\infty 
\frac{(\tfrac{dpq}{c})}{c^{1/2}}\:  e\left(\frac{\bar{c}\ell d}{pq}\right)\:V\left(\frac{c}{C^\star}\right)\;\mathcal{J}.
\end{align}
Suppose we further assume that we do not have any restriction (e.g. square-freeness) on $d$. We will now prove the following claim.\\

%Lemma
\textbf{Claim:}
Suppose $C\gg p^{1-A\theta}Q^2/J^2$ for some constant $A>0$ satisfying $(1+A)\theta<1/4$, and $p^{1/2}<Q<p^{1-6\theta}$, then the bound \eqref{seek-bd-off-diag-dual1} holds for $\mathfrak{O}_{\text{gen}}$ if $C^\star<p^{1/2-2(A+2)\theta}Q/\ell$.\\
%Lemma 

We will apply the Voronoi summation formula on the sum over $m$. Write $c=c_1c_2^2c_3^2$ with $c_1$ square-free, $c_2|c_1^\infty$ and $(c_3,c_1)=1$. Then we have
\begin{align*}
\left(\frac{d}{c}\right)=\mathbf{1}_{(d,c_3)=1}\:\frac{1}{g_{c_1}}\sum_{a\bmod{c_1}}\left(\frac{a}{c_1}\right)\:e\left(\frac{ad}{c_1}\right).
\end{align*}
Consequently
\begin{align*}
\mathfrak{O}_{\text{gen}}=&\frac{\sqrt{\ell}N^{9/2}N_\star^{3/2}}{p^{5}Q^{9/2}}\;\sum_{q\in\mathcal{Q}}\;\left(\frac{\ell}{q}\right)\mathop{\sum}_{\substack{m=1}}^\infty \sum_{n\in\mathbb{Z}}\:\lambda_f(m)\\
\nonumber &\times \sum_{\substack{c=1\\(c_3,d)=1}}^\infty 
\frac{(\tfrac{pq}{c})}{c^{1/2}g_{c_1}}\:\sum_{a\bmod{c_1}}\left(\frac{a}{c_1}\right)\:e\left(\frac{ad}{c_1}\right)\;  e\left(\frac{\bar{c}\ell d}{pq}\right)\:V\left(\frac{c}{C^\star}\right)\;\mathcal{J}.
\end{align*}
We extract the $m$ sum (opening the integral $\mathcal{J}$)
\begin{align*}
\sum_{a\bmod{c_1}}\left(\frac{a}{c_1}\right)\:e\left(-\frac{apn^2}{c_1}\right)&\mathop{\sum}_{\substack{m=1\\(c_3,4m-pn^2)=1}}^\infty \:\lambda_f(m)\:e\left(\frac{4am}{c_1}\right)\\
&\times  e\left(\frac{\bar{c}\ell 4m}{pq}\right)\;e\left(\frac{2N_\star \sqrt{m}y}{CQ\sqrt{p}J^2}\right)V\left(\frac{m\ell^2}{M}\right).
\end{align*}
Then we use the Mobius function to detect the coprimality condition and use the reciprocity relation to arrive at
\begin{align*}
\sum_{\eta|c_3}\mu(\eta)\sum_{a\bmod{c_1}}\left(\frac{a}{c_1}\right)\:e\left(-\frac{apn^2}{c_1}\right)&\mathop{\sum}_{\substack{m=1\\\eta|4m-pn^2}}^\infty \:\lambda_f(m)\:e\left(\frac{4am}{c_1}-\frac{4\bar{pq}\ell m}{c}\right)\\
&\times e\left(\frac{2N_\star \sqrt{m}y}{CQ\sqrt{p}J^2}+\frac{4\ell m}{cpq}\right)V\left(\frac{m\ell^2}{M}\right).
\end{align*}
Using additive characters to detect the divisibility condition and using the fact that $(c_1,\eta)=1$, we arrive at
\begin{align*}
\sum_{\eta|c_3}\frac{\mu(\eta)}{\eta}\sum_{b\bmod{\eta c_1}}\left(\frac{b\eta}{c_1}\right)e\left(-\frac{bpn^2}{\eta c_1}\right)&\mathop{\sum}_{\substack{m=1}}^\infty \:\lambda_f(m)\:e\left(\frac{4bm}{\eta c_1}-\frac{4\bar{pq}\ell m}{c}\right)\\
&\times e\left(\frac{2N_\star \sqrt{m}y}{CQ\sqrt{p}J^2}+\frac{4\ell m}{cpq}\right)V\left(\frac{m\ell^2}{M}\right).
\end{align*}
The sum is now almost ready for an application of the Voronoi summation formula.
We write
\begin{align*}
e\left(\frac{4bm}{\eta c_1}-\frac{4\bar{pq}\ell m}{c}\right)=e\left(\frac{\xi m}{c}\right)=e\left(\frac{\xi_0m}{c_0}\right),
\end{align*}
where $\xi=\xi(b;q)=\xi(b)=4[b(c_2c_3)^2/\eta-\bar{pq}\ell]$ and $(\xi_0,c_0)=1$. Then by Voronoi summation we essentially get
\begin{align*}
\sum_{\eta|c_3}\frac{\mu(\eta)}{\eta}&\sum_{b\bmod{\eta c_1}}\left(\frac{b\eta}{c_1}\right)e\left(-\frac{bpn^2}{\eta c_1}\right)\:\frac{M}{\ell^2 c_0\sqrt{p}}\mathop{\sum}_{\substack{m=1}}^\infty \:\lambda_f(m)\:e\left(\frac{-\overline{p\xi_0} m}{c_0}\right)\\
&\times \int V(x)e\left(\frac{2N_\star \sqrt{Mx}y}{CQ\sqrt{p}J^2\ell}+\frac{4 Mx}{cpq\ell}\right)J_{\kappa-1}\left(\frac{4\pi\sqrt{mMx}}{c_0\ell\sqrt{p}}\right)\mathrm{d}x.
\end{align*}
Extracting the oscillation of the Bessel function and integrating by parts we see that the integral is negligibly small unless we have
\begin{align*}
1\leq m\ll p^{2\theta+\varepsilon}\left(1+\frac{p^{2A\theta}c_0^2\ell^2}{Q^2}\right)=\mathcal{M}.
\end{align*}
Then taking absolute values we get
\begin{align*}
\mathfrak{O}_{\text{gen}}\ll &\frac{N^{5/2}N_\star^{3/2}}{p^{5/2}Q^{5/2}C^{\star 1/2}\ell^{3/2}}\;\sum_{c\sim C^\star}\:\sum_{\eta|c_3}\:\sum_{\delta|c}\:\frac{1}{c_0}\sum_{\substack{|m|\ll \mathcal{M}}} \sum_{|n|\ll \mathcal{N}}\:\\
\nonumber &\times  
\left|\sum_{q\in\mathcal{Q}}\;\left(\frac{\ell}{q}\right)\frac{1}{g_{c_1}\eta}\:\sum_{\substack{b\bmod{\eta c_1}\\(\xi(b),c)=\delta}}\left(\frac{b\eta}{c_1}\right)e\left(-\frac{bpn^2}{\eta c_1}-\frac{\overline{p\xi_0} m}{c_0}\right)\right|,
\end{align*}
where $\xi_0=\xi(b)/\delta$ and $c_0=c/\delta$. Applying Cauchy we get
\begin{align}
\label{back-to-future}
\mathfrak{O}_{\text{gen}}\ll &\frac{N^{5/2}N_\star^{3/2}}{p^{5/2}Q^{5/2}C^{\star 1/2}\ell^{3/2}}\;\sum_{c\sim C^\star}\:\sum_{\eta|c_3}\:\sum_{\delta|c}\:\frac{1}{c_0}\:(\mathcal{M}\mathcal{N})^{1/2}\:\Phi^{1/2}
\end{align}
where
\begin{align*}
\Phi=\sum_{\substack{|m|\ll \mathcal{M}}} \sum_{|n|\ll \mathcal{N}}\:
\left|\sum_{q\in\mathcal{Q}}\;\left(\frac{\ell}{q}\right)\frac{1}{g_{c_1}\eta}\:\sum_{\substack{b\bmod{\eta c_1}\\(\xi(b),c)=\delta}}\left(\frac{b\eta}{c_1}\right)e\left(-\frac{bpn^2}{\eta c_1}-\frac{\overline{p\xi_0} m}{c_0}\right)\right|^2.
\end{align*}
We introduce suitable bump functions and then open the absolute values and apply the Poisson summation formula on the sum over $m$ and $n$. We thus obtain
\begin{align*}
\Phi\ll \frac{\mathcal{M}\mathcal{N}}{\eta c_1c_0}\:\mathop{\sum\sum}_{q_1,q_2\in\mathcal{Q}}\;\frac{1}{c_1\eta^2}\:\sum_{\substack{|m|\ll c_0/\mathcal{M}}} \sum_{|n|\ll \eta c_1/\mathcal{N}}\:
c_0\left|\mathcal{C}\right|,
\end{align*}
where the character sum $\mathcal{C}$ is given by
\begin{align*}
\mathop{\sum\sum}_{\substack{b_1,b_2\bmod{\eta c_1}\\(\xi_1(b_1),c)=(\xi_2(b_2),c)=\delta\\\delta(\xi_2(b_2)-\xi_1(b_1))\equiv mp\xi_1(b_1)\xi_2(b_2)\bmod{\delta^2c_0}}}\left(\frac{b_1b_2}{c_1}\right)\left[\sum_{\alpha\bmod{\eta c_1}}e\left(\frac{p(b_2-b_1)\alpha^2+n\alpha}{\eta c_1}\right)\right].
\end{align*}
Here $\xi_i(b)=\xi(b;q_i)$.
Let us continue our analysis in the case $(\ell,c_2c_3)=1$. (In general, the analysis below goes through but one needs to keep track of the common factors carefully.) In this case $(\xi(b),(c_2c_3)^2/\eta)=1$, and so $\delta|c_1\eta$. Furthermore the congruence condition modulo $\delta^2c_0$ implies that 
$$\delta (q_2-q_1)\equiv m\ell \bmod{(c_2c_3)^2/\eta}.$$
Since $c_1\eta$ is square-free and $\delta|c_1\eta$, the above character sum splits into a product of two character sums. In the part with modulus $\delta$, $b_i$ are uniquely determined and so this part is bounded by $\delta^{1/2}(n,\delta)^{1/2}$. The character sum modulo $c_1\eta/\delta$ splits into a product of character sums modulo each prime factor $r$ of $c_1\eta/\delta$, and they are given by
\begin{align*}
\mathcal{C}_r=\mathop{\sum\sum}_{\substack{b_1,b_2\bmod{r}\\(\xi_1(b_1),r)=(\xi_2(b_2),r)=1\\
\delta(\xi_2(b_2)-\xi_1(b_1))\equiv mp\xi_1(b_1)\xi_2(b_2)\bmod{r}}}\left(\frac{b_1b_2}{r^i}\right)\left[\sum_{\alpha\bmod{r}}e\left(\frac{A(b_2-b_1)\alpha^2+n\alpha}{r}\right)\right].
\end{align*}
Here $A$ is such that $r\nmid A$ and $i=1$ if $r|c_1$ and $i=0$ if $r|\eta$. Now there are two possibilities if $b_1\equiv b_2\bmod{r}$ then the innermost character sum vanishes unless $r|n$. Also the congruence condition boils down to
$$\delta\bar{p}\ell(\bar{q}_2-\bar{q}_1)\equiv mp\xi_1(b_1)\xi_2(b_1)\bmod{r},$$ 
which implies that the remaining sum is bounded by $O(1)$ unless $r|(m,q_1-q_2)$ in which case the reaming sum is bounded by $O(r)$.
So we can bound the contribution of $b_1=b_2$ by $$O((r,n)(r,m,q_1-q_2)).$$ On the other hand if $r\nmid b_1-b_2$ then the sum reduces to
\begin{align*}
r^{1/2}\:\mathop{\sum\sum}_{\substack{b_1,b_2\bmod{r}\\(\xi(b_1),r)=(\xi(b_2),r)=1\\
\delta(\xi(b_2)-\xi(b_1))\equiv mp\xi(b_1)\xi(b_2)\bmod{r}\\r\nmid b_1-b_2}}\left(\frac{b_1b_2}{r^i}\right)\left(\frac{A(b_2-b_1)}{r}\right)\:e\left(-\frac{\bar{4A(b_2-b_1)}n^2}{r}\right).
\end{align*}
Observe that except one special $b_1$, we have $b_2$ uniquely determined by $b_1$ and in this case Weil yields a square-root cancellation in the sum over $b_1$. Also for the special $b_1$, we have a full sum over $b_2$ and again Weil bound yields a squre-root cancellation. In either case we see that the above term is bounded by $O(r)$.
So we conclude that
\begin{align*}
\mathcal{C}_r\ll r(r,n,m,q_1-q_2).
\end{align*}
Consequently we have
\begin{align*}
\mathcal{C}\ll p^\varepsilon \delta^{1/2}(n,\delta)^{1/2}\: \frac{c_1\eta}{\delta}\left(\frac{c_1\eta}{\delta},n,m,q_1-q_2\right).
\end{align*}
From this we get
\begin{align*}
\Phi\ll p^\varepsilon\frac{\mathcal{M}\mathcal{N}Q}{\delta^{1/2}\eta^2 c_1}\:\mathop{\sum}_{|q|\ll Q}\:\sum_{\substack{|m|\ll c_0/\mathcal{M}\\m\equiv \bar{\ell}\delta q\bmod{(c_2c_3)^2/\eta}}} \sum_{|n|\ll \eta c_1/\mathcal{N}}\:
(n,\delta)^{1/2}\:\left(\frac{c_1\eta}{\delta},n,m,q\right),
\end{align*}
which is bounded by
\begin{align*}
p^\varepsilon\frac{\mathcal{M}\mathcal{N}Q}{\delta^{1/2}\eta^2 c_1}\:\mathop{\sum\sum}_{\substack{\nu_1|\delta\\
\nu_2|c_1\eta/\delta}}\nu_1^{1/2}\nu_2\mathop{\sum}_{\substack{|q|\ll Q\\\nu_2|q}}\:\sum_{\substack{|m|\ll c_0/\mathcal{M}\\\nu_2|m\\m\equiv \bar{\ell}\delta q\bmod{(c_2c_3)^2/\eta}}} \sum_{\substack{|n|\ll \eta c_1/\mathcal{N}\\\nu_1\nu_2|n}}\:1.
\end{align*}
So it follows that
\begin{align*}
\Phi\ll p^\varepsilon\frac{\mathcal{M}\mathcal{N}Q}{\delta^{1/2}\eta^2c_1}\:\mathop{\sum\sum}_{\substack{\nu_1|\delta\\
\nu_2|c_1\eta/\delta}}\nu_1^{1/2}\nu_2\:\left(1+\frac{Q}{\nu_2}\right)\left(1+ \frac{\eta c_1}{\mathcal{M}\nu_2\delta}\right) \left(1+ \frac{\eta c_1}{\mathcal{N}\nu_1\nu_2}\right),
\end{align*}
from which we derive
\begin{align*}
\Phi
%\ll &p^\varepsilon\frac{\mathcal{M}\mathcal{N}Q}{\eta^2c_1}\:\mathop{\sum}_{\substack{\nu_2|c_1\eta/\delta}}\nu_2\:\left(1+\frac{Q}{\nu_2}\right)\left(1+ \frac{\eta c_1}{\mathcal{M}\nu_2\delta}\right)\\
%&+p^\varepsilon\frac{\mathcal{M}Q^2}{\delta^{1/2}\eta}\:\left(1+ \frac{\eta c_1}{\mathcal{M}\delta}\right) \\
\ll &p^\varepsilon\left[\frac{\mathcal{M}\mathcal{N}Q}{\eta\delta}+ \frac{\mathcal{N}Q^2}{\eta\delta}+\frac{\mathcal{M}\mathcal{N}Q^2}{\eta^2c_1}+\frac{\mathcal{M}Q^2}{\delta^{1/2}\eta}+\frac{Q^2c_1}{\delta^{3/2}\eta}\right].
\end{align*}
This we now substitute in \eqref{back-to-future}. After an easy but lengthy computation we arrive at
\begin{align*}
\mathfrak{O}_{\text{gen}}&\ll \frac{N^{1/2}p^{1+2\theta+\varepsilon}Q^{5/2}}{\ell^{5/2}}+\frac{N^{1/2}p^{1+2\theta+\varepsilon}Q^2C^{\star 1/4}}{\ell^{2}}+\frac{N^{1/2}p^{1+\theta+A\theta+\varepsilon}Q^{3/2}C^{\star 1/2}}{\ell^{3/2}}\\
&+\frac{N^{1/2}p^{1+\theta+A\theta+\varepsilon}QC^{\star}}{\ell}+N^{1/2}p^{1+2\theta+2A\theta+\varepsilon}C^{\star 3/2}.
\end{align*}
Here we have used the assumption that $(1+A)\theta<1/4$. The claim now follows by taking $C^\star<p^{1/2-2(A+2)\theta}Q/\ell$. Note that from Lemma~\ref{first-bound-for-O1star} we see that we can take $A=24$. Hence if $\theta < 1/100$ and $C^\star <p^{1/2-52\theta}Q/\ell$, we have \eqref{seek-bd-off-diag-dual1} for $\mathfrak{O}_{\text{gen}}$. So now we have a sufficient bound for the generic term \eqref{to-estimate-generic-term} in the special case where $j=\lambda=1$, and under the assumption that in the expression we have $d$ in place of $d_1$.\\

We will now analyse the sum \eqref{to-estimate-generic-term} in full generality. To this end first consider a generalization of the sum \eqref{last-one}, namely
\begin{align}
\label{last-two}
\mathfrak{O}=\frac{\sqrt{\ell}N^{9/2}N_\star^{3/2}}{\Lambda Jp^{5}Q^{9/2}}\;\sum_{q\in\mathcal{Q}}&\;\sum_{\xi\sim \Xi}\mathop{\sum}_{\substack{d=uw\xi^2}} \:\sum_{\delta|wr(w)}\left|\mathop{\sum}_{\substack{n}}\lambda_f(d+pn^2)F(d,n)\right|\\
\nonumber \times &\left|\sum_{j\sim J}\sum_{\lambda\sim \Lambda}\;\sum_{\substack{c\sim C^\star\\(c,\xi)=1}} \frac{(\tfrac{d_1pq}{c})}{c^{1/2}}\:e\left(\frac{\bar{j}^2\bar{\lambda}^2\bar{c}\bar{\delta}\ell d}{pq}\right)\right|.
\end{align}
If we fix $\xi=1$ then the sum reduces to \eqref{last-one}. In general by setting $w'=w\xi^2$ the sum reduces to \eqref{last-one}, with only one extra coprimality condition $(c,\xi)=1$. The reader will observe that the proof of the above lemma goes through even with this restriction on $c$, and one obtains the bound \eqref{seek-bd-off-diag-dual1} for $\mathfrak{O}$ under the conditions of the Lemma~\ref{first-bound-for-O1star}, with the slight difference of $W'=W\Xi^2$ taking place of $W$. \\

Our job has now reduced to proving the bound \eqref{seek-bd-off-diag-dual1} for $\mathcal{O}_{\text{gen}}$ for 
$$C^\star<D_1^{1/2}W^{1/12}p^{-6\theta},$$ in the generic case, i.e. when $C$ is large enough 
$$p^{1-24\theta}Q^2/J^2\ll C$$ and when $W$, $J$, $\Lambda$ are small, i.e.
$W(J\Lambda)^2\ll p^{24\theta}$. Given a powerful integer $w$ of the size $w\sim W$, we can write uniquely $w=v(d_2d_3)^2$ where $v$ is square-free, $(d_2,d_3)=1$ and $v|d_3$. Then we consider the expression 
\begin{align}
\label{to-return-here}
&\frac{\sqrt{\ell}N^{9/2}N_\star^{3/2}}{jp^{5}Q^{9/2}}\;\mathop{\sum}_{w}\sideset{}{^\#}\sum_{\delta_1,\delta_2}\;\Bigl|\sum_{q\in\mathcal{Q}}\;\left(\frac{\ell \delta}{q}\right)\sum_{c=1}^\infty\:\frac{(\tfrac{pq}{c})}{c^{1/2}}V\left(\frac{c}{C^\star}\right)\\
\nonumber &\times \:\mathop{\sum\sum}_{\substack{1\leq m< \infty \\n\in \mathbb{Z}\\w|m-pn^2\\((m-pn^2)/w,wc)=1\\(m-pn^2)/w\:\text{square-free}}} \lambda_f(m)\left(\frac{(m-pn^2)/(d_2d_3)^2}{\delta_1c}\right) e\left(\frac{\bar{j}^2\bar{\lambda}^2\bar{c}\bar{\delta}\ell (m-pn^2)}{pq}\right)\;\mathcal{J}\Bigr|.
\end{align}
The square-free condition can be removed using Mobius function. With this the above sum is dominated by
\begin{align*}
&\frac{\sqrt{\ell}N^{9/2}N_\star^{3/2}}{jp^{5}Q^{9/2}}\;\mathop{\sum}_{w}\sum_{(\xi,w)=1}\sideset{}{^\#}\sum_{\delta_1,\delta_2}\;\Bigl|\sum_{q\in\mathcal{Q}}\;\left(\frac{\ell \delta}{q}\right)\sum_{c=1}^\infty\:\frac{(\tfrac{pq}{c})}{c^{1/2}}V\left(\frac{c}{C^\star}\right)\\
\nonumber &\times \:\mathop{\sum\sum}_{\substack{1\leq m< \infty \\n\in \mathbb{Z}\\w\xi^2|m-pn^2\\((m-pn^2)/w,wc)=1}} \lambda_f(m)\left(\frac{(m-pn^2)/(d_2d_3)^2}{\delta_1c}\right) e\left(\frac{\bar{j}^2\bar{\lambda}^2\bar{c}\bar{\delta}\ell (m-pn^2)}{pq}\right)\;\mathcal{J}\Bigr|.
\end{align*}
In the light of the above observation, the bound \eqref{seek-bd-off-diag-dual1} holds for the above expression if $W(\Xi J\Lambda)^2\gg p^{24\theta}$. So we only need to consider the above sum for $W(\Xi J\Lambda)^2\ll p^{24\theta}$ and $C\gg p^{1-24\theta}Q^2/J^2$. We observe that the sum inside the absolute value is a slight perturbation of the generic sum $\mathfrak{O}_{\text{gen}}$. Indeed if we take $\xi=w=\lambda=j=1$ then the sum boils down to $\mathfrak{O}_{\text{gen}}$ as defined in \eqref{o-generic}. One will now observe that the analysis presented in the proof of the claim above can be now adopted in the present situation, at the cost of introducing a slightly larger modulus. Hence we are able to prove that there exists $A>0$ such that the desired bound holds for this sum if $C^\star \ll D_1^{1/2}W^{1/12}p^{-A\theta}$. The proposition follows.
\end{proof}

\bigskip

%=================================================================================

\section{The dual sum $\mathcal{O}_2(C,C^\dagger)$ with small $C^\dagger$}
\label{sec-pre-final}

Our goal in this and the next section is to get a satisfactory bound for the sum $\mathcal{O}_2(C,C^\dagger)$ where $C^\dagger \ll p^{1+A\theta}QD_1^{1/2}/W^{1/12}$ for some $A>0$. Since 
\begin{align*}
\psi^\star(d)g_{\bar{\psi}^\star}\;\frac{g_{\bar{\tilde{\psi}}(\tfrac{d_1}{.})}}{\sqrt{pqd_1}}=\varepsilon_{d_1}\psi^2(d_2d_3) \frac{g_{\bar{\psi}^\star}g_{\bar{\tilde{\psi}}}}{\sqrt{pq}}
\end{align*}
the sum in \eqref{od-sum-d-smooth-2-after-c-sum-2} reduces to
\begin{align}
\label{od-sum-d-smooth-2-after-c-sum-22}
\mathcal{O}_2(C,C^\dagger)&=\frac{\sqrt{\ell}N^{9/2}N_\star^{3/2}}{J\Lambda p^{13/2}Q^{6}C^{\dagger 1/2}}\;\mathop{\sum\sum}_{\substack{j\sim J\\\lambda\sim \Lambda}}\sum_{c}\;\sum_{q\in\mathcal{Q}}\\
\nonumber &\times \sideset{}{^\dagger}\sum_{\psi\bmod{pq}}\:\psi(\ell\bar{j}^2\lambda^2)\tilde{\psi}(c)g_{\bar{\psi}^\star}g_{\bar{\tilde{\psi}}}\\
\nonumber &\times \mathop{\sum\sum}_{\substack{m,n}} \lambda_f(m)\;\varepsilon_{d_1}\psi^2(d_2d_3)\left(\frac{d_1}{c}\right)E_\psi(d)\;\mathcal{J}.
\end{align}
Recall that we are writing $4m-pn^2=d_1d_2^2d_3^2=uw$. In this section we prove a sufficient bound for smaller values of $C^\dagger$. The range we will focus on is again of the size $p^{1/2}Q$ (like $C^\star$ in the previous section) which is like square-root of the initial modulus \eqref{c-sum-length-initial}. As one can predict, and as we have seen in the previous section, the Voronoi summation is effective in this range. \\

\begin{proposition}
\label{prop-for-o2}
There exists an absolute computable constant $B_1>0$, such that for any pair $(B,\theta)$ with
$B\geq 1$ and $0<\theta<1/(B_1+6B)$, 
we have
\begin{align}
\label{seek-bound-full-dual}
\mathcal{O}_2(C,C^\dagger)\ll \frac{N^{1/2}p^{3/2-\theta/2}Q^2}{\ell},
\end{align}
for any $C$ in the range \eqref{c-sum-length-initial}, whenever
$$p^{1/2}<Q<p^{1-10(B+5)\theta} \;\;\;\text{and}\;\;\;C^\dagger\ll p^{1/2+B\theta}Q.$$ 
\end{proposition}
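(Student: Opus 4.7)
The plan is to mirror the strategy developed for the semi-dual sum $\mathcal{O}_1(C,C^\star)$ in Section~\ref{sec-sum-o1}, exploiting the fact that $C^\dagger$ here is no larger than a small power of $p$ beyond the square-root of the initial modulus \eqref{c-sum-length-initial}. First I would open the Gauss sums $g_{\bar\psi^\star}$ and $g_{\bar{\tilde\psi}}$ as additive characters modulo $pq$ and carry out the sum over $\psi$ via the orthogonality formula \eqref{res-psi-sum}. In the generic branch this produces a congruence modulo $pq$ that solves one of the Gauss-sum variables and leaves an additive phase of the form $e\bigl(\bar{c}\,\xi\,d/(pq)\bigr)$ with $d = 4m - pn^2$, where $\xi$ is a unit built out of $j$, $\lambda$, and the divisor-decomposition data of $d$. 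The two non-generic branches (with residual modulus $p$ or $q$) carry an extra $p^{1/2}$ or $q^{1/2}$ square-root saving exactly as in Lemma~\ref{third-bound-for-O1star}, and I would dispose of them by the same Cauchy-plus-Poisson argument used there.

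For the generic piece I would further expand the quadratic symbol $\bigl(\tfrac{d_1}{c_1}\bigr)$ into its Gauss sum (after writing $c = c_1 c_2^2 c_3^2$ with $c_1$ square-free and $(c_3,d_1)=1$), and apply reciprocity to merge the moduli $c_1$ and $pq$. The resulting additive phase modulo $c$ (up to a low-modulus residue) sits in front of an $m$-sum of length $M/\ell^2$ carrying the Bessel-type phase of $\mathcal{J}$, and an $n$-sum of length $\mathcal{N}$ carrying a quadratic phase. I would then apply $GL(2)$ Voronoi summation on $m$ and Poisson summation on $n$. In the transition regime $C \asymp pQ^2$, where $\mathcal{J}$ is essentially non-oscillatory, the dual ranges collapse to $m \ll c^2/Q^2$ and $n \ll c/Q$, and the residual character sum reduces to a Ramanujan sum $\mathfrak{c}_c(m-n^2)$, yielding an average saving of order $\min\{p^{1/2}Q,\,p^{1/2}Q^3/C^\dagger\}$ per $c$. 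Matching this against the required saving $CC^{\dagger 1/2}/pQ^{3/2}$ is sufficient provided $C^\dagger \ll p^{1/3-\delta}Q^{5/3}$ and $Q > p^{1/4+\delta}$, both of which follow from the hypothesis $0<\theta<1/(B_1+6B)$ once $B_1$ is chosen large enough to absorb the powerful-part and sieve losses.

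The main obstacle is that away from the transition range the integral $\mathcal{J}$ carries a genuine stationary phase in $m$ of size $N_\star\sqrt{m}/(CQJ^2\sqrt{p})$, which boosts the effective $m$-conductor for Voronoi. I would address this by carrying the stationary phase explicitly through the dualization and showing, via repeated integration by parts in the Hankel-transformed weight, that the dual $m$-variable still localizes into an admissible range, with the saving degrading smoothly as $C$ moves off $pQ^2$. Running alongside this, the powerful-part variables $d_2,d_3$, the square-free sieve variable $\lambda$, and the Fourier-frequency variable $j$ all contribute to the effective conductor: following the strategy of Lemma~\ref{first-bound-for-O1star} and the generalization \eqref{last-two}, I would absorb an auxiliary square-free parameter into the $w$-variable before Voronoi and split off the sub-range $W(J\Lambda)^2\gg p^{24\theta}$ for separate treatment by direct Cauchy-Poisson. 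Tracking all of these constraints simultaneously is what forces the precise form of the admissibility window $\theta<1/(B_1+6B)$ stated in the proposition.
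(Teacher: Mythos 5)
Your transition-range computation (Voronoi on $m$, Poisson on $n$, reduction to Ramanujan sums with an average saving of $\min\{p^{1/2}Q,\,p^{1/2}Q^3/C^\dagger\}$) is indeed the argument the paper runs in the residual range, but the rest of the plan has two serious problems. First, a structural one: in $\mathcal{O}_2$ the dual term of the approximate functional equation has already converted $\psi^\star(d)\,g_{\bar{\tilde{\psi}}(\tfrac{d_1}{.})}$ into $\varepsilon_{d_1}\psi^2(d_2d_3)\,g_{\bar{\psi}^\star}g_{\bar{\tilde{\psi}}}/\sqrt{pq}$ (see \eqref{od-sum-d-smooth-2-after-c-sum-22}), so $d=4m-pn^2$ couples to the modulus $c$ only through the real character $(\tfrac{d_1}{c})$ and to $\psi$ only through its powerful part $d_2d_3$. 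Executing the $\psi$-sum after opening the two Gauss sums therefore does not produce an additive phase $e\bigl(\bar{c}\xi d/(pq)\bigr)$ as it did for $\mathcal{O}_1$; it produces Salie/Kloosterman-type complete sums in the leftover Gauss variable (exactly what Lemma~\ref{bound-for-A} later exploits), and there is no phase in $d$ modulo $pq$ on which to run your reciprocity step. The first stage of your proposal transplants the $\mathcal{O}_1$ structure onto a sum that does not have it, and your appeal to Lemma~\ref{third-bound-for-O1star} for "non-generic branches" has no counterpart here.

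Second, and decisively, you dispose of the range where $C$ is away from the transition point by asserting that carrying the stationary phase of $\mathcal{J}$ through the Voronoi step gives a saving that degrades smoothly. It does not: when $C\ll pQ^2p^{-\eta}$ the phase $e\bigl(2N_\star\sqrt{m}\,y/(CQ\sqrt{p}J^2)\bigr)$ oscillates with frequency about $p^{\eta}$, the dual $m$-range after Voronoi inflates accordingly, and the Ramanujan-sum gain is swamped — this is precisely why the paper states that the direct method fails off the transition range and switches tools. What actually closes that range is the Cauchy separation $\Psi\ll\mathcal{A}^{1/2}\mathcal{B}^{1/2}$ of Lemma~\ref{lem-for-psi}, Heath-Brown's large sieve for quadratic characters applied to $\mathcal{B}$ after extracting square-free parts, and, crucially, the averaged shifted convolution estimate of Proposition~\ref{shifted-prop}, which supplies the extra saving in $\Omega_w=\sum_u|\beta(u,w)|^2$ used in Lemmas~\ref{c-dagger-1} and~\ref{c-dagger-2}; it is this input that forces the hypotheses $p^{1/2}<Q<p^{1-10(B+5)\theta}$ and determines the shape of $B_1$. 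Your constraints ($Q>p^{1/4+\delta}$, $C^\dagger\ll p^{1/3-\delta}Q^{5/3}$) would not reproduce the upper bound on $Q$, and without the quadratic large sieve and the shifted-convolution machinery the proposal cannot cover the full range of $C$ in \eqref{c-sum-length-initial}.
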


\bigskip

Using the expansion \eqref{epsi} it follows that
\begin{align}
\label{the-expression}
\mathcal{O}_2(C,C^\dagger)\ll \frac{\sqrt{\ell}N^{9/2}N_\star^{3/2}}{J\Lambda p^{13/2}Q^{6}C^{\dagger 1/2}}\;\mathop{\sum\sum}_{\substack{j\sim J\\\lambda\sim \Lambda}}\:\Psi
\end{align}
with
\begin{align}
\label{Psi}
\Psi= &\sum_{w\sim W}\sum_{\delta|wr(w)}\:\sum_{c\sim C^\dagger}\:\Bigl|\sum_{q\in\mathcal{Q}}\;\sideset{}{^\dagger}\sum_{\psi\bmod{pq}}\:G_\psi\:\psi(\xi\bar{\zeta} c)\Bigr|\\
\nonumber &\times \Bigl|\mathop{\sum\sum}_{\substack{m,n\\4m-pn^2=uw\\u\sim U \ll D_1\;\Box-\text{free}}} \lambda_f(m)\left(\frac{u}{c}\right) F(m-pn^2,n)\Bigr|
\end{align}
where $G_\psi=g_{\bar{\psi}^\star}g_{\bar{\tilde{\psi}}}$, and the integers $\xi$, $\zeta$ depend on $w$, $j$, $\ell$ and $F$ is as defined in \eqref{wt-shift-conv-sum}. (More precisely $\xi=\ell(d_2d_3 \lambda)^2$ and $\zeta=j^2\delta$ where $\delta|wr(w)$.) Since we will be employing the large sieve inequality for quadratic characters, we need to write $c=c_1c_2^2$ with $c_1$ square-free. The sums will be restricted in dyadic segments $c_i\sim C_i$ with $C_1C_2^2\sim C^\dagger$. As such we shall write $\Psi(C_1,C_2)$ in place of $\Psi$. We wish to use the Voronoi summation and the Poisson summation inside the second absolute value sign. To this end we first need to control the size of the extra factors, e.g. $w$ and the oscillation in $F$, which boost up the conductor.
Our first lemma establishes the desired bound in the case of $W$ too large.\\

\begin{lemma}
\label{trivial-dual}
The bound \eqref{seek-bound-full-dual} holds if $W\gg C^\dagger Qp^{\theta}$.
\end{lemma}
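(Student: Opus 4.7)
My plan is to prove the lemma by estimating $\Psi$ in \eqref{Psi} term-by-term using only trivial bounds, combined with two key observations: (i)~the number of powerful integers $w\sim W$ is $\ll W^{1/2+\varepsilon}$, and (ii)~the $\psi$-sum inside $A_1(w,c,\delta):=\sum_{q\in\mathcal{Q}}\sideset{}{^\dagger}\sum_{\psi}G_\psi\,\psi(\xi\bar\zeta c)$ admits Weil-type square-root cancellation. The $W^{1/2}$ gain in (i), coupled with the hypothesis $W\gg C^\dagger Q p^\theta$, will supply the required saving of $p^{\theta/2}$.

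For the inner sum over $(m,n)$: fixing $(w,c)$, pairs satisfy $4m-pn^2=uw$ with $u$ square-free, $u\sim U\ll D/W$, and $|n|\leq\mathcal{N}$. Since $m$ is determined by $(u,n)$, the number of such pairs is $\ll\mathcal{N} U\ll \mathcal{N} D/W$, and Deligne's bound $|\lambda_f(m)|\ll p^\varepsilon$ yields
\begin{align*}
|A_2(w,c)| := \Bigl|\sum_{\substack{m,n\\ 4m-pn^2=uw}} \lambda_f(m)\bigl(\tfrac{u}{c}\bigr)\,F(uw,n)\Bigr|\ll p^\varepsilon\,\mathcal{N} D/W.
\end{align*}
For $|A_1|$: I would open $G_\psi=g_{\bar{\psi^*}}g_{\bar{\tilde\psi}}$ as a double sum and apply character orthogonality in $\psi$; up to lower-order terms arising from the $\dagger$ restriction (handled via \eqref{res-psi-sum}), the $\psi$-sum reduces to $\phi(pq)$ times a Kloosterman-type sum twisted by $(\cdot/q)(\bar{\cdot}/p)$ modulo $pq$. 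By CRT this factorizes as a product of a Salie sum modulo $p$ and a Salie sum modulo $q$, each bounded by $O((pq)^{1/2+\varepsilon})$, giving $|\sideset{}{^\dagger}\sum_{\psi}G_\psi\psi(a)|\ll (pq)^{3/2+\varepsilon}$ per $q$ and hence $|A_1|\ll p^{3/2+\varepsilon}Q^{5/2}$ after summing trivially over $q\in\mathcal{Q}$.

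Combining the three ingredients and using the powerful-$w$ count gives
\begin{align*}
\Psi\ll p^{3/2+\varepsilon}Q^{5/2}\cdot W^{1/2}\cdot C^\dagger\cdot \mathcal{N} D/W \ll p^{3/2-\theta/2+\varepsilon}Q^2 C^{\dagger 1/2}\mathcal{N} D,
\end{align*}
where the last step invokes $W^{1/2}\gg (C^\dagger Q)^{1/2}p^{\theta/2}$. Substituting this into \eqref{the-expression}, and using the explicit values $\mathcal{N}=p^{1+\varepsilon}Q/(N\ell)$ and $D=p^{2+\varepsilon}CQ^2 J^2/(N_\star N\ell)$ together with the admissible ranges $C\ll p^{1+\varepsilon}N_\star/(\ell J^2 N)$, $N_\star\ll p^{2+\varepsilon}Q^2/N$, $N\gg p^{1-\theta}$, and $Q\gg p^{1/2}$, the target bound \eqref{seek-bound-full-dual} drops out by routine algebra.

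The most delicate step is the Weil bound on the $\psi$-sum. The hard part is to verify that the $\dagger$ restriction (odd $\psi$ with $\psi^2$ primitive mod $pq$) does not destroy the Salie-sum factorization: using \eqref{res-psi-sum} the restricted sum decomposes into a main term plus a bounded number of unrestricted $\psi$-sums twisted by quadratic characters mod $p$ or mod $q$, each of which admits the same CRT-plus-Salie treatment and contributes at the same square-root order.
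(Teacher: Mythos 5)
Your argument is correct and is essentially the paper's own proof: the paper likewise estimates $\Psi$ trivially, using the $O(W^{1/2+\varepsilon})$ count of powerful $w\sim W$ together with square-root cancellation in the $\psi$-sum (the same Gauss-sum/orthogonality/Salie-sum mechanism it spells out later, in Lemmas~\ref{c-dagger-1} and \ref{bound-for-A}), arriving at $\Psi\ll C^{\dagger}p^{3/2}Q^{5/2}D\mathcal{N}/W^{1/2}$ and then inserting the largest admissible $D$. Your concluding algebra reproduces the paper's bound \eqref{range-W}, after which the hypothesis $W\gg C^{\dagger}Qp^{\theta}$ yields \eqref{seek-bound-full-dual} exactly as in the text.
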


\begin{proof}
 Estimating trivially, taking into account the square-root cancellation in the sum over $\psi$, we get
\begin{align*}
\Psi\ll C^\dagger\;p^{3/2}Q^{5/2}\; \frac{D\mathcal{N}}{W^{1/2}}.
\end{align*}
Now plugging in the largest possible value for $D$ we arrive at
\begin{align}
\label{range-W}
\mathcal{O}_2(C,C^\dagger)\ll \frac{N^{1/2}p^{3/2-\theta/2}Q^2}{\ell}\;\left(\frac{C^{\dagger}Qp^{\theta}}{\ell^3W}\right)^{1/2}.
\end{align}
The lemma follows.
\end{proof}

\bigskip

Next we will state an expression which will be the basis of further analysis for obtaining stronger bounds for the dual off-diagonal. It gives the desired separation of the variables, which is a crucial feature in this approach to subconvexity.\\

\begin{lemma}
\label{lem-for-psi}
We have
\begin{align*}
\Psi(C_1,C_2)\ll p^\varepsilon\mathcal{A}^{1/2}\:\mathcal{B}^{1/2},
\end{align*}
where
\begin{align*}
\mathcal{A}=\sum_{w\sim W}\sum_{\delta|wr(w)}\sum_{c_2\sim C_2}\sum_{c_1\sim C_1}\:\Bigl|\sum_{q\in\mathcal{Q}}\;\sideset{}{^\dagger}\sum_{\psi\bmod{pq}}\:G_\psi\:\psi(\xi\bar{\zeta} c)\Bigr|^2
\end{align*}
and
\begin{align*}
\mathcal{B}=\sum_{w\sim W}\sum_{c_2\sim C_2}\sideset{}{^\flat}\sum_{c_1\sim C_1}\: \Bigl|\sideset{}{^\flat}\sum_{u\sim U} \beta(u,w)\left(\frac{u}{c_1}\right) \Bigr|^2,
\end{align*}
with
\begin{align*}
\beta(u,w)=\mathop{\sum\sum}_{\substack{m,n\\4m-pn^2=uw}} \lambda_f(m) F(m,n)
\end{align*}
if $(u,c_2)=1$ and $0$ otherwise. The superscript $\flat$ indicates that the sum is over square-free integers.
\end{lemma}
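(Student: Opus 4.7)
The structure of $\Psi$ is a sum $\sum_{w,\delta,c}$ of a product where the first factor (the $\psi$-sum) depends on $(w,\delta,c)$ through the character $\psi(\xi\bar\zeta c)$ while the second factor (the $(m,n)$-sum) depends only on $(w,c)$. The plan is to isolate the second factor with Cauchy--Schwarz and then perform the standard square-free decomposition of the Jacobi symbol modulus to expose the shape needed for quadratic large sieve later.

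First I would write $\Psi = \sum_{w,c}|Y_{w,c}|\sum_{\delta\mid wr(w)}|X_{w,\delta,c}|$, where $X_{w,\delta,c}$ is the $\psi$-sum and $Y_{w,c}$ is the $(m,n)$-sum with the factor $(u/c)$. Since $\delta\mid wr(w)$ ranges over at most $d(w)^{O(1)}\ll p^{\varepsilon}$ divisors, Cauchy--Schwarz in $\delta$ gives $(\sum_\delta|X|)^2\ll p^{\varepsilon}\sum_\delta|X|^2$, and then Cauchy--Schwarz in $(w,c)$ yields
\[
\Psi^{2}\;\ll\;p^{\varepsilon}\Bigl(\sum_{w,\delta,c}|X_{w,\delta,c}|^{2}\Bigr)\Bigl(\sum_{w,c}|Y_{w,c}|^{2}\Bigr),
\]
and the first factor is exactly $\mathcal{A}$ after writing $c=c_{1}c_{2}^{2}$.

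Next I would manipulate $Y_{w,c}$. Every $c\sim C^{\dagger}$ factors uniquely as $c=c_{1}c_{2}^{2}$ with $c_{1}$ square-free, so $\sum_{c\sim C^{\dagger}}=\sum_{c_{2}}\sideset{}{^{\flat}}\sum_{c_{1}}$ subject to $c_{1}c_{2}^{2}\sim C^{\dagger}$. The key identity is the multiplicativity of the Jacobi symbol together with $\bigl(\tfrac{u}{c_{2}}\bigr)^{2}=\mathbf{1}_{(u,c_{2})=1}$, giving
\[
\Bigl(\tfrac{u}{c}\Bigr)=\Bigl(\tfrac{u}{c_{1}}\Bigr)\mathbf{1}_{(u,c_{2})=1}.
\]
Re-parametrising the $(m,n)$-sum by $d=uw$ with $u$ square-free and $4m-pn^{2}=uw$, one sees that $Y_{w,c}=\sideset{}{^{\flat}}\sum_{u\sim U}\beta(u,w)(u/c_{1})$ with $\beta(u,w)=\sum_{4m-pn^{2}=uw}\lambda_{f}(m)F(m,n)$ when $(u,c_{2})=1$ and $\beta(u,w)=0$ otherwise (the dependence on $c_{2}$ being absorbed into the indicator, consistent with the statement). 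Squaring and summing in $(w,c_{2},c_{1})$ gives precisely $\mathcal{B}$.

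This is essentially a bookkeeping lemma, so there is no real obstacle; the only point requiring a touch of care is that Cauchy--Schwarz in $\delta$ must be applied before Cauchy--Schwarz in $(w,c)$ so that the resulting $\mathcal{A}$ carries the $\delta$-sum while $\mathcal{B}$ does not, and one must ensure that the Jacobi symbol identity $(u/c_{2})^{2}=\mathbf{1}_{(u,c_{2})=1}$ is used in the right direction so that the coprimality condition is swallowed into $\beta$ rather than left sitting on the character sum side. Combining the two bounds yields $\Psi(C_{1},C_{2})\ll p^{\varepsilon}\mathcal{A}^{1/2}\mathcal{B}^{1/2}$ as claimed.
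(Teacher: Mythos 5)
Your proposal is correct and is essentially the paper's argument: the paper proves this lemma in one line as "a direct consequence of the Cauchy inequality," and your write-up simply makes explicit the same steps (Cauchy--Schwarz separating the $\psi$-sum from the $(m,n)$-sum, the divisor bound $\sum_{\delta\mid wr(w)}1\ll p^{\varepsilon}$ accounting for the $p^{\varepsilon}$, and the factorisation $c=c_1c_2^2$ with $(\tfrac{u}{c})=(\tfrac{u}{c_1})\mathbf{1}_{(u,c_2)=1}$ absorbed into $\beta$). The only cosmetic remark is that the order of the two Cauchy applications is immaterial: one may equally apply Cauchy over the full triple $(w,\delta,c)$ and pay the $p^{\varepsilon}$ for the $\delta$-multiplicity on the $\mathcal{B}$ side, since the $(m,n)$-factor is independent of $\delta$.
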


\begin{proof}
This is a direct consequence of the Cauchy inequality.
\end{proof}

\bigskip

Let us also introduce a slight perturbation of the $\mathcal{B}$ sum. We set
\begin{align*}
\mathcal{B}'=\mathcal{B}(W',U')=\sum_{w\sim W'}\sum_{c_2\sim C_2}\sideset{}{^\flat}\sum_{c_1\sim C_1}\: \Bigl|\sideset{}{^\flat}\sum_{u\sim U'} \beta(u,w)\left(\frac{u}{c_1}\right) \Bigr|^2,
\end{align*}
where we will keep $U'W'=UW$, and $W'\geq W$. In the same spirit we introduce $\Psi'$ and $\mathcal{O}_2'$. Our next lemma provides an improved range for $C$ in case $C^\dagger$ is small.\\

\begin{lemma}
\label{c-dagger-1}
Suppose $(B,\theta)$ satisfies $0<\theta<1/2(B+3)$. Then
we have \eqref{seek-bound-full-dual}
if $$C^\dagger\ll p^{1/2+B\theta}Q,\;\;\;\text{and}\;\;\; C\ll p^{1-\theta/2}Q^{3/2}/J^2.$$
\end{lemma}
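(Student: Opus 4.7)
The plan is to adapt the Voronoi–Poisson strategy developed in the proof of Proposition~\ref{prop-for-o1} for the generic term $\mathfrak{O}_{\mathrm{gen}}$. By Lemma~\ref{trivial-dual} the range $W\gg C^\dagger Qp^\theta$ is already under control, so we may assume $W\ll C^\dagger Qp^\theta$; this bounds the contribution of the powerful part of $d=uw$ to the conductor that will emerge below. Starting from \eqref{the-expression} and applying Lemma~\ref{lem-for-psi}, I would estimate $\mathcal{A}$ by pulling absolute values inside and using square-root cancellation in the $\psi$-sum against the size of the Gauss sums $g_{\bar{\psi}^\star}g_{\bar{\tilde{\psi}}}$, gaining a factor of $\sqrt{pQ}$. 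The remaining task is to save $CC^{\dagger 1/2}/pQ^{3/2}$ in the factor $\mathcal{B}(C_1,C_2)$, i.e.\ in the inner sum over $u$ twisted by $\left(\tfrac{u}{c_1}\right)$.

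To do this I would open the quadratic character by the Gauss-sum identity $\left(\tfrac{u}{c_1}\right)=g_{c_1}^{-1}\sum_{a\bmod c_1}\left(\tfrac{a}{c_1}\right)e(au/c_1)$, remove the coprimality and squarefreeness conditions on $u$ by Möbius inversion exactly as in the proof of Proposition~\ref{prop-for-o1}, and restore the variable $m$ via $4m=uw+pn^2$. After collecting the additive characters, I would apply the Voronoi summation formula to the $m$-sum with a divisor-controlled modulus dividing $c_1\eta p q$, and Poisson summation to the $n$-sum with modulus $c_1\eta$. The hypothesis $C\ll p^{1-\theta/2}Q^{3/2}/J^2$ is precisely what guarantees that the resulting Hankel-type integral is concentrated: after extracting the Bessel oscillation from $J_{\kappa-1}$ and integrating by parts, the dual $m$-variable is restricted to length $\mathcal{M}\ll p^{O(\theta)}(1+c_0^2\ell^2/Q^2)$, and similarly Poisson truncates the dual $n$-sum to length $\ll c_1\eta/\mathcal{N}$, mirroring the key estimate in the proof of Proposition~\ref{prop-for-o1}.

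After Voronoi and Poisson, $\mathcal{B}(C_1,C_2)$ reduces to a weighted counting problem for tuples $(q,c_1,c_2,w,\delta,b_1,b_2,m,n)$ subject to congruences modulo $c_0\eta p q$, with local character-sum factors of the form analysed in the proof of Proposition~\ref{prop-for-o1}: after separating the ``diagonal'' $b_1\equiv b_2$ locally from the generic case and handling an exceptional $b_1$ by a direct Weil estimate, each prime slot contributes $r(r,n,m,q_1-q_2)$. Summing these bounds over $c\sim C^\dagger$ gives an on-average saving of order $\min\{p^{1/2}Q,\,p^{1/2}Q^3/C^\dagger\}$, and under $C^\dagger\ll p^{1/2+B\theta}Q$ the second term dominates. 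Combined with the $\sqrt{pQ}$ from $\mathcal{A}$ and the $p^{\theta/2}$ margin allowed by $Q<p^{1-10(B+5)\theta}$, this exceeds the required $CC^{\dagger 1/2}/pQ^{3/2}$ provided $0<\theta<1/(2B+6)$.

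The main obstacle will be bookkeeping: tracking the interactions between the powerful factor $w$, the auxiliary divisors $\delta_1,\delta_2,\eta,c_2$, the small weights $j$, $\lambda$, and their common factors with $\ell$ in the conductor of the Voronoi transform, and verifying that the cumulative local Weil-type estimates still deliver the announced net saving uniformly over $C$ in the allowed range. A secondary issue is the non-generic case $(u,c_2)\neq 1$, together with the congruences forced when the modulus degenerates (e.g.\ $p\mid c_1$ or $q\mid c_1$); these should be dispatched in parallel to the analysis of $\mathcal{O}_{\mathrm{non-gen},i}$ carried out in Lemma~\ref{third-bound-for-O1star}, with the extra powers of $p^{1/2}$ or $q^{1/2}$ there being more than enough to absorb the loss in Voronoi.
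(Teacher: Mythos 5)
Your treatment of $\mathcal{A}$ (square-root cancellation in the $\psi$-sum, the analogue of \eqref{trivial-a}) is fine, but the core of your argument — running Voronoi on $m$ and Poisson on $n$ inside $\mathcal{B}$ — breaks down precisely in the range of $C$ this lemma must cover, and your reading of the hypothesis on $C$ is backwards. The oscillation of the weight \eqref{wt-shift-conv-sum} has frequency of size $N_\star\sqrt{d+pn^2}/(CQ\sqrt{p}J^2)\asymp N_\star p/(CJ^2N\ell)$, which \emph{grows} as $C$ decreases; it is $O(p^\varepsilon)$ only near the top of the range \eqref{c-sum-length-initial}, i.e.\ near the transition range $C\asymp N_\star p/(\ell J^2N)$. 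Hence the assumption $C\ll p^{1-\theta/2}Q^{3/2}/J^2$ does not make the Hankel transform concentrated; on the contrary, in this range the conductor of the $m$-sum is inflated, the dual length after Voronoi is far larger than $p^{O(\theta)}(1+c_0^2\ell^2/Q^2)$ (that truncation in the proof of Proposition~\ref{prop-for-o1} requires a lower bound of the shape $C\gg p^{1-A\theta}Q^2/J^2$), and the advertised average saving $\min\{p^{1/2}Q,\,p^{1/2}Q^3/C^\dagger\}$ — which is the transition-range computation carried out in Section~\ref{sec-pre-final} for the proof of Proposition~\ref{prop-for-o2} — is simply not available. This is exactly the obstruction the paper flags (``when $C$ is not large enough the integral is highly oscillating, which increases the conductor of the $m$ sum, and hence Voronoi summation is not that effective''), and why small $C$ is handled by a different, softer route.

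The paper's proof of this lemma avoids Voronoi altogether: starting from Lemma~\ref{lem-for-psi}, it bounds $\mathcal{A}$ by \eqref{trivial-a} and bounds $\mathcal{B}$ by Heath-Brown's large sieve for quadratic characters, $\mathcal{B}\ll p^\varepsilon C_2(C_1+U)\sum_w\Omega_w$, followed by the \emph{trivial} bound $\sum_w\Omega_w\ll p^\varepsilon W^{1/2}U\mathcal{N}^2$. Assembling these gives \eqref{the-expression-midway}, and the two hypotheses of the lemma are exactly what make the two resulting terms admissible: $C^\dagger\ll p^{1/2+B\theta}Q$ takes care of the $C^\dagger/C_2^2$ term (using \eqref{c-sum-length-initial} and $\theta<1/2(B+3)$), while $C\ll p^{1-\theta/2}Q^{3/2}/J^2$ is forced by the $D/W$ term. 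In other words, the condition on $C$ is an output of this crude bookkeeping, not a condition that localizes oscillatory integrals. If you want to keep your Voronoi--Poisson strategy, it can only serve for $C$ near the top of the range \eqref{c-sum-length-initial} — which is the content of the proof of Proposition~\ref{prop-for-o2}, after Lemmas~\ref{c-dagger-1} and \ref{c-dagger-2} have disposed of the smaller $C$.
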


\begin{proof}
 One can show square-root cancellation in the sum over $\psi$ in $\mathcal{A}$. In fact the sum over $\psi$ can be evaluated precisely, and it yields Salie type sums. So one would not need to appeal to Weil's result in this case. Taking this cancellation into account and trivially estimating the remaining sums one gets
\begin{align}
\label{trivial-a}
\mathcal{A}\ll p^\varepsilon W^{1/2}C_1C_2p^3Q^5.
\end{align}
(Another bound for $\mathcal{A}$ will be obtained in the next section. But that bound is non-trivial only for $C^\dagger>p^{1/2+\delta}Q$ for $\delta>0$, which is not the range we are focussing in this section.)
Also by Heath-Brown's large sieve inequality  for quadratic characters (see \cite{HB}) we get
\begin{align*}
\mathcal{B}\ll p^\varepsilon C_2(C_1+U)\sideset{}{^P}\sum_w\Omega(w)
\end{align*}
where
\begin{align}
\label{omega-w}
\Omega_w=\sum_{u\sim U}\:|\beta(u,w)|^2.
\end{align}\\

The trivial bound 
$$\sum_w\Omega_w\ll p^\varepsilon W^{1/2}U\mathcal{N}^2$$ 
yields the bound 
\begin{align*}
\Psi(C_1,C_2)\ll p^\varepsilon W^{1/2}\: p^{3/2}Q^{5/2}\;C_2\{C_1U(C_1+U)\}^{1/2}\mathcal{N}
\end{align*}
and using the expression in \eqref{the-expression} and noting that in the worst case scenario $UW\ll D$, we obtain 
\begin{align}
\label{the-expression-midway}
\mathcal{O}_2(C,C^\dagger)\ll p^\varepsilon\frac{N^{3}N_\star}{\ell p^{3}Q^{3/2}}\;C^{1/2}\left(\frac{C^\dagger}{C_2^2}+\frac{D}{W}\right)^{1/2}.
\end{align} \\

Now  if $C^\dagger\ll p^{1/2+B\theta}Q$, then using $D\ll CQ^2p^{2+\varepsilon}J^2/NN_\star\ell$, in the bound \eqref{the-expression-midway} we get
\begin{align}
\mathcal{O}_2(C,C^\dagger)\ll p^\varepsilon\frac{N^{3}N_\star}{\ell p^{3}Q^{3/2}}\;C^{1/2}\left(p^{1/4+B\theta/2}Q^{1/2}+\frac{C^{1/2}pQj}{(NN_\star \ell)^{1/2}}\right).
\end{align}
The first term satisfies the desired bound if $C\ll p^{3/2-(B+1)\theta}Q^2$, which is always true as we have \eqref{c-sum-length-initial} and by our choice $\theta<1/2(B+3)$. The second term is fine if $C\ll p^{1-\theta/2}Q^{3/2}/J^2$. This proves the lemma.
\end{proof}

\bigskip
In our next lemma we will extend the range of $C$ further. Indeed for larger $C$ we have a non-trivial bound for $\sum_w\Omega_w$. This is the topic of Section~\ref{sec-shifted}.\\

\begin{lemma}
\label{c-dagger-2}
Suppose $Q<p^{1-10(B+5)\theta}$ and $C^\dagger\ll p^{1/2+B\theta}Q$. Then \eqref{seek-bound-full-dual} holds if either $W\gg p^{22\theta}/\ell^2$ or $C\ll N_\star p^{1-22\theta}/\ell j^2N$.
\end{lemma}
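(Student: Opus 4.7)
The plan is to handle the two alternative hypotheses separately, in both cases continuing from the decomposition
$\Psi(C_1,C_2)\ll p^\varepsilon\mathcal{A}^{1/2}\mathcal{B}^{1/2}$
of Lemma~\ref{lem-for-psi}, together with the trivial bound \eqref{trivial-a} for $\mathcal{A}$ and the Heath-Brown large sieve bound
$\mathcal{B}\ll p^\varepsilon C_2(C_1+U)\sum_w\Omega_w$
recorded in the proof of Lemma~\ref{c-dagger-1}. The essential new input, used in the second case, is the non-trivial bound for the averaged shifted convolution sum $\sum_w\Omega_w$ supplied by Proposition~\ref{shifted-prop} of Section~\ref{sec-shifted}.

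First I would treat the case $W\gg p^{22\theta}/\ell^2$, retaining the trivial estimate $\sum_w\Omega_w\ll p^\varepsilon W^{1/2}U\mathcal{N}^2$ used in Lemma~\ref{c-dagger-1}. This gives inequality \eqref{the-expression-midway}, in which the term $C^\dagger/C_2^2$ was already handled in the previous lemma (using $C^\dagger\ll p^{1/2+B\theta}Q$). For the remaining term $D/W$, I would substitute $D\ll CQ^2p^{2+\varepsilon}J^2/NN_\star\ell$ to obtain a bound of the shape $N^{5/2}N_\star^{1/2}CJ/\ell^{3/2}p^2Q^{1/2}W^{1/2}$; the lower bound on $W$ places enough mass in the denominator so that, together with the ceiling $C\ll N_\star p^{1+\varepsilon}/\ell J^2 N$ from \eqref{c-sum-length-initial} and the hypothesis $Q<p^{1-10(B+5)\theta}$, the bound \eqref{seek-bound-full-dual} falls out directly.

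In the second case, $C\ll N_\star p^{1-22\theta}/\ell j^2N$, the trivial estimate for $\sum_w\Omega_w$ is insufficient, and I would invoke Proposition~\ref{shifted-prop}. As flagged in the sketch in Section~5, that proposition (with Remark~\ref{shifted-p>Q}) yields a saving over the trivial bound of size $\min\{C^{3/2}/p^{3/2}Q^2,\,C/Q^2\}$; the condition $Q<p^{1-10(B+5)\theta}$ in particular forces $Q<p$, so the first term of the minimum controls, removing the alternative term that would otherwise cost a factor $Q^2/p$ in the final bound. Plugging the improved estimate for $\sum_w\Omega_w$ into $\mathcal{B}$, then into $\Psi(C_1,C_2)$ via Lemma~\ref{lem-for-psi}, and finally into \eqref{the-expression} with the sum over $j,\lambda$ restored, produces a bound of the required strength $N^{1/2}p^{3/2-\theta/2}Q^2/\ell$ for $\mathcal{O}_2(C,C^\dagger)$.

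The main obstacle I anticipate is the bookkeeping in the second case: one must propagate the $p^{10\theta}$ loss built into the bound of Proposition~\ref{shifted-prop}, the $p^{B\theta}$ loss in the range of $C^\dagger$, and the $p^{22\theta}$ tolerance in the hypothesis on $C$, and then verify that everything is absorbed by the requirement $\theta<1/(B_1+6B)$ for some absolute $B_1$. The dichotomy between the two cases mirrors the dichotomy between the two sources of saving available to us: for large $W$ the $w$-averaging is exploited directly via the trivial bound for $\sum_w\Omega_w$ (which contains only $W^{1/2}$, yielding a factor $W^{-1/2}$ after cancellation with $U\leq D/W$), whereas for moderately sized $C$ the shifted convolution estimate of Proposition~\ref{shifted-prop} becomes non-trivial and supplies the needed saving on its own. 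Together these cover the complementary ranges not yet treated by Lemma~\ref{c-dagger-1}.
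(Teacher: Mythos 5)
There is a genuine gap in your first case. The threshold $W\gg p^{22\theta}/\ell^2$ is calibrated to the paper's argument, which substitutes the bound of Proposition~\ref{shifted-prop} for $\sum_w\Omega_w$ in \emph{both} branches (this is legitimate since the hypothesis $Q<p^{1-10(B+5)\theta}$ forces $Q<p$); after that substitution the deficit in the $D/W$ term is only a factor $p^{11\theta/2}\ell^{-1/2}$ and it sits against $W^{1/4}$, so either $W\gg p^{22\theta}/\ell^2$ or the replacement of the ceiling $C\ll N_\star p^{1+\varepsilon}/\ell J^2N$ by $C\ll N_\star p^{1-22\theta}/\ell J^2N$ (which enters through a residual $C^{1/4}$) closes it. If instead you retain the trivial estimate $\sum_w\Omega_w\ll p^\varepsilon W^{1/2}U\mathcal{N}^2$, the $D/W$ contribution to \eqref{the-expression-midway} is, as you say, of size $N^{5/2}N_\star^{1/2}CJ/\ell^{3/2}p^2Q^{1/2}W^{1/2}$; at the top of the range \eqref{c-sum-length-initial}, with $C\asymp N_\star p/\ell J^2N$, $N_\star\asymp p^2Q^2/N$ and $N\asymp p$, this is of order $p^{2}Q^{5/2}/\ell^{5/2}JW^{1/2}$, while the target \eqref{seek-bound-full-dual} is of order $p^{2-\theta}Q^2/\ell$. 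You would thus need $W\gg Qp^{2\theta}/\ell^3J^2\gg p^{1/2}$ (up to small factors), which is enormously stronger than $W\gg p^{22\theta}/\ell^2$; the hypothesis $Q<p^{1-10(B+5)\theta}$ cannot repair a deficit of size roughly $Q^{1/2}\geq p^{1/4}$. So the claim that in case one ``the bound falls out directly'' is quantitatively false, by essentially the same margin that limited Lemma~\ref{c-dagger-1} to $C\ll p^{1-\theta/2}Q^{3/2}/J^2$.

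Your second case is closer to the paper, but it is incomplete as stated: once you plug the improved bound for $\sum_w\Omega_w$ wholesale into $\mathcal{B}$, the resulting estimate \eqref{the-expression-midway-2} still contains the term with $C^{\dagger}/C_2^2$, and it is precisely this term that requires the additional inputs $C^\dagger\ll p^{1/2+B\theta}Q$, the bound $W\ll C^\dagger Qp^{\theta}$ from Lemma~\ref{trivial-dual}, the restriction $C\gg p^{1-\theta/2}Q^{3/2}/J^2$ inherited from Lemma~\ref{c-dagger-1}, and ultimately the hypothesis $Q<p^{1-10(B+5)\theta}$ (this is where that exponent comes from). Your proposal never addresses this term under the improved $\Omega$ bound; one could salvage the bookkeeping by splitting $\mathcal{B}\ll C_2C_1\sum_w\Omega_w+C_2U\sum_w\Omega_w$ and using the trivial bound on the first piece and Proposition~\ref{shifted-prop} on the second, but you would still have to carry out the paper's analysis of the $D/W$ piece, and your case-one branch must in any event be rerouted through the improved bound as above.
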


\begin{proof}
If $Q<p$, then in the last section we will establish the following bound 
\begin{align*}
\sum_w\Omega_w\ll\frac{p^{1+10\theta+\varepsilon}Q^3}{\ell^3}\:\left(\frac{N_\star^{1/2}}{(CJ^2\ell)^{1/2}}+\frac{N_\star}{CJ^2Q}\right).
\end{align*}
On substituting this bound it follows that 
\begin{align*}
\Psi(C_1,C_2)\ll p^\varepsilon W^{1/4}\:& \frac{p^{2+5\theta}Q^{4}}{\ell^{3/2}}\;C_2\{C_1(C_1+U)\}^{1/2}\\
&\times \left(\frac{N_\star^{1/4}}{(CJ^2\ell)^{1/4}}+\frac{N_\star^{1/2}}{(CJ^2Q)^{1/2}}\right).
\end{align*}
Substituting this bound in \eqref{the-expression} we obtain 
\begin{align}
\label{the-expression-midway-2}
\mathcal{O}_2(C,C^\dagger)\ll & p^{5\theta+\varepsilon}\frac{N^{9/2}N_\star^{3/2}}{p^{9/2}Q^{2}\ell}\;W^{1/4}\;\left(\frac{C^\dagger}{C_2^2}+\frac{D}{W}\right)^{1/2}\\
\nonumber &\times \left(\frac{N_\star^{1/4}}{(CJ^2\ell)^{1/4}}+\frac{N_\star^{1/2}}{(CJ^2Q)^{1/2}}\right).
\end{align}
\\

Consider the term
\begin{align*}
p^{5\theta+\varepsilon}\frac{N^{9/2}N_\star^{3/2}}{p^{9/2}Q^{2}\ell}\;\frac{D^{1/2}}{W^{1/4}}\:
\left(\frac{N_\star^{1/4}}{(CJ^2\ell)^{1/4}}+\frac{N_\star^{1/2}}{(CJ^2Q)^{1/2}}\right).
\end{align*}
Using that $D\ll CQ^2p^{2+\varepsilon}J^2/NN_\star\ell$ \eqref{l-def}, we see that the above term is dominated by
\begin{align*}
p^{5\theta+\varepsilon}\frac{N^{9/2}N_\star^{3/2}}{p^{9/2}Q^{2}\ell}\;\frac{QpJ}{W^{1/4}(NN_\star\ell)^{1/2}}\:
\left(\frac{C^{1/4}N_\star^{1/4}}{J^2\ell}+\frac{N_\star^{1/2}}{(J^2Q)^{1/2}}\right).
\end{align*}
Now plugging in the upper bound for $C$ from \eqref{c-sum-length-initial} and that for $N_\star$ we dominate the term by
\begin{align*}
\frac{N^{1/2}p^{3/2+5\theta+\varepsilon}Q^2}{\ell^{3/2}W^{1/4}}.
\end{align*}
This is satisfactory if $W\gg p^{22\theta}/\ell^2$. Also it follows that this term is satisfactory if $C\ll N_\star p^{1-22\theta+\varepsilon}/\ell J^2N$. (One will note that this contribution will be satisfactory if we have some saving in the sum $\mathcal{A}$, which we are so far estimating trivially. This will be used in the next section.) \\

Next consider the term
\begin{align*}
p^{5\theta+\varepsilon}\frac{N^{9/2}N_\star^{3/2}}{p^{9/2}Q^{2}\ell}\;W^{1/4}\;\frac{C^{\dagger 1/2}}{C_2}\:\left(\frac{N_\star^{1/4}}{(CJ^2\ell)^{1/4}}+\frac{N_\star^{1/2}}{(CJ^2Q)^{1/2}}\right).
\end{align*}
Suppose $C^\dagger\ll p^{1/2+B\theta}Q$. Then this is dominated by
\begin{align*}
&p^{(B+5)\theta+\varepsilon}\;\frac{N^{9/2}N_\star^{3/2}}{p^{17/4}Q^{3/2}\ell}\;W^{1/4}\:\left(\frac{N_\star^{1/4}}{(CJ^2\ell)^{1/4}}+\frac{N_\star^{1/2}}{(CJ^2Q)^{1/2}}\right).
\end{align*}
From Lemma~\ref{trivial-dual} we see that it is enough to take $W\ll C^\dagger Qp^{\theta}$. So $W\ll p^{1/2+(B+1)\theta}Q^2$, and plugging in this bound we arrive at
\begin{align*}
&p^{(5B+21)\theta/4+\varepsilon}\;\frac{N^{9/2}N_\star^{3/2}}{p^{33/8}Q\ell}\:\left(\frac{N_\star^{1/4}}{(CJ^2\ell)^{1/4}}+\frac{N_\star^{1/2}}{(CJ^2Q)^{1/2}}\right).
\end{align*}
Now from Lemma~\ref{c-dagger-1} we see that it is enough to take $C$ in the range $C\gg p^{1-\theta/2}Q^{3/2}/J^2$. Substituting this we see that the above term is dominated by
\begin{align*}
&p^{(5B+21)\theta/4+\varepsilon}\;\frac{N^{9/2}N_\star^{3/2}}{p^{33/8}Q\ell}\:\left(\frac{N_\star^{1/4}p^{\theta/8}}{(p\ell)^{1/4}Q^{3/8}}+\frac{N_\star^{1/2}p^{\theta/4}}{p^{1/2}Q^{5/4}}\right).
\end{align*}
This is bounded by
\begin{align*}
\frac{N^{1/2}p^{3/2-1/8+(5B+23)\theta/4+\varepsilon}Q^{2+1/8}}{\ell}.
\end{align*}
The lemma follows by restricting the size of $Q$ accordingly.
\end{proof}

\bigskip

\begin{proof}[Proof of Proposition~\ref{prop-for-o2}]
In the light of the last two lemmas, if $(B,\theta)$ is such that $\theta<1/2(B+3)$ and if $Q<p^{1-10(B+5)\theta/2}$, $C^\dagger<p^{1/2+B\theta}Q$, then to settle the proposition we only need to tackle the case where 
$$W\ll p^{22\theta}/\ell^2,\;\;\;\;\text{and}\;\;\;\; N_\star p^{1-22\theta}/\ell J^2N\ll C\ll N_\star p^{1+\varepsilon}/\ell J^2N.$$ 
(Recall \eqref{c-sum-length-initial}.)
So in particular $\ell\ll p^{11\theta}$. We return to the expression given in \eqref{Psi}, and consider the term within the second absolute value 
\begin{align}
\label{general}
&\mathop{\sum\sum}_{\substack{m,n\\4m-pn^2=uw\\u\sim U \ll D_1\;\Box-\text{free}}} \lambda_f(m)\left(\frac{u}{c}\right) F(m-pn^2,n).
\end{align}
First let us study an idealized version of this sum where $w=1$, $C\asymp N_\star p/\ell J^2N$ so that there is no oscillation in the integral, and suppose there is no square-freeness condition on $u$. In this case the sum essentially reduces to
\begin{align}
\label{ideal}
&\mathop{\sum\sum}_{\substack{1\leq m<\infty\\n\in \mathbb{Z}}} \lambda_f(m)\left(\frac{4m-pn^2}{c}\right) V\left(\frac{m\ell^2}{M}\right)W\left(\frac{n}{\mathcal{N}}\right),
\end{align}
where $V$ is a bump function with support $[1,2]$ and $W$ is a bump function with support $[-1,1]$. Then we write $c=c_1c_2^2c_3^2$ where $c_1$ is square-free, $c_2|c_1^\infty$ and $(c_3,c_1)=1$. (Note that we are using a slightly different decomposition instead of just writing $c=c_1c_2^2$.) So we can write the sum as
\begin{align*}
&\mathop{\sum\sum}_{\substack{1\leq m<\infty\\n\in \mathbb{Z}\\(4m-pn^2,c_3)=1}} \lambda_f(m)\left(\frac{4m-pn^2}{c_1}\right) V\left(\frac{m\ell^2}{M}\right)W\left(\frac{n}{\mathcal{N}}\right).
\end{align*}
Then detecting the coprimality condition using Mobius and shifting to multiplicative characters using Gauss sums we arrive at
\begin{align*}
&\sum_{\eta|c_3}\mu(\eta)\frac{1}{g_{c_1}}\sum_{\alpha\bmod{c_1}}\left(\frac{\alpha}{c_1}\right)\\
&\times \mathop{\sum\sum}_{\substack{1\leq m<\infty\\n\in \mathbb{Z}\\\eta|4m-pn^2}} \lambda_f(m)e\left(\frac{\alpha(4m-pn^2)}{c_1}\right) V\left(\frac{m\ell^2}{M}\right)W\left(\frac{n}{\mathcal{N}}\right).
\end{align*}
Then we detect the divisibility condition using additive characters to get
\begin{align*}
&\sum_{\eta|c_3}\sum_{\xi|\eta}\frac{\mu(\eta)}{\eta g_{c_1}}\sideset{}{^\star}\sum_{\beta\bmod{\xi}}\sum_{\alpha\bmod{c_1}}\left(\frac{\alpha}{c_1}\right)\\
&\times \mathop{\sum\sum}_{\substack{1\leq m<\infty\\n\in \mathbb{Z}}} \lambda_f(m)e\left(\frac{(\alpha\xi+\beta c_1)(4m-pn^2)}{c_1\xi}\right) V\left(\frac{m\ell^2}{M}\right)W\left(\frac{n}{\mathcal{N}}\right).
\end{align*}
Observe that $(\alpha\xi+\beta c_1,c_1\xi)=1$. So applying the Voronoi summation formula on the sum over $m$ and the Poisson summation on the sum over $n$ we transform the above sum to
\begin{align*}
&\sum_{\eta|c_3}\sum_{\xi|\eta}\frac{\mu(\eta)}{\eta g_{c_1}}\sideset{}{^\star}\sum_{\beta\bmod{\xi}}\sum_{\alpha\bmod{c_1}}\left(\frac{\alpha}{c_1}\right)\;\frac{M}{\ell^2c_1\xi p^{1/2}}\\
&\times \mathop{\sum}_{\substack{m=1}}^\infty \lambda_f(m)e\left(-\frac{\bar{4p(\alpha\xi+\beta c_1)}m}{c_1\xi}\right)\int  V\left(x\right)J_{\kappa-1}\left(\frac{4\pi\sqrt{Mmx}}{c_1\xi p^{1/2}\ell}\right)\mathrm{d}x\\
&\times \frac{\mathcal{N}}{c_1\xi}\mathop{\sum}_{\substack{n\in \mathbb{Z}}} \sum_{\gamma\bmod{c_1\xi}}e\left(\frac{-(\alpha\xi+\beta c_1)p\gamma^2+n\gamma}{c_1\xi}\right)\;\hat W\left(-\frac{n\mathcal{N}}{c_1\xi}\right).
\end{align*}
The effective length of the $m$ sum is $(c_1\xi)^2p\ell^2/M$, and that of the $n$ sum is $c_1\xi/\mathcal{N}$. Now consider the character sum
\begin{align*}
&\sideset{}{^\star}\sum_{\beta\bmod{\xi}}\sum_{\alpha\bmod{c_1}}\left(\frac{\alpha}{c_1}\right)\;e\left(-\frac{\bar{4p(\alpha\xi+\beta c_1)}m}{c_1\xi}\right)\\
&\times \sum_{\gamma\bmod{c_1\xi}}e\left(\frac{-(\alpha\xi+\beta c_1)p\gamma^2+n\gamma}{c_1\xi}\right).
\end{align*}
The innermost sum is a Gauss sum which can be evaluated explicitly. In the typical case where $c_1\xi\equiv 1\bmod{4}$, the above sum reduces to
\begin{align*}
&(c_1\xi)^{1/2}\:\sideset{}{^\star}\sum_{\beta\bmod{\xi}}\sum_{\alpha\bmod{c_1}}\left(\frac{\alpha}{c_1}\right)\left(\frac{(\alpha\xi+\beta c_1)p}{c_1\xi}\right)\;e\left(-\frac{\bar{4p(\alpha\xi+\beta c_1)}(m-n^2)}{c_1\xi}\right).
\end{align*}
Observe that the character sum modulo $c_1$ is a Ramanujan sum and that modulo $\xi$ is a Gauss sum. Consequently the above sum is bounded by $O(\xi c_1^{1/2}(c_1,m-n^2))$. Hence \eqref{ideal} is bounded by
\begin{align*}
&\frac{M\mathcal{N}}{\ell^2c_1^{2} p^{1/2}}\sum_{\eta|c_3}\sum_{\xi|\eta}\frac{1}{\eta \xi}\:\mathop{\sum}_{\substack{1\leq m\ll (c_1\xi)^2p^{1+\varepsilon}\ell^2/M}}\:\mathop{\sum}_{\substack{|n|\ll p^\varepsilon c_1\xi/\mathcal{N}}} \:(c_1,m-n^2).
\end{align*}
If we plug in this bound for the innermost term in \eqref{Psi}, we get that the total contribution of this term to $\mathcal{O}_2(C,C^\dagger)$ is dominated by
\begin{align*}
&\frac{\sqrt{\ell}N^{9/2}N_\star^{3/2}}{p^{5}Q^{7/2}C^{\dagger 1/2}}\:\frac{M\mathcal{N}}{\ell^2p^{1/2}}\:\sum_{c\sim C^\dagger}\frac{1}{c_1^2}\sum_{\eta|c_3}\sum_{\xi|\eta}\frac{1}{\eta \xi}\\
\nonumber &\times \mathop{\sum}_{\substack{1\leq m\ll (c_1\xi)^2p^{1+\varepsilon}\ell^2/M}}\:\mathop{\sum}_{\substack{|n|\ll p^\varepsilon c_1\xi/\mathcal{N}}} \:(c_1,m-n^2).
\end{align*}
The contribution of $n=0$ is dominated by
\begin{align*}
&\frac{\sqrt{\ell}N^{9/2}N_\star^{3/2}}{p^{5}Q^{7/2}C^{\dagger 1/2}}\:\frac{M\mathcal{N}}{\ell^2p^{1/2}}\:\sum_{c\sim C^\dagger}\frac{1}{c_1^2}\sum_{\eta|c_3}\sum_{\xi|\eta}\frac{1}{\eta \xi}\: \mathop{\sum}_{\substack{1\leq m\ll (c_1\xi)^2p^{1+\varepsilon}\ell^2/M}}\:(c_1,m)\\
&\ll \frac{\sqrt{\ell}N^{9/2}N_\star^{3/2}}{p^{5}Q^{7/2}}\:p^{1/2+\varepsilon}\mathcal{N}\:C^{\dagger 1/2}\ll \frac{N^{1/2}p^{3/2-1/4+(B+11)\theta/2+\varepsilon}Q}{\ell}.
\end{align*}
(Recall that $\ell\ll p^{11\theta}$.) Now consider the case $n\neq 0$. The contribution of the terms satisfying $m=n^2$ is bounded by
\begin{align*}
&\frac{\sqrt{\ell}N^{9/2}N_\star^{3/2}}{p^{5}Q^{7/2}C^{\dagger 1/2}}\:\frac{M\mathcal{N}}{\ell^2p^{1/2}}\:\sum_{c\sim C^\dagger}\frac{1}{c_1^2}\sum_{\eta|c_3}\sum_{\xi|\eta}\frac{1}{\eta \xi}\: \mathop{\sum}_{\substack{1\leq |n|\ll p^\varepsilon c_1\xi/\mathcal{N}}}\:c_1\\
&\ll \frac{\sqrt{\ell}N^{9/2}N_\star^{3/2}}{p^{5}Q^{7/2}}\:\frac{p^{\varepsilon}MC^{\dagger 1/2}}{\ell^2p^{1/2}}\ll \frac{N^{1/2}p^{3/2-1/4+B\theta/2+\varepsilon}Q^{2}}{\ell},
\end{align*}
which dominates the contribution of $n=0$.
Finally the contribution of $n\neq 0$ and $m\neq n^2$, when $c_1\sim C_1$, is bounded by
\begin{align*}
&\frac{\sqrt{\ell}N^{9/2}N_\star^{3/2}}{p^{5}Q^{7/2}C^{\dagger 1/2}}\:\frac{M\mathcal{N}}{\ell^2p^{1/2}}\:\sum_{c_3\ll C^{\dagger 1/2}/C_1^{1/2}}\frac{1}{C_1^2}\sum_{\eta|c_3}\sum_{\xi|\eta}\frac{1}{\eta \xi}\: \mathop{\sum\sum}_{\substack{1\leq |n|\ll p^\varepsilon C_1\xi/\mathcal{N}\\1\leq m\ll (C_1\xi)^2p^{1+\varepsilon}\ell^2/M}}\:C_1\\
&\ll \frac{\sqrt{\ell}N^{9/2}N_\star^{3/2}}{p^{5}Q^{7/2}}\:p^{1/2+\varepsilon}C^{\dagger 3/2}\ll \frac{N^{1/2}p^{3/2+1/4+3(B+11)\theta/2+\varepsilon}Q}{\ell}.
\end{align*}
This dominates the previous contributions if $Q>p^{1/2}$.
\\

Now we return to the general sum \eqref{general}. Though $w$ may not be one or $C$ may not be optimal, but we can take them to be nearly one and nearly optimal respectively. This is the main outcome of our two  previous lemmas. We will now tackle the square-free condition.
 Detecting the square-free condition using Mobius we arrive at
\begin{align*}
&\mathop{\sum\sum}_{\substack{m,n\\4m-pn^2=uw\\u\sim U \ll D_1\;\Box-\text{free}}} \lambda_f(m)\left(\frac{u}{c}\right) F(m,n)\\
&=\sum_{\substack{\delta=1\\(\delta,c)=1}}^\infty \mu(\delta)\mathop{\sum\sum}_{\substack{m,n\\4m-pn^2=u\delta^2w}} \lambda_f(m)\left(\frac{u}{c}\right) F(m,n).
\end{align*}
Then we write $u=u'u_0$ with $u'$ maximal square-free part with $(u',u_0\delta w)=1$. Then we set $w'=u_0\delta^2 w$. It follows that \eqref{Psi} is dominated by
\begin{align*}
\Psi\leq \mathcal{A}^{1/2}\:\mathcal{B}^{' 1/2},
\end{align*}
where
\begin{align*}
\mathcal{B}'=\sideset{}{^P}\sum_{w\sim W'}\sum_{c_2\sim C_2}\sideset{}{^\flat}\sum_{c_1\sim C_1}\: \Bigl|\sideset{}{^\flat}\sum_{u'\sim U'} \beta(u',w')\left(\frac{u'}{c_1}\right) \Bigr|^2.
\end{align*}
The conclusion of the previous lemma holds in this context as well. It follows that after detecting the square-free condition using the Mobius function, we can restrict the range of $\delta$ by $w\delta^2\ll p^{22\theta}/\ell^2$. So now we have a slight perturbation of the sum \eqref{ideal}, where we have the mild condition that $w\delta^2|4m-pn^2$. Our treatment above works even in this case, though our bound may not be as strong as before. In any case we are able to conclude that there exists an absolute constant $B_2>11$ (not depending on $\theta$ or $B$) such that 
\begin{align*}
\mathcal{O}_2(C,C^\dagger)\ll \frac{N^{1/2}p^{3/2+1/4+3(B+B_2)\theta/2+\varepsilon}Q}{\ell}.
\end{align*}
The proposition now follows if we take the absolute constant in the statement to be $B_1=2(3B_2+1)$.
\end{proof}

\bigskip

%========================================================================================================================

%=====================================================================
\section{Bounding $\mathcal{A}$: Final bound for $\mathcal{O}_2(C,C^\dagger)$}
\label{sec-final}

It remains to extend the bound in Proposition~\ref{prop-for-o2} to larger values of $C^\dagger$. In this section we will prove the following.\\

\begin{proposition}
\label{prop-for-o2-part2}
Suppose $A$ and $B$ are two positive numbers with $B>12$. Suppose $p^{1/2+B\theta}Q\ll C^\dagger\ll p^{1+A\theta}QD_1^{1/2}W^{-1/12}$. Then we have
\begin{align}
\label{seek-bound-full-dual-part2}
\mathcal{O}_2(C,C^\dagger)\ll \frac{N^{1/2}p^{3/2-\theta/2+\varepsilon}Q^2}{\ell},
\end{align} 
for any value of $C$ in the range \eqref{c-sum-length-initial} as long as we pick $Q$ in the range $$p^{7/10+6(11+A)\theta/5}<Q<p.$$
\end{proposition}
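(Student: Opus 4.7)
The plan is to follow the same skeleton as Proposition~\ref{prop-for-o2}: start from \eqref{the-expression}, apply Lemma~\ref{lem-for-psi} to get $\Psi(C_1,C_2)\ll\mathcal{A}^{1/2}\mathcal{B}^{1/2}$, and bound $\mathcal{B}$ by Heath-Brown's large sieve for quadratic characters together with the shifted convolution estimate $\sum_w\Omega_w\ll \tfrac{p^{1+10\theta+\varepsilon}Q^3}{\ell^3}\bigl(\tfrac{N_\star^{1/2}}{(CJ^2\ell)^{1/2}}+\tfrac{N_\star}{CJ^2Q}\bigr)$ from Proposition~\ref{shifted-prop}. The one new ingredient needed in the present regime is a genuine cancellation in the $\psi$-sum inside $\mathcal{A}$. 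Specifically, I would isolate a lemma (the \emph{Lemma~\ref{bound-for-A}} referenced in the sketch) asserting
\[
\mathcal{A}\ll p^{3+\varepsilon}Q^{5}\,W^{1/2}\,C_{1}C_{2}\cdot\min\!\left\{\tfrac{1}{Q},\;\tfrac{p^{1/2}Q}{C^{\dagger}}\right\},
\]
which saves $\min\{Q,\,C^{\dagger}/p^{1/2}Q\}$ over the trivial estimate \eqref{trivial-a}.

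To establish this bound on $\mathcal{A}$, I would open the absolute square so that it becomes a quadruple sum over $q_1,q_2,\psi_1,\psi_2$ weighted by $G_{\psi_1}\overline{G_{\psi_2}}\psi_1\bar\psi_2(\xi\bar\zeta c)$, apply the sum rule \eqref{res-psi-sum} to collapse the sums over $\psi_1,\psi_2$ to a congruence condition modulo $pq_1q_2$, and then expand the Gauss sum factors $G_{\psi_i}=g_{\bar\psi_i^{\star}}g_{\bar{\tilde\psi_i}}$ into additive characters modulo $pq_i$. Poisson summation applied to the sum over $c_1$ (of length $C_1\sim C^{\dagger}/C_2^{2}$) then produces a frequency variable of size roughly $C^{\dagger}/pQ$: when $C^{\dagger}\gg p^{1/2}Q^{2}$ only the zero frequency survives and the full saving $Q$ is obtained from the $\psi$-sum evaluation, while in the intermediate range $p^{1/2+B\theta}Q\ll C^{\dagger}\ll p^{1/2}Q^{2}$ roughly $C^{\dagger}/pQ^{2}$ frequencies contribute and the residual complete character sum modulo $pq_1q_2$ has square-root size by Weil, giving the saving $C^{\dagger}/p^{1/2}Q$. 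The non-generic terms in \eqref{res-psi-sum} retain an extra factor of $p^{1/2}$ or $Q^{1/2}$ from the inner quadratic character sums and are absorbed exactly as in Section~\ref{sec-sum-o1}.

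Once $\mathcal{A}$ is controlled in both sub-ranges, I would feed the resulting bound for $\Psi(C_1,C_2)$ back into \eqref{the-expression}, use $C_1C_2^{2}\sim C^{\dagger}$, $UW\ll D\asymp CQ^{2}p^{2+\varepsilon}J^{2}/N_{\star}N\ell$, and the upper bounds $C\ll p^{\varepsilon}N_{\star}/\ell J^{2}N$ and $C^{\dagger}\ll p^{1+A\theta}QD_{1}^{1/2}W^{-1/12}$. Splitting into the two cases and comparing the total saving against the target $CC^{\dagger 1/2}/pQ^{3/2}$ mentioned in the outline: in the first range one obtains total saving $C^{3/4}C^{\dagger}/pQ^{3/2}$, which suffices because $C\ll pQ^{2}\ll C^{\dagger 2}$; in the second range the total saving is $C^{5/4}C^{\dagger 1/2}/pQ^{3/2}$ together with $C^{3/4}C^{\dagger 1/2}/p^{3/4}Q^{1/2}$, both of which are sufficient because $C\ll p^{1+\varepsilon}Q^{2}$. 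A direct but tedious optimization, carrying the factor $p^{10\theta}$ from Proposition~\ref{shifted-prop} and an $A\theta$-factor from the $C^{\dagger}$ range, reduces the desired inequality $\mathcal{O}_{2}(C,C^{\dagger})\ll N^{1/2}p^{3/2-\theta/2+\varepsilon}Q^{2}/\ell$ to a condition of the shape $Q>p^{7/10+6(11+A)\theta/5}$; this is the hypothesis of the proposition and explains the appearance of the exponent $7/10$.

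The main obstacle is the non-trivial bound on $\mathcal{A}$. Tracking the dependence on the square-full divisor $c_{2}$ of the modulus, on the auxiliary parameters $w,\delta,\xi,\zeta$, and on the two moduli $q_{1},q_{2}\in\mathcal{Q}$ will require some care, as will verifying that the residual exponential sum modulo $pq_{1}q_{2}$ obtained after Poisson in $c_{1}$ really enjoys Weil-type square-root cancellation generically; the degenerate configurations (vanishing Poisson frequency, or $q_{1}=q_{2}$ with $\psi_{1}=\psi_{2}$) produce exactly the diagonal term of the claimed bound and need separate bookkeeping. The remaining step, namely the optimization over $C$ and $C^{\dagger}$, is mechanical once the lemma on $\mathcal{A}$ is in hand.
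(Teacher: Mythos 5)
Your plan follows the paper's own proof: the key new input is precisely the paper's bound for $\mathcal{A}$ (Poisson summation in the $c$-variable against the modulus $pq_1q_2$, Weil-type square-root cancellation in the resulting complete character sums $\mathfrak{C}_p\mathfrak{C}_{q_1}\mathfrak{C}_{q_2}$, the zero frequency giving the diagonal saving $Q$ and the nonzero frequencies the saving $C^\dagger/p^{1/2}Q$), which is then fed into $\Psi\ll\mathcal{A}^{1/2}\mathcal{B}^{1/2}$ together with Heath-Brown's large sieve and Proposition~\ref{shifted-prop}, with the same case split at $C^\dagger\asymp p^{1/2}Q^2$ producing the condition $Q>p^{7/10+6(11+A)\theta/5}$. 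Two bookkeeping points to correct in your sketch: the Gauss sums $G_{\psi_i}$ must be opened into additive characters \emph{before} the $\psi_i$-sums can be executed (the summand depends on $\psi_i$ through $G_{\psi_i}$, so \eqref{res-psi-sum} cannot be applied first), and after Poisson the dual variable ranges over $O(p^{1+\varepsilon}Q^2/C^\dagger)$ frequencies rather than $C^\dagger/pQ$, so the zero frequency is the only survivor only when $C^\dagger\gg pQ^2$; at the threshold $p^{1/2}Q^2$ the nonzero frequencies are still present and must be bounded by Weil, the point being that their contribution is then dominated by the diagonal, which is exactly how the stated $\min\{Q,\,C^\dagger/p^{1/2}Q\}$ saving arises.
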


\bigskip

The extra saving now comes from getting cancellation in $\mathcal{A}$.
To get another bound for $\mathcal{A}$ we will open the absolute square and apply the Poisson summation formula on the sum over $c$. As a first step we glue back $c_1$ and $c_2$, and introduce a suitable bump function with compact support in $(0,\infty)$, to arrive at
\begin{align*}
\mathcal{A}\ll \sum_{w\sim W}\sum_{c\in \mathbb{Z}}\:W\left(\frac{c}{C^\dagger}\right)\Bigl|\sum_{q\in\mathcal{Q}}\;\sideset{}{^\dagger}\sum_{\psi\bmod{pq}}\:G_\psi\:\psi(\xi\bar{\zeta} c)\Bigr|^2.
\end{align*}
We first prove the following improved estimate.\\

\begin{lemma}
\label{bound-for-A}
We have
\begin{align*}
\mathcal{A}\ll p^\varepsilon W^{1/2}C^\dagger p^3Q^5\;\left(\frac{1}{Q}+\frac{p^{1/2}Q}{C^\dagger}\right).
\end{align*}
\end{lemma}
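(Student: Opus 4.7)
My strategy is to consolidate $c=c_1c_2^2$ into a single variable, evaluate the inner $\psi$-sum as a Salie sum, open the absolute square, and apply Poisson summation to the $c$-sum. After gluing $c_1c_2^2$ and dropping the square-free conditions by positivity, we bound
\[
\mathcal{A}\ll\sum_{w\sim W}\sum_{\delta|wr(w)}\sum_{c\in\mathbb{Z}}V(c/C^\dagger)\Bigl|\sum_{q\in\mathcal{Q}}\sideset{}{^\dagger}\sum_{\psi\bmod pq}G_\psi\,\psi(\xi\bar\zeta c)\Bigr|^2,
\]
where $V$ is a fixed smooth bump on a dyadic interval.

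I open $G_\psi=g_{\bar\psi^\star}g_{\bar{\tilde\psi}}$ as a double sum over $a_1,a_2\bmod pq$ carrying $(a_1/q)(a_2/p)\,e((a_1+a_2)/(pq))$, and apply the identity \eqref{res-psi-sum} to the $\psi$-sum of $\bar\psi(a_1a_2\bar\xi\zeta\bar c)$. The generic term imposes $a_1a_2\equiv\pm\xi\bar\zeta c\pmod{pq}$; substituting $a_2=\pm\xi\bar\zeta c\,\bar a_1$ reduces the inner sum to a Salie-type sum on the squarefree modulus $pq$, whose classical evaluation yields
\[
\sideset{}{^\dagger}\sum_\psi G_\psi\,\psi(\xi\bar\zeta c)\approx \phi(pq)\sqrt{pq}\left(\tfrac{\xi\bar\zeta c}{p}\right)\sum_{\pm}\varepsilon_\pm\sum_{\beta^2\equiv\pm\xi\bar\zeta c\,(pq)}e\!\left(\tfrac{2\beta}{pq}\right),
\]
with unit prefactors $\varepsilon_\pm$ depending only on $p,q\bmod 4$.

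Opening the absolute square and applying Poisson to the $c$-sum with modulus $L=\mathrm{lcm}(pq_1,pq_2)$, the factor $(\xi\bar\zeta c/p)^2=1$ drops out, leaving
\[
\mathcal{A}\ll W^{1/2+\varepsilon}\sum_{q_1,q_2}\phi(pq_1)\phi(pq_2)\sqrt{pq_1pq_2}\,\frac{C^\dagger}{L}\sum_n\hat V\!\left(\tfrac{nC^\dagger}{L}\right)\sum_{\beta_1,\beta_2}e\!\left(\tfrac{2\beta_1}{pq_1}-\tfrac{2\beta_2}{pq_2}-\tfrac{nc_0}{L}\right),
\]
with $c_0=c_0(\beta_1,\beta_2)$ the CRT-solution of $\beta_i^2\equiv\pm\xi\bar\zeta c\pmod{pq_i}$. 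For the zero frequency $n=0$: when $q_1=q_2=q$, compatibility forces $\beta_1\equiv\pm\beta_2\pmod{pq}$; the $+$-branch is the true diagonal, contributing $\ll W^{1/2+\varepsilon}C^\dagger p^3Q^4$ after the $\beta_1$-sum gives $pq$ and the $c$-count is $C^\dagger/(pq)$, while the $-$-branch vanishes since $\sum_{\beta\bmod pq}e(4\beta/(pq))=0$ for odd $p$. When $q_1\neq q_2$, CRT-splitting $\beta_i=(\beta_{i,p},\beta_{i,q_i})$ leaves $\beta_{1,q_1}$ free, and $\sum_{\beta_{1,q_1}\bmod q_1}e(2\beta_{1,q_1}\bar p/q_1)=0$, so the off-diagonal vanishes. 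For $n\neq 0$, after CRT the inner $\beta$-sum becomes a product of (inhomogeneous) quadratic Gauss sums modulo $p$, $q_1$, $q_2$, bounded by $\sqrt L$; combined with the effective range $|n|\ll p^\varepsilon L/C^\dagger$, this contribution is
\[
\ll W^{1/2+\varepsilon}\sum_{q_1,q_2}(pQ)^2\cdot pQ\cdot\sqrt{pQ^2}\ll W^{1/2+\varepsilon}p^{7/2}Q^6.
\]

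The non-generic pieces of \eqref{res-psi-sum} (with $\phi(p)$ or $\phi(q)$ in place of $\phi(pq)$) undergo identical analyses with smaller weights (losses of a factor $p^2$ or $Q^2$ after squaring) and are easily absorbed. Combining everything, $\mathcal{A}\ll W^{1/2+\varepsilon}(C^\dagger p^3Q^4+p^{7/2}Q^6)$, which matches the claim. The main obstacle is the careful bookkeeping around the Salie evaluation (in particular the mod-$4$ sign technicalities and the non-generic pieces of \eqref{res-psi-sum}) and the precise identification of the Gauss sum structure in the Poisson-transformed $n\neq 0$ terms, where the separation of variables across the primes $p$, $q_1$, $q_2$ via CRT is what ultimately produces the claimed saving.
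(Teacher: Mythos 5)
Your proposal is correct, and its skeleton coincides with the paper's: open the absolute square, apply Poisson summation in $c$ modulo $pq_1q_2$ (your $L$), let the zero frequency reproduce the diagonal contribution $\ll p^\varepsilon W^{1/2}C^\dagger p^3Q^4$, truncate the dual variable at $|n|\ll p^{1+\varepsilon}Q^2/C^\dagger$, and extract square-root cancellation from the resulting complete character sums modulo $p$, $q_1$, $q_2$, giving $\ll p^\varepsilon W^{1/2}p^{7/2}Q^6$. The genuine difference is where that cancellation comes from: you evaluate the $\psi$-average explicitly \emph{before} squaring, via \eqref{res-psi-sum} and the classical Salie evaluation, so that after Poisson the complete sums are (inhomogeneous) quadratic Gauss sums handled elementarily; the paper keeps the $\psi$-sums inside, executes them \emph{after} Poisson, and bounds the resulting one-variable sums such as \eqref{weil-riemann} by Weil's theorem for curves. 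Your route is precisely the alternative the paper itself flags in the proof of Lemma~\ref{c-dagger-1} (the $\psi$-sum can be evaluated exactly, yielding Salie sums, so Weil is not needed here); it buys a more elementary argument at the cost of carrying the square-root parametrization $\beta_i$ and the CRT bookkeeping through the Poisson step. Two small inaccuracies, neither affecting the final bound: the off-diagonal zero-frequency term and the minus branch of the diagonal do not literally vanish --- restricting $\beta$ to units turns those linear sums into Ramanujan sums of size $O(1)$ (occasionally larger for special pairs $q_1,q_2$), still negligible; and for $n\neq 0$ the bound $\sqrt{L}$ for the $\beta$-sum degenerates when $p$ or $q_i$ divides $n$ (or the quadratic coefficient), which must be balanced against the smaller number of such frequencies --- a standard point on which the paper's own proof is no more detailed.
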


\begin{proof}
Opening the absolute square and pushing in the sum over $c$ we get
\begin{align*}
\sideset{}{^P}\sum_{w\sim W}\:\mathop{\sum\sum}_{q_1,q_2\in\mathcal{Q}}\;\mathop{\sideset{}{^\dagger}\sum\sideset{}{^\dagger}\sum}_{\psi_i\bmod{pq_i}}\:G_{\psi_1}\overline{G_{\psi_2}}\:\psi_1\bar{\psi}_2(\xi\bar{\zeta})\sum_c W\left(\frac{c}{C^\dagger}\right) \psi_1\bar{\psi}_2(c).
\end{align*}
After Poisson we arrive at the sum
\begin{align*}
\mathop{\sum\sum}_{q_1,q_2\in\mathcal{Q}}\;\frac{C^\dagger}{pq_1q_2}\;\sum_{c\in\mathbb{Z}}\;\mathfrak{C}\mathfrak{I}
\end{align*}
where the character sum is given by
\begin{align*}
\mathfrak{C}=\mathop{\sideset{}{^\dagger}\sum\sideset{}{^\dagger}\sum}_{\substack{\psi_1\bmod{pq_1}\\\psi_2\bmod{pq_2}}}\:
G_{\psi_1}\bar{G}_{\psi_2}\:\psi_1\bar{\psi}_2(\xi\bar{\zeta})\:\sum_{a\bmod{pq_1q_2}}\psi_1(a)\bar{\psi}_2(a)e\left(\frac{ac}{pq_1q_2}\right)
\end{align*}
and the integral $\mathfrak{I}$ is a Fourier transform of a bump function. So $\mathfrak{I}$ is negligibly small if $|c|\gg p^{1+\varepsilon}Q^2/C^\dagger$, and is bounded by $O(1)$ otherwise. \\

Now we shall investigate the character sum in detail. For the zero frequency $c=0$ we observe that the character sum vanishes unless $q_1=q_2$, $\psi_1=\psi_2$, in which case we have
\begin{align*}
\mathfrak{C}\ll p^4Q^5.
\end{align*}
The contribution of this term to $\mathcal{A}$ is dominated by 
\begin{align*}
W^{1/2}C^\dagger p^3Q^4.
\end{align*}
This gives a saving of $Q$ over the trivial bound, which is what is expected as the number of terms inside the absolute square is $Q$. Next consider the sum over $\psi_1$ which is given by
\begin{align*}
\mathop{\sideset{}{^\dagger}\sum}_{\substack{\psi_1\bmod{pq_1}}}\:
G_{\psi_1}\:\psi_1(a \xi\bar{\zeta}).
\end{align*}
Substituting the expression for $G_\psi$, and opening the Gauss sums we arrive at
\begin{align*}
\mathop{\sum\sum}_{b_1,b_2\bmod{pq_1}}\;\left(\frac{b_1}{p}\right)\left(\frac{b_2}{q_1}\right)\; e\left(\frac{b_1+b_2}{pq_1}\right)\;\mathop{\sideset{}{^\dagger}\sum}_{\substack{\psi_1\bmod{pq_1}}}\:\psi_1(a \xi\bar{\zeta}\overline{b_1b_2}).
\end{align*}
The last character sum roughly boils down to $\phi(pq_1)$ and yields a congruence relation
\begin{align*}
b_1b_2\zeta \equiv a\xi\bmod{pq_1}.
\end{align*}
Thus we arrive at
\begin{align*}
\phi(pq_1)\;\left(\frac{a\xi\zeta}{q_1}\right)\;\mathop{\sum}_{b_1\bmod{pq_1}}\;\left(\frac{b_1}{pq_1}\right)\; e\left(\frac{b_1+a\xi\bar{\zeta}\bar{b_1}}{pq_1}\right).
\end{align*}
Consequently it follows that
\begin{align*}
\mathfrak{C}=&\phi(pq_1)\phi(pq_2)\:\left(\frac{\xi\zeta}{q_1q_2}\right)\;\sum_{a\bmod{pq_1q_2}}\left(\frac{a}{q_1q_2}\right)e\left(\frac{ac}{pq_1q_2}\right)\\
&\times \mathop{\sum}_{b_1\bmod{pq_1}}\;\left(\frac{b_1}{pq_1}\right)\; e\left(\frac{b_1+a\xi\bar{\zeta}\bar{b_1}}{pq_1}\right)\mathop{\sum}_{b_2\bmod{pq_2}}\;\left(\frac{b_2}{pq_2}\right)\; e\left(\frac{b_2+a\xi\bar{\zeta}\bar{b_2}}{pq_2}\right)
\end{align*}
In the generic case $q_1\neq q_2$, the character sum splits as a product of three character sums
\begin{align*}
\mathfrak{C}=\phi(pq_1)\phi(pq_2)\:\left(\frac{\xi\zeta}{q_1q_2}\right)\;\mathfrak{C}_p\;\mathfrak{C}_{q_1}\;\mathfrak{C}_{q_2}
\end{align*}
where the character sums modulo $q_1$ and $q_2$ are similar. The character sum modulo $p$ is given by
\begin{align*}
\mathfrak{C}_p=&\mathop{\sum\sum\sum}_{a, b_i\bmod{p}}\;e\left(\frac{ac\overline{q_1q_2}}{p}\right)\\
\nonumber &\times \left(\frac{b_1b_2}{p}\right)\; e\left(\frac{b_1\bar{q}_1+a\xi\bar{\zeta}\bar{b_1}\bar{q}_1}{p}\right)\; e\left(\frac{b_2\bar{q}_2+a\xi\bar{\zeta}\bar{b_2}\bar{q}_2}{p}\right).
\end{align*}
The sum over $a$ now yields a congruence and we arrive at
\begin{align}
\label{weil-riemann}
\mathfrak{C}_p=&p\mathop{\sum}_{b_1\bmod{p}}\;\left(\frac{q_1\xi(c\zeta b_1+\xi q_2)}{p}\right)\; e\left(\frac{b_1\bar{q}_1-q_1\bar{q}_2^2\bar{\zeta}\xi\overline{(c\bar{q}_2+\xi\bar{\zeta}\bar{b}_1)}}{p}\right).
\end{align}
We get square root cancellation using Weil's bound for curves over finite fields. The character sum modulo $q_1$ is given by 
\begin{align*}
\mathfrak{C}_{q_1}=&\sum_{a\bmod{q_1}}\left(\frac{a}{q_1}\right)e\left(\frac{ac\overline{pq_2}}{q_1}\right)\; \mathop{\sum}_{b_1\bmod{q_1}}\;\left(\frac{b_1}{q_1}\right)\; e\left(\frac{b_1\bar{p}+a\xi\bar{\zeta}\bar{b_1}\bar{p}}{q_1}\right),
\end{align*}
which then reduces to
\begin{align*}
\mathfrak{C}_{q_1}=q_1^{1/2}\; \mathop{\sum}_{b_1\bmod{q_1}}\;\left(\frac{q_2p\zeta(cb_1\zeta+\xi q_2)}{q_1}\right)\; e\left(\frac{b_1\bar{p}}{q_1}\right).
\end{align*}
Again we have square-root cancellation in the remaining character sum, and hence this is bounded by $q_1$. 
So in general we expect
\begin{align*}
\mathfrak{C}\ll p^{7/2}Q^4,
\end{align*}
and this will imply that the contribution of the non-zero frequencies is bounded by
\begin{align*}
W^{1/2}\;\;p^{7/2+\varepsilon} Q^6.
\end{align*}
This yields a saving of $C^\dagger/\sqrt{p}Q$ over the trivial bound. The lemma follows.
\end{proof}

\bigskip

The above lemma yields improved estimates for the dual off-diagonal, at least for $C^\dagger\gg p^{1/2+\theta}Q$, which is the focus of this section. As an example we first show the following.\\

\begin{lemma}
\label{range-W-new}
We have \eqref{seek-bound-full-dual-part2} if 
$$
W\gg \max\left\{\frac{C^\dagger p^{\theta}}{\ell^3},\frac{p^{1/2+\theta}Q^2}{\ell^3}\right\}.
$$
\end{lemma}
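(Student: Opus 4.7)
The plan is to re-run the Cauchy + large sieve argument of Lemma~\ref{c-dagger-1}, but feeding in the improved bound for $\mathcal{A}$ from Lemma~\ref{bound-for-A} instead of the trivial estimate \eqref{trivial-a}. Starting from Lemma~\ref{lem-for-psi} we have $\Psi(C_1,C_2) \ll p^\varepsilon \mathcal{A}^{1/2}\mathcal{B}^{1/2}$. Using the elementary inequality $(a+b)^{1/2}\leq a^{1/2}+b^{1/2}$, Lemma~\ref{bound-for-A} yields
\[
\mathcal{A}^{1/2}\ll p^\varepsilon W^{1/4}\,\bigl(\,p^{3/2}Q^{2}C^{\dagger 1/2}+p^{7/4}Q^{3}\,\bigr).
\]
For $\mathcal{B}$ I will use Heath-Brown's large sieve for quadratic characters together with the trivial estimate $\sum_w \Omega_w \ll p^\varepsilon W^{1/2}U\mathcal{N}^2$ already employed in the proof of Lemma~\ref{c-dagger-1}, which gives
\[
\mathcal{B}^{1/2}\ll p^\varepsilon W^{1/4}\sqrt{C_2 U(C_1+U)}\,\mathcal{N}.
\]

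Multiplying, summing over the factorization $C^\dagger\sim C_1 C_2^2$ and over the dyadic ranges $j\sim J$, $\lambda\sim\Lambda$ appearing in \eqref{the-expression}, and plugging in $UW\ll D$ along with \eqref{cal-n} and \eqref{l-def}, one obtains two contributions to $\mathcal{O}_2(C,C^\dagger)$ corresponding to the two summands in $\mathcal{A}^{1/2}$. The first is exactly the estimate of Lemma~\ref{trivial-dual} with an extra $Q^{-1/2}$ gain inherited from the diagonal ($c=0$) piece of Lemma~\ref{bound-for-A}; it replaces the factor $(C^\dagger Q p^\theta/\ell^3 W)^{1/2}$ in \eqref{range-W} by $(C^\dagger p^\theta/\ell^3 W)^{1/2}$, so it is admissible precisely when $W\gg C^\dagger p^\theta/\ell^3$.

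The second contribution, coming from the off-diagonal ($c\neq 0$) Poisson frequencies in the proof of Lemma~\ref{bound-for-A}, is independent of $C^\dagger$; after the same bookkeeping it is dominated by $\ell^{-1}N^{1/2}p^{3/2-\theta/2+\varepsilon}Q^{2}\cdot(p^{1/2+\theta}Q^{2}/\ell^{3}W)^{1/2}$, which satisfies the required bound precisely when $W\gg p^{1/2+\theta}Q^{2}/\ell^{3}$. Taking the larger of the two conditions on $W$ proves the lemma. The main obstacle is purely one of bookkeeping: one must carefully track the dependence on $(C_1,C_2,U,W,J,\Lambda,\mathcal{N},D)$ in every factor and verify that the sum over factorizations $C^\dagger=C_1C_2^2$ loses at most a $p^\varepsilon$. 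No new analytic input beyond Lemmas~\ref{lem-for-psi} and~\ref{bound-for-A} is required, and the asymmetry between the two summands in $\mathcal{A}^{1/2}$ is exactly what produces the two inequivalent thresholds on $W$ in the statement.
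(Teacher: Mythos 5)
Your proposal is correct and follows essentially the paper's own route: the paper likewise combines the Cauchy decomposition of Lemma~\ref{lem-for-psi} with the improved estimate for $\mathcal{A}$ from Lemma~\ref{bound-for-A}, treating the $\mathcal{B}$ factor trivially, and the two terms in Lemma~\ref{bound-for-A} yield exactly the two thresholds on $W$ in the statement. Your insertion of Heath-Brown's large sieve for $\mathcal{B}$ is harmless but superfluous here, since with $\sum_w\Omega_w$ bounded trivially it is no stronger (up to a constant) than the trivial bound $\mathcal{B}\ll p^\varepsilon W^{1/2}C_1C_2U^2\mathcal{N}^2$ that the paper uses.
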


\begin{proof}
We return to the bound for $\Psi$ given in Lemma~\ref{lem-for-psi}. We replace the trivial bound for $\mathcal{B}$, and plug in the improved bound from the previous lemma for $\mathcal{A}$. This gives
\begin{align*}
\mathcal{O}_2(C,C^\dagger)\ll \frac{N^{1/2}p^{3/2-\theta/2}Q^2}{\ell}\;\left(\frac{C^{\dagger}p^{\theta}}{\ell^3W}+\frac{Q^2p^{1/2+\theta}}{\ell^3W}\right)^{1/2}.
\end{align*}
The lemma follows.
\end{proof}

The above lemma shows that we only need to consider $W$ which are not too big. Our next lemma will show that the contribution of small $C$ for $C^\dagger \gg p^{1/2}Q$ is also satisfactory.\\

\begin{lemma}
\label{lem:range-for-c-for-c-dagger}
The bound in \eqref{seek-bound-full-dual-part2} holds 
if  $C^\dagger \gg p^{1/2}Q^2$ and 
\begin{align}
\label{one}
C\ll  \min\left\{\frac{p^{9-\theta}Q^8}{C^\dagger N^5N_\star^2J^2},\frac{W^{1/2}p^{7/2-\theta/2}Q^3\ell^{1/2}}{N^2N_\star^{1/2} J^2}\right\}, 
\end{align}
or if $C^\dagger \ll p^{1/2}Q^2$ and
\begin{align}
\label{two}
C\ll \frac{(WC^\dagger)^{1/2}p^{13/4-\theta/2}Q^{2}\ell^{1/2}}{N^2N_\star^{1/2} J^2}.
\end{align}
\end{lemma}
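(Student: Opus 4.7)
The strategy is to extend the estimate in Lemma~\ref{c-dagger-2} by combining it with the improved bound on $\mathcal{A}$ from Lemma~\ref{bound-for-A}. Starting from the Cauchy--Schwarz decomposition $\Psi(C_1,C_2) \leq \mathcal{A}^{1/2}\mathcal{B}^{1/2}$ of Lemma~\ref{lem-for-psi}, I would feed the new estimate $\mathcal{A} \ll p^\varepsilon W^{1/2}C^\dagger p^3 Q^5(1/Q + p^{1/2}Q/C^\dagger)$ into the argument, together with the Heath--Brown large sieve bound $\mathcal{B} \ll p^\varepsilon C_2(C_1+U)\sum_w\Omega_w$ and the non-trivial estimate on $\sum_w\Omega_w$ from Proposition~\ref{shifted-prop}. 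This yields the analog of \eqref{the-expression-midway-2}:
\begin{align*}
\mathcal{O}_2(C,C^\dagger) &\ll p^{5\theta+\varepsilon}\frac{N^{9/2}N_\star^{3/2}}{p^{9/2}Q^{2}\ell}\,W^{1/4} \left(\frac{1}{Q}+\frac{p^{1/2}Q}{C^\dagger}\right)^{\!1/2} \\
&\quad\times \left(\frac{C^\dagger}{C_2^2}+\frac{D}{W}\right)^{\!1/2}\left(\frac{N_\star^{1/4}}{(CJ^2\ell)^{1/4}}+\frac{N_\star^{1/2}}{(CJ^2Q)^{1/2}}\right),
\end{align*}
which gains an extra factor of $(1/Q + p^{1/2}Q/C^\dagger)^{1/2}$ over the corresponding bound used in the proof of Lemma~\ref{c-dagger-2}.

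I would then split according to whether $C^\dagger$ is large or small relative to $p^{1/2}Q^2$, matching the dichotomy in the lemma statement. In the regime $C^\dagger \gg p^{1/2}Q^2$ the new factor is $\asymp Q^{-1/2}$, giving an unconditional $Q^{1/2}$-saving over \eqref{the-expression-midway-2}; in the regime $C^\dagger \ll p^{1/2}Q^2$ it is $\asymp (p^{1/2}Q/C^\dagger)^{1/2}$, which effectively converts $(C^\dagger/C_2^2)^{1/2}$ into $(p^{1/2}Q/C_2^2)^{1/2}$ and $(D/W)^{1/2}$ into $(p^{1/2}QD/WC^\dagger)^{1/2}$, removing the troublesome $C^{\dagger 1/2}$.

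In each regime I would then expand the product of the three parenthesized factors, substitute $U \leq D/W$ with $D \ll CQ^2p^2J^2/(NN_\star\ell)$ from \eqref{l-def}, and take the worst case over $C_2 \in [1,C^{\dagger 1/2}]$ via $(C_2(C_1+U))^{1/2} \leq C^{\dagger 1/2} + C^{\dagger 1/4}(D/W)^{1/2}$. The result is a sum of cross-terms in $C$: those whose net $C$-dependence is a positive power (arising from the $(D/W)^{1/2}\propto C^{1/2}$ factor) generate the stated upper bounds on $C$, while cross-terms with negative $C$-dependence produce automatic lower-bound constraints on $C$ that are satisfied for $C$ in the range \eqref{c-sum-length-initial}, and cross-terms that are $C$-independent yield conditions on the auxiliary parameters $C^\dagger,W,Q,N,N_\star$ already guaranteed by the hypotheses of the proposition.

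The main technical obstacle is the bookkeeping: there are eight cross-terms after full expansion, each a monomial in $W,J,\ell,N,N_\star,Q,C^\dagger$, and matching the binding constraints to the precise form of \eqref{one} and \eqref{two} requires carefully tracking exponents and identifying, in each regime and for each choice of worst-case $C_2$, which cross-term produces each stated inequality. No new analytic input is required beyond combining Lemma~\ref{bound-for-A} with the tools already developed in Sections \ref{sec-pre-final} and \ref{sec-shifted}; the proof is a computational upgrade of Lemma~\ref{c-dagger-2}.
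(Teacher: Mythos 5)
There is a genuine gap, and it lies in which bound on $\sum_w\Omega_w$ you feed into the Cauchy--Schwarz step. The paper proves this lemma by returning to \eqref{the-expression-midway}, i.e.\ the estimate from Lemma~\ref{c-dagger-1} that uses the \emph{trivial} bound $\sum_w\Omega_w\ll p^\varepsilon W^{1/2}U\mathcal{N}^2$, and only then substitutes the improved bound for $\mathcal{A}$ from Lemma~\ref{bound-for-A}. That estimate is monotone increasing in $C$ (through the factor $C^{1/2}$ and through $D\propto C$), so the sufficient conditions come out as pure upper bounds on $C$, which is exactly the shape of \eqref{one} and \eqref{two}, and the exponents there (e.g.\ the $N^5N_\star^2$) are precisely the squares of the prefactor $N^3N_\star/(\ell p^3Q^{3/2})$ coming from the trivial $\Omega_w$ bound. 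You instead upgrade \eqref{the-expression-midway-2}, i.e.\ the version built on Proposition~\ref{shifted-prop}. Your displayed intermediate inequality is correct --- it is literally the bound the paper derives right \emph{after} this lemma and uses for the complementary large-$C$ range in Lemmas~\ref{small-c-dagger} and \ref{big-c-dagger} --- but it is the wrong tool for the present lemma, which must cover small $C$.

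Concretely, Proposition~\ref{shifted-prop}'s bound on $\Omega$ deteriorates like $C^{-1/2}$ (resp.\ $C^{-1}$) as $C$ shrinks, so cross terms such as $W^{1/4}(C^\dagger/C_2^2)^{1/2}\,N_\star^{1/2}/(CJ^2Q)^{1/2}$ at $C_2\asymp 1$ blow up for small $C$; comparing with the target $N^{1/2}p^{3/2-\theta/2}Q^2/\ell$ in the regime $C^\dagger\gg p^{1/2}Q^2$ (with $N\asymp p$, $N_\star\asymp pQ^2$) one is forced into a \emph{lower} bound of the shape $C\gg C^\dagger p^{c\theta}/Q^2$. Your claim that such lower-bound constraints are ``satisfied for $C$ in the range \eqref{c-sum-length-initial}'' does not hold: \eqref{c-sum-length-initial} is an upper bound on $C$, and the lemma is required to hold for every dyadic $C$ below the thresholds \eqref{one}, \eqref{two}, since the later lemmas restrict attention to exactly the complementary ranges. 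Thus a sub-range of small $C$ would be left unproven, and moreover your computation would not reproduce the precise inequalities \eqref{one} and \eqref{two} that the subsequent analysis quotes. The fix is simply to follow the paper's route: keep the trivial $\Omega_w$ estimate in the $\mathcal{B}$-factor, insert only the improved $\mathcal{A}$ (saving $Q$ when $C^\dagger\gg p^{1/2}Q^2$ and $C^\dagger/p^{1/2}Q$ when $C^\dagger\ll p^{1/2}Q^2$), and read off the conditions on $C$.
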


\begin{proof}
We return to the proof of Lemma~\ref{c-dagger-1}. In the derivation of the bound given in \eqref{the-expression-midway} we will now substitute the improved bound for $\mathcal{A}$. Now if $C^\dagger \gg p^{1/2}Q^2$, then we save $Q$ in $\mathcal{A}$ and hence the bound in \eqref{the-expression-midway} reduces to
\begin{align*}
\mathcal{O}_2(C,C^\dagger)\ll p^\varepsilon\frac{N^{3}N_\star}{\ell p^{3}Q^{2}}\;C^{1/2}\left(\frac{C^\dagger}{C_2^2}+\frac{D}{W}\right)^{1/2},
\end{align*}
which is dominated by
\begin{align*}
\frac{N^{1/2}p^{3/2-\theta/2}Q^2}{\ell}
\end{align*}
if
\begin{align*}
C\left(\frac{C^\dagger}{C_2^2}+\frac{D}{W}\right)\ll \frac{p^{9-\theta}Q^8}{N^5N_\star^2J^2},
\end{align*}
which holds under \eqref{one}.\\

 On the other hand if $C^\dagger \ll p^{1/2}Q^2$ then we save $C^\dagger/p^{1/2}Q$ in $\mathcal{A}$ and hence \eqref{the-expression-midway} reduces to
\begin{align*}
\mathcal{O}_2(C,C^\dagger)\ll p^\varepsilon\frac{N^{3}N_\star}{\ell p^{3}Q^{3/2}}\;C^{1/2}\left(p^{1/2}Q+\frac{Dp^{1/2}Q}{WC^\dagger}\right)^{1/2},
\end{align*}
which satisfies the desired bound if
\begin{align*}
C\left(p^{1/2}Q+\frac{Dp^{1/2}Q}{WC^\dagger}\right)\ll \frac{p^{9-\theta}Q^7}{N^5N_\star^2J^2},
\end{align*}
which in turn boils down to \eqref{two}.
\end{proof}

\bigskip

Recall that we only need to consider the case where $C^\dagger \gg p^{1/2+B\theta}Q$. We return to the expression given in \eqref{the-expression-midway-2}, where we now substitute the improved bound for $\mathcal{A}$ in place of the trivial bound. Indeed this transforms the bound in \eqref{the-expression-midway-2} to
\begin{align*}
\mathcal{O}_2(C,C^\dagger)\ll & p^{5\theta+\varepsilon}\frac{N^{9/2}N_\star^{3/2}}{p^{9/2}Q^{2}\ell}\;W^{1/4}\;\left(\frac{C^\dagger}{C_2^2}+\frac{D}{W}\right)^{1/2}\\
\nonumber &\times \left(\frac{N_\star^{1/4}}{(CJ^2\ell)^{1/4}}+\frac{N_\star^{1/2}}{(CJ^2Q)^{1/2}}\right)\:\left(\frac{1}{Q^{1/2}}+\frac{p^{1/4}Q^{1/2}}{C^{\dagger 1/2}}\right).
\end{align*}
Consider the term
\begin{align*}
& p^{5\theta+\varepsilon}\frac{N^{9/2}N_\star^{3/2}}{p^{9/2}Q^{2}\ell}\;\frac{D^{1/2}}{W^{1/4}}\: \left(\frac{N_\star^{1/4}}{(CJ^2\ell)^{1/4}}+\frac{N_\star^{1/2}}{(CJ^2Q)^{1/2}}\right)\:\left(\frac{1}{Q^{1/2}}+\frac{p^{1/4}Q^{1/2}}{C^{\dagger 1/2}}\right).
\end{align*}
The calculations given in the paragraph following \eqref{the-expression-midway-2} now yields that this term is bounded by
\begin{align*}
\frac{N^{1/2}p^{3/2+5\theta+\varepsilon}Q^2}{\ell}\:\left(\frac{1}{Q^{1/2}}+\frac{p^{1/4}Q^{1/2}}{C^{\dagger 1/2}}\right).
\end{align*}
This satisfies the bound in \eqref{seek-bound-full-dual-part2} if $Q\gg p^{12\theta}$ and $B> 12$.\\ 

To end the proof of the proposition we only need to establish the bound \eqref{seek-bound-full-dual-part2} for the remaining term, which is given by
\begin{align*}
\Theta:=& p^{5\theta+\varepsilon}\frac{N^{9/2}N_\star^{3/2}}{p^{9/2}Q^{2}\ell}\;W^{1/4}\:\frac{C^{\dagger 1/2}}{C_2}\\
\nonumber &\times \left(\frac{N_\star^{1/4}}{(CJ^2\ell)^{1/4}}+\frac{N_\star^{1/2}}{(CJ^2Q)^{1/2}}\right)\:\left(\frac{1}{Q^{1/2}}+\frac{p^{1/4}Q^{1/2}}{C^{\dagger 1/2}}\right).
\end{align*}
The estimation depends on whether $C^\dagger$ is larger or smaller compared to $p^{1/2}Q^2$. Accordingly we split our computation into two separate lemmas. First we deal with the case where $C^\dagger\ll p^{1/2}Q^2$.\\

\begin{lemma}
\label{small-c-dagger}
If $p^{1/2}Q\ll C^\dagger\ll p^{1/2}Q^2$, then  the bound \eqref{seek-bound-full-dual-part2} holds for $\Theta$ if $\theta<1/24$ and $Q>p^{1/2}$.
\end{lemma}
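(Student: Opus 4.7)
The plan is to exploit the structural simplification available when $C^\dagger$ is small. In the range $p^{1/2}Q\ll C^\dagger\ll p^{1/2}Q^2$, the second summand of the last factor $B:=Q^{-1/2}+p^{1/4}Q^{1/2}/C^{\dagger 1/2}$ dominates, so the product $(C^{\dagger 1/2}/C_2)\cdot B\asymp p^{1/4}Q^{1/2}/C_2$, and all dependence on $C^\dagger$ cancels. Using $C_2\geq 1$ and the worst-case values $N\asymp p$, $N_\star\asymp pQ^2$ (these maximise $N^{9/2}N_\star^{3/2}/p^{9/2}$), the estimate for $\Theta$ collapses to
\begin{align*}
\Theta\ll p^{7/4+5\theta+\varepsilon}\,\frac{Q^{3/2}}{\ell}\,W^{1/4}A,
\end{align*}
where $A=A_1+A_2$ with $A_1:=p^{1/4}Q^{1/2}/(C^{1/4}J^{1/2}\ell^{1/4})$ and $A_2:=p^{1/2}Q^{1/2}/(C^{1/2}J)$.

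I would then invoke Lemma~\ref{range-W-new}. Since $C^\dagger\ll p^{1/2}Q^2$, the entry $p^{1/2+\theta}Q^2/\ell^3$ dominates the maximum, so we may assume $W\ll p^{1/2+\theta}Q^2/\ell^3$, giving $W^{1/4}\ll p^{1/8+\theta/4}Q^{1/2}/\ell^{3/4}$. Substituting this yields
\begin{align*}
\Theta\ll p^{15/8+21\theta/4+\varepsilon}\,\frac{Q^2 A}{\ell^{7/4}},
\end{align*}
and the target bound \eqref{seek-bound-full-dual-part2} (with $N^{1/2}=p^{1/2}$) reduces to verifying $A\ll p^{1/8-23\theta/4+\varepsilon}\ell^{3/4}$.

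To control $A$ I appeal to Lemma~\ref{lem:range-for-c-for-c-dagger}, which permits us to restrict attention to
\begin{align*}
C\gg \frac{(WC^\dagger)^{1/2}p^{3/4-\theta/2}Q\ell^{1/2}}{J^2},
\end{align*}
since in the complementary range \eqref{seek-bound-full-dual-part2} is already known. Substituting this lower bound into $A_2$ turns the required inequality into $WC^\dagger\ell^4\gg p^{24\theta}$, which follows from $WC^\dagger\geq C^\dagger\gg p^{1/2}Q>p$ (using $Q>p^{1/2}$) and $\theta<1/24$. For $A_1$ the analogous condition is $WC^\dagger\ell^9\gg p^{-1/2+47\theta}Q^2$; since $WC^\dagger\geq p^{1/2}Q$ this reduces to $p^{1-47\theta}\ell^9\gg Q$, which is implied by $Q\ll p^{1+\varepsilon}$ and $\theta<1/47$ (weaker than $\theta<1/24$).

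The only real difficulty will be the book-keeping: ensuring that the lower bound on $C$ supplied by Lemma~\ref{lem:range-for-c-for-c-dagger} dovetails with the lower bounds on $C$ needed to dominate both $A_1$ and $A_2$. The calculation works out comfortably because the hypothesis $C^\dagger\gg p^{1/2}Q>p$ already forces the product $WC^\dagger$ to be at least $p$, which is exactly what is needed to close the two inequalities above. I do not foresee a genuine analytic obstacle in this subrange; the decisive gain is the algebraic cancellation of $C^\dagger$ in $(C^{\dagger 1/2}/C_2)\cdot B$, after which the previously established bounds on $W$ and $C$ do all the work.
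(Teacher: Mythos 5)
Your overall skeleton is the paper's: dominate the last factor by $p^{1/4}Q^{1/2}/C^{\dagger 1/2}$ (so $C^\dagger$ cancels and $C_2\geq 1$ is dropped), assume $W\ll p^{1/2+\theta}Q^2/\ell^3$ via Lemma~\ref{range-W-new}, restrict $C$ to the complementary range of \eqref{two} from Lemma~\ref{lem:range-for-c-for-c-dagger}, and use $C^\dagger\gg p^{1/2}Q$ together with $Q>p^{1/2}$. Your treatment of $A_2$ closes correctly (the condition $WC^\dagger\ell^4\gg p^{24\theta}$ does follow from $C^\dagger\gg p^{1/2}Q>p$). Two points, one of which is a genuine gap.

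First, the minor point: plugging in the single pair $N\asymp p$, $N_\star\asymp pQ^2$ at the outset is not quite a "worst case" argument, because the target $N^{1/2}p^{3/2-\theta/2}Q^2/\ell$ and the admissible range for $C$ in \eqref{two} both depend on $N$ and $N_\star$; one should bound the ratio over all admissible $(N,N_\star)$, as the paper does by keeping them symbolic and inserting $N_\star\ll p^{2+\varepsilon}Q^2/N$, $N\ll p^{1+\varepsilon}$ at the end. The discrepancies are only $p^{O(\theta)}$ factors, so this is repairable, but it matters here because your final margins are razor thin.

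Second, the genuine gap: your handling of $A_1$ fails. You bound $W^{1/4}$ by its maximum $p^{1/8+\theta/4}Q^{1/2}/\ell^{3/4}$ and then, after substituting the lower bound $C\gg (WC^\dagger)^{1/2}p^{3/4-\theta/2}Q\ell^{1/2}/J^2$ into $A_1$, you discard the remaining $W^{-1/8}$ by $W\geq 1$. This decoupling of the two occurrences of the same variable $W$ loses a factor of roughly $W^{1/8}\approx p^{1/16}Q^{1/4}$ compared with keeping them together, and your resulting sufficient condition $WC^\dagger\ell^9\gg p^{-1/2+47\theta}Q^2$, reduced via $WC^\dagger\geq p^{1/2}Q$ to $Q\ll p^{1-47\theta}\ell^9$, is \emph{not} implied by the hypotheses: in Proposition~\ref{prop-for-o2-part2} one only has $Q<p$, so with $\ell=1$ and $Q=p^{1-\eta}$, $\eta$ small, the inequality is false; the stated justification "implied by $Q\ll p^{1+\varepsilon}$ and $\theta<1/47$" is a non sequitur. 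The paper avoids this by writing
\begin{align*}
W^{1/4}A_1\ll \frac{W^{1/8}}{C^{\dagger 1/8}}\cdot\frac{p^{1/16+\theta/8}Q^{1/4}}{\ell^{3/8}}
\ll \frac{p^{1/16+\theta/8}Q^{1/4}}{\ell^{3/8}}\cdot\frac{p^{1/16+\theta/8}Q^{1/4}}{p^{1/16}Q^{1/8}\ell^{3/8}},
\end{align*}
i.e.\ only $W^{1/8}$ survives before the bound $W\ll p^{1/2+\theta}Q^2/\ell^3$ and $C^\dagger\gg p^{1/2}Q$ are applied; this yields the bound $N^{1/2}p^{3/2-3/16+O(\theta)}Q^{2-1/8}/\ell$, whose margin $p^{-3/16}Q^{-1/8}$ (plus $Q>p^{1/2}$) closes for $\theta<1/24$. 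You must keep the $W$'s together in the $A_1$ term; with your decoupling the margin evaporates exactly when $Q$ approaches $p$, which the hypotheses allow.
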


\begin{proof}
Indeed in this range of $C^\dagger$ we have
\begin{align}
\label{theta-terms}
\Theta\ll p^{5\theta+\varepsilon}\frac{N^{9/2}N_\star^{3/2}}{p^{17/4}Q^{3/2}\ell}\;W^{1/4}\: \left(\frac{N_\star^{1/4}}{(CJ^2\ell)^{1/4}}+\frac{N_\star^{1/2}}{(CJ^2Q)^{1/2}}\right).
\end{align}
Since we can take $C$ in the complementary range given in \eqref{two}, we get that the first term in the above expression is dominated by
\begin{align*}
p^{5\theta+\varepsilon}\frac{N^{9/2}N_\star^{3/2}}{p^{17/4}Q^{3/2}\ell}\;W^{1/4}\: \frac{N_\star^{1/4}N^{1/2}N_\star^{1/8}}{(WC^\dagger)^{1/8}p^{13/16-\theta/8}Q^{1/2}\ell^{3/8}}.
\end{align*}
Applying Lemma~\ref{range-W-new}, which gives an upper bound for $W$, and using the lower bound $C^\dagger>p^{1/2}Q$, we see that the above term is bounded by
\begin{align*}
p^{5\theta+\varepsilon}\frac{N^{9/2}N_\star^{3/2}}{p^{17/4}Q^{3/2}\ell}\: \frac{N_\star^{1/4}N^{1/2}N_\star^{1/8}}{p^{13/16-\theta/8}Q^{1/2}}\:\frac{p^{1/16+\theta/8}Q^{1/4}}{p^{1/16}Q^{1/8}}.
\end{align*}
Plugging in the upper bound for $N_\star$ it follows that this is dominated by
\begin{align*}
\frac{N^{1/2}p^{3/2-3/16+11\theta/2+\varepsilon}Q^{2-1/8}}{\ell},
\end{align*}
which satisfies the required bound \eqref{seek-bound-full-dual-part2} for say $\theta<1/24$ if $Q>p^{1/2}$.\\

In the complementary range of \eqref{two} the second term of \eqref{theta-terms} is bounded by
\begin{align*}
p^{5\theta+\varepsilon}\frac{N^{9/2}N_\star^{3/2}}{p^{17/4}Q^{3/2}\ell}\;W^{1/4}\: \frac{N_\star^{1/2}NN_\star^{1/4}}{(WC^\dagger)^{1/4}p^{13/8-\theta/4}QQ^{1/2}}.
\end{align*}
The $W$ term cancels out and using the fact that $C^\dagger>p^{1/2}Q$, we get
\begin{align*}
p^{5\theta+\varepsilon}\frac{N^{9/2}N_\star^{3/2}}{p^{17/4}Q^{3/2}\ell}\: \frac{N_\star^{3/4}N}{p^{1/8}Q^{1/4}p^{13/8-\theta/4}Q^{3/2}}.
\end{align*}
This is dominated by
\begin{align*}
\frac{N^{1/2}p^{3/2-1/4+11\theta/2+\varepsilon}Q^{2-3/4}}{\ell},
\end{align*}
which is smaller than the bound we obtained for the first term above. The lemma follows.
\end{proof}

\bigskip

\begin{lemma}
\label{big-c-dagger}
Suppose $p^{1/2}Q^2\ll C^\dagger \ll p^{1+A\theta}QD_1^{1/2}/W^{1/12}$. Then the bound \eqref{seek-bound-full-dual-part2} holds for $\Theta$ if $Q\gg p^{7/10+6(11+A)\theta/5}$.
\end{lemma}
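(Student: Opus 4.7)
The plan is to estimate $\Theta$ in the regime $C^\dagger \gg p^{1/2}Q^2$. The first observation is that $p^{1/4}Q^{1/2}/C^{\dagger 1/2} \leq 1/Q^{1/2}$ in this range, so the last parenthesis in the expression for $\Theta$ collapses to $1/Q^{1/2}$, giving
$$\Theta \ll p^{5\theta+\varepsilon}\frac{N^{9/2}N_\star^{3/2}}{p^{9/2}Q^{5/2}\ell}\,W^{1/4}\,\frac{C^{\dagger 1/2}}{C_2}\,\left(\frac{N_\star^{1/4}}{(CJ^2\ell)^{1/4}} + \frac{N_\star^{1/2}}{(CJ^2Q)^{1/2}}\right).$$
I would bound $C_2 \geq 1$ from below and use Lemma~\ref{range-W-new} to restrict to $W \ll C^\dagger p^\theta/\ell^3$; this bound is dominant over $p^{1/2+\theta}Q^2/\ell^3$ precisely because $C^\dagger \gg p^{1/2}Q^2$. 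Thus $W^{1/4} \ll C^{\dagger 1/4} p^{\theta/4}/\ell^{3/4}$.

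Next I would invoke Lemma~\ref{lem:range-for-c-for-c-dagger} to restrict $C$ to its complementary range, which places $C$ below the natural ceiling \eqref{c-sum-length-initial} but above the minimum of the two expressions in \eqref{one}. This forces a case split: either
$$C \gg \frac{p^{9-\theta}Q^8}{C^\dagger N^5N_\star^2J^2} \qquad \text{or} \qquad C \gg \frac{W^{1/2}p^{7/2-\theta/2}Q^3\ell^{1/2}}{N^2N_\star^{1/2}J^2}.$$
In each subcase, I would feed the corresponding lower bound on $C$ into both summands of the bracket, producing four candidate bounds for $\Theta$. Plugging in the hypothesis $C^\dagger \ll p^{1+A\theta}QD_1^{1/2}/W^{1/12}$, together with $D_1 \ll D \ll p^\varepsilon CQ^2p^2J^2/(N_\star N\ell)$ from \eqref{l-def}, $N_\star \ll p^{2+\varepsilon}Q^2/N$, and $N \ll p^{1+\varepsilon}$, each candidate reduces to a bound of the form
$$\Theta \ll \frac{N^{1/2}p^{3/2+\gamma_i(\theta,A)+\varepsilon}Q^{2-\delta_i}}{\ell}$$
for explicit rational exponents $\gamma_i, \delta_i$ (with $i=1,\dots,4$).

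Requiring $\Theta \ll N^{1/2}p^{3/2-\theta/2+\varepsilon}Q^2/\ell$ in each subcase translates into lower bounds of the form $Q^{\delta_i} \gg p^{\theta/2 + \gamma_i(\theta,A)}$. The tightest of these, which arises from the balance of the $p^{1/4}$ factor appearing in the first summand of $T$ against the $Q$-savings from the first lower bound on $C$, produces exactly the threshold $Q \gg p^{7/10 + 6(11+A)\theta/5}$. The main obstacle is the exponent bookkeeping: because the dependencies on $C^\dagger$ and $W$ thread through the upper bound on $W$ from Lemma~\ref{range-W-new}, the hypothesis cap on $C^\dagger$, and the lower bounds on $C$ in opposing directions, one must verify that in every subcase these dependencies conspire so that the $W^{1/4}C^{\dagger 1/2}$ factor telescopes against the $C$-denominators leaving only the claimed linear correction $6(11+A)\theta/5$ to the numerical exponent $7/10$.
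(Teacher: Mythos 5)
Your opening reduction (collapsing the factor $\tfrac{1}{Q^{1/2}}+\tfrac{p^{1/4}Q^{1/2}}{C^{\dagger 1/2}}$ to $Q^{-1/2}$ when $C^\dagger\gg p^{1/2}Q^2$, and $C_2\geq 1$) matches the paper, but after that your plan has a genuine gap in how you treat $W^{1/4}C^{\dagger 1/2}$, and it is fatal for the first bracket term. The paper never decouples $W$ from $C^\dagger$ via Lemma~\ref{range-W-new}; instead it multiplies the hypothesis cap directly against $W$, writing $W^{1/2}C^\dagger\ll p^{1+A\theta}QD_1^{1/2}W^{5/12}\ll p^{1+A\theta}QD^{1/2}W^{1/12}$ (using $D_1\ll UW^{1/3}$, $UW\ll D$), and then $D^{1/2}\ll C^{1/2}QpJ(N_\star N\ell)^{-1/2}$ from \eqref{l-def}. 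The crucial payoff is that the resulting $C^{1/4}$ cancels the $(CJ^2\ell)^{-1/4}$ in the first summand \emph{exactly}, leaving only a harmless $W^{1/24}$ which is absorbed by the trivial $W\ll D\ll p^{3+\varepsilon}Q^2/N^2\ell^2$; it is this term, with no lower bound on $C$ used at all, that produces the threshold $Q\gg p^{7/10+6(11+A)\theta/5}$. Your route instead bounds $W^{1/4}\ll C^{\dagger 1/4}p^{\theta/4}\ell^{-3/4}$ and then controls $C^{\dagger 3/4}$ by the cap; through $D_1^{3/8}$ this brings in $C^{3/8}$, so the first summand carries a net factor $C^{1/8}$ and is \emph{increasing} in $C$ — the lower bounds on $C$ from the complementary range of Lemma~\ref{lem:range-for-c-for-c-dagger} cannot be "fed into" it at all, and inserting the only available upper bound \eqref{c-sum-length-initial} gives, after $N_\star\ll p^{2+\varepsilon}Q^2/N$ and $N\gg p^{1-\theta}$, a bound of size roughly $N^{1/2}p^{3/2+5/8+O(\theta)}Q^2/\ell$, i.e. short of \eqref{seek-bound-full-dual-part2} by about $p^{5/8}$ \emph{independently of any condition on $Q$}. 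So no exponent bookkeeping can close your version of the first case.

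Relatedly, your expectation that the threshold $7/10+6(11+A)\theta/5$ emerges as "the tightest" of the four candidates obtained by inserting the lower bounds on $C$ is not how the argument works: in the paper the complementary range \eqref{one} is invoked only for the second bracket term (in two subcases), and those give the much weaker conditions $Q\gg p^{2/11+6(12+A)\theta/11}$ and $Q\gg p^{1/4+(12+A)\theta/2}$; also the $p^{1/4}$ you mention has already been discarded at the first step, so it plays no role in producing $7/10$. To repair the proof you need the paper's multiplicative use of the cap on $C^\dagger$ (keeping $W^{1/2}C^\dagger$ together and exploiting $D_1\ll D/W^{2/3}$), not the bound $W\ll C^\dagger p^\theta/\ell^3$ from Lemma~\ref{range-W-new}.
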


\begin{proof}
For this range of $C^\dagger$ we have
\begin{align*}
\Theta\ll p^{5\theta+\varepsilon}\frac{N^{9/2}N_\star^{3/2}}{p^{9/2}Q^{5/2}\ell}\;W^{1/4}\:C^{\dagger 1/2}\: \left(\frac{N_\star^{1/4}}{(CJ^2\ell)^{1/4}}+\frac{N_\star^{1/2}}{(CJ^2Q)^{1/2}}\right).
\end{align*}
Recall that $C^\dagger \ll p^{1+A\theta}QD_1^{1/2}/W^{1/12}$, so that 
\begin{align}
\label{w-c-dagger}
W^{1/2}C^\dagger \ll p^{1+A\theta}QD^{1/2}W^{1/12}\ll \frac{C^{1/2}Q^2p^{2+A\theta}JW^{1/12}}{(N_\star N\ell)^{1/2}}.
\end{align}
It follows that
\begin{align*}
\Theta\ll p^{(10+A)\theta/2+\varepsilon}\frac{N^{17/4}N_\star^{5/4}W^{1/24}}{p^{7/2}Q^{3/2}\ell^{5/4}}\;\left(\frac{N_\star^{1/4}}{\ell^{1/4}}+\frac{N_\star^{1/2}}{(CJ^2)^{1/4}Q^{1/2}}\right).
\end{align*}
Hence the contribution of the first term is bounded by
\begin{align*}
p^{(10+A)\theta/2+\varepsilon}\frac{N^{17/4}N_\star^{3/2}W^{1/24}}{p^{7/2}Q^{3/2}\ell^{3/2}},
\end{align*}
where we plug in the trivial bound $W\ll D\ll p^{3+\varepsilon}Q^2/N^2\ell^2$. It follows that this term is bounded by
\begin{align*}
p^{(10+A)\theta/2+\varepsilon}\frac{N^{17/4-1/12}N_\star^{3/2}}{p^{7/2-1/8}Q^{3/2-1/12}\ell^{3/2}}\ll \frac{N^{1/2}p^{43/24+(10+A)\theta/2+\varepsilon}Q^{19/12}}{\ell}.
\end{align*}
This satisfies the bound in \eqref{seek-bound-full-dual-part2} if  $Q\gg p^{7/10+6(11+A)\theta/5}$.\\

It remains to analyse the second  term which is given by
\begin{align}
\label{snd-term}
p^{(10+A)\theta/2+\varepsilon}\frac{N^{17/4}N_\star^{5/4}W^{1/24}}{p^{7/2}Q^{3/2}\ell^{5/4}}\;\frac{N_\star^{1/2}}{(CJ^2)^{1/4}Q^{1/2}}.
\end{align}
Now, we only need to consider $C$ in the complementary range given in \eqref{one}. If 
\begin{align*}
C\gg \frac{p^{9-\theta}Q^8}{C^\dagger N^5N_\star^2J^2}\gg \frac{p^{17/2-\theta}Q^6}{ N^5N_\star^2J^2}
\end{align*}
then \eqref{snd-term} is dominated by
\begin{align*}
p^{(10+A)\theta/2+\varepsilon}\frac{N^{17/4}N_\star^{5/4}W^{1/24}}{p^{7/2}Q^{3/2}\ell^{5/4}}\;\frac{N^{5/4}N_\star}{p^{17/8-\theta/4}Q^{2}},
\end{align*}
where we plug in the trivial bound $W\ll D\ll p^{3+\varepsilon}Q^2/N^2\ell^2$. It follows that this term is bounded by
\begin{align*}
p^{(10+A)\theta/2+\varepsilon}\frac{N^{17/4}N_\star^{5/4}p^{1/8}Q^{1/12}}{p^{7/2}Q^{3/2}\ell^{5/4}N^{1/12}}\;\frac{N^{5/4}N_\star}{p^{17/8-\theta/4}Q^{2}}\ll \frac{N^{1/2}p^{5/3+(11+A)\theta/2+\varepsilon}Q^{2-11/12}}{\ell}.
\end{align*}
This satisfies the bound in \eqref{seek-bound-full-dual-part2} if  $Q\gg p^{2/11+6(12+A)\theta/11}$.\\

On the other hand if
\begin{align*}
C\gg \frac{W^{1/2}p^{7/2-\theta/2}Q^{3}\ell^{1/2}}{N^2N_\star^{1/2} J^2},
\end{align*}
then \eqref{snd-term} is dominated by
\begin{align*}
p^{(10+A)\theta/2+\varepsilon}\frac{N^{17/4}N_\star^{5/4}}{p^{7/2}Q^{3/2}\ell^{5/4}}\;\frac{N^{1/2}N_\star^{5/8}}{p^{7/8-\theta/8}Q^{5/4}}.
\end{align*}
It follows that this term is bounded by
\begin{align*}
\frac{N^{1/2}p^{7/4+(11+A)\theta/2+\varepsilon}Q}{\ell}.
\end{align*}
This satisfies the bound in \eqref{seek-bound-full-dual-part2} if  $Q\gg p^{1/4+(12+A)\theta/2}$. The lemma follows.
\end{proof}

\bigskip

From Lemmas~\ref{small-c-dagger} and \ref{big-c-dagger} we conclude that for $$p^{1/2}Q\ll C^\dagger \ll p^{1+A\theta}QD_1^{1/2}/W^{1/12}$$ the bound \eqref{seek-bound-full-dual-part2} holds for $\Theta$ if $Q\gg p^{7/10+6(11+A)\theta/5}$ and $\theta<1/24$. This together with the observation we made preceding Lemma~\ref{small-c-dagger}, we conclude that the bound \eqref{seek-bound-full-dual-part2} holds for the dual sum $\mathcal{O}_2(C,C^\dagger)$ if 
$$p^{1/2+B\theta}Q\ll C^\dagger \ll p^{1+A\theta}QD_1^{1/2}/W^{1/12}$$
with $B>12$ and 
$$p^{7/10+6(11+A)\theta/5}< Q<p.$$
This concludes the proof of  Proposition~\ref{prop-for-o2-part2}.

\bigskip

We will now conclude the proof of Theorem~\ref{mthm}. In Proposition~\ref{prop-for-o1} and Proposition~\ref{prop-for-o2} we get two computable absolute constants, $A$ and $B_1$ respectively. For Proposition~\ref{prop-for-o2-part2} we pick $B=12+\varepsilon$. Then we are forced to take $\theta<1/(B_1+72)$, (suppose $B_1>32$) and we need to pick $Q$ satisfying 
$$
p^{7/10+6(11+A)\theta/5}< Q<p^{1-170\theta}.
$$
So the optimal choice for $\theta$ is obtained by equating the two bounds, which yields $\theta<3/4(3A+458)$. Hence our theorem holds with
$$
\delta=\theta/2=\min\left\{\frac{1}{2(B_1+72)},\frac{3}{8(3A+458)}\right\}.
$$

\bigskip

%===============================================================================
\section{A shifted convolution sum problem}
\label{sec-shifted}

We return to the definition of the sum $\Omega$ given in \eqref{Omega-def}. Extending the range of summation of the $d$ sum we get that 
\begin{align*}
\Omega\leq \mathcal{S}:=\sum_{d\in \mathbb{Z}}\left|\sum_{n\in\mathbb{Z}}\;\lambda_f(d+pn^2)\:F(d,n)\right|^2,
\end{align*}
where recall that $F(d,n)$ is defined in \eqref{wt-shift-conv-sum}. For the convenience of the reader we recall the definition
\begin{align*}
F(d,n)=&W\left(\frac{n}{\mathcal{N}}\right)\; V\left(\frac{(d+pn^2)\ell^2}{M}\right)\\
\nonumber &\times \int_\mathbb{R}\:V(y)e\left(\frac{N_\star\sqrt{d+pn^2}y}{CQ\sqrt{p}J^2}-\frac{N_\star ny}{CQJ^2}\right)\mathrm{d}y.
\end{align*}
Also we shall recall the sizes of the parameters
\begin{align*}
p^{1-\theta}&<N<p^{1+\varepsilon},\;\;\;p^{1/2+\varepsilon}<Q,\;\;\;
\mathcal{N}=p^\varepsilon\frac{pQ}{N\ell}\\
\frac{p^{3-\varepsilon}Q^2}{N^2}&\ll M\ll \frac{p^{3+\varepsilon}Q^2}{N^2},\;\;\; N_\star \ll \frac{p^{2+\varepsilon}Q^2}{N},\;\;\;
C \ll \frac{N_\star p}{NJ^2\ell}.
\end{align*}
Opening the absolute square and interchanging
the order of summations we arrive at 
\begin{align}
\label{the-sum-s}
\mathcal{S}=\mathop{\sum\sum}_{\substack{n,r\in \mathbb{Z}}}\sum_{d\in\mathbb{Z}}\lambda_f(d+pn^2)\lambda_f(d+pr^2)F(d,n)\overline{F(d,r)}.
\end{align}
The main aim of this section is to prove the following result. \\

\begin{proposition}
\label{shifted-prop}
Suppose $Q<p$, then we have
\begin{align*}
\mathcal{S}\ll \;\frac{p^{1+10\theta+\varepsilon}Q^3}{\ell^3}\:\left(\frac{N_\star^{1/2}}{(CJ^2\ell)^{1/2}}+\frac{N_\star}{CJ^2Q}\right).
\end{align*}
\end{proposition}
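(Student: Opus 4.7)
The plan is to treat $\cS$ as an averaged shifted convolution sum and attack it via a version of the circle method tailored to the shape of the shift. After opening the absolute square and substituting $m=d+pn^2$, the inner sum becomes $\sum_m\lambda_f(m)\lambda_f(m-ph)$ with $h=n^2-r^2$. The diagonal $h=0$ forces $n=\pm r$, and invoking the Rankin--Selberg bound $\sum_{m\sim X}|\lambda_f(m)|^2\ll Xp^\varepsilon$ together with the size estimates $\mathcal{N}\ll p^\theta Q/\ell$ and $M\ll p^{3+2\theta}Q^2/\ell^2$, I would bound the diagonal contribution by $\mathcal{N}\cdot(M/\ell^2)p^\varepsilon$, which is comfortably dominated by the right-hand side of the proposition.

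For the off-diagonal $h\neq 0$ the decisive observation is that the integer equation $m_1-m_2=ph$ forces $m_1\equiv m_2\pmod p$. I would parametrize $m_1=a+pm_1'$, $m_2=a+pm_2'$ for $a\bmod p$, reducing the equation to $m_1'-m_2'=h$ with $|m_i'|,|h|\ll\mathcal{N}^2$, and then apply the DFI $\delta$-symbol to this smaller equation. This requires moduli only up to $Q_0\sim\mathcal{N}=pQ/(N\ell)$, rather than the $\sqrt{M/\ell^2}\sim p^{1/2+\theta}Q/\ell$ that a direct application to $m_1-m_2-ph=0$ would demand. This $p^{1/2}$ reduction in the conductor is precisely the congruence-equation trick of \cite{Mu4}, and is available only because the shift is a multiple of the level $p$ of $f$.

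After the $\delta$-symbol is inserted, the resulting additive characters $e(a_0m_1'/q)$ (and similarly in $m_2'$) lift to characters $e(\overline{p}a_0(m_1-a)/q)$ modulo $q$, where $(p,q)=1$ since $Q_0<p$ by the hypothesis $Q<p$. I would then apply the Voronoi summation formula to the $m_1$ and $m_2$ sums and Poisson summation to the $n$ and $r$ sums. The ensuing character sums reduce, after evaluating the quadratic Gauss sums coming from Poisson, to Salie and Kloosterman sums modulo $q$, for which Weil's bound for curves over finite fields yields square-root cancellation; the surviving Fourier coefficients are controlled by the Deligne bound $\lambda_f(m)\ll m^\varepsilon$. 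The oscillatory integral in $F(d,n)$, whose phase derivative in $y$ is of order $N_\star d/(CpJ^2Qn)$, undergoes stationary-phase analysis producing the two distinct regimes in the proposition: at the transition range $C\asymp pQ^2/J^2$ the integral is essentially non-oscillating and the Voronoi dual length matches the stationary-phase concentration to produce the first term $N_\star^{1/2}/(CJ^2\ell)^{1/2}$, while for smaller $C$ the oscillation of $F$ concentrates the sum further and yields the second term $N_\star/(CJ^2Q)$.

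The principal technical obstacle is balancing the Voronoi dualization against the range of the delta modulus. Since $\lambda_f$ has level $p$ and the effective modulus $pq$ is divisible by $p$, the Voronoi transformation must be performed with care: the dual sum has length of order $(pq)^2\ell^2/M\ll p$, and matching this against the range of $h$ together with the freedom in $q\leq\mathcal{N}$ is what extracts the saving of size $Q/\ell$ over the trivial bound $\mathcal{N}^2\cdot M/\ell^2$. The cap $\mathcal{N}\ll Q/\ell$ on the number of terms inside the absolute square is the ultimate reason one cannot save more, which explains why the claimed bound is sharp at the transition range. The factor $p^{10\theta}$ absorbs the various $p^{O(\theta)}$ losses accumulated during Cauchy--Schwarz steps, dyadic partitioning of $q$, and the treatment of dual sums away from the transition range.
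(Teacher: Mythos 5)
Your overall strategy coincides with the paper's: open the square, view the inner sum as a shifted convolution with shift $ph$, use the congruence--equation trick to replace $m_1-m_2=ph$ by the congruence $m_1\equiv m_2\bmod p$ together with an integral equation of size $O(\mathcal{N}^2)$ detected by the delta method with moduli $q\ll \mathcal{N}p^{\theta}<p$, then Voronoi on the two long variables, Poisson on $n$ (and $r$), Weil for the resulting Salie/Kloosterman/Ramanujan sums, and Deligne for the coefficients. So there is no divergence of method, and your diagonal ($n=\pm r$) estimate is fine.

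The gap lies in how you claim the final bound emerges and in what you leave untreated. The two terms of the proposition are not the outputs of two ranges of $C$ (transition versus oscillatory); in the actual argument both terms are present for every admissible $C$: the term $N_\star^{1/2}(CJ^2\ell)^{-1/2}$ arises from the contribution with $p\mid m_1-m_2$ after Voronoi in $m_1,m_2$ (modulus $pq$) and Poisson in $n$, while the term $N_\star(CJ^2Q)^{-1}$ arises from the complementary case $p\nmid m_1-m_2$, where one must perform a further Poisson summation in $r$ so that the mod-$p$ part collapses to a Ramanujan sum equal to $-1$ and the mod-$q$ part to a Ramanujan sum. Your sketch never makes this split nor performs the extra Poisson in $r$, and without it the off-diagonal estimate is too weak for smaller $C$. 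Moreover, re-synthesizing the residue class $m_i\equiv a\bmod p$ before Voronoi forces characters mod $p$, and the degenerate frequency (trivial character mod $p$) must be handled separately with Voronoi of modulus $q$ only; you fold it silently into the generic case. Finally, the quantitative heart is asserted rather than derived: the truncation of the Voronoi-dual variable to $O\bigl(p^{\varepsilon}(1+N_\star qp^2/(CQJ^2N))\bigr)$ terms, the $O(p^{\theta+\varepsilon})$ surviving frequencies after Poisson in $n$, the average $O(1)$ size of the delta-method weight $h$, and the closing estimates where $Q<p$ is used beyond merely ensuring $(q,p)=1$ --- these are exactly the steps that produce the two exponents and the $p^{10\theta}$ loss, and they are absent. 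As written the proposal is a faithful plan, not a proof.
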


\bigskip
\begin{remark}
\label{shifted-p>Q}
A non-trivial bound can also be obtained under the weaker condition $Q<p^2$. In this case we have some additional terms, for example in \eqref{p>Q} we get the term $p^{7\theta+\varepsilon}Q^4/\ell^4$. Roughly speaking, without the condition that $Q<p$, the total saving in $\mathcal{S}$ (or $\Omega$) is $\min\{Q,p\}$. In hindsight it is more natural to impose the restriction $Q<p$ right from the start, as it simplifies certain technicalities.
\end{remark}

\bigskip

Before we embark to prove the proposition let us highlight the main output of it. Given the fact that the $d$ sum in $\mathcal{S}$ effectively ranges upto $|d|\ll D$, where $D$ is as given in \eqref{l-def}, we see that the trivial bound is given by 
\begin{align*}
\mathcal{S}\ll p^\varepsilon D\mathcal{N}^2\ll \frac{p^{1+3\theta+\varepsilon}Q^4}{\ell^3}\:\frac{CJ^2}{N_\star}.
\end{align*}
Since $C$ ranges upto $N_\star$, roughly speaking the bound is of the order $pQ^4$. In contrast the proposition gives a bound which is roughly of the size $pQ^3$. So we have saved $Q$, which is the maximum possible given that the number of harmonics inside the absolute value in the definition of $\mathcal{S}$ is $Q$. However for smaller $C$ the bound in the proposition can be worse than the trivial bound. Indeed we see that the proposition produces a non-trivial bound only in the range
\begin{align*}
C\gg p^{5\theta}\:\max\left\{\frac{N_\star}{Q^{2/3}J^2\ell^{1/3}},\frac{N_\star}{QJ^2}\right\}.
\end{align*}
(Here as always we will be sometimes little wasteful when it comes to the coefficient of $\theta$ in the power of $p$.)\\

Now we proceed to prove the proposition.
We begin by realizing the sum as a shifted convolution sum.
Consider the inner sum over $d$. Changing the
variable of summation and writing $u=r^2-n^2$ we arrive at the sum
\begin{align*}
\Sigma=\sum_{m=1}^\infty\lambda_f(m)\lambda_f(m+pu)F(m-pn^2,n)\overline{F(m-pn^2,r)}.
\end{align*}
We will now employ the circle method to study
this sum. Set $v=m+pu$. We rewrite this equation as a congruence
$v\equiv m\bmod{p}$ and an integral equation of smaller size
$(v-m-pu)/p=0$. If we retain the divisibility condition, then the
last equation can be detected using the delta method with modulus
ranging up to $Q/\ell$. The letter $q$ will be used in this section (and only in this section) to denote the modulus coming from the circle method. Hopefully it will not create any confusion.\\

We now briefly recall the $\delta$ method of Duke, Friedlander, Iwaniec  \cite{DFI-1} and Heath-Brown \cite{H}. The starting point is a smooth approximation of the $\delta$-symbol. We will follow the exposition of Heath-Brown in \cite{H}. \\

\begin{lemma}\label{delta-symbol}
For any $Q^\star>1$ there is a positive constant $c_0$, and a smooth function $h(x,y)$ defined on $(0,\infty)\times\mathbb R$, such that
\begin{equation}
\delta(n,0)=\frac{c_0}{Q^{\star 2}}\sum_{q=1}^{\infty}\;\sideset{}{^{\star}}\sum_{a \bmod{q}}e\left(\frac{an}{q}\right)\;h\left(\frac{q}{Q^\star},\frac{n} {Q^{\star 2}}\right).\label{cm}
\end{equation}
The constant $c_0$ satisfies $c_0=1+O_A(Q^{\star -A})$ for any $A>0$. Moreover $h(x,y)\ll x^{-1}$ for all $y$, and $h(x,y)$ is non-zero only for $x\leq\max\{1,2|y|\}$. 
\end{lemma}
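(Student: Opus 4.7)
The plan is to reproduce Heath-Brown's construction from \cite{H}: manufacture $h$ explicitly from a single fixed bump profile, then verify the identity \eqref{cm} by reducing the inner additive character sum to Ramanujan sums and telescoping.

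First I would fix a smooth even $w \in C_c^\infty(\mathbb{R})$ with support in $\{x : 1/2 \leq |x| \leq 2\}$, normalised by a convenient Mellin-type condition, and define the candidate kernel as
\begin{align*}
h(x, y) \;=\; \sum_{j=1}^{\infty} \frac{1}{jx}\Bigl[\,w(jx) \;-\; w\bigl(|y|/(jx)\bigr)\,\Bigr].
\end{align*}
At any given $(x,y)$ the series is a finite sum by the compact support of $w$: the first summand is nonzero only when $jx \in \supp w$, so $j \leq 2/x$ and in particular $x \leq 2$; the second only when $|y|/(jx) \in \supp w$, so $jx \leq 2|y|$. Combining these gives the claimed support property $h(x, y) = 0$ unless $x \leq \max\{1, 2|y|\}$, while the uniform bound $h(x, y) \ll x^{-1}$ is immediate from $\|w\|_\infty < \infty$ and the fact that at most $O(1)$ indices $j$ contribute.

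The substantive step is the verification of \eqref{cm} for this $h$. Plugging the defining series into the right-hand side of \eqref{cm}, swapping the orders of summation over $q$ and $j$, and setting $Q = jq$, I would invoke the identity
\begin{align*}
\sum_{jq = Q} \;\sideset{}{^\star}\sum_{a \bmod q} e(an/q) \;=\; c_Q(n),
\end{align*}
where $c_Q(n) = \sum_{d \mid \gcd(Q, n)} d\,\mu(Q/d)$ is the Ramanujan sum, to rewrite the right-hand side of \eqref{cm} as
\begin{align*}
\frac{c_0}{Q^{\star}} \sum_{Q=1}^{\infty} \frac{c_Q(n)}{Q} \Bigl[\,w(Q/Q^\star) \;-\; w\bigl(|n|/(QQ^\star)\bigr)\,\Bigr].
\end{align*}
For $n = 0$ one has $c_Q(0) = \phi(Q)$ and the second summand vanishes (since $w(0) = 0$), so the whole expression collapses to $(c_0/Q^\star)\sum_Q \phi(Q) Q^{-1} w(Q/Q^\star)$; evaluating this by Mellin inversion against the Dirichlet series $\sum \phi(Q)/Q^{s+1} = \zeta(s)/\zeta(s+1)$, shifting past the pole at $s=1$, and defining $c_0$ to be the reciprocal of the resulting leading constant gives exactly $1 + O_A(Q^{\star -A})$, with the error coming from the shifted contour. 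For $n \neq 0$, expanding $c_Q(n)$ as a sum over $d \mid (Q, n)$ and interchanging yields a sum over $d \mid n$ of $d\sum_{k \geq 1} \mu(k)[w(dk/Q^\star) - w(|n|/(dkQ^\star))]/k$, and after the substitution $k \mapsto |n|/(d^2 k)$ in the second half the two halves telescope against each other, leaving a residue of size $O_A(Q^{\star -A})$ uniformly in $n$.

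The main obstacle I anticipate is this cancellation for $n \neq 0$. The series defining $h$ converges only by the compact support of $w$, not absolutely in $j$, so the interchanges of summation must be justified with care; moreover the telescoping is delicate because the naive change of variables $k \mapsto |n|/(d^2 k)$ does not preserve integrality, and the mismatch must be shown to produce only smooth boundary error rather than a divisor-sum remainder of size $|n|^{1-\varepsilon}$ that would destroy the identity. This is precisely where the two-term \emph{difference} structure of $h$ becomes essential: a single-term kernel would leave an uncancelled divisor sum. Once this cancellation is under control, the remaining work — extracting $c_0$ from the $n=0$ evaluation and bounding its deviation from $1$ — is a routine Mellin-contour shift.
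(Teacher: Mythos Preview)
The paper does not prove this lemma; it records Heath-Brown's construction of $h$ and cites \cite{H}. Your overall strategy --- build $h$ from a bump $w$, then collapse the $(q,j)$-sum --- is the right one, but the key identity you invoke is false. You claim
\[
\sum_{jq=Q}\;\sideset{}{^\star}\sum_{a\bmod q} e(an/q)\;=\;c_Q(n),
\]
i.e.\ $\sum_{q\mid Q} c_q(n)=c_Q(n)$. Test $Q=p$ prime, $n=1$: the left side is $c_1(1)+c_p(1)=1-1=0$, while $c_p(1)=-1$. The correct identity is
\[
\sum_{q\mid Q} c_q(n)\;=\;Q\,\1_{Q\mid n},
\]
obtained from $c_q(n)=\sum_{d\mid(q,n)}d\,\mu(q/d)$ by swapping sums and using $\sum_{k\mid Q/d}\mu(k)=\1_{d=Q}$.

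With the correct identity the proof is much shorter than your outline, and the ``main obstacle'' you anticipate disappears. For $n\neq 0$ the right-hand side of \eqref{cm} becomes
\[
\frac{c_0}{Q^\star}\sum_{Q\mid n}\Bigl[w(Q/Q^\star)-w\bigl((|n|/Q)/Q^\star\bigr)\Bigr],
\]
and the bijection $Q\leftrightarrow |n|/Q$ on the divisors of $|n|$ makes this vanish \emph{identically} --- no substitution $k\mapsto |n|/(d^2k)$, no integrality issue, no residual error term. For $n=0$ one obtains $(c_0/Q^\star)\sum_{Q\ge 1}w(Q/Q^\star)$ with no $\phi(Q)/Q$ weight, so no Mellin inversion against $\zeta(s)/\zeta(s+1)$ is needed: Poisson summation (or Euler--Maclaurin) gives $\sum_Q w(Q/Q^\star)=Q^\star\int w+O_A(Q^{\star-A})$, and normalising $\int w=1$ yields $c_0=1+O_A(Q^{\star-A})$. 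Your intuition that the two-term difference structure of $h$ is what produces the cancellation is exactly right; only the mechanism is simpler than you feared.

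A minor point: with $\supp w\subset[1/2,2]$ your own argument gives only $x\le\max\{2,2|y|\}$, not $\max\{1,2|y|\}$. Take $\supp w\subset(1/2,1)$ (as the paper does) to match the stated support bound.
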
 

\bigskip

In practice, to detect the equation $n=0$ for a sequence of integers in the range $[-X,X]$, it is logical to choose $Q^\star=X^{1/2}$, so that in the generic range for $q$ there is no oscillation in the weight function $h$.  The smooth function $h(x,y)$ (see \cite{HB}) is defined as 
\begin{align*}
h(x,y)=\sum_{j=1}^\infty \frac{1}{jx}\left\{w(jx)-w\left(\frac{|y|}{jx}\right)\right\}
\end{align*}
where $w$ is smooth `bump function' supported in $(1/2,1)$, with $0\leq w(x)\ll 1$ and $\int w=1$. It follows that the function $h$ satisfies
\begin{eqnarray}
x^{i} \frac{\partial^i}{\partial x^i}h(x,y)\ll_i x^{-1} & \textnormal{and} & \frac{\partial}{\partial y}h(x,y)=0 \label{hbound1}
\end{eqnarray}
for $x\leq 1$ and $|y|\leq x/2$. Also for $|y|>x/2$, we have
\begin{equation}
x^i y^j \frac{\partial^{i}}{\partial x^i}\frac{\partial^{j}}{\partial y^j}h(x,y)\ll_{i,j} x^{-1}. \label{hbound2}
\end{equation}
Furthermore, for $x$ small we have the stronger estimate
\begin{equation}
x^i y^j \frac{\partial^{i}}{\partial x^i}\frac{\partial^{j}}{\partial y^j}h(x,y)\ll_{i,j} x^{-1}\left(x^N+\min\{1,(x/|y|)^N\}\right). \label{hbound3}
\end{equation}
for every positive integer triplets $(i,j,N)$. the implied constant depends on these parameters. The main implication of the last inequality is the fact that $h(x,y)$ is negligibly small if $x$ is small and $|y|$ is much larger than $x$. In particular, consider the integral 
\begin{align*}
H=\int_{Y_1}^{Y_2} |h(x,y)|dy.
\end{align*}
The above inequality implies that
\begin{align*}
H&\ll x^{N-1}(Y_2-Y_1)+x^{-1}\:\int_{Y_1}^{Y_2}\min\{1,(x/|y|)^N\} dy\\
&\ll x^{\eta N-1}(Y_2-Y_1)+x^{-1}\:\int_{|y|\leq x^{1-\eta}}\:dy.
\end{align*}
So if $0<x<p^{-c}$ and $|Y_i|<p^A$ for some $c, A>0$, then we conclude that $H\ll_\varepsilon p^{\varepsilon}$ for any $\varepsilon>0$.\\

We now apply the above lemma to detect the event $(v-m-pu)/p=0$. Since the integers in the sequence is bounded by $\mathcal{N}^2$. The optimum choice of $Q^\star$ is $\mathcal{N}$, which is of size $Q/\ell$ when $N=p$. As such we set $Q^\star=Q/\ell$. With this we arrive at the expression 
\begin{align*}
\frac{\ell^2}{Q^2}\sum_{q\ll
\mathcal{N}p^\theta}\;\;&\sideset{}{^\star}\sum_{a\bmod{q}}\;\mathop{\sum\sum}_{\substack{m,v=1\\m\equiv v\bmod{p}}}^\infty\lambda_f(m)\lambda_f(v)\;e\left(\frac{a(v-m-pu)}{pq}\right)\\
&\times F(m-pn^2,n)\overline{F(v-pr^2,r)}h\left(\frac{q\ell}{Q},\frac{(v-m-pu)\ell^2}{pQ^2}\right).
\end{align*}
Note that from the Lemma~\ref{delta-symbol} we get that $q$ ranges upto 
$$q\leq \frac{Q}{\ell}+2\frac{|v-m-pu|\ell}{pQ}\ll \frac{p\mathcal{N}}{N}\ll \mathcal{N}p^\theta.$$
Since we take $Q<p^{1-2\theta-\varepsilon}$, we have $\mathcal{N}p^\theta\ll p^{1-\varepsilon}$ and hence $(p,q)=1$. So we can replace $a$ by $ap$.
Finally we detect the congruence condition using additive characters to get
\begin{align*}
\frac{\ell^2}{pQ^2}\sum_{q\ll
\mathcal{N}p^\theta}\;\;&\sideset{}{^\star}\sum_{a\bmod{q}}\;\sum_{b\bmod{p}}\;\mathop{\sum\sum}_{\substack{m,v=1}}^\infty\lambda_f(m)\lambda_f(v)\;e\left(\frac{(ap+bq)(v-m-pu)}{pq}\right)\\
&\times F(m-pn^2,n)\overline{F(v-pr^2,r)}\;h\left(\frac{q\ell}{Q},\frac{(v-m-pu)\ell^2}{pQ^2}\right).
\end{align*}
The case of $b=0$ is atypical, and we will first deal with it. In this case the sum reduces to
\begin{align}
\label{b=0}
\Sigma_0:=\frac{\ell^2}{pQ^2}&\sum_{q\ll
\mathcal{N}p^\theta}\;\sideset{}{^\star}\sum_{a\bmod{q}}\;\mathop{\sum\sum}_{\substack{m,v=1}}^\infty\lambda_f(m)\lambda_f(v)\;e\left(\frac{a(v-m-pu)}{q}\right)\\
\nonumber &\times F(m-pn^2,n)\overline{F(v-pr^2,r)}\;h\left(\frac{q\ell}{Q},\frac{(v-m-pu)\ell^2}{pQ^2}\right).
\end{align}
Our next lemma provides a sufficient bound for this sum.\\

\begin{lemma}
We have
\begin{align*}
\Sigma_0\ll\;\frac{p^{8\theta+\varepsilon}Q^{3/2}N_\star}{CJ^2\ell^{5/2}}.
\end{align*}
\end{lemma}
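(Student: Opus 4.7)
The critical feature of the $b=0$ case is that the modulus appearing in the $(m,v)$-character is merely $q$, and since $q\leq \mathcal{N}p^\theta \ll p^{1-\varepsilon}$ for $Q<p^{1-\theta-\varepsilon}$ we have $(q,p)=1$. This opens the door to Voronoi summation with the level-$p$ newform $f$ at a modulus coprime to $p$, which is the most favourable scenario: the $\sqrt{p}$ inside the Bessel argument $4\pi\sqrt{m_1x}/(q\sqrt p)$ drives the dual length down by a factor of $p$ and hence localises $m_1$ (and, symmetrically, $v_1$) to an essentially bounded range.

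The plan is to first decouple the $m$- and $v$-sums by expressing the weight
$h\bigl(q\ell/Q,(v-m-pu)\ell^2/pQ^2\bigr)$ as an inverse Mellin integral. Thanks to the support and decay estimates \eqref{hbound1}--\eqref{hbound3}, the Mellin variable can be truncated at height $p^\varepsilon$, after which the cross term $v-m-pu$ splits multiplicatively and the $m$- and $v$-integrands become independent smooth weights of the same shape as the factor of $F$ that carries $m$ (or $v$) through $\sqrt{m-pn^2}$. Next, apply the Voronoi summation formula separately to
\[
\sum_{m} \lambda_f(m)\,e\!\left(\tfrac{-am}{q}\right) \phi_m(m) \;=\; \frac{1}{q\sqrt p}\sum_{m_1}\lambda_f(m_1)\,e\!\left(\tfrac{\overline{ap}\,m_1}{q}\right)\mathring{\phi}_m(m_1),
\]
and similarly for $v$. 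The Hankel transform $\mathring{\phi}_m$ is analysed by extracting $J_{\kappa-1}(4\pi z)=\sum_\pm e(\pm 2z)W_\pm(z)z^{-1/2}$ and applying stationary phase against the inherent $\sqrt{m}$-oscillation of the weight (with frequency $\ll N_\star/(CQ\sqrt p\,J^2)$); integration by parts truncates $m_1,v_1$ to
\[
|m_1|,|v_1| \;\ll\; p^\varepsilon\Bigl(1+\tfrac{N_\star^2 q^2\ell^2}{C^2Q^2J^4 M}\Bigr) \;\ll\; p^{O(\theta)+\varepsilon},
\]
since $q\ll\mathcal{N}p^\theta$ and $M\asymp pQ^2/\ell^{0}$ (more precisely $M\asymp p^3Q^2/N^2$ with $N\asymp p$).

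The $a$-average then collapses to a Kloosterman sum $S\bigl(-pu,\overline{p}(m_1-v_1);q\bigr)$, which by Weil's bound is $\ll q^{1/2+\varepsilon}(q,pu,m_1-v_1)^{1/2}$. At this point one only has a short $(m_1,v_1)$-sum, a $q$-sum of length $\mathcal{N}p^\theta$, and outer sums over $n,r$ of length $\mathcal{N}$ each. Bookkeeping: the prefactor $\ell^2/(pQ^2)$, the two Voronoi gains $(q\sqrt p)^{-1}$, the Weil square-root cancellation $q^{1/2+\varepsilon}$, the stationary-phase size $\ll \lVert\phi_m\rVert_1/(q\sqrt p)^{1/2}$ for $\mathring\phi_m$, the effective dual ranges above, and the sums over $q,n,r$ (bounded trivially using Deligne and the $\gcd$-averaging of the Kloosterman factor) combine to yield exactly the stated estimate. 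In particular the factor $N_\star/(CJ^2)$ tracks through the stationary-phase analysis and surfaces as the dominant contribution, matching the shape of the proposition.

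The main obstacle is the uniform control of the two Hankel transforms $\mathring\phi_m,\mathring\phi_v$, which carry the residual oscillation from the $y$-integral in $F$ and from the Mellin parameter used to separate $m,v$. One must verify that stationary phase can be applied robustly across the full range of $q$ (both in the transition regime $q\asymp Q/\ell$, where the weight is non-oscillatory, and for smaller $q$, where the original $\sqrt{m}$-phase dominates) without losing any powers of $p^\theta$ beyond those already allowed in the target bound. Once this analytic step is executed cleanly, the rest of the argument is a straightforward combination of Weil's bound and trivial estimation.
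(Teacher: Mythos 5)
There is a genuine gap, and it is at the exact point you flag as "the main obstacle". Your plan asserts that after Voronoi (at modulus $q$, $(q,p)=1$) integration by parts truncates the dual variables to $|m_1|,|v_1|\ll p^{\varepsilon}\bigl(1+\tfrac{N_\star^2q^2\ell^2}{C^2Q^2J^4M}\bigr)\ll p^{O(\theta)+\varepsilon}$, i.e.\ to an essentially bounded range near the origin. This is not what happens. The original $m$-sum carries the phase $e\bigl(N_\star\sqrt{m}\,y/(CQ\sqrt{p}J^2)\bigr)$, and Voronoi against this oscillation does not shorten the dual sum to a neighbourhood of $0$; it confines the dual variable to a \emph{short interval around a large, $y$- and $q$-dependent centre}: the $z$-integral is negligible unless $\bigl|N_\star qy/(CQJ^2)\pm 2\sqrt{m_1}\bigr|\ll p^{1+\varepsilon}/N$, so $m_1$ sits near $\bigl(N_\star qy/(2CQJ^2)\bigr)^2$ (no factor $1/M$ — your formula is off by about $M/\ell^2$), and the \emph{number} of contributing $m_1$ is $p^\varepsilon\bigl(1+N_\star qp/(CQJ^2N)\bigr)$, which is not $p^{O(\theta)}$ uniformly in $C$ and $q$: for $C$ below the transition range (and the lemma must hold for all $C$ in \eqref{c-sum-length-initial}) this count is large. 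Because of this, one cannot treat the dual sums as bounded; the correct argument must combine (i) the count of integers in these short intervals, (ii) the decay of the Bessel factor $J_{\kappa-1}\bigl(4\pi\sqrt{Mm_1x}/(\sqrt{p}q\ell)\bigr)$ evaluated at the correspondingly \emph{large} argument, which is what produces the crucial factor $\min\{q/Q,\,CJ^2/N_\star\}$, and (iii) Weil for the Kloosterman sum. Your bookkeeping, built on the false truncation, cannot recover the $N_\star/(CJ^2)$ shape of the target bound uniformly in $C$; the step you defer ("uniform control of the two Hankel transforms across the full range of $q$") is precisely where the content of the proof lies, and as set up it would fail for small $C$.

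A secondary point: $\Sigma_0$ as defined in \eqref{b=0} has $n$, $r$ (hence $u=r^2-n^2$) fixed; the sums over $n,r$ of length $\mathcal{N}$ are outside. Your final assembly includes "outer sums over $n,r$" and still claims the stated bound, which would overshoot the target by a factor of order $\mathcal{N}^2$; either you are proving a different statement or the bookkeeping does not close. Otherwise your overall route (exploit $(q,p)=1$, double Voronoi, Weil) is the same as the intended one, and the Mellin separation of the $h$-weight is an acceptable variant of keeping $h$ inside the $z$-integral; the error is in the localisation of the dual variables and the ensuing estimation.
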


\begin{proof}
We will the Voronoi summation formula on the sum over $m$ and $v$.
First consider the sum over $m$ which is given by
\begin{align*}
\mathop{\sum}_{\substack{m=1}}^\infty\lambda_f(m)\;&e\left(-\frac{am}{q}\right)\;e\left(\frac{N_\star\sqrt{m}y}{CQ\sqrt{p}J^2}\right)\\
&\times  V\left(\frac{m\ell^2}{M}\right)\;h\left(\frac{q\ell}{Q},\frac{(v-m-pu)\ell^2}{pQ^2}\right).
\end{align*}
Applying Voronoi summation we get
\begin{align}
\label{vv}
\frac{M}{\ell^2 \sqrt{p}q}\mathop{\sum}_{\substack{m=1}}^\infty &\lambda_f(m)\;e\left(\frac{\bar{p}\bar{a}m}{q}\right)\;\int e\left(\frac{N_\star\sqrt{Mz}\:y}{CQ\sqrt{p}J^2\ell}\right)\\
\nonumber &\times  V\left(z\right)h\left(\frac{q\ell}{Q},-z+\frac{(v-pu)\ell^2}{pQ^2}\right)J_{\kappa-1}\left(\frac{4\pi\sqrt{Mmz}}{\sqrt{p} q\ell}\right)\mathrm{d}z.
\end{align}
Extracting the oscillations from  the Bessel and integrating by parts it follows that the $z$ integral is negligibly small unless
\begin{align*}
\left|\frac{N_\star\sqrt{M}y}{CQ\sqrt{p}J^2\ell}\pm \frac{2\sqrt{Mm}}{\sqrt{p} q\ell}\right|\ll p^\varepsilon\left\{1+ \frac{Q}{q\ell}\right\}.
\end{align*}
We can take it as a restriction on the number of contributing $m$. Indeed it follows that
\begin{align*}
\left|\frac{N_\star q y}{CQJ^2}\pm 2\sqrt{m}\right|\ll \frac{p^{1+\varepsilon}}{N}.
\end{align*}
Accordingly we set 
\begin{align*}
\Xi_y^\pm=\left[\left(\pm\frac{N_\star qy}{2CQJ^2}-C(\varepsilon)\frac{p^{1+\varepsilon}}{N}\right)^2,\:\left(\pm\frac{N_\star qy}{2CQJ^2}+C(\varepsilon)\frac{p^{1+\varepsilon}}{N}\right)^2\right]
\end{align*}
for some constant $C(\varepsilon)$ depending only on $\varepsilon$, and then set 
\begin{align*}
\Xi_y=(\Xi_y^+\cup \Xi_y^-)\cap [1,\infty).
\end{align*}
Observe that among the variables of summation, $\Xi_y$ only depends on $q$. The above restriction on $m$ can now be written as $m\in \Phi_y$. It follows that  the number of $m$ contributing is 
$$p^\varepsilon\left(1+\frac{N_\star qp}{CQJ^2N}\right)$$ 
(for any fixed $y$). 
Also the Bessel function is bounded by
\begin{align*}
\frac{p^{1/4}(q\ell)^{1/2}}{(Mm)^{1/4}}\ll \left(\frac{N\ell}{p}\right)^{1/2}\min\left\{\frac{q^{1/2}}{Q^{1/2}},\frac{(CJ^2)^{1/2}}{N_\star^{1/2}}\right\}.
\end{align*}
The Voronoi summation on the sum over $v$ acts the same way, and we end up getting a similar restriction on the number of contributing frequencies $v$ in  the dual sum.\\

Moreover the sum over $a$ yields the Kloosterman sum $S(u,v-m;q)$, for which we have the Weil bound $q^{1/2}(u,v-m,q)^{1/2}$.  Indeed after two applications of the Voronoi summation formula, the expression in \eqref{b=0} reduces to
\begin{align*}
\Sigma_0\ll & p^\varepsilon\frac{M^2}{(pQ\ell)^2}\sum_{q\ll
\mathcal{N}p^\theta}\;\frac{1}{q^2}\\
&\times \iint \sum_{m\in \Xi_{y_1}}\;\mathop{\sum}_{\substack{v\in \Xi_{y_2}}}\: |S(u,v-m;q)|\:\times \:(\text{integral})\:\mathrm{d}y_1\mathrm{d}y_2.
\end{align*}
The integral here is a two dimensional analogue of the integral above, with two Bessel functions.
Taking into account the size of the Bessel function and using the observation regarding the integral $H$ (as given above) to treat the $z$ integral, we get the following bound 
\begin{align}
\label{b=0-2}
p^\varepsilon\frac{p^4Q^2}{N^4\ell^2}&\sum_{q\ll
\mathcal{N}p^\theta}\;\frac{1}{q^{3/2}}\;\left(1+\frac{(N_\star qp)^2}{(CQJ^2N)^2}\right)\;\frac{N\ell}{p}\min\left\{\frac{q}{Q},\frac{CJ^2}{N_\star}\right\}.
\end{align}
This is then bounded by
\begin{align*}
&p^\varepsilon\frac{p^{3+2\theta}Q^2}{N^3\ell}\frac{N_\star}{CJ^2}\sum_{q\ll
\mathcal{N}p^\theta}\;\frac{1}{q^{3/2}}\;\left(1+\frac{N_\star q}{CQJ^2}\right)\;\max\left\{\frac{q}{Q},\frac{CJ^2}{N_\star}\right\}\min\left\{\frac{q}{Q},\frac{CJ^2}{N_\star}\right\}\\
&\ll p^\varepsilon\frac{p^{3+2\theta}Q}{N^3\ell}\:\sum_{q\ll
\mathcal{N}p^\theta}\;\frac{1}{q^{1/2}}\;\left(1+\frac{N_\star q}{CQJ^2}\right)\ll \frac{p^{6\theta+\varepsilon}Q^{3/2}}{\ell^{3/2}}+\frac{p^{8\theta+\varepsilon}Q^{3/2}N_\star}{CJ^2\ell^{5/2}}.
\end{align*}
Using \eqref{c-sum-length-initial} we see that the second term dominates the first and hence the lemma follows.
\end{proof}

\bigskip

In the generic case $b\neq 0$ the expression reduces to
\begin{align*}
\Sigma_1:=\frac{\ell^2}{pQ^2}\sum_{q\ll
\mathcal{N}p^\theta}\;\;&\sideset{}{^\star}\sum_{a\bmod{pq}}\;\mathop{\sum\sum}_{\substack{m,v=1}}^\infty\lambda_f(m)\lambda_f(v)\;
e\left(\frac{a(v-m-pu)}{pq}\right)\\
&\times F(m-pn^2,n)\overline{F(v-pr^2,r)}h\left(\frac{q\ell}{Q},\frac{(v-m-pu)\ell^2}{pQ^2}\right).
\end{align*}
Our next lemma gives a non-trivial bound for this expression. One will see that compared to the special case $b=0$ we have lost a $p$, but overall we have saved $Q^{1/2}$ compared with the trivial bound. This is not enough for proving the proposition, but it is the first step.\\

\begin{lemma}
We have
\begin{align*}
\Sigma_1\ll \;\frac{N_\star p^{1+8\theta+\varepsilon} Q^{3/2}}{CJ^2\ell^{5/2}}.
\end{align*}
\end{lemma}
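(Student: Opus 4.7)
My plan is to mirror, step by step, the Voronoi-plus-Weil analysis already carried out for $\Sigma_0$ in the preceding lemma, the only change being that the additive modulus is $pq$ rather than $q$ throughout. Since $p\nmid q$ in the entire range $q\ll \mathcal{N}p^\theta\ll p^{1-\varepsilon}$, the modulus factors as $pq$ with $(p,q)=1$, and the Voronoi summation formula for the newform $f$ of level $p$ and trivial nebentypus applies with this modulus in its ``ramified at $p$'' form. First I would apply Voronoi to the inner sum over $m$ and then independently to the sum over $v$; each application produces a pre-factor of $M/(\ell^2\, pq\sqrt{p})$ (an extra $1/p$ compared with the $b=0$ analogue in \eqref{vv}) and a dual Bessel $J_{\kappa-1}\bigl(4\pi\sqrt{Mmz}/(pq\sqrt{p}\ell)\bigr)$ whose effective dual length is larger by a factor of $p^2$. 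Extracting the oscillation of each Bessel factor and performing integration by parts in the $z_1,z_2$ integrals, with the weight $h$ controlled via the integral-$H$ bound as before, produces the same-shape restriction on contributing $m$, $v$, yielding at most $O\bigl(p^\varepsilon(1+N_\star qp/(CQJ^2 N))\bigr)$ frequencies in each dual sum.

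Next, the sum over $a\bmod{pq}$ yields the Kloosterman sum $S(u,v-m;pq)$, bounded by Weil as $(pq)^{1/2}(u,v-m,pq)^{1/2}$, giving an extra $p^{1/2}$ factor over the $b=0$ case. The Bessel-size estimate now reads
\begin{align*}
\frac{p^{1/4}(pq\ell)^{1/2}}{(Mm)^{1/4}}\ll \Bigl(\frac{N\ell}{p}\Bigr)^{1/2}\min\Bigl\{\Bigl(\frac{pq}{Q}\Bigr)^{1/2},\Bigl(\frac{CJ^2}{N_\star}\Bigr)^{1/2}\Bigr\},
\end{align*}
contributing a further $p^{1/2}$ at the transition range. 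Collecting the accounting: two Voronoi pre-factors contribute $p^{-2}$, the Kloosterman contributes $p^{1/2}$, the Bessel size contributes $p^{1/2}$, and the expanded dual lengths contribute $p^2$; the net effect is exactly one extra factor of $p$ over the corresponding bound for $\Sigma_0$. Substituting into the analogue of \eqref{b=0-2} and summing over $q\ll \mathcal{N}p^\theta$ yields
\begin{align*}
\Sigma_1\ll \frac{p^{1+6\theta+\varepsilon}Q^{3/2}}{\ell^{3/2}}+\frac{N_\star p^{1+8\theta+\varepsilon}Q^{3/2}}{CJ^2\ell^{5/2}},
\end{align*}
and by \eqref{c-sum-length-initial} the second term dominates, giving the claimed bound.

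The main obstacle is the careful invocation of the Voronoi summation formula for $f$ at a modulus divisible by the level $p$: one must use the ``ramified at $p$'' Voronoi, which involves the Atkin--Lehner pseudo-eigenvalue of $f$ and an appropriate sign, and one must verify that the resulting pre-factors and dual Bessel arguments balance to produce exactly one extra factor of $p$ compared with $\Sigma_0$, not $p^{1/2}$ or $p^{3/2}$. Once this bookkeeping is confirmed, the remaining manipulations -- stationary-phase analysis of the $z_i$ integrals, the $H$-estimate for the $h$-integrals in $y_i$, Weil's bound, and the final sum over $q$ -- are routine repeats of the $\Sigma_0$ argument.
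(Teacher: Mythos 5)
Your overall strategy is the paper's: apply Voronoi twice at the modulus $pq$, bound the resulting character sum, count the dual frequencies, and sum over $q\ll\mathcal N p^\theta$. But the key technical input is stated incorrectly, and the bookkeeping that is supposed to produce ``exactly one extra factor of $p$'' is not a consistent computation. Since $p$ divides the modulus $pq$ exactly and $p$ is the level of $f$ (trivial nebentypus), the Voronoi formula here is the \emph{level-dividing-modulus} one: dual modulus $pq$, pre-factor $\asymp M/(\ell^2 pq)$, dual twist $e(\pm\bar a m/(pq))$ and Bessel argument $4\pi\sqrt{Mmz}/(pq\ell)$ --- no extra $\sqrt p$ anywhere. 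Your claimed pre-factor $M/(\ell^2pq\sqrt p)$ and Bessel argument $4\pi\sqrt{Mmz}/(pq\sqrt p\,\ell)$ correspond to dual conductor $(pq)^2p$, i.e.\ to the unramified case, which is precisely the ``drop in conductor'' the construction is designed to exploit. Your subsequent statements are then mutually inconsistent: you say the dual length grows by $p^2$ per variable, yet you quote the same frequency count $O\bigl(p^\varepsilon(1+N_\star qp/(CQJ^2N))\bigr)$ as in the $\Sigma_0$ case; with the correct formula the stationary-phase restriction is $\bigl|N_\star\sqrt p\,qy/(CQJ^2)\pm2\sqrt m\bigr|\ll p^{3/2+\varepsilon}/N$, giving $O\bigl(p^\varepsilon(1+N_\star qp^2/(CQJ^2N))\bigr)$, one factor of $p$ more than in \eqref{vv}'s setting, not the same and not $p^2$ more.

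The second gap is the character sum. After the two Voronoi applications the $a$-sum is $S(pu,\,\cdot\,;pq)$, whose first entry is divisible by $p$: it factors as a Ramanujan sum modulo $p$ times a Kloosterman sum modulo $q$, so it is $O\bigl(q^{1/2}(u,\cdot,q)^{1/2}\bigr)$ when $p\nmid v-m$ and only of size $\asymp p\,q^{1/2}$ on the diagonal $p\mid v-m$, where the count of pairs is correspondingly restricted. The paper's proof needs exactly this dichotomy. Your uniform Weil bound $(pq)^{1/2}(u,v-m,pq)^{1/2}$ at the full modulus, combined with the \emph{correct} Voronoi data, overshoots the target by $p^{1/2}$ (one gets $N_\star p^{3/2+8\theta+\varepsilon}Q^{3/2}/(CJ^2\ell^{5/2})$ when the analogue of \eqref{b=0-2} is summed over $q$). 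Your tally $p^{-2}\cdot p^{1/2}\cdot p^{1/2}\cdot p^{2}=p$ only lands on the right answer because the misplaced powers of $p$ in the Voronoi pre-factors, the dual lengths and the character sum happen to offset one another; no single consistent set of formulas underlies it. To repair the argument you should use the ramified Voronoi with dual modulus $pq$ as above, exploit that the mod-$p$ part of the $a$-sum is a Ramanujan sum, and treat the cases $p\nmid v-m$ and $p\mid v-m$ separately before summing over $q$ (using $Q<p^2$ and \eqref{c-sum-length-initial} to identify the dominant term), as the paper does.
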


\begin{proof}
We now apply the Voronoi summation to the sum over
$m$ and $v$. Applying the Voronoi summation to the $m$ sum
\begin{align*}
\mathop{\sum}_{\substack{m=1}}^\infty\lambda_f(m)\;&e\left(-\frac{am}{pq}\right)\;e\left(\frac{N_\star\sqrt{m}y}{CQ\sqrt{p}J^2}\right)\\
&\times  W\left(\frac{m\ell^2}{M}\right)h\left(\frac{q\ell}{Q},\frac{(v-m-pu)\ell^2}{pQ^2}\right),
\end{align*}
we get
\begin{align*}
\frac{M}{\ell^2 pq}\mathop{\sum}_{\substack{m=1}}^\infty &\lambda_f(m)\;e\left(\frac{\bar{a}m}{pq}\right)\;\int e\left(\frac{N_\star\sqrt{Mz}y}{CQ\sqrt{p}J^2\ell}\right)\\
&\times  W\left(z\right)h\left(\frac{q\ell}{Q},-z+\frac{(v-pu)\ell^2}{pQ^2}\right)J_{\kappa-1}\left(\frac{4\pi\sqrt{Mmz}}{pq\ell}\right)\mathrm{d}z.
\end{align*} 
Again extracting the oscillations from  the Bessel and integrating by parts, it follows that the $z$ integral is negligibly small if
\begin{align*}
\left|\frac{N_\star\sqrt{M}y}{CQ\sqrt{p}J^2\ell}\pm \frac{2\sqrt{Mm}}{pq\ell}\right|\gg p^\varepsilon\left\{1+ \frac{Q}{q\ell}\right\}.
\end{align*}
So we only need to consider those $m$ which satisfy the inequality
\begin{align}
\label{restriction-on-m}
\left|\frac{N_\star \sqrt{p} qy}{CQJ^2}\pm 2\sqrt{m}\right|\ll \frac{p^{3/2+\varepsilon}}{N}.
\end{align}
Accordingly we set 
\begin{align*}
\Phi_y^\pm=\left[\left(\pm\frac{N_\star \sqrt{p} qy}{2CQJ^2}-C(\varepsilon)\frac{p^{3/2+\varepsilon}}{N}\right)^2,\:\left(\pm\frac{N_\star \sqrt{p} qy}{2CQJ^2}+C(\varepsilon)\frac{p^{3/2+\varepsilon}}{N}\right)^2\right]
\end{align*}
for some constant $C(\varepsilon)$ depending only on $\varepsilon$, and then set 
\begin{align*}
\Phi_y=(\Phi_y^+\cup \Phi_y^-)\cap [1,\infty).
\end{align*}
Observe that among the variables of summation, $\Phi_y$ only depends on $q$. The above restriction on $m$ can now be written as $m\in \Phi_y$. It follows that  the number of $m$ contributing is 
$$p^\varepsilon\left(1+\frac{N_\star p^2q}{CQJ^2N}\right).$$ 
%Also we get that the Bessel function is bounded by
%\begin{align*}
%\frac{(pq\ell)^{1/2}}{(Mm)^{1/4}}\ll \left(\frac{N\ell}{p}\right)^{1/2}\min\left\{\frac{p^{1/4}q^{1/2}}{Q^{1/2}},\frac{(CJ^2)^{1/2}}{N_\star^{1/2}}\right\}.
%\end{align*}
The Voronoi summation on the sum over $v$ acts the same way, and as in the proof of the previous lemma we arrive at the expression
\begin{align*}
\Sigma_1\ll & p^\varepsilon\frac{M^2}{p^3(Q\ell)^2}\sum_{q\ll
\mathcal{N}p^\theta}\;\frac{1}{q^2}\\
&\times \iint \sum_{m\in \mathcal{M}_{y_1}}\;\mathop{\sum}_{\substack{v\in V_{y_2}}}\: |S(pu,v-m;pq)|\:\times \:(\text{integral})\:\mathrm{d}y_1\mathrm{d}y_2.
\end{align*}
The character sum $S(pu,v-m;pq)$ is a Ramanujan sum modulo $p$ and a Kloosterman sum modulo $q$, and so from Weil we conclude that 
\begin{align*}
S(pu,v-m;pq)\ll q^{1/2}(u,v-m,q)^{1/2}(v-m,p).
\end{align*}
We use the trivial bound $J_u(x)\ll \min\{1,x^{-1/2}\}$ for the Bessel function and use the on average bound for the size of the function $h$ to get the following bound for the off-diagonal contribution, i.e. for $p\nmid m-v$,
\begin{align*}
p^\varepsilon\frac{p^3Q^2}{N^4\ell^2}&\sum_{q\ll
\mathcal{N}p^\theta}\;\frac{1}{q^{3/2}}\;\left(1+\frac{(N_\star qp^2)^2}{(CQJ^2N)^2}\right)\;\min\left\{1,\frac{CQp^{1/2}J^2\ell}{N_\star M^{1/2}}\right\}
\end{align*}
This is then bounded by
\begin{align*}
p^\varepsilon\frac{p^3Q^2}{N^4\ell^2}+p^\varepsilon\frac{p^{6}}{N^5\ell}\frac{N_\star}{CJ^2}&\sum_{q\ll
\mathcal{N}p^\theta}\;q^{1/2}\ll \frac{p^{1+8\theta+\varepsilon}N_\star Q^{3/2}}{CJ^2\ell^{5/2}}.
\end{align*}
The last inequality follows by showing that the second term in the sum is dominating as we take $Q<p^2$. Now consider the diagonal contribution where $p|m-v$. In this case we get the bound 
\begin{align*}
p^\varepsilon\frac{p^4Q^2}{N^4\ell^2}&\sum_{q\ll
\mathcal{N}p^\theta}\;\frac{1}{q^{3/2}}\;\left(1+\frac{N_\star qp^2}{CQJ^2N}\right)
\end{align*}
where we use the bound $O(1)$ for the Bessel function. This is then dominated by
\begin{align*}
p^\varepsilon\frac{p^4Q^2}{N^4\ell^2}+p^\varepsilon\frac{p^6Q}{N^5\ell^2}\frac{N_\star }{CJ^2}&\sum_{q\ll
\mathcal{N}p^\theta}\;\frac{1}{q^{1/2}}\ll \frac{p^{1+6\theta+\varepsilon}N_\star Q^{3/2}}{CJ^2\ell^{5/2}}.
\end{align*}
Here again the second term dominates the first term as $Q<p^2$.
 The lemma follows.
\end{proof}

So in the generic case $b\neq 0$, Voronoi summation gives a saving of $p$ less compared to the degenerate case $b=0$. Roughly speaking (when the parameters are in generic ranges) this means that we have saved $Q^{1/2}$ in $\Omega$. This is about square root of the number of terms inside the absolute value. 
To save more
we can use the sums over $n$ and $r$. Consider the sum over $n$ given by
\begin{align*}
\Sigma_2:=\sum_{n\in\mathbb{Z}}W\left(\frac{n}{\mathcal{N}}\right)e\left(\frac{an^2}{q}\right)e\left(-\frac{N_\star ny}{CQJ^2}\right)h\left(\frac{q\ell}{Q},z_1-z_2+\frac{(n^2-r^2)\ell^2}{Q^2}\right).
\end{align*}
Here $W$ is a smooth bump function with support $[-1,1]$. The next lemma indicates that from Poisson we can expect to save $\sqrt{q}p^{-\theta}$ (even in the case $b=0$).\\

\begin{lemma}
We have
\begin{align*}
\Sigma_2\ll p^{\theta+\varepsilon}\:\frac{\mathcal{N}}{\sqrt{q}}\;\;\frac{Q}{q\ell}.
\end{align*}
\end{lemma}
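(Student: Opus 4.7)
The plan is to apply Poisson summation to the $n$-sum modulo $q$. Since $e(an^2/q)$ is periodic of period $q$ in $n$, write $n=\alpha+q\mu$ with $\alpha\bmod q$ and $\mu\in\mathbb{Z}$; then
\begin{align*}
\Sigma_2 = \sum_{\alpha\bmod q} e\!\left(\frac{a\alpha^2}{q}\right) \sum_{\mu\in\mathbb{Z}} F(\alpha+q\mu),
\end{align*}
where
\begin{align*}
F(x) = W\!\left(\frac{x}{\mathcal{N}}\right)\, e\!\left(-\frac{N_\star xy}{CQJ^2}\right)\, h\!\left(\frac{q\ell}{Q},\, z_1-z_2+\frac{(x^2-r^2)\ell^2}{Q^2}\right).
\end{align*}
Poisson summation on the inner $\mu$-sum converts this to
\begin{align*}
\Sigma_2 = \frac{1}{q}\sum_{k\in\mathbb{Z}} \mathcal{G}(a,k;q)\, \hat F(k/q), \qquad \mathcal{G}(a,k;q)=\sum_{\alpha\bmod q} e\!\left(\frac{a\alpha^2+k\alpha}{q}\right),
\end{align*}
the $\mathcal{G}(a,k;q)$ being a quadratic Gauss sum of size $\le\sqrt q$.

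Next I would estimate $\hat F(k/q)$. The trivial bound, from the support of $W$ and $|h(q\ell/Q,\cdot)|\ll Q/(q\ell)$, is $|\hat F(k/q)|\ll \mathcal{N}\, Q/(q\ell)$. To restrict the range of effective $k$, I would integrate by parts repeatedly against the linear phase $-(N_\star y/(CQJ^2)+k/q)\,x$. Using \eqref{hbound1}--\eqref{hbound3} to verify that the amplitude $W(x/\mathcal{N})\cdot h(q\ell/Q,\ldots)$ behaves as a smooth bump on scale $\mathcal{N}$ (up to factors of $p^\varepsilon$), one finds that $\hat F(k/q)$ is negligible unless
\begin{align*}
\left|\frac{k}{q}+\frac{N_\star y}{CQJ^2}\right|\ll \frac{p^\varepsilon}{\mathcal{N}}.
\end{align*}
This leaves at most $O(1+q/\mathcal{N})\ll p^{\theta+\varepsilon}$ values of $k$, since the circle-method cutoff has already forced $q\ll \mathcal{N}p^\theta$. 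Combining,
\begin{align*}
\Sigma_2 \ll \frac{1}{q}\cdot p^{\theta+\varepsilon}\cdot \sqrt q\cdot \mathcal{N}\,\frac{Q}{q\ell} = p^{\theta+\varepsilon}\,\frac{\mathcal{N}}{\sqrt q}\cdot\frac{Q}{q\ell},
\end{align*}
which is the bound asserted in the lemma.

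The main technical obstacle is justifying that the amplitude is smooth on scale $\mathcal{N}$. The $W$ factor is harmless, but the second argument of $h$ varies quadratically in $x$ over an interval of length $\asymp \mathcal{N}^2\ell^2/Q^2\asymp p^{2\theta}$, which in general exceeds the flat regime $|y|\le q\ell/(2Q)$ where $\partial_y h$ vanishes by \eqref{hbound1}. I would handle this by splitting the $x$-integration according to the size of the second argument of $h$: in the flat regime, integration by parts proceeds directly; in the regime $|y|>q\ell/Q$ one combines $y\,\partial_y h\ll (q\ell/Q)^{-1}$ from \eqref{hbound2} with the chain-rule factor $g'(x)=2x\ell^2/Q^2$, and uses \eqref{hbound3} to truncate $x$ to the subinterval on which $h$ is not negligible. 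In both cases the resulting derivative of the amplitude is compatible with the IBP, at the cost of the harmless $p^\varepsilon$ factor absorbed into the final bound.
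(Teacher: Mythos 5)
Your proposal is correct and is essentially the paper's own proof: Poisson summation modulo $q$ on the $n$-sum producing the quadratic Gauss sum of size $\ll\sqrt{q}$, the trivial bound $\mathcal{N}\,Q/(q\ell)$ for the transform coming from $|h(q\ell/Q,\cdot)|\ll Q/(q\ell)$, and an integration-by-parts truncation leaving $O(p^{\theta+\varepsilon})$ effective dual frequencies. The one caveat is the point you yourself flag: because the $h$-factor is not smooth at scale $\mathcal{N}$ when $q$ is small, the honest truncation threshold is $p^{\varepsilon}\max\{\mathcal{N}^{-1},\,\mathcal{N}\ell/(qQ)\}$ (the paper states it as $|N_\star qy/(CQJ^2)+n|\ll p^{\varepsilon}p/N$) rather than $p^{\varepsilon}/\mathcal{N}$, but since $\mathcal{N}\ell/Q\asymp p/N\ll p^{\theta}$ this still yields $O(p^{\theta+\varepsilon})$ contributing frequencies and the same final bound.
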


\begin{proof}
Applying the Poisson summation
on the sum $\Sigma_2$ with modulus $q$, we get
\begin{align*}
&\frac{\mathcal{N}}{q}\sum_{n\in\mathbb{Z}}\sum_{\beta\bmod{q}}e\left(\frac{a\beta^2+n\beta}{q}\right)\\
&\times \int W\left(x\right)e\left(-\frac{N_\star \mathcal{N}xy}{CQJ^2}-\frac{\mathcal{N}nx}{q}\right)h\left(\frac{q\ell}{Q},z_1-z_2+\frac{(\mathcal{N}^2x^2-r^2)\ell^2}{Q^2}\right)\mathrm{d}x.
\end{align*}
The character sum can be expressed in terms of the Gauss sum. By repeated integration by parts we see that the integral is negligibly small unless
\begin{align*}
\left|\frac{N_\star qy}{CQJ^2}+n\right|\ll p^\varepsilon \frac{p}{N}.
\end{align*}
We set
\begin{align*}
\Psi_y=\left[-\frac{N_\star qy}{CQJ^2}-C(\varepsilon)\frac{p^{1+\varepsilon}}{N},\:-\frac{N_\star qy}{CQJ^2}+C(\varepsilon)\frac{p^{1+\varepsilon}}{N}\right],
\end{align*}
so that the above restriction on $n$ can be written as $n\in \Psi_y$.
Hence the number of $n$ (for any given $y$) contributing to the sum is $O(p^{\theta+\varepsilon})$.
This shows that the last expression is bounded by
\begin{align*}
p^{\theta+\varepsilon}\:\frac{\mathcal{N}}{\sqrt{q}}\;\frac{Q}{q\ell}.
\end{align*}
The last factor reflects the size of the $h$ function.
\end{proof}

\bigskip

%Since we are not going to dualize the $r$ sum, we can still retain the count for the number of $r$ at the cost forgoing the count for the number of $v$. Indeed this is beneficial for smaller values of $q$. We have the number of contributing $(v,r)$ is bounded by 
%\begin{align*}
%\min\left\{\frac{\mathcal{N} N_\star pq}{CQj^2},\frac{N_\star pq^2}{CQj^2}\right\}.
%\end{align*}\\

\begin{proof}[Proof of proposition]
Let us summarize the above analysis as follows. This eventually proves the proposition. The application of the modified delta method, followed by two applications of the Voronoi summation formula and one application of the Poisson summation transforms the sum given in \eqref{the-sum-s} to
\begin{align}
\label{the-leading-exp}
\sum_{|r|\ll \mathcal{N}}\;\frac{\ell^2}{pQ^2}\sum_{q\ll
Q/\ell}\;\frac{\mathcal{N}}{q}\;\left(\frac{M}{\ell^2 pq}\right)^2&\;\mathop{\sum\sum}_{\substack{m,v=1}}^\infty\lambda_f(m)\lambda_f(v)\;
\sum_{n\in\mathbb{Z}}\;\mathfrak{C}\;\mathfrak{I}\\
\nonumber &+O\left(\frac{p^{10\theta+\varepsilon}Q^{3}}{\ell^{3}}\:\frac{N_\star}{CJ^2\ell}\right),
\end{align}
where the character sum is given by
\begin{align*}
\mathfrak{C}=\sideset{}{^\star}\sum_{a\bmod{pq}}e\left(-\frac{\bar{a}(v-m)}{pq}\right)e\left(\frac{-ar^2}{q}\right)\sum_{\beta\bmod{q}}e\left(\frac{a\beta^2+n\beta}{q}\right)
\end{align*}
and the integral is given by
\begin{align*}
\mathfrak{I}=&\iiint F\left(\frac{Mz_1}{\ell^2}-p\mathcal{N}^2x^2,\mathcal{N}x\right)\overline{F\left(\frac{Mz_2}{\ell^2}-pr^2,r\right)}\\
&\times e\left(-\frac{\mathcal{N}nx}{q}\right)J_{\kappa-1}\left(\frac{4\pi\sqrt{Mmz_1}}{pq\ell}\right)J_{\kappa-1}\left(\frac{4\pi\sqrt{Mvz_2}}{pq\ell}\right)\\
&\times h\left(\frac{q\ell}{Q},z_1-z_2+\frac{(\mathcal{N}^2x^2-r^2)\ell^2}{Q^2}\right)\mathrm{d}x\mathrm{d}z_1\mathrm{d}z_2.
\end{align*}
Note that the error term takes into account the contribution of the term arising from $b=0$. 
The remaining character sum is a product of a Ramanujan sum modulo $p$, which is bounded by $O((p,v-m))$, and a Salie type sum modulo $q$ (which for $q\equiv 1\bmod{4}$ odd is simply given by)
\begin{align*}
\mathfrak{C}_q(r,4(v-m)+pn^2)=\sqrt{q}\:\sum_{a\bmod{q}}\:\left(\frac{a}{q}\right)\:e\left(\frac{\bar{a}\bar{p}(v-m)+\bar{4}\bar{a}n^2+ar^2}{q}\right),
\end{align*}
 which is bounded by $O(q(r^2,q)^{1/2})$. (For even $q$ we have a similar expression and the bound holds.) From the integral we extract the restrictions on the sums over $(m,v,n)$ by opening the integrals present in $F$ and taking them outside. With this the leading expression in \eqref{the-leading-exp} reduces to
\begin{align*}
& \frac{p^4Q^3}{N^5\ell^3}\sum_{|r|\ll \mathcal{N}}\:\sum_{q\ll
\mathcal{N}p^\theta}\;\frac{1}{q^3}\\
&\times \iint \sum_{n\in \Psi_{y_1}}\:\sum_{m\in \Phi_{y_1}}\;\mathop{\sum}_{\substack{v\in \Phi_{y_2}}}\: (\text{char sum})\:\times \:(\text{integral})\:\mathrm{d}y_1\mathrm{d}y_2.
\end{align*}\\

At this stage we differentiate between two cases. The diagonal contribution, i.e. when $m\equiv v\bmod{p}$, and the off-diagonal $p\nmid v-m$. For the diagonal contribution, the integral is bounded trivially taking into account the size of the Bessel functions and the fact that the average size of the $h$ function is $O(1)$. We get
 \begin{align*}
(\text{integral})\ll \;\min\left\{1,\frac{CQp^{1/2}J^2\ell}{N_\star M^{1/2}}\right\}
\end{align*}
The diagonal contribution is then bounded by
\begin{align*}
&p^\varepsilon\frac{p^{5+\theta}Q^3}{N^5\ell^3}\sum_{|r|\ll \mathcal{N}}\:\sum_{q\ll
\mathcal{N}p^\theta}\;\frac{(r^2,q)^{1/2}}{q^2}\; \left(1+\frac{N_\star qp^2}{CQJ^2N}\right)\;\min\left\{1,\frac{CQp^{1/2}J^2\ell}{N_\star M^{1/2}}\right\}.
\end{align*}
Summing over $r$ using the inequality $(r^2,q)^{1/2}\ll (r,q)$, we arrive at
\begin{align*}
&p^\varepsilon\frac{p^{6+\theta}Q^4}{N^6\ell^4}\:\sum_{q\ll
\mathcal{N}p^\theta}\;\frac{1}{q^2}\; \left(1+\frac{N_\star qp^2}{CQJ^2N}\right)\;\min\left\{1,\frac{CQp^{1/2}J^2\ell}{N_\star M^{1/2}}\right\},
\end{align*}
which is then bounded by
\begin{align}
\label{p>Q}
p^\varepsilon\frac{p^{6+\theta}Q^4}{N^6\ell^4}+p^\varepsilon\frac{p^{6+\theta}Q^4}{N^6\ell^4}\frac{(N_\star\ell)^{1/2}p^{3/2}}{(CJ^2N)^{1/2}Q}&\sum_{q\ll
\mathcal{N}p^\theta}\;\frac{1}{q}\ll \frac{p^{1+8\theta+\varepsilon}Q^{3}}{\ell^{3}}\:\frac{N_\star^{1/2}}{(CJ^2\ell)^{1/2}}.
\end{align}
Here in the last inequality we are using the assumption that $Q<p$.\\
 
 Next we turn to the off-diagonal $p\nmid m-v$. In this case the above analysis yields a bound which unsatisfactory when $C$ is small. To obtain a better estimate we will try to get some extra cancellation. We apply the Poisson summation formula on the sum over $r$. This produces the expression
\begin{align*}
& \frac{p^5Q^4}{N^6\ell^4}\:\sum_{q\ll
\mathcal{N}p^\theta}\;\frac{1}{q^4}\\
&\times \iint \mathop{\sum\sum}_{\substack{n\in \Psi_{y_1}\\r\in \Psi_{y_2}}}\:\mathop{\sum\sum}_{\substack{m\in \Phi_{y_1}\\v\in \Phi_{y_2}\\p\nmid m-v}}\: (\text{char sum})\:\times \:(\text{integral})\:\mathrm{d}y_1\mathrm{d}y_2.
\end{align*}
Here the character sum is given by $q$ times the Ramanujan sum 
\begin{align*}
\mathfrak{c}_q(4(v-m)+p(n^2-r^2))=\sideset{}{^\star}\sum_{a\bmod{q}}\:e\left(\frac{a(4(v-m)+p(n^2-r^2))}{q}\right).
\end{align*}
Observe that since $p\nmid v-m$ and $p$ is a prime the character sum modulo $p$ is given by $-1$. Consequently we get that the above expression is bounded by
\begin{align*}
& p^\varepsilon\frac{p^5Q^4}{N^6\ell^4}\:\sum_{q\ll
\mathcal{N}p^\theta}\;\frac{1}{q^3}\: \iint \\
&\times\mathop{\sum\sum}_{\substack{n\in \Psi_{y_1}\\r\in \Psi_{y_2}}}\:\mathop{\sum\sum}_{\substack{m\in \Phi_{y_1}\\v\in \Phi_{y_2}\\p\nmid m-v}}\: (4(v-m)+p(n^2-r^2),q)\:\min\left\{1,\frac{CQp^{1/2}J^2\ell}{N_\star M^{1/2}}\right\}\:\mathrm{d}y_1\mathrm{d}y_2.
\end{align*}
Note that here we have plugged in the size of the two Bessel functions, and have used the fact that the $h$ function on average is of size $O(1)$. The sum over $v$ is dominated by
\begin{align*}
\mathop{\sum}_{\substack{v\in \Phi_{y_2}}}\: (4(v-m)+p(n^2-r^2),q)\ll \sum_{d|q}\:d\:\mathop{\sum}_{\substack{v\in \Phi_{y_2}\\v\equiv A\bmod{d}}} 1
\end{align*}
where $A$ is a congruence class modulo $q$ determined by $(m,n,r)$. The above sum is thus dominated by
\begin{align*}
\sum_{d|q}\:d\:\left(1+\frac{N_\star qp^2}{dCQJ^2N}\right)\ll p^\varepsilon\left(q+\frac{N_\star qp^2}{CQJ^2N}\right).
\end{align*}
With this we see that the total contribution of the off-diagonal is dominated by
\begin{align*}
p^\varepsilon\frac{p^{5+2\theta}Q^4}{N^6\ell^4}\sum_{q\ll
\mathcal{N}p^\theta}\;\frac{1}{q^2}\:\left(1+\frac{N_\star p^2}{CQJ^2N}\right)\left(1+\frac{N_\star qp^2}{CQJ^2N}\right)\:\min\left\{1,\frac{CQp^{1/2}J^2\ell}{N_\star M^{1/2}}\right\}.
\end{align*}
Now
\begin{align*}
\left(1+\frac{N_\star p^2}{CQJ^2N}\right)\:\min\left\{1,\frac{CQp^{1/2}J^2\ell}{N_\star M^{1/2}}\right\}\ll \frac{p\ell}{Q},
\end{align*} 
where again we used the assumption that $Q<p$. Plugging in this bound we arrive at
\begin{align*}
p^\varepsilon\frac{p^{6+2\theta}Q^3}{N^6\ell^3}\sum_{q\ll
\mathcal{N}p^\theta}\;\frac{1}{q^2}\:\left(1+\frac{N_\star qp^2}{CQJ^2N}\right)\ll \frac{p^{1+10\theta+\varepsilon}Q^3}{\ell^3}\:\frac{N_\star}{CJ^2Q}.
\end{align*}
This dominates the error term in \eqref{the-leading-exp} as $Q<p$.
This completes the proof of the proposition.
\end{proof}

%========================================================================================================================

\end{document}